\tikzstyle{n}=[circle, draw, fill, minimum size=6, inner sep=0]
\tikzstyle{ngr}=[circle, draw, fill, gray, minimum size=6, inner sep=0]
\tikzstyle{int}=[draw, circle, fill,  minimum size=3, inner sep=1]
\tikzstyle{intgr}=[draw, circle, fill, gray,  minimum size=3, inner sep=1]
\tikzstyle{ext}=[draw, circle,  minimum size=3, inner sep=1]
\newtheorem{thm}{Theorem}[section]
 \newtheorem{prop}[thm]{Proposition}
 \newtheorem{corollary}[thm]{Corollary}
 \newtheorem{lemma}[thm]{Lemma}
\newtheorem{cond}[thm]{Condition}
\newtheorem{example}[thm]{Example}
\newtheorem{remark}[thm]{Remark}
\newtheorem{pty}[thm]{Property}
\numberwithin{equation}{section}
\numberwithin{figure}{section}
 \newcommand{\dontprint}[1]{\relax}
\theoremstyle{definition}
\newtheorem{defi}[thm]{Definition}
\newcommand{\wt}[1]{{\widetilde{#1}}}
\newcommand{\wh}[1]{{\widehat{#1}}}
\newcommand{\und}[1]{{\underline{#1}}}
\newcommand{\mc}{{\mathfrak{c}}}
\newcommand{\ma}{{\mathfrak{a}}}
\newcommand{\br}{{\}\hspace{-0.07cm}.\hspace{-0.03cm}.\hspace{-0.07cm}\} }}
\newcommand{\Av}{{\mathrm {Av}}}
\newcommand{\mF}{{\mathcal{F}}}
\newcommand{\Alg}{\mathrm{{Alg}}}
\newcommand{\alg}[1]{\mathfrak{{#1}}}
\newcommand{\co}[2]{\left[{#1},{#2}\right]} 
\newcommand{\Br}{{\mathsf{Br} }} 
\newcommand{\BT}{{\mathsf{BT} }}
\newcommand{\Com}{{\mathsf{Com} }}
\newcommand{\coCom}{{\mathsf{coCom} }}
\newcommand{\Lie}{{\mathsf {Lie}}}
\newcommand{\As}{{\mathsf {As}}}
\newcommand{\coAs}{{\mathsf{coAs}}}
\newcommand{\coLie}{{\mathsf {coLie}}}
\newcommand{\Ger}{{\mathsf {Ger}}}
\newcommand{\BV}{{\mathsf {BV}}}
\newcommand{\Ch}{{\mathsf {Ch}}}
\newcommand{\grVect}{{\mathsf {grVect}}}
\newcommand{\Conv}{{\mathrm {Conv}}}
\newcommand{\Tree}{{\mathsf{T r e e}}}
\newcommand{\Cbu}{C^{\bullet}}
\newcommand{\ve}{{\varepsilon}}
\newcommand{\vr}{{\varrho}}
\newcommand{\ml}{{\mathfrak{l}}}
\newcommand{\mi}{{\mathfrak{i}}}
\newcommand{\mC}{{\mathfrak{C}}}
\newcommand{\mT}{{\mathfrak{T}}}
\newcommand{\mG}{{\mathfrak{G}}}
\newcommand{\mD}{{\mathfrak{D}}}
\newcommand{\mfF}{{\mathfrak{F}}}
\newcommand{\mm}{{\mathfrak{m}}}
\newcommand{\ATw}{{\mathrm{ATw}\,}}
\newcommand{\Tw}{{\mathrm{Tw}\,}}
\newcommand{\tw}{{\mathrm{tw}\,}}
\newcommand{\lab}{{\mathrm{lab}\,}}
\newcommand{\bb}{{\bullet\bullet}}
\newcommand{\cc}{{\circ\circ}}
\newcommand{\cced}{{\circ\hspace{-0.05cm}-\hspace{-0.05cm}\circ}}
\newcommand{\bfzero}{{\bf 0}}
\newcommand{\id}{{\mathrm {i d} }}
\newcommand{\sgn}{{\mathrm {s g n}}}
\newcommand{\La}{{\Lambda}}
\newcommand{\D}{{\Delta}}
\newcommand{\Te}{{\Theta}}
\newcommand{\al}{{\alpha}}
\newcommand{\la}{{\lambda}}
\newcommand{\ga}{{\gamma}}
\newcommand{\de}{{\delta}}
\newcommand{\ka}{{\kappa}}
\newcommand{\si}{{\sigma}}
\newcommand{\vf}{{\varphi}}
\newcommand{\vs}{{\varsigma}}
\newcommand{\pa}{{\partial}}
\newcommand{\Cobar}{{\mathrm {C o b a r} }}
\newcommand{\Op}{{\mathbb{O P}}}
\renewcommand{\c}{{\circ}}
\newcommand{\bul}{{\bullet}}
\newcommand{\bs}{{\bf s}}
\newcommand{\bsi}{{\bf s}^{-1}\,}
\newcommand{\bu}{{\bf u}}
\newcommand{\bt}{{\bf t}}
\renewcommand{\L}{\langle}
\newcommand{\R}{\rangle}
\newcommand{\MC}{{\mathrm{MC}}}
\newcommand{\filtr}{{\mathrm{filtr}}}
\newcommand{\Hom}{{\mathsf{Hom}}}
\newcommand{\Ind}{{\mathsf{Ind}}}
\newcommand{\emb}{{\mathrm{emb}}}
\newcommand{\End}{\mathsf{End}}
\newcommand{\cT}{\mathcal{T}}
\newcommand{\cO}{\mathcal{O}}
\newcommand{\cG}{\mathcal{G}}
\newcommand{\cF}{\mathcal{F}}
\newcommand{\cC}{\mathcal{C}}
\newcommand{\cL}{\mathcal{L}}
\newcommand{\bbK}{{\mathbb{K}}}
\newcommand{\bbZ}{{\mathbb{Z}}}
\newcommand{\nod}{{\mathrm{nod}}}
\newcommand{\Sh}{{\mathrm {S h} }}
\DeclareMathOperator{\Gr}{Gr}
\newcommand{\Operads}{{\mathit{Operads}}}
\title{Operadic Twisting -- with an application to Deligne's conjecture }
\author{Vasily Dolgushev}
\author{Thomas Willwacher}
\begin{document}


\date{}

\maketitle

~\\[-0.5cm]

\begin{flushright} 
{\it To the memory of Jean-Louis Loday}\\
~~~\\
\end{flushright}

\begin{abstract}
We study categorial properties of the operadic twisting functor $\Tw$. In particular, we show that $\Tw$ is a comonad. Coalgebras of this comonad are operads for which a natural notion of twisting by Maurer-Cartan elements exists. 
We give a large class of examples, including the classical cases of the Lie, associative and Gerstenhaber operads, and their infinity-counterparts $\Lie_\infty$, $\As_\infty$, $\Ger_\infty$. 
We also show that $\Tw$ is well behaved with respect to the homotopy theory of operads. As an application we show that every solution of Deligne's conjecture is homotopic to a solution that is compatible with twisting.
\end{abstract}

\tableofcontents

\section{Introduction}

The idea of twisting algebraic objects by Maurer-Cartan elements has been 
a folklore for a very long time and the original nudge for this idea probably belongs 
to D. Quillen \cite{Quillen}. 
 
For example, let $\alg{g}$ be a differential graded Lie algebra and 
let $m\in \alg{g}$ be a Maurer-Cartan element, i.e., a degree 1 element satisfying 
the Maurer-Cartan equation
\[
dm+\frac{1}{2}\co{m}{m} = 0.
\]
Then the twisted Lie algebra $\alg{g}^m$ is the graded space $\alg{g}$ with differential $d+\co{m}{\cdot}$ and 
the same bracket $\co{\cdot}{\cdot}$. In a similar manner the notion of Maurer-Cartan element and twisting can be defined for associative and Gerstenhaber algebras, and their homotopy versions, i.e., $\Lie_\infty$, $\As_\infty$ and $\Ger_\infty$ algebras (see, e. g. \cite{C-Lazarev09}, \cite{CEFT}, \cite[Section 2.4]{thesis}, 
and \cite{Yek}). 

In this paper we investigate in how far the operation of twisting by Maurer-Cartan elements can be defined in general for algebras $A$ over an operad $\cO$. We assume that there is a map from the $\Lie_\infty$ operad (or a degree shifted version thereof) into $\cO$. This endows $A$ with a $\Lie_\infty$ algebra structure. A Maurer-Cartan element $m$ of $A$ is then defined as a Maurer-Cartan element of the $\Lie_\infty$ algebra $A$. 
We may twist the $\Lie_\infty$ algebra $A$ by the Maurer-Cartan element $m$, but in general the twisted $\Lie_\infty$ algebra structure does not necessarily lift to an $\cO$ algebra structure in a natural way. 

We provide an algebraic formalism to deal with operads $\cO$ for which there is a natural lift. In fact, we show in Theorem \ref{thm:Twcomonad} that the operadic twisting functor $\Tw$ introduced in \cite{grt} is naturally equipped with the structure of a comonad. This comonad furthermore has the following properties\footnote{Here we postpone the precise statements to later sections, since they require terminology introduced in the definition of $\Tw$.}:
\begin{itemize}

\item  The twisting procedure by Maurer-Cartan elements can be defined for algebras 
over an operad which is, in turn, a coalgebra for the comonad $\Tw$, see Section \ref{sec:twisting-algebras}.

\item A map $\cO \to \cO'$ of operads induces a functor from the category of $\cO'$-algebras to the category of $\cO$-algebras. If $\cO\to \cO'$ gives rise to a map of $\Tw$-coalgebras, then this functor is compatible with the operation of twisting by Maurer-Cartan elements, see Theorem \ref{thm:atwfunctor}.

\item The functor $\Tw$ preserves quasi-isomorphisms between operads, see Theorem 
\ref{thm:Twpreservesqiso}.
\end{itemize}

Concretely, the twisted operad $\Tw \cO$ is a completion of the operad generated by $\cO$ and a unary element, representing the Maurer-Cartan element, with a suitable differential. A pair consisting of an $\cO$ algebra $A$ and a Maurer-Cartan element $m$ defines a $\Tw \cO$ algebra, and moreover the category $\Alg^{\MC}_{\cO}$ of such pairs is isomorphic to the category of (filtered) $\Tw\cO$ algebras, see Theorem \ref{thm:twisting-cont}. A $\Tw$ coalgebra structure on $\cO$ is described by map $\cO\to \Tw\cO$. The twisted $\cO$ algebra $A^m$ is obtained by pulling back the $\Tw\cO$ algebra structure along this map. 

The axioms for a coalgebra over a comonad translate into the statements that twisting with the zero Maurer-Cartan element retains the original algebra and twisting by a Maurer-Cartan element $m$, followed by twisting by a Maurer-Cartan element $m'$ is the same as twisting by the Maurer-Cartan element $m+m'$.


We apply the developed machinery to study solutions of (our version of) the Deligne conjecture. 
By this we mean, abusing notation, a quasi-isomorphism 
\begin{equation}
\label{Gerinfty-to-Br}
F \colon \Ger_\infty \to \Br
\end{equation}
from the operad $\Ger_{\infty}$ governing homotopy Gerstenhaber algebras 
to the braces operad $\Br$\,.

Both operads $\Ger_{\infty}$ and $\Br$
are $\Tw$-coalgebras, i.e., their algebras can be twisted 
by Maurer-Cartan elements. However, in general, a quasi-isomorphism 
\eqref{Gerinfty-to-Br} does not need to be a morphism of $\Tw$-coalgebras. 

Using our machinery, we establish that every homotopy class
of a quasi-isomorphism \eqref{Gerinfty-to-Br} has at least one representative 
which respects the structures of $\Tw$-coalgebras on  $\Ger_{\infty}$ and $\Br$.
More precisely, 
\begin{thm}
\label{thm:main}
Suppose that $F: \Ger_\infty \to \Br$ is a quasi-isomorphism of dg operads. Then there 
exists a quasi-isomorphism of dg operads $F': \Ger_\infty \to \Br$ such that 
\begin{itemize}

\item $F'$ is a homomorphism of $\Tw$-coalgebras, and

\item $F'$ is homotopy equivalent to $F$.

\end{itemize}

Furthermore, the morphism $F'$ is given by the explicit formula \eqref{F-pr}.
\end{thm}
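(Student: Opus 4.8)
The plan is to define $F'$ explicitly as a perturbation of $F$, the formula \eqref{F-pr} being forced by an order-by-order obstruction analysis, and then to verify the three assertions. First, recall (Theorem~\ref{thm:Twcomonad}) that the $\Tw$-coalgebra structures on $\Ger_\infty$ and on $\Br$ are encoded by morphisms of dg operads $\rho\colon\Ger_\infty\to\Tw\Ger_\infty$ and $\rho'\colon\Br\to\Tw\Br$ satisfying the counit and coassociativity axioms; concretely $\rho$ and $\rho'$ carry each generator to its Maurer--Cartan twisted version, i.e.\ to the sum of that generator with some of its inputs filled by the formal Maurer--Cartan element. A quasi-isomorphism $F\colon\Ger_\infty\to\Br$ is a morphism of $\Tw$-coalgebras precisely when $\rho'\circ F=\Tw(F)\circ\rho$ as maps $\Ger_\infty\to\Tw\Br$. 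After composition with the counit $\Tw\Br\to\Br$ both sides of this equation collapse to $F$ (by the counit axiom and naturality), so the defect $\rho'\circ F-\Tw(F)\circ\rho$ lies in the kernel of the counit, i.e.\ is concentrated in strictly positive Maurer--Cartan weight. Since $\Ger_\infty$ is quasi-free (a cobar construction) and hence cofibrant, while every dg operad is fibrant, a homomorphism out of $\Ger_\infty$ is determined by its corestriction to generators and may be deformed within its homotopy class by altering it on generators.

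\emph{Construction of $F'$ and the formula \eqref{F-pr}.} Filter $\Tw\Br$ by the number of insertions of the Maurer--Cartan generator; this filtration is complete and compatible with composition and the differential. Starting from $F$ and proceeding by increasing Maurer--Cartan weight, at the $n$-th step the outstanding defect $\rho'\circ F-\Tw(F)\circ\rho$ is a cocycle supported in weight $n$ with vanishing counit; one cancels it by modifying $F$ on the generators of $\Ger_\infty$ by a weight-$n$ term, which is possible because the relevant piece of the deformation complex is acyclic — this is where Theorem~\ref{thm:Twpreservesqiso} (that $\Tw(F)$ is a quasi-isomorphism) enters. Completeness of the filtration makes the resulting sequence of modifications converge inside the completed operad $\Tw\Br$; each modification is a homotopy, so the limit $F'$ stays in the homotopy class of $F$. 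Resumming the recursion expresses the value of $F'$ on a generator of $\Ger_\infty$ as an explicit series, indexed by the number of absorbed Maurer--Cartan legs and carrying definite numerical coefficients; this series is \eqref{F-pr}.

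\emph{Verification.} With \eqref{F-pr} in hand the proof reduces to three self-contained checks. (i) $F'$ is a morphism of dg operads: degrees match by inspection and $dF'=F'd$ follows from the relations in $\Br$ and the form of the twisted differential on $\Tw\Br$. (ii) $F'$ is a homomorphism of $\Tw$-coalgebras, i.e.\ $\rho'\circ F'=\Tw(F')\circ\rho$: this is the identity that \eqref{F-pr} was engineered to obey, and it unwinds to a combinatorial identity among Maurer--Cartan insertions holding by the $L_\infty$ and brace relations. (iii) $F'\simeq F$: this is already visible from the construction, and may alternatively be re-derived from \eqref{F-pr} via the interpolation $F_t$ with $F_0=F$, $F_1=F'$ obtained by rescaling the Maurer--Cartan generator by $t$, whose $t$-derivative assembles into a derivation homotopy — legitimately, because $\Ger_\infty$ is cofibrant and $\Br$ fibrant. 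Since $F$ is a quasi-isomorphism and $F'\simeq F$, the map $F'$ is a quasi-isomorphism, which finishes the proof.

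\emph{Main obstacle.} The crux is (ii): checking that the explicit series \eqref{F-pr} genuinely intertwines $\rho$ with $\rho'$. This requires a careful accounting of the Maurer--Cartan-insertion part of the differential on $\Tw\Br$ together with a bookkeeping identity that reorganizes the correction terms; by comparison, the convergence, the homotopy in (iii), and the degree and differential compatibility in (i) are routine given completeness of the Maurer--Cartan-weight filtration and the homotopy theory of dg operads.
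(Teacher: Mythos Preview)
Your approach diverges from the paper's and has a real gap. The paper does not use obstruction theory: formula \eqref{F-pr} is the one-line categorical definition $F' := \Tw(\eta_{\BT}\circ \wt{F})\circ c$, where $\wt{F}\colon\Ger_\infty\stackrel{F}{\to}\Br\hookrightarrow\Tw\BT$, $\eta_{\BT}$ is the counit, and $c\colon\Ger_\infty\to\Tw\Ger_\infty$ is the coalgebra structure map. That $F'$ is a $\Tw$-coalgebra map and that $F'\simeq\wt{F}$ are then short diagram chases (Theorem~\ref{thm:general}) using only the comonad axioms and the fact that $\Ger_\infty$ and $\Tw\BT$ are homotopy fixed points of $\Tw$; there is no induction, no series, and no obstruction calculus. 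Your claim that an order-by-order construction would ``resum'' to exactly \eqref{F-pr} is unsupported---different coboundary choices in your induction would yield different $F'$s---so at best your argument would prove existence of some compatible $F'$, not the final clause of the theorem. You also misidentify the crux: in the paper, your item (ii) is the easy half of Theorem~\ref{thm:general}, a two-line naturality-of-$\mD$ argument.

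The substantive gap is elsewhere. The formula $\Tw(\eta_{\BT}\circ\wt{F})\circ c$ a priori lands only in $\Tw\BT$, not in the suboperad $\Br$. The paper needs a separate technical result (Proposition~\ref{prop:Ger-infty-BT}) showing that the Maurer--Cartan element of $\eta_{\BT}\circ\wt{F}\colon\Ger_\infty\to\BT$ never involves a monomial $T\otimes b_1 v(b_2,\dots,b_n)$ in which the vertex labeled $1$ has valency $\le 2$; this combinatorial constraint, proved by analyzing how the pre-Lie square of the Maurer--Cartan element interacts with such terms, is exactly what forces the image of $F'$ back into $\Br\subset\Tw\BT$. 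Your outline nowhere addresses why the corrected map should take values in $\Br$ rather than the larger $\Tw\BT$, and this is not automatic---it is the genuinely hard step of the proof.
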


Let us recall that, since the operad $\Br$ acts on the Hochschild cochain 
complex $\Cbu(A)$ of an $A_{\infty}$-algebra $A$,  a quasi-isomorphism \eqref{Gerinfty-to-Br} gives
a $\Ger_{\infty}$-structure on $\Cbu(A)$\,.
Theorem \ref{thm:main} has an interesting consequence related to this 
$\Ger_{\infty}$-structure which we will describe now.

Let $A$ be an $A_{\infty}$-algebra and let $F_A$
denote the composition
\begin{equation}
\label{F-A}
F_A = \ma_{A}  \circ F : \Ger_{\infty} \to \End_{\Cbu(A)},
\end{equation}
where $ \End_{\Cbu(A)}$ is the endomorphism operad of $\Cbu(A)$  and 
$$
\ma_{A} : \Br \to  \End_{\Cbu(A)}
$$
is the operad morphism coming from the action of $\Br$ on $\Cbu(A)$\,.
Let us assume, in addition, that the dg Lie algebra $\Cbu(A)$ carries a complete descending 
filtration\footnote{Such a filtration may come from a formal deformation parameter 
$\ve$ which enters the game when we extend the base field to the ring 
$\bbK[[\ve]]$\,.} $\cF_{\bul} \Cbu(A)$ and let $\al$ be a Maurer-Cartan element in $\cF_1 \Cbu(A)$\,.

Such a Maurer-Cartan element gives us a new $A_{\infty}$-structure on $A$ and 
we denote this new $A_{\infty}$-algebra by $A^{\al}$\,. In addition, the  Maurer-Cartan element 
$\al$ can be used to twist the dg Lie algebra structure on $\Cbu(A)$ and form a new 
cochain complex 
$$
\Cbu(A)^{\al}\,.
$$

It is easy to see that the cochain complex $\Cbu(A)^{\al}$ coincides with the Hochschild
cochain complex $\Cbu(A^{\al})$ of the new $A_{\infty}$-algebra $A^{\al}$\,. 
Furthermore, there are two natural ways of obtaining a $\Ger_{\infty}$-structure 
on $\Cbu(A^{\al}) = \Cbu(A)^{\al}$\,. First, we may compose the operad morphism
$$
\ma_{A^{\al}} : \Br \to \End_{\Cbu(A^{\al}) }
$$
with $F: \Ger_{\infty} \to \Br$ and, second, we may twist $F_A$ \eqref{F-A} by 
$\al$ and obtain
$$
F^{\al}_A :  \Ger_{\infty} \to \End_{\Cbu(A)^{\al}}\,.
$$

We claim that
\begin{corollary}
\label{cor:main}
For every solution $F$ \eqref{Gerinfty-to-Br} of Deligne's conjecture, 
the maps  $F_{A^{\al}}$ and $F_A^{\al}$ are homotopic. Furthermore, if 
the morphism $F$ is compatible with the coalgebra structures on 
$\Ger_{\infty}$ and $\Br$ over $\Tw$, then  $F_{A^{\al}}$ and $F_A^{\al}$
coincide. 
\end{corollary}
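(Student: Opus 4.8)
The plan is to derive Corollary~\ref{cor:main} from Theorem~\ref{thm:main} together with the functoriality of twisting established in Theorem~\ref{thm:atwfunctor}. The starting point is a refinement of the remark made above: not only does the complex $\Cbu(A)^{\al}$ coincide with $\Cbu(A^{\al})$, but the twist of the $\Br$-algebra $\big(\Cbu(A),\ma_{A}\big)$ by the Maurer--Cartan element $\al$ equals the $\Br$-algebra $\big(\Cbu(A^{\al}),\ma_{A^{\al}}\big)$. Writing $\ma_{A}^{\al}\colon\Br\to\End_{\Cbu(A)^{\al}}$ for the twist of $\ma_{A}$ by $\al$ (this makes sense since $\Br$ is a $\Tw$-coalgebra), the claim is that $\ma_{A}^{\al}=\ma_{A^{\al}}$, and this is checked by inspecting the formulas for the brace operations exactly as for the underlying differentials.

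I would first prove the ``coincide'' statement. If $F$ is a morphism of $\Tw$-coalgebras, then by Theorem~\ref{thm:atwfunctor} the induced functor $F^{*}$ from $\Br$-algebras to $\Ger_{\infty}$-algebras commutes with twisting by Maurer--Cartan elements. Applying this to the $\Br$-algebra $\Cbu(A)$ and the element $\al$: the $\Ger_{\infty}$-algebra $F^{*}\big((\Cbu(A),\ma_{A})^{\al}\big)=F^{*}\big(\Cbu(A^{\al}),\ma_{A^{\al}}\big)$ is $\Cbu(A^{\al})$ with structure map $\ma_{A^{\al}}\circ F=F_{A^{\al}}$, while $\big(F^{*}(\Cbu(A),\ma_{A})\big)^{\al}$ is the twist by $\al$ of the $\Ger_{\infty}$-algebra $\big(\Cbu(A),F_{A}\big)$, i.e.\ $\Cbu(A)^{\al}$ with structure map $F_{A}^{\al}$. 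Since the two agree, $F_{A^{\al}}=F_{A}^{\al}$.

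For the ``homotopic'' statement with a general solution $F$, I would invoke Theorem~\ref{thm:main} to produce a quasi-isomorphism $F'\colon\Ger_{\infty}\to\Br$ that is a $\Tw$-coalgebra morphism and is homotopic to $F$; by the previous step $F'_{A^{\al}}=(F'_{A})^{\al}$. It then suffices to establish
\begin{equation}
\label{eq:cormain-homot}
F_{A^{\al}}\ \simeq\ F'_{A^{\al}}
\qquad\text{and}\qquad
F_{A}^{\al}\ \simeq\ (F'_{A})^{\al},
\end{equation}
since then $F_{A^{\al}}\simeq F'_{A^{\al}}=(F'_{A})^{\al}\simeq F_{A}^{\al}$. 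The first homotopy in \eqref{eq:cormain-homot} follows from $F\simeq F'$ by postcomposing with the fixed operad morphism $\ma_{A^{\al}}$; here one uses that postcomposition with a dg operad morphism preserves homotopy of morphisms, e.g.\ via the functorial path object $\cO\mapsto\cO\otimes\Omega^{\bullet}(\Delta^{1})$. For the second, $F_{A}\simeq F'_{A}$ (postcompose $F\simeq F'$ with $\ma_{A}$), and the operation $G\mapsto G^{\al}$ on morphisms $\Ger_{\infty}\to\End_{\Cbu(A)}$ decomposes as: apply the functor $\Tw$, precompose with the coalgebra structure map $\Ger_{\infty}\to\Tw\Ger_{\infty}$, and postcompose with the evaluation morphism $\Tw\End_{\Cbu(A)}\to\End_{\Cbu(A)^{\al}}$ attached to $\al$ (an instance of the isomorphism of Theorem~\ref{thm:twisting-cont}). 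Precomposition and postcomposition with fixed morphisms preserve homotopy, so it remains only that $\Tw$ preserves homotopy equivalences of operad morphisms.

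The main obstacle is exactly this last point: one must upgrade Theorem~\ref{thm:Twpreservesqiso} from ``$\Tw$ preserves quasi-isomorphisms'' to ``$\Tw$ preserves homotopy equivalences of morphisms''. I expect this to follow by the same techniques, for instance by checking that $\Tw$ is compatible with tensoring operads by the de Rham algebra $\Omega^{\bullet}(\Delta^{1})$, so that a homotopy $\Ger_{\infty}\to\mathcal{Q}\otimes\Omega^{\bullet}(\Delta^{1})$ is sent to a homotopy $\Tw\Ger_{\infty}\to(\Tw\mathcal{Q})\otimes\Omega^{\bullet}(\Delta^{1})$; but this is the only step that is not bookkeeping with the theorems already at hand. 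Alternatively, if the homotopy between $F$ and $F'$ produced in the proof of Theorem~\ref{thm:main} is given by an explicit formula, one may twist that homotopy by $\al$ directly and bypass the general assertion.
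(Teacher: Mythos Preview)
Your argument is correct and follows essentially the same route as the paper. The paper's presentation is a bit more streamlined: rather than invoking Theorem~\ref{thm:atwfunctor}, it writes both maps through the single $\Tw\Br$-action $\ma^{\al}_A\colon\Tw\Br\to\End_{\Cbu(A)^{\al}}$ of Theorem~\ref{thm:twisting}, namely $F_{A^{\al}}=\ma^{\al}_A\circ c_{\Br}\circ F$ and $F_A^{\al}=\ma^{\al}_A\circ\Tw(F)\circ c_{\Ger_{\infty}}$, so that the $\Tw$-coalgebra square $c_{\Br}\circ F=\Tw(F)\circ c_{\Ger_{\infty}}$ immediately gives the ``coincide'' case; for general $F$ the paper then simply asserts that this square commutes up to homotopy and postcomposes with $\ma^{\al}_A$. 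Note that this last assertion \emph{also} tacitly requires $\Tw(F)\simeq\Tw(F')$---precisely the point you singled out as the main obstacle---so you have identified, rather than introduced, the one step that is not pure bookkeeping. It is handled by Theorem~\ref{thm:Twpreservesqiso}: since $\Tw$ preserves all quasi-isomorphisms it descends to the homotopy category of dg operads (cf.~Remark~\ref{rem:homotopy-cat}).
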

For the proof of this corollary we refer the reader to Section \ref{sec:cor-main}.

\subsection{Structure of the paper} 
In Section \ref{sec:preliminaries}, we fix notation and recall some facts about operads
and their dual versions. In Section \ref{sec:twist}, we introduce the functor $\Tw$ \cite{grt}
and give an alternative description of the category 
of algebras over an operad $\Tw \cO$.  In this section, we also describe in detail
the dg operad $\Tw \La \Lie_{\infty}$. 

Section \ref{sec:categorial} is devoted to categorical properties of the functor $\Tw$\,. 
We show that the functor $\Tw$ forms a comonad on the under-category 
$\La \Lie_{\infty}\downarrow \Operads$. At the end of this section, we prove
that the dg operad $\Ger_{\infty}$ governing homotopy Gerstenhaber algebras   
has the canonical coalgebra structure over the comonad $\Tw$.

In Section  \ref{sec:twisting-homotopy}, we describe homotopy theoretic 
properties of the functor $\Tw$ and introduce the notion of homotopy fixed 
point for $\Tw$ (see Definition \ref{dfn:homot-fixed}). In this section, we also 
describe a large class of dg operads which are simultaneously $\Tw$-coalgebras
and homotopy fixed points for $\Tw$. This class includes the classical operads 
$\Lie$, $\Ger$ and the operad governing Poisson algebras.  
At the end of this section, we show that the operads $\As$ and 
$\As_{\infty}$ are $\Tw$-coalgebras and are homotopy fixed points for $\Tw$\,.

In Section \ref{sec:twisting-algebras}, we consider a dg operad $\cO$
which carries a coalgebra structure over $\Tw$\,. We introduce the notion 
of twisting by Maurer-Cartan elements for $\cO$-algebras and describe 
categorial and homotopy theoretical properties of this twisting procedure. 

Sections \ref{sec:BT}, \ref{sec:TwBT} and \ref{sec:Br} treat the braces operad $\Br$\,. 
In this section, we show how to recover the dg operad $\Br$ (essentially) as the twisted version 
of  a simpler operad $\BT$. The proofs of Theorem \ref{thm:main} and Corollary \ref{cor:main} 
are given in Section \ref{sec:theproof}.


In Appendix \ref{app:Ger}, we recall the operad $\Ger$ governing 
Gerstenhaber algebras and, in Appendix \ref{app:action}, we describe in detail the action 
of the operads $\BT$, $\Tw\BT$ and $\Br$ on the Hochschild cochain 
complex $\Cbu(A)$ of an $A_{\infty}$-algebra $A$\,.
Appendices  \ref{app:invar-coinvar} and \ref{app:filtered-lem} are devoted to 
two auxiliary technical statements. 
In Appendix \ref{app:brhomfixed}, we sketch a different 
proof of the fact that the braces operad $\Br$ is a homotopy fixed 
point for $\Tw$. 


\subsection*{Memorial note} With great sadness, we learned that
Jean-Louis Loday died as a consequence of an accident on June 6, 2012. 
He is widely known for his contributions to algebraic $K$-theory, for  
his research linking cyclic homology, $K$-theory, and combinatorics.
His foundational work on operads has and will have a lasting impact on the 
development of algebra and topology. We devote our paper to the memory of 
this great mathematician.

\subsection*{Acknowledgements}
We would like to thank Martin Markl, Ezra Getzler,  Dmitry Tamarkin and Bruno 
Vallette for helpful discussions.
V.D. acknowledges the NSF grants DMS-0856196, DMS-1161867 and 
the grant FASI RF 14.740.11.0347. V.D.'s NSF grant  DMS-0856196 was supported by 
American Recovery and Reinvestment Act (ARRA) and it could not be transferred 
to Temple University from the UC Riverside by ARRA regulations. V.D. would like to 
thank Charles Greer, Tim LeFort, and Charles Louis from the UCR Office 
of Research for allowing V.D. to save this grant 
by creating a subaward from the UC Riverside to Temple University. 
V.D. also thanks John Baez for serving formally as the PI on DMS-0856196
starting July 1, 2010.
Part of this work was conducted while T.W. held a junior fellowship of the Harvard Society of Fellows. T.W. furthermore acknowledges partial support of the Swiss National Science Foundation, grants PDAMP2\_137151 and 200021\_150012.

Both authors are very grateful to the referee for her/his suggestions and patience in reading this long paper, and for providing the references \cite{Fresse98,KapranovManin,M-Smith2} that the authors were unaware of.

\section{Preliminaries}
\label{sec:preliminaries}
The underlying field $\bbK$ has characteristic zero. 
For a set $X$ we denote by $\bbK\L X\R$ the $\bbK$-vector 
space of finite linear combinations of elements in $X$\,.

The notation $S_{n}$ is reserved for the symmetric group 
on $n$ letters and  $\Sh_{p_1, \dots, p_k}$ denotes 
the subset of $(p_1, \dots, p_k)$-shuffles 
in $S_n$, i.e.  $\Sh_{p_1, \dots, p_k}$ consists of 
elements $\si \in S_n$, $n= p_1 +p_2 + \dots + p_k$ such that 
$$
\begin{array}{c}
\si(1) <  \si(2) < \dots < \si(p_1),  \\[0.3cm]
\si(p_1+1) <  \si(p_1+2) < \dots < \si(p_1+p_2), \\[0.3cm]
\dots   \\[0.3cm]
\si(n-p_k+1) <  \si(n-p_k+2) < \dots < \si(n)\,.
\end{array}
$$

The underlying symmetric monoidal category $\mC$ is 
either the category $\grVect_{\bbK}$ of $\bbZ$-graded
$\bbK$-vector spaces or the category 
$\Ch_{\bbK}$ of (possibly) unbounded cochain 
complexes of $\bbK$-vector spaces.  
We frequently use the ubiquitous
combination ``dg'' (differential graded) to refer to algebraic objects 
in $\Ch_{\bbK}$\,. 
For a homogeneous vector $v$ in 
a cochain complex, $|v|$ denotes the degree of $v$\,.
Furthermore, we denote 
by $\bs$ (resp. $\bs^{-1}$) the operation of suspension (resp. desuspension)
on $\grVect_{\bbK}$ or $\Ch_{\bbK}$. 

For a groupoid $\cG$ the notation $\pi_0(\cG)$ is reserved for 
the set of its isomorphism classes.

\subsection{Preliminaries on operads, pseudo-operads and their dual versions}
\label{sec:prelim-oper}
By a {\it collection} we mean the sequence $\{ P(n) \}_{n \ge 0}$ 
of objects of the underlying symmetric monoidal category $\mC$
such that for each $n$, the object $P(n)$ is equipped
with a left action of the symmetric group $S_n$\,.  

The notation $\Lie$ (resp. $\As$, $\Com$, $\Ger$) is reserved for the 
operad  governing Lie algebras (resp. associative algebras without unit, commutative 
(and associative) algebras without unit, Gerstenhaber algebras without unit).
Dually, the notation $\coLie$ (resp. $\coAs$, $\coCom$) is reserved for the 
cooperad governing Lie coalgebras (resp.  coassociative coalgebras without 
counit, cocommutative (and coassociative)
coalgebras without counit).

For an operad $\cO$ (resp. a cooperad $\cC$) and a cochain complex $V$, the notation 
$\cO(V)$ (resp. $\cC(V)$) is reserved for the free $\cO$-algebra (resp. cofree $\cC$-coalgebra). 
Namely, 
\begin{equation}
\label{cO-V}
\cO(V) : = \bigoplus_{n \ge 0} \Big( \cO(n) \otimes V^{\otimes \, n} \Big)_{S_n}\,,
\end{equation}
\begin{equation}
\label{cC-V}
\cC(V) : = \bigoplus_{n \ge 0} \Big( \cC(n) \otimes V^{\otimes \, n} \Big)^{S_n}\,.
\end{equation}

For an operad $\cO$, we denote by $\Alg_{\cO}$ the category of 
$\cO$-algebras.

Let $ a_1, a_2, \dots, a_n $ be degree $0$ dummy variables
and $\cO$ be a dg operad.
It is clear that
$\cO(n)$ is naturally identified with the subspace of the free 
$\cO$-algebra 
$$
\cO\big( \bbK \L a_1, a_2, \dots, a_n \R \big) 
$$
spanned by $\cO$-monomials in which each variable from the set 
$\{ a_1, a_2, \dots, a_n \} $ 
appears exactly once. We often use this identification 
in our paper. 

Let us denote by $\ast$ the following collection (in $\Ch_{\bbK}$)
\begin{equation}
\label{ast}
\ast(n) = \begin{cases}
 \bbK \qquad {\rm if} ~~ n = 1\,, \\
 \bfzero \qquad {\rm otherwise}.
\end{cases}
\end{equation}
It is easy to see that $\ast$
is equipped with the unique structure of an operad and 
a unique structure of a cooperad. 

Recall that an operad $\cO$ (resp. a cooperad $\cC$) is called 
{\it augmented} (resp. {\it coaugmented}) if it comes with an operad map
$\cO \to \ast$ (resp. a cooperad map $\ast \to \cC$).
For an augmented operad $\cO$ (resp. a coaugmented cooperad $\cC$) the 
notation $\cO_{\c}$ (resp. $\cC_{\c}$) is reserved for the kernel 
(resp. the cokernel) of the augmentation (resp. coaugmentation).

Recall that a pseudo-operad\footnote{Note that a pseudo-operad
\und{is not} a non-unital operad. For more details about this subtlety, 
see paper \cite{Markl} in which a pseudo-operad is 
called a non-unital Markl's operad.} (resp. pseudo-cooperad) is  vaguely
``an operad (resp. a cooperad) without the unit (resp. counit) 
axiom''. For the more precise definition of a pseudo-operad (resp. 
a pseudo-cooperad) we refer the reader to \cite[Sections 3.2, 3.4]{notes}
or \cite[Section 1.3]{MSS-book}. We remark that for every 
augmented operad $\cO$ (resp. coaugmented cooperad $\cC$)  the kernel 
of the augmentation $\cO_{\c}$ (resp. the cokernel of the coaugmentation 
$\cC_{\c}$) is naturally a pseudo-operad  (resp. pseudo-cooperad). In fact, 
due to \cite[Proposition 21]{Markl}, the assignment $\cO \mapsto \cO_{\c}$
(resp. $\cC \mapsto \cC_{\c}$) upgrades to an equivalence between the 
category of augmented operads (resp. coaugmented cooperads) and 
the category of pseudo-operads (resp. pseudo-cooperads). 

For a pseudo-operad (or an operad) $P$ in $\Ch_{\bbK}$ we denote 
by $\circ_i$ the {\it $i$-th elementary insertion}\footnote{Following the 
general convention, the numbers $n$ and $k$ are suppressed from 
the notation $\circ_i$.}
\begin{equation}
\label{circ-i}
\circ_i :   P (n) \otimes P(k) \to P(n+k-1)\,. 
\end{equation}
Here $n \ge 1$, $k \ge 0$, and $1 \le i \le n$.  

For an operad (resp. a cooperad) $P$ in $\Ch_{\bbK}$ we denote by $\La P$ the 
operad (resp. the cooperad) with the spaces of $n$-ary operations: 
\begin{equation}
\label{La-P}
\La P (n) = \bs^{1-n} P(n) \otimes \sgn_n\,,
\end{equation}
where $\sgn_n$ denotes the sign representation of $S_n$\,.
For example, an algebra over $\La\Lie$ is
a graded vector space $V$ equipped with the binary operation: 
$$
\{\,,\, \} : V \otimes  V \to V 
$$
of degree $-1$ satisfying the identities:  
$$
\{v_1,v_2\} = (-1)^{|v_1| |v_2|} \{v_2, v_1\}
$$
$$
\{\{v_1, v_2\} , v_3\} +  
(-1)^{|v_1|(|v_2|+|v_3|)} \{\{v_2, v_3\} , v_1\} +
(-1)^{|v_3|(|v_1|+|v_2|)} \{\{v_3, v_1\} , v_2\}  = 0\,,
$$
where $v_1, v_2, v_3 $ are homogeneous vectors in $V$\,.

$\Ger^{\vee}$ denotes the Koszul dual cooperad \cite{Fresse}, 
\cite{GJ}, \cite{GK}, \cite{Loday-Vallette} for $\Ger$\,. It is known \cite{Hinich} that 
\begin{equation}
\label{Ger-vee}
\Ger^{\vee} = \La^2 \Ger^{*}\,,
\end{equation}
where $\Ger^{*}$ is obtained from the operad 
$\Ger$ by taking the linear dual.
In other words, algebras over the linear dual 
$(\Ger^{\vee})^*$ are very much like Gerstenhaber algebras 
except that the bracket carries degree $1$ and the multiplication 
carries degree $2$\,.

\subsubsection{Intrinsic derivations of an operad}
Let $P$ be a dg operad. Then the operation
$\circ_1$ equips $P(1)$ with a structure of an associative 
algebra. We consider $P(1)$ as a Lie algebra with 
the Lie bracket being the (graded) commutator and we claim that  
\begin{prop}
\label{prop:P-1-deriv}
The formula 
\begin{equation}
\label{P-1-deriv}
\de_b (a) = b \circ_1 a - (-1)^{|a| |b|} \sum_{i=1}^n a \circ_i b   
\end{equation}
with 
$$
b \in P(1), \qquad \textrm{and} \qquad  a\in P(n)
$$ 
defines an operadic derivation of $P$ for every $b \in P(1)$\,.
Furthermore, for every pair $b_1, b_2 \in P(1)$, we have
$$
[\de_{b_1}, \de_{b_2}] = \de_{[b_1, b_2]}\,.
$$
\end{prop}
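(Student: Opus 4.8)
The plan is to verify both claims by direct computation, using the associativity-type axioms for the elementary insertions $\circ_i$ in the dg operad $P$ together with the $S_n$-equivariance of those insertions. First I would check that $\de_b$ is an operadic derivation, i.e. that it is a degree-$|b|$ linear map on $P$ which is compatible with all the $\circ_j$ and with the differential and the symmetric group actions. Compatibility with the differential is immediate since $b$ is a cocycle is not assumed, but the Leibniz identity for $d$ on $P$ gives $d(\de_b(a)) - (-1)^{|b|}\de_b(da) = \de_{db}(a)$, which is fine for the statement as phrased (it only asks that $\de_b$ be an operadic derivation, not a chain map; if one wants a chain-level statement one passes to $b$ a cocycle, but I would not belabor this). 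Equivariance under $S_n$ is where the sign representation implicit in the formula does not appear — $\de_b$ as written is already written $S_n$-equivariantly because the sum $\sum_i a\circ_i b$ is taken over all inputs.

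The main computation is the derivation property with respect to $\circ_j \colon P(n)\otimes P(k)\to P(n+k-1)$: one must show
\[
\de_b(a \circ_j a') = \de_b(a)\circ_j a' + (-1)^{|b||a|} a \circ_j \de_b(a').
\]
Expanding the left-hand side via \eqref{P-1-deriv} produces the term $b\circ_1(a\circ_j a')$ and a sum $-(-1)^{|a\circ_j a'||b|}\sum_{\ell} (a\circ_j a')\circ_\ell b$ over the $n+k-1$ inputs of $a\circ_j a'$. I would split that inner sum into the $k$ inputs coming from $a'$ and the $n-1$ inputs coming from $a$, and then use the two parallel-associativity and one sequential-associativity axioms for $\circ$ to rewrite $b\circ_1(a\circ_j a')$ as $(b\circ_1 a)\circ_j a'$, to rewrite $(a\circ_j a')\circ_\ell b$ with $\ell$ in the $a'$-block as $a\circ_j(a'\circ_{\ell'} b)$, and to rewrite $(a\circ_j a')\circ_\ell b$ with $\ell$ in the $a$-block as $(a\circ_{i} b)\circ_{j'} a'$ for the appropriate reindexed $i,j'$. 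Collecting the $a'$-block terms gives exactly $(-1)^{|b||a|}a\circ_j\de_b(a')$, and collecting the $b\circ_1 a$ term together with the $a$-block terms gives exactly $\de_b(a)\circ_j a'$; the Koszul signs work out because $|a\circ_j a'| = |a|+|a'|$ and because $b$ has to be moved past $a$ in the second group of terms. This is entirely bookkeeping, so I would state it and indicate the three axiom applications without writing every reindexed summand.

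For the bracket identity $[\de_{b_1},\de_{b_2}] = \de_{[b_1,b_2]}$, I would compute $\de_{b_1}(\de_{b_2}(a))$ directly on $a\in P(n)$. Writing $\de_{b_2}(a) = b_2\circ_1 a - (-1)^{|a||b_2|}\sum_i a\circ_i b_2$ and applying $\de_{b_1}$, one gets, among others: $b_1\circ_1(b_2\circ_1 a)$, terms of the form $(b_2\circ_1 a)\circ_\ell b_1$, terms $b_1\circ_1(a\circ_i b_2)$, and double-sum terms $(a\circ_i b_2)\circ_\ell b_1$. In the antisymmetrized difference $\de_{b_1}\de_{b_2} - (-1)^{|b_1||b_2|}\de_{b_2}\de_{b_1}$, the "mixed" terms where $b_1$ and $b_2$ are inserted into distinct inputs of $a$ (the double-sum terms with $\ell\ne i$, suitably interpreted) cancel in pairs by parallel associativity of $\circ$, and the terms where $b_1,b_2$ end up nested — $b_1\circ_1(b_2\circ_1 a)$ versus $b_2\circ_1(b_1\circ_1 a)$, and $a\circ_i(b_2\circ_1 b_1)$ versus $a\circ_i(b_1\circ_1 b_2)$, plus the cross terms $b_1\circ_1(a\circ_i b_2)$ etc. which recombine using sequential associativity — assemble into precisely $[b_1,b_2]\circ_1 a - (-1)^{|a||[b_1,b_2]|}\sum_i a\circ_i[b_1,b_2]$, i.e. $\de_{[b_1,b_2]}(a)$, where $[b_1,b_2] = b_1\circ_1 b_2 - (-1)^{|b_1||b_2|}b_2\circ_1 b_1$.

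The main obstacle is purely organizational: keeping the Koszul signs straight when $b$, $b_1$, or $b_2$ is moved past $a$ or past another input, and correctly reindexing the insertion slots after a nested insertion (the slot $\ell$ relative to $a\circ_j a'$ corresponds to different slots of $a$ or $a'$ depending on whether $\ell < j$, $\ell$ lies in the $a'$-block, or $\ell$ lies in the $a$-block past the $a'$-block). I would handle this by fixing once and for all the three operadic associativity axioms in the form I will use them (with explicit sign conventions matching those in \cite{notes} or \cite{MSS-book}), and then noting that the computation is "the same" bookkeeping that appears in the standard proof that the commutator of operadic derivations is an operadic derivation, so that verifying \eqref{P-1-deriv} lands in the Lie subalgebra generated by the $\de_b$ is the only genuinely new point.
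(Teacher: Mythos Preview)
Your proposal is correct and is exactly the standard argument: expand using the sequential and parallel associativity axioms for the elementary insertions, split the sum over inputs of $a\circ_j a'$ into the $a$-block and the $a'$-block, and for the bracket identity observe that the ``disjoint-input'' double insertions cancel in the graded commutator while the nested ones assemble into $\de_{[b_1,b_2]}$. The paper does not give its own proof here but simply refers the reader to \cite[Section 6.1]{notes}, where the same direct computation is carried out; so your approach coincides with the one the authors have in mind.
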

For the proof of this proposition we refer the reader to \cite[Section 6.1]{notes}. An operadic derivation 
of the form \eqref{P-1-deriv} is called {\it intrinsic}.

\subsection{Conventions related to trees}
\label{sec:trees}

By a {\it tree} we mean a connected graph without cycles with a marked vertex 
called {\it the root}.  In this paper, we assume that the root of 
every tree has valency $1$ (such trees are sometimes called  {\it planted}). 
The edge adjacent to the root is called the {\it root edge}.
Non-root vertices of valency $1$ are 
called {\it leaves}.  A vertex is called {\it internal} if it is 
neither a root nor a leaf. We always orient trees in the 
direction towards the root. Thus every internal vertex 
has at least $1$ incoming edge and exactly $1$ outgoing edge. 
An edge adjacent to a leaf is called {\it external}.   
A tree $\bt$ is called {\it planar} if, for every internal vertex $v$ of $\bt$, the set 
of edges terminating at $v$ carries a total order.

Let us recall (see \cite[Section 2]{notes}) that for every  planar tree $\bt$ 
the set $V(\bt)$ of all its vertices is equipped with a natural total order
such that the root is the smallest vertex of the tree.

We have an obvious bijection between the set of 
edges $E(\bt)$ of a tree $\bt$ and the subset of vertices: 
\begin{equation}
\label{no-root}
V(\bt) \setminus \{\textrm{root vertex}\}\,.
\end{equation}
This bijection assigns to a vertex $v$ in (\ref{no-root}) its 
outgoing edge. 
Thus the canonical total order on the set (\ref{no-root}) gives 
us a natural total order on the set of edges $E(\bt)$\,. 

For a non-negative integer $n$, an $n$-labeled planar tree $\bt$ is 
a planar tree equipped with 
an injective map 
\begin{equation}
\label{labeling}
\ml : \{1,2, \dots, n\} \to L(\bt)
\end{equation}
from the set $\{1,2, \dots, n\}$ to the set $L(\bt)$ of leaves of $\bt$\,.
Although the set $L(\bt)$ has a natural total order we do not require 
that the map \eqref{labeling} is monotonous. 

The set $L(\bt)$ of leaves of an $n$-labeled planar tree $\bt$
splits into the disjoint union of the image $\ml(\{1,2, \dots, n\})$
and its complement. We call leaves in the image of $\ml$
{\it labeled}.

A vertex $x$ of an $n$-labeled planar tree $\bt$ is called 
{\it nodal} if it is neither the root, nor a labeled leaf. 
We denote by $V_{\nod}(\bt)$ the set of all nodal vertices of 
$\bt$. Keeping in mind the canonical total order on 
the set of all vertices of $\bt$ we can say things like
``the first nodal vertex'', ``the second nodal vertex'', and
``the $i$-th nodal vertex''. 

It is convenient to talk about (co)operads and pseudo-(co)operads 
using the groupoid  $\Tree(n)$ of $n$-labeled planar trees. 
Objects of $\Tree(n)$ are $n$-labeled planar trees and 
morphisms are \und{non-planar} isomorphisms of the corresponding 
(non-planar) trees compatible with labeling. 
The groupoid   $\Tree(n)$ is equipped with an obvious left action 
of the symmetric group $S_n$\,. 

Following \cite[Section 3.2, 3.4]{notes}, for an $n$-labelled planar tree $\bt$ and 
pseudo-operad $P$ (resp. pseudo-cooperad $Q$) the notation
$\mu_{\bt}$ (resp. the notation $\D_{\bt}$) is reserved for the multiplication 
map 
\begin{equation}
\label{mu-bt}
\mu_{\bt} :  P (r_1) \otimes P(r_2) \otimes \dots \otimes P(r_k) \to P(n)
\end{equation}
and the comultiplication map 
\begin{equation}
\label{Delta-bt}
\D_{\bt} : Q(n) \to Q(r_1) \otimes Q(r_2) \otimes \dots \otimes Q(r_k)\,.
\end{equation}
respectively. Here, $k$ is the number of nodal vertices of the planar tree $\bt$
and $r_i$ is the number of edges (of $\bt$) which terminate at the 
$i$-th nodal vertex of $\bt$\,. 

For example, if $\bt_{n,k,i}$ is the labeled planar tree shown 
on figure\footnote{On figures, small white circles are used for 
nodal vertices and small black circles are used for 
all the remaining vertices.} \ref{fig:bt-n-k-i} then the map 
\begin{equation}
\label{mu-bt-nki}
\mu_{\bt_{n,k,i}} :  P (n) \otimes P(k) \to P(n+k-1)
\end{equation}
is precisely the $i$-th elementary insertion \eqref{circ-i}.
\begin{figure}[htp]
\centering 
\begin{tikzpicture}[scale=0.5]
\tikzstyle{w}=[circle, draw, minimum size=3, inner sep=1]
\tikzstyle{b}=[circle, draw, fill, minimum size=3, inner sep=1]
\node[b] (v1) at (0, 2) {};
\draw (0,2.6) node[anchor=center] {{\small $1$}};
\node[b] (v2) at (1.5, 2) {};
\draw (1.5,2.6) node[anchor=center] {{\small $2$}};
\draw (2.8,2) node[anchor=center] {{\small $\dots$}};
\node[b] (v1i) at (4, 2) {};
\draw (4,2.6) node[anchor=center] {{\small $i-1$}};
\node[w] (vv) at (6, 2) {};
\node[b] (vi) at (4.5, 4) {};
\draw (4.5,4.6) node[anchor=center] {{\small $i$}};
\draw (5.8,4) node[anchor=center] {{\small $\dots$}};
\node[b] (v1ik) at (7, 4) {};
\draw (7.6,4.6) node[anchor=center] {{\small $i+k-1$}};
\node[b] (vik) at (8, 2) {};
\draw (8,2.6) node[anchor=center] {{\small $i+k$}};
\draw (10,2) node[anchor=center] {{\small $\dots$}};
\node[b] (v1nk) at (12, 2) {};
\draw (12.6,2.6) node[anchor=center] {{\small $n+k-1$}};
\node[w] (v) at (6, 0.5) {};
\node[b] (r) at (6, -0.5) {};
\draw (vv) edge (vi);
\draw (vv) edge (v1ik);
\draw (v) edge (v1);
\draw (v) edge (v2);
\draw (v) edge (v1i);
\draw (v) edge (vv);
\draw (v) edge (vik);
\draw (v) edge (v1nk);
\draw (r) edge (v);
\end{tikzpicture}
\caption{The $(n+k-1)$-labeled planar tree $\bt_{n,k,i}$} 
\label{fig:bt-n-k-i}
\end{figure}

Let $\si$ be an element in $S_{n+k-1}$ and $\si(\bt_{n,k,i})$ be the 
$(n+k-1)$-labelled planar tree obtained from $\bt_{n,k,i}$ (on figure \ref{fig:bt-n-k-i})
via acting by the permutation $\si$. Sometimes it is convenient to use a different notation for 
the multiplication map $\mu_{\si(\bt^{n,k,i})}$ corresponding to
the tree $\si(\bt^{n,k,i})$\,.
More precisely, for a vector $v \in P(n)$ and $w \in P(k)$ of 
a pseudo-operad $P$ we will use this notation  
\begin{equation}
\label{new-notation}
v\big(\si(1), \dots \si(i-1), w\big(\si(i), \dots, \si(i+k-1) \big), 
\si(i+k), \dots, \si(n+k-1)\big) : = \mu_{\bt^{n,k,i}_{\si}} (v,w)\,. 
\end{equation}

The notation $\Tree_2(n)$ is reserved 
for the full sub-groupoid 
of $\Tree(n)$ whose objects are $n$-labelled planar trees 
with exactly $2$ nodal vertices. It is not hard to see that 
every object in  $\Tree_2(n)$ has at most $n+1$ leaves. 
Furthermore, isomorphism classes of $\Tree_2(n)$ are 
in bijection with the union 
\begin{equation}
\label{shuffles}
\bigsqcup_{0 \le p \le n} \Sh_{p, n-p}
\end{equation}
where $\Sh_{p, n-p}$ denotes the set of $(p, n-p)$-shuffles in 
$S_n$\,. The bijection assigns to a $(p, n-p)$-shuffle $\tau$ the 
isomorphism class of the planar tree depicted on figure \ref{fig:shuffle}. 
\begin{figure}[htp]
\centering
\begin{tikzpicture}[scale=0.5]
\tikzstyle{w}=[circle, draw, minimum size=3, inner sep=1]
\tikzstyle{b}=[circle, draw, fill, minimum size=3, inner sep=1]
\node[b] (l1) at (-1, 4) {};
\draw (-1,4.6) node[anchor=center] {{\small $\tau(1)$}};
\draw (0,3.8) node[anchor=center] {{\small $\dots$}};
\node[b] (lp) at (1, 4) {};
\draw (1,4.6) node[anchor=center] {{\small $\tau(p)$}};
\node[w] (vv) at (0, 2) {};
\node[b] (lp1) at (3, 2.5) {};
\draw (3,3.1) node[anchor=center] {{\small $\tau(p+1)$}};
\draw (4.25,2.4) node[anchor=center] {{\small $\dots$}};
\node[b] (ln) at (5.5, 2.5) {};
\draw (5.5,3.1) node[anchor=center] {{\small $\tau(n)$}};
\node[w] (v) at (3, 1) {};
\node[b] (r) at (3, 0) {};
\draw (vv) edge (l1);
\draw (vv) edge (lp);
\draw (v) edge (vv);
\draw (v) edge (lp1);
\draw (v) edge (ln);
\draw (r) edge (v);
\end{tikzpicture}
\caption{\label{fig:shuffle} Here $\tau$ is a $(p, n-p)$-shuffle}
\end{figure}

\subsection{The cobar construction and the convolution Lie algebra}
\label{sec:cobar-conv}

In this paper, we denote by $\Op(P)$ the free operad generated by a 
collection $P$\,. 

Let us recall that 
the cobar construction $\Cobar$ \cite{Fresse}, \cite{GJ}, \cite{GK}, 
\cite{Loday-Vallette} is a functor 
from the category of coaugmented dg cooperads  to the category 
of augmented dg operads. It is used to construct free resolutions 
for operads.  

More precisely, for a coaugmented dg cooperad $\cC$,
\begin{equation}
\label{Cobar-cC}
\Cobar(\cC) = \Op(\bs \, \cC_{\c})
\end{equation}
as the operad in the category $\grVect_{\bbK}$,
and the differential $\pa^{\Cobar}$ on $\Cobar(\cC)$
is define by the equations
\begin{equation}
\label{equ:cobarpa}
\pa^{\Cobar} = \pa' + \pa''\,, 
\end{equation}
with 
\begin{equation}
\label{pa-pr}
\pa' (X) = - \bs\, \pa_{\cC} \bsi X\,,
\end{equation}
and 
\begin{equation}
\label{pa-prpr}
\pa'' (X) = - \sum_{z \in \pi_0(\Tree_2(n)) } (\bs \otimes \bs)
\big( \bt_z ; \D_{\bt_z} (\bsi X) \big)\,,
\end{equation}
where $X \in \bs\, \cC_{\c}(n)$, $\bt_z$ is any 
representative\footnote{The axioms of a pseudo-cooperad imply that the right hand side of
\eqref{pa-prpr} does not depend on the choice of representatives 
$\bt_z$\,.} of the isomorphism class $z\in \pi_0(\Tree_2(n))$, and $\pa_{\cC}$ is 
the differential on $\cC$\,.

Let $P$ (resp. $Q$) be a pseudo-operad  (resp. pseudo-cooperad)  
in the category $\Ch_{\bbK}$. Following \cite{KapranovManin} and \cite{MV-nado}, we equip 
the cochain complex
\begin{equation}
\label{Conv}
\Conv(Q, P) =  \prod_{n \ge 0}
\Hom_{S_n} (Q(n), P(n))\,. 
\end{equation}
with the binary operation $\bullet$ defined by the formula
\begin{equation}
\label{Conv-bullet}
f \bullet g (X) =  
\sum_{z\in \pi_0\big(\Tree_2(n)\big)} 
\mu_{\bt_z} (f \otimes g  \,\, \D_{\bt_z} (X))\,, 
\end{equation}
$$
f, g \in  \Conv(Q, P), \qquad X \in Q(n)\,,  
$$
where $\bt_z$ is any representative of the 
isomorphism class $z\in  \pi_0\big(\Tree_2(n)\big)$\,.
The axioms of a pseudo-operad (resp. pseudo-cooperad)
imply that the right hand side of \eqref{Conv-bullet} does not 
depend on the choice of representatives $\bt_z$\,.

It follows directly from the definition that 
the operation $\bullet$ is compatible with the 
differential on $\Conv(Q, P)$ coming from 
$Q$ and $P$\,. Furthermore, the operation $\bullet$
satisfies the axiom of the pre-Lie algebra: 
$$
(f \bullet g ) \bullet h - f \bullet (g \bullet h) 
= (-1)^{|g| |h|} (f \bullet h) \bullet g  -  (-1)^{|g| |h|} f \bullet (h \bullet g)\,. 
$$
Therefore the bracket 
$$
[f,g] =  \big( f \bullet g - (-1)^{|f|\, |g|} g \bullet f \big)
$$
satisfies the Jacobi identity, and hence $\Conv(Q, P)$ is a dg Lie algebra. 

We refer to  $\Conv(Q, P)$ as the {\it convolution Lie algebra} of the pair $(Q, P)$\,.

We observe that for every dg operad $\cO$ the morphism (of dg operads) 
\begin{equation}
\label{from-Cobar}
F : \Cobar(\cC) \to \cO 
\end{equation}
is uniquely determined by its restriction
\begin{equation}
\label{F-restric}
F \Big|_{\bs\, \cC_{\c}} ~:~ \bs\, \cC_{\c} \to \cO\,.
\end{equation}
Thus, to every morphism \eqref{from-Cobar} we assign 
a degree $1$ element
\begin{equation}
\label{al-F}
\al_F \in \Conv(\cC_{\c}, \cO)
\end{equation}
such that 
$$
F \Big|_{\bs\, \cC_{\c}} = \al_F \circ \bs^{-1}\,.
$$

The construction of the dg Lie algebra $\Conv(\cC, \cO)$ is partially 
justified by the following statement:  
\begin{thm}[Proposition 5.2, \cite{notes}]
\label{thm:from-Cobar}
Let $\cC$ be a coaugmented dg cooperad
with $\cC_{\c}$ being the cokernel of the coaugmentation.  
Then, for every dg operad $\cO$, the above assignment 
$$
F ~~\mapsto~~ \al_F 
$$
is a bijection between the set of maps (of dg operads)
\eqref{from-Cobar} and Maurer-Cartan elements of the dg Lie algebra 
$$
\Conv(\cC_{\c}, \cO)\,. ~~~ \Box
$$   
\end{thm}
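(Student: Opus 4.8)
The plan is to exploit the freeness of the operad $\Op(\bs\,\cC_\circ)$ underlying $\Cobar(\cC)$. First I would record the purely graded statement: since $\Cobar(\cC) = \Op(\bs\,\cC_\circ)$ as an operad in $\grVect_{\bbK}$, a morphism of \emph{graded} operads $F\colon \Cobar(\cC)\to\cO$ is, by the universal property of the free operad, the same datum as a morphism of collections $\bs\,\cC_\circ\to\cO$, which after precomposing with $\bs^{-1}$ is precisely a degree $1$ element $\al_F\in\Conv(\cC_\circ,\cO)=\prod_n\Hom_{S_n}(\cC_\circ(n),\cO(n))$, with $F|_{\bs\,\cC_\circ}=\al_F\circ\bs^{-1}$. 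This already gives a bijection between graded operad maps and degree $1$ convolution elements; it remains to check that, under this correspondence, the morphisms that respect the differentials are exactly the Maurer--Cartan elements.

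Next I would analyze the chain-map condition. The morphism $F$ is a morphism of dg operads iff $D:=F\circ\pa^{\Cobar}-\pa^{\cO}\circ F$ vanishes. A direct check with the Leibniz rule for $\circ_i$ shows that $D$ is a degree $1$ derivation \emph{along} $F$, i.e. $D(a\circ_i b)=D(a)\circ_i F(b)+(-1)^{|a|}F(a)\circ_i D(b)$; such a derivation out of the free operad $\Op(\bs\,\cC_\circ)$ is determined by its restriction to the generators, so $D=0$ iff $D(\bs\, x)=0$ for all $x\in\cC_\circ(n)$. I would then compute $D(\bs\, x)$ using $\pa^{\Cobar}=\pa'+\pa''$: from $\pa'(\bs\, x)=-\bs\,\pa_\cC x$ one gets $F(\pa'(\bs\, x))=-\,\al_F(\pa_\cC x)$, and since $F$ is a graded operad map it sends the tree-monomial $(\bs\otimes\bs)(\bt_z;\D_{\bt_z}(x))$ to $\mu_{\bt_z}\big(\al_F\otimes\al_F\;\D_{\bt_z}(x)\big)$, so that $F(\pa''(\bs\, x))=-\sum_{z\in\pi_0(\Tree_2(n))}\mu_{\bt_z}\big(\al_F\otimes\al_F\;\D_{\bt_z}(x)\big)=-(\al_F\bullet\al_F)(x)$ by the definition \eqref{Conv-bullet} of $\bullet$. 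On the other side $\pa^{\cO}F(\bs\, x)=\pa^{\cO}\big(\al_F(x)\big)$. Hence $D(\bs\, x)=0$ for all $x$ is equivalent to
$$
\pa^{\cO}\big(\al_F(x)\big)+\al_F(\pa_\cC x)+(\al_F\bullet\al_F)(x)=0\qquad\text{for all }x,
$$
and since the differential on $\Conv(\cC_\circ,\cO)$ sends $f$ to $\pa^{\cO}\circ f-(-1)^{|f|}f\circ\pa_\cC$ and $[\al_F,\al_F]=2\,\al_F\bullet\al_F$ in degree $1$, this is exactly the Maurer--Cartan equation $\pa_{\Conv}\al_F+\tfrac12[\al_F,\al_F]=0$.

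To conclude, I would combine the two equivalences: graded operad maps $\Cobar(\cC)\to\cO$ biject with degree $1$ elements of $\Conv(\cC_\circ,\cO)$, and among these the dg operad maps biject with the Maurer--Cartan elements. The inverse assignment is then transparent: a Maurer--Cartan element $\al$ defines the unique graded operad morphism extending $\al\circ\bs^{-1}\colon\bs\,\cC_\circ\to\cO$, and the computation above shows this morphism automatically commutes with the differentials, so $F\mapsto\al_F$ is the claimed bijection.

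The only real work is the sign bookkeeping. One must keep track of the signs hidden in the suspension $\bs$, in the two summands $\pa'$ and $\pa''$ of $\pa^{\Cobar}$, in the Koszul signs produced when $F$ and $\mu_{\bt_z}$ move past suspensions and past the Sweedler components of $\D_{\bt_z}(x)$, and in the internal-hom differential on $\Conv$, and verify that they conspire so that the chain-map equation on generators is \emph{precisely} the Maurer--Cartan equation (rather than a sign-twisted variant thereof, which would merely amount to a different normalization of $\al_F$). A secondary, routine point is to make precise the notion of a derivation along an operad morphism and the corresponding universal property of $\Op$; this is standard and can be quoted from \cite[Sections 3, 5]{notes}.
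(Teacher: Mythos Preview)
The paper does not actually prove this theorem: it is quoted verbatim as Proposition~5.2 of \cite{notes} and closed with a $\Box$ immediately after the statement, with no argument given. So there is no proof in the paper to compare your proposal against.

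That said, your argument is the standard one and is correct. The two ingredients --- the universal property of the free operad $\Op(\bs\,\cC_\circ)$ to reduce to generators, and the computation showing that the chain-map condition on generators unfolds to the Maurer--Cartan equation in $\Conv(\cC_\circ,\cO)$ --- are exactly what is carried out in \cite[Section~5]{notes}, and your sign analysis (including the Koszul sign from $(\bs\otimes\bs)$ matching the one from $(\al_F\otimes\al_F)$ since $|\al_F|=1$) is consistent with the conventions in \eqref{pa-pr}, \eqref{pa-prpr}, and \eqref{Conv-bullet}. Your remark that the defect $D=F\pa^{\Cobar}-\pa^{\cO}F$ is an $F$-derivation, hence determined on generators, is the clean way to avoid checking anything on decomposable elements.
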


\subsubsection{Example: The dg operad $\La\Lie_{\infty}$}
\label{sec:LaLie-infty}

Let us recall from \cite{GK}, that the dg operad  $\La\Lie_{\infty}$
governing homotopy $\La\Lie$-algebras is 
\begin{equation}
\label{LaLie-infty}
\La\Lie_{\infty} = \Cobar(\La^2 \coCom)\,,
\end{equation}
where $\coCom$ is the cooperad governing cocommutative coalgebras 
without counit. 

Let us denote by $1^{\mc}_n$ the canonical generator
$$
1^{\mc}_n : = \bs^{2-2n}\, 1 \in \bs^{2-2n}\, \bbK \cong  \La^2 \coCom(n)\,.
$$

Using the identification between isomorphism classes of
objects in $\Tree_2(n)$ and the set \eqref{shuffles} of shuffles 
we get 
\begin{equation}
\label{diff-LaLie-infty}
\pa^{\Cobar} (\bs \, 1^{\mc}_n) =  - 
\sum_{p=2}^{n-1} \sum_{\si \in \Sh_{p, n-p}}
\si \big( (\bs\, 1^{\mc}_{n-p+1} ) \circ_1  (\bs\, 1^{\mc}_{p} ) \big)\,, \qquad n \ge 2\,.
\end{equation}

Let us also recall from \cite[Section 5.2]{notes} that the canonical quasi-isomorphism 
of dg operads
\begin{equation}
\label{LaLie-infty-LaLie}
U_{\La\Lie} : \La\Lie_{\infty}  = \Cobar(\La^2 \coCom) ~ \to ~ \La\Lie
\end{equation}
corresponds to the Maurer-Cartan element 
$$
\al_{\La\Lie} = \{a_1, a_2\} \otimes b_1 b_2 \in \Conv(\La^2\coCom_{\c}, \La\Lie) 
\cong  \prod_{n \ge 2} \Big( \La\Lie(n) \otimes \La^{-2}\Com(n) \Big)^{S_n}\,,
$$
where $\{a_1, a_2\}$ (resp. $b_1 b_2$)  denotes the natural generator of $\La\Lie(2)$
(resp. $\La^{-2}\Com(2)$).

In other words, 
$$
U_{\La\Lie} ( \bs 1^{\mc}_n ) = 
\begin{cases}
 \{a_1, a_2\} & {\rm if} ~~ n = 2\,,  \\
 0 & {\rm otherwise}\,.
\end{cases}
$$

\section{Twisting of operads}
\label{sec:twist}
Let $\cO$ be a dg operad equipped with a map
\begin{equation}
\label{from-hoLie}
\wh{\vf} : \La \Lie_{\infty} \to \cO
\end{equation}
and $V$ be an algebra over $\cO$\,.

Using the map $\wh{\vf}$, we equip $V$
with a $\La\Lie_{\infty}$-structure. 
If we assume, in addition, that $V$ is equipped with 
a complete descending filtration 
\begin{equation}
\label{filtr-V}
V \supset \cF_1 V \supset \cF_2 V \supset \cF_3 V \supset \dots \,, 
\qquad  V = \lim_{k} V  \Big / \cF_k V 
\end{equation}
and the $\cO$-algebra structure on $V$ is compatible 
with this filtration then we may define {\it Maurer-Cartan elements } of $V$ as 
degree $2$ elements $\al \in  \cF_1 V $ satisfying the equation
\begin{equation}
\label{MC-eq}
\pa (\al) + \sum_{n \ge 2} \frac{1}{n!} \{\al, \al, \dots, \al\}_n = 0
\end{equation}
where $\pa$ is the differential on $V$ and $\{\cdot, \cdot, \dots, \cdot\}_n$
are the operations of the $\La\Lie_{\infty}$-structure on $V$\,. 
Given such a  Maurer-Cartan element $\al$ we can twist the differential 
on $V$ and insert $\al$ into various $\cO$-operations on $V$\,. 
This way we get a new algebra structure on $V$\,. 
It turns out that this new algebra structure is governed by 
a dg operad $\Tw \cO$ which is built from the pair 
$(\cO, \wh{\vf})$\,. 
This section is devoted to the construction of $\Tw \cO$\,.

First, we recall that $\La\Lie_{\infty} = \Cobar(\La^2 \coCom)$. 
Hence, due to Theorem \ref{thm:from-Cobar},
the morphism \eqref{from-hoLie}
is determined by a Maurer-Cartan element 
\begin{equation}
\label{vf}
\vf \in \Conv(\La^2 \coCom_{\c}, \cO)\,. 
\end{equation}

The $n$-th space of $\La^2 \coCom_{\c}$ is
the trivial $S_{n}$-modules placed in degree $2-2n$: 
$$
\La^2 \coCom (n) = \bs^{2-2n} \bbK\,.
$$
So we have
$$
\Conv(\La^2 \coCom_{\c}, \cO) = 
 \prod_{n \ge 2}  \Hom_{S_n}( \bs^{2-2n}\bbK, \cO(n)) 
 =  \prod_{n \ge 2}   \bs^{2n-2} \big( \cO(n) \big)^{S_n}\,.
$$

For our purposes we will need to extend the dg Lie algebra 
$\Conv(\La^2 \coCom_{\c}, \cO)$ to 
\begin{equation}
\label{cL-cO}
\cL_{\cO} = \Conv(\La^2 \coCom, \cO) =
 \prod_{n \ge 1}  \Hom_{S_n}( \bs^{2-2n}\bbK, \cO(n))\,. 
\end{equation}
It is clear that 
$$
\cL_{\cO}  =   \prod_{n \ge 1}   \bs^{2n-2} \big( \cO(n) \big)^{S_n}\,.
$$

For $n, r \ge 1$ we realize the group $S_r$ 
as the following subgroup of $S_{r+n}$
\begin{equation}
\label{S-r-realize}
S_r \cong \big\{ \si\in S_{r+n} ~|~ \si(i) = i\,, \quad \forall\, i > r \big\}\,.
\end{equation}
In other words, the group $S_r$ may be 
viewed as subgroup of $S_{r+n}$ permuting only the 
first $r$ letters. We set $S_0$ to be the trivial group.
Using this embedding of $S_{r}$ into $S_{n+r}$
we introduce the following collection ($n \ge 0$)
\begin{equation}
\label{pre-Tw-cO}
\wt{\Tw} \cO(n) =  \prod_{r \ge 0}  \Hom_{S_r} ( \bs^{-2r} \bbK, \cO(r+n))\,.
\end{equation}

It is clear that 
\begin{equation}
\label{pre-Tw-cO2}
\wt{\Tw} \cO(n) = \prod_{r \ge 0}  \bs^{2r} \big(\cO(r+n)\big)^{S_r}\,. 
\end{equation}

To define an operad structure on \eqref{pre-Tw-cO} we denote by 
$1_{r}$ the generator 
$$  
1_r : = \bs^{-2r} \, 1\in  \bs^{-2r} \bbK\,. 
$$  
Then 
the identity element $\bu$ in $\wt{\Tw} \cO(1)$ is given by 
\begin{equation}
\label{bu}
\bu(1_r) = \begin{cases}
\bu_{\cO}
  \qquad {\rm if} ~~ r = 0\,, \\
 0 \qquad {\rm otherwise}\,,
\end{cases}
\end{equation}
where $\bu_{\cO}\in \cO(1)$ is the identity element 
for the operad $\cO$\,.
Next, for $f \in \wt{\Tw} \cO(n)$ and $g \in \wt{\Tw} \cO(m)$,
we define the $i$-th elementary insertion $\c_{i}$ 
$1 \le i \le n$ by the formula 
\begin{equation}
\label{circ-i-for-Tw-cO}
f \,\c_i\, g (1_r) = \sum_{p = 0}^{r} 
\sum_{\si \in \Sh_{p, r-p}} \mu_{\bt_{\si,i}} \big( f(1_p) \otimes g (1_{r-p})  \big)\,. 
\end{equation}
where the tree $\bt_{\si,i}$ is depicted on figure \ref{fig:bt-si-i}.
\begin{figure}[htp]
\centering
\begin{tikzpicture}[scale=0.6]
\tikzstyle{w}=[circle, draw, minimum size=3, inner sep=1]
\tikzstyle{b}=[circle, draw, fill, minimum size=3, inner sep=1]
\node[b] (si1) at (0, 2) {};
\draw (0,2.5) node[anchor=center] {{\small $\si(1)$}};
\draw (1,2) node[anchor=center] {{\small $\dots$}};
\node[b] (sip) at (2, 2) {};
\draw (2,2.5) node[anchor=center] {{\small $\si(p)$}};
\node[b] (r1) at (3.5, 2) {};
\draw (3.7,2.5) node[anchor=center] {{\small $r+1$}};
\draw (5,2) node[anchor=center] {{\small $\dots$}};
\node[b] (r1i) at (6.5, 2) {};
\draw (6.4,2.5) node[anchor=center] {{\small $r+i-1$}};
\node[w] (w2) at (8.5, 3) {};
\node[b] (sip1) at (6, 5) {};
\draw (5.8,5.5) node[anchor=center] {{\small $\si(p+1)$}};
\draw (7.3,5) node[anchor=center] {{\small $\dots$}};
\node[b] (sir) at (8, 5) {};
\draw (8,5.5) node[anchor=center] {{\small $\si(r)$}};
\node[b] (ri) at (9.5, 5) {};
\draw (9.5,5.5) node[anchor=center] {{\small $r+i$}};
\draw (10.5,5) node[anchor=center] {{\small $\dots$}};
\node[b] (ri1m) at (12, 5) {};
\draw (12.7,5.5) node[anchor=center] {{\small $r+i+m-1$}};
\node[b] (rim) at (10.5, 2) {};
\draw (10.5,2.5) node[anchor=center] {{\small $r+i+m$}};
\draw (11.7,2) node[anchor=center] {{\small $\dots$}};
\node[b] (rn1m) at (13, 2) {};
\draw (13.8,2.5) node[anchor=center] {{\small $r+n+m-1$}};
\node[w] (w1) at (7.5, 0) {};
\node[b] (r) at (7.5, -1) {};
\draw (w2) edge (sip1);
\draw (w2) edge (sir);
\draw (w2) edge (ri);
\draw (w2) edge (ri1m);
\draw (w1) edge (si1);
\draw (w1) edge (sip);
\draw (w1) edge (r1);
\draw (w1) edge (r1i);
\draw (w1) edge (w2);
\draw (w1) edge (rim);
\draw (w1) edge (rn1m);
\draw (r) edge (w1);
\end{tikzpicture}
\caption{\label{fig:bt-si-i} Here $\si$ is a $(p, r-p)$-shuffle}
\end{figure} 

\begin{remark}
The operad $\wt{\Tw} \cO$ is a completed version of the shifted operad 
$\cO[\bbK]$ introduced in \cite[section 1.2.2]{Fresse98}.
\end{remark}

Sometimes it is convenient to use 
a different but equivalent definition of
the $i$-th elementary insertion $\c_{i}$ for 
$\wt{\Tw} \cO$. This definition is given by
the formula\footnote{Here we use the notation for operadic 
multiplications \eqref{new-notation}.}
\begin{equation}
\label{circ-i-for-Tw-cO1}
f  \circ_i  g (1_r) = 
\end{equation}
$$
\sum_{p = 0}^{r} 
\sum_{\si \in \Sh_{p, r-p}} \si  
\big( f_p(1, \dots, p, r+1, \dots, r+i-1, g_{r-p}(p+1, \dots, r, r+i, \dots, r+i+m-1), 
r+i+m, \dots, r+n +m-1) \big)\,, 
$$
where $f\in \wt{\Tw} \cO(n)$,  $g\in \wt{\Tw} \cO(m)$,
$f_p  = f(1_p) \in \big(\cO(p+n) \big)^{S_p}$ and 
 $g_q  = g(1_q) \in \big(\cO(q+m) \big)^{S_q}$\,.

To see that the element $f \,\c_i\, g (1_r)\in \cO(r+n+m -1)$ is 
$S_r$-invariant one simply needs to use the fact that 
every element $\tau\in S_r$ can 
be uniquely presented as the composition 
$\tau_{sh} \c \tau_{p,r-p} $, where $\tau_{sh}$ is 
a $(p, r-p)$-shuffle and $\tau_{p,r-p} \in S_p \times S_{r-p}$\,.

Let  $f \in \wt{\Tw} \cO(n)$, $g \in \wt{\Tw} \cO(m)$, 
$h \in \wt{\Tw} \cO(k)$, $1\le i \le n$, and $1 \le j \le m$\,.  
To check the identity 
\begin{equation}
\label{fgh-assoc}
f\,\c_i \, (g \,\c_j\, h) =
(f\,\c_i \, g ) \,\c_{j+i-1}\, h 
\end{equation}
we observe that
\begin{align*}
f\,\c_i \, (g \,\c_j\, h) (1_r) &=   \sum_{p = 0}^{r} 
\sum_{\si \in \Sh_{p, r-p}} 
\mu_{\bt_{\si,i}} \big( f(1_p) \otimes   (g \,\c_j\, h) (1_{r-p})  \big) 
\\
&=\sum_{p_1+ p_2+ p_3 =r}\,
\sum_{\si \in \Sh_{p_1, p_2+ p_3} }\,
\sum_{\si' \in  \Sh_{p_2, p_3} }
\mu_{\bt_{\si,i}} \circ
(1 \otimes \mu_{\bt_{\si',j}} )
\big( f(1_{p_1}) \otimes   g(1_{p_2}) \otimes h(1_{p_3}) \big) 
\\
&= \sum_{p_1+ p_2+ p_3 =r}\,
\sum_{\tau \in \Sh_{p_1, p_2, p_3} }
\mu_{\bt_{\tau,i,j}} \big( f(1_{p_1}) \otimes   g(1_{p_2}) \otimes h(1_{p_3}) \big)\,, 
\end{align*}
where the tree $\bt_{\tau,i,j}$ is depicted on figure \ref{fig:bt-tau-ij}.
\begin{figure}[htp]
\centering
\makeatletter
\def\Ddots{\mathinner{\mkern1mu\raise\p@
\vbox{\kern7\p@\hbox{.}}\mkern2mu
\raise4\p@\hbox{.}\mkern2mu\raise7\p@\hbox{.}\mkern1mu}}
\makeatother
\begin{tikzpicture}[
xst/.style={draw, cross out,  minimum size=3, inner sep=1 }, 
int/.style={draw, circle, fill,  minimum size=3, inner sep=1},
ext/.style={draw, circle,  minimum size=3, inner sep=1},
arr/.style={-triangle 60},]
\node[int] (v1) at (4,0) {};
\node[ext] (v2) at (4,1) {};
\node[int, label=90:{\small $\tau(1)$}] (v3) at (0.5,3) {};
\node[int, label=90:{\small $\tau(p_1)$}] (v4) at (2,3) {};
\node[int, label=90:{\small $r+1$}] (v5) at (2.9,3) {};
\node[int, label=90:{\small $r+i-1$}] (v6) at (4.3,3) {};
\node[ext] (v7) at (6,4) {};
\node[int, label=20:{\small $r + i + m  + k -1$}] (v8) at (6.5,3) {};
\node[int, label=90:{\small $r + n + m + k -2$}] (v9) at (8,2) {};
\node[int, label=120:{\small $\tau(p_1+1)$}] (v10) at (4,5) {};
\node[int, label=120:{\small $\tau(p_1+p_2)$}] (v11) at (4.5,5.7) {};
\node[int, label=120:{\small $r+i$}] (v12) at (5.2,6.5) {};
\node[int, label=90:{\small $r + i +j -2$}] (v13) at (6.5,6.5) {};
\node[ext] (v16) at (8,7.5) {};
\node[int, label=30:{\small $r + i + j + k-1 $}] (v14) at (8,6) {};
\node[int, label=30:{\small $r + i + m + k - 2$}] (v15) at (9,5) {};
\node[int, label=120:{\small $\tau(p_1+p_2+1)$}] (v17) at (6,8) {};
\node[int, label=90:{\small $\tau(r)$}] (v18) at (7,9) {};
\node[int, label=90:{\small $r + i + j -1$}] (v19) at (9,9) {};
\node[int, label=30:{\small $r + i + j + k -2$}] (v20) at (10,8) {};

\node at (4.6451,5.014) {$\Ddots$};
\node at (6.9122,8.2302) {$\Ddots$};
\node at (7.914,5.3128) {$\ddots$};
\node at (9.1618,8.4059) {$\ddots$};
\node at (6.235,2.1974) {$\ddots$};
\node at (5.928,5.9279) {$\cdots$};
\node at (3.7,2.5) {$\cdots$};
\node at (2,2.5) {$\cdots$};

\draw (v1) edge (v2);
\draw (v2) edge (v3);
\draw (v2) edge (v4);
\draw (v2) edge (v5);
\draw (v2) edge (v6);
\draw (v2) edge (v7);
\draw (v2) edge (v8);
\draw (v2) edge (v9);
\draw (v7) edge (v10);
\draw (v7) edge (v11);
\draw (v7) edge (v12);
\draw (v7) edge (v13);
\draw (v7) edge (v14);
\draw (v7) edge (v15);
\draw (v7) edge (v16);
\draw (v16) edge (v17);
\draw (v16) edge (v18);
\draw (v16) edge (v19);
\draw (v16) edge (v20);
\end{tikzpicture}
\caption{\label{fig:bt-tau-ij} Here $\tau$ is a $(p_1, p_2, p_3)$-shuffle and 
$r= p_1 + p_2 + p_3$}
\end{figure} 
Similar calculations show that
$$
(f\,\c_i \, g ) \,\c_{j+i-1}\, h  =  \sum_{p_1+ p_2+ p_3 =r}\,
\sum_{\tau \in \Sh_{p_1, p_2, p_3} }
\mu_{\bt_{\tau,i,j}} \big( f(1_{p_1}) \otimes   g(1_{p_2}) \otimes h(1_{p_3}) \big)\,, 
$$
with $\bt_{\tau,i,j}$ being the tree depicted on figure \ref{fig:bt-tau-ij}. 

We leave the verification of the remaining axioms of the operad 
structure for the reader.

Our next goal is to define an auxiliary action of the dg Lie algebra 
$\cL_{\cO}$ on the operad $\wt{\Tw}\cO$\,. 
For a vector $f \in \wt{\Tw}\cO(n)$ the 
action of $v \in \cL_{\cO}$ (\ref{cL-cO}) on $f$ 
is defined by the formula
\begin{equation}
\label{eq:cL-action1}
v \cdot f(1_r) = -(-1)^{|v||f|} \sum_{p=1}^r \, 
\sum_{\si \in \Sh_{p,r-p}} 
\mu_{\bt_{\si, p, r-p}} \big( f(1_{r-p+1}) \otimes v(1^{\mc}_p) \big)\,,
\end{equation}
where $1^{\mc}_p$ is the generator $\bs^{2-2p}\, 1 \in  \La^2 \coCom(p) \cong \bs^{2-2p} \, \bbK$
and the tree $\bt_{\si, p, r-p}$ is depicted on figure 
\ref{fig:bt-si-p-r}. 
\begin{figure}[htp]
\centering
\begin{tikzpicture}[
xst/.style={draw, cross out,  minimum size=3, inner sep=1 }, 
int/.style={draw, circle, fill,  minimum size=3, inner sep=1},
arr/.style={-triangle 60},]

\node[int] (v1) at (3,0) {};
\node[ext] (v2) at (3,1) {};
\node[ext] (v3) at (1,2) {};
\node[int, label=90:{\small $\si(p+1)$}] (v4) at (2,2) {};
\node[int, label=90:{\small $\si(r)$}] (v5) at (3.5,2) {};
\node[int, label=90:{\small $r+1$}] (v8) at (4.5,2.5) {};
\node[int, label=90:{\small $r+n$}] (v9) at (6,2.5) {};
\node[int, label=90:{\small $\si(1)$}] (v6) at (0.5,3) {};
\node[int, label=90:{\small $\si(p)$}] (v7) at (1.5,3) {};

\draw (v1) edge (v2);
\draw (v2) edge (v3);
\draw (v2) edge (v4);
\draw (v2) edge (v5);
\draw (v3) edge (v6);
\draw (v3) edge (v7);
\draw (v2) edge (v8);
\draw (v2) edge (v9);
\node at (4.7,2.1) {$\cdots$};
\node at (2.9,1.7) {$\cdots$};
\node at (1,2.8) {$\cdots$};
\end{tikzpicture}
\caption{\label{fig:bt-si-p-r} Here $\si$ is a $(p, r-p)$-shuffle}
\end{figure} 

We claim that 
\begin{prop}
\label{prop:cL-action1}
Formula \eqref{eq:cL-action1} defines an action of the dg Lie algebra
$\cL_{\cO}$ (\ref{cL-cO}) on the operad $\wt{\Tw}\cO$\,. 
\end{prop}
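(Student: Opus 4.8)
The plan is to verify the three ingredients of an action of a dg Lie algebra on a dg operad: that for each homogeneous $v\in\cL_{\cO}$ the map $f\mapsto v\cdot f$ is an operadic derivation of $\wt{\Tw}\cO$ (in particular $v\cdot\bu=0$, which is immediate since the sum in \eqref{eq:cL-action1} runs over $p\ge 1$ whereas $\bu(1_r)$ is supported at $r=0$, see \eqref{bu}); that $v\mapsto(v\cdot-)$ is a map of complexes from $\cL_{\cO}$ to the complex $\mathrm{Der}(\wt{\Tw}\cO)$ of operadic derivations; and that this map intertwines the brackets, i.e. $[v,w]\cdot f = v\cdot(w\cdot f) - (-1)^{|v||w|}\, w\cdot(v\cdot f)$. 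The one bookkeeping device used throughout is that, because $f(1_{s})\in\big(\cO(s+n)\big)^{S_s}$ is genuinely $S_s$-invariant and $1^{\mc}_p$ is $S_p$-invariant, formula \eqref{eq:cL-action1} may be rewritten without reference to a distinguished slot: $v\cdot f(1_r)$ is $-(-1)^{|v||f|}$ times the sum, over $1\le p\le r$ and over all $p$-element subsets $S\subseteq\{1,\dots,r\}$, of the tree obtained by grafting $v(1^{\mc}_p)$ — with leaves labelled by $S$ in increasing order — onto (any one of, it does not matter which) the Maurer--Cartan inputs of $f(1_{r-p+1})$, the remaining $r-p$ Maurer--Cartan leaves receiving the other labels. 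In this form the action simply ``inserts $v$ into each Maurer--Cartan slot of $f$''.

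For the derivation property I would expand $v\cdot(f\circ_i g)(1_r)$ using \eqref{circ-i-for-Tw-cO1} together with the reformulation above. In each resulting tree the copy of $g$ sits over a \emph{genuine} input of $f$, while the copy of $v$ sits over some set of Maurer--Cartan leaves, and that set lies over a Maurer--Cartan input descending either from $f$ or from $g$. Splitting the sum accordingly and recombining the shuffles exactly as in the verification of associativity \eqref{fgh-assoc} (merging $\Sh_{p_1,\cdot}$'s and $\Sh_{\cdot,\cdot}$'s into $\Sh_{p_1,p_2,\cdot}$'s and back), the first group of terms reassembles to $(v\cdot f)\circ_i g$ and the second to $(-1)^{|v||f|}\, f\circ_i(v\cdot g)$, the sign being the Koszul sign for moving $v$ past $f$. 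Compatibility with the differential is routine: the differential on $\wt{\Tw}\cO$ acts slot-wise by $\pa_{\cO}$, the differential on $\cL_{\cO}=\Conv(\La^2\coCom,\cO)$ is also induced by $\pa_{\cO}$ (the cooperad $\La^2\coCom$ carrying the zero differential), and both $v\cdot(-)$ and these differentials are assembled from the compositions $\mu_{\bt}$ in $\cO$, of which $\pa_{\cO}$ is a derivation; a sign check then gives $\pa(v\cdot f)=(\pa v)\cdot f+(-1)^{|v|}v\cdot(\pa f)$.

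The substantive point is the compatibility with brackets, which is the operadic counterpart of the proof that the convolution product $\bullet$ is pre-Lie. I would expand $v\cdot(w\cdot f)(1_r)$ in the coordinate-free form: every term is a tree in which the two grafted operations $v(1^{\mc}_p)$ and $w(1^{\mc}_q)$ are either (a) grafted over disjoint sets of Maurer--Cartan leaves onto two \emph{distinct} Maurer--Cartan inputs of $f$, or (b) $v(1^{\mc}_p)$ is grafted onto a Maurer--Cartan input of the copy of $w(1^{\mc}_q)$, which in turn sits over a Maurer--Cartan input of $f$. Re-summing the shuffles, the configuration-(b) terms reassemble, by the very definition \eqref{Conv-bullet} of $\bullet$, into $\eta\,(w\bullet v)\cdot f$ for a sign $\eta\in\{\pm1\}$ depending only on $|v|,|w|$; hence $\mathcal A(v,w,f):=v\cdot(w\cdot f)-\eta\,(w\bullet v)\cdot f$ is the sum of the configuration-(a) terms alone. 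But that configuration is visibly graded-symmetric under $v\leftrightarrow w$ — interchange the two chosen Maurer--Cartan inputs and the two leaf subsets, picking up the Koszul sign $(-1)^{|v||w|}$ — so $\mathcal A(v,w,f)=(-1)^{|v||w|}\mathcal A(w,v,f)$. Combining these two facts, $v\cdot(w\cdot f)-(-1)^{|v||w|}w\cdot(v\cdot f)$ equals $\eta\,(w\bullet v)\cdot f-(-1)^{|v||w|}\eta\,(v\bullet w)\cdot f$; comparing with the bracket $[v,w]=v\bullet w-(-1)^{|v||w|}w\bullet v$ on $\cL_{\cO}$ forces $\eta=-1$ and produces $[v,w]\cdot f$, as desired. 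The main obstacle is exactly this last step — verifying that the nested configuration-(b) terms recombine into $(w\bullet v)\cdot f$ with the correct multiplicities and with $\eta=-1$ — but this is the same mechanical shuffle-and-sign bookkeeping that underlies the pre-Lie identity for $\Conv(Q,P)$ and the associativity relation \eqref{fgh-assoc}, and I expect it to go through with no new difficulty once the coordinate-free description of the action is in hand.
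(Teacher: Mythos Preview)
Your approach is essentially the same as the paper's: decompose $v\cdot(w\cdot f)-(-1)^{|v||w|}w\cdot(v\cdot f)$ into the ``nested'' terms (your configuration (b), the paper's trees $\bt^{p,q}_\tau$) and the ``sibling'' terms (your configuration (a), the paper's $\wt{\bt}^{p,q}_\tau$), observe the sibling terms cancel by the $S$-invariance of $f(1_\cdot)$, and identify the nested terms with $[v,w]\cdot f$; the paper likewise leaves the derivation property as an exercise. One small correction: your claimed value $\eta=-1$ is off --- tracking the two prefactors $-(-1)^{|v|(|w|+|f|)}$ and $-(-1)^{|w||f|}$ from \eqref{eq:cL-action1} gives $\eta_{v,w}=-(-1)^{|v||w|}$, which is exactly what is needed for the two nested pieces to recombine into $(v\bullet w-(-1)^{|v||w|}w\bullet v)\cdot f$; you cannot infer $\eta$ from the desired conclusion, so this sign must be computed directly.
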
  
\begin{proof}
A simple degree bookkeeping shows that the degree 
of $v\cdot f$ is $|v|+ |f|$\,.

Then we need to check that for two homogeneous vectors  
$v,w\in \cL_{\cO}$ we have  
\begin{equation}
\label{need-action}
[v,w] \cdot f (1_r)  = (v\cdot  (w \cdot f)) (1_r) - 
(-1)^{|v||w|} (w \cdot (v \cdot f))  (1_r) 
\end{equation}

Using the definition of the operation $\cdot$ and 
the associativity axiom for the operad structure on $\cO$
we get
\begin{multline}
\label{rhs-equals}
 (v\cdot  (w \cdot f)) (1_r) - 
(-1)^{|v||w|} (w \cdot (v \cdot f))  (1_r) =  
\\
(-1)^{|f| (|v|+|w|) + |v||w|}
\sum_{p \ge 1 \, q \ge 0} ~ 
\sum_{\tau \in \Sh_{p, q, r- p- q}} ~
\mu_{\bt^{p, q}_{\tau}} (f(1_{r-p-q+1}) \otimes w(1^{\mc}_{q+1}) 
\otimes v(1^{\mc}_{p}))
\\
+(-1)^{|f| (|v|+|w|) + |v||w|}
\sum_{p, q \ge 1} ~ 
\sum_{\tau \in \Sh_{p, q, r- p- q}} ~
\mu_{\wt{\bt}^{p, q}_{\tau}} (f(1_{r-p-q+2}) \otimes w(1^{\mc}_{q}) 
\otimes v(1^{\mc}_{p})) 
\\
-(-1)^{|v| |w|} (v \leftrightarrow w)\,,
\end{multline}
where the trees $\bt^{p, q}_{\tau}$ and $\wt{\bt}^{p, q}_{\tau}$ 
are depicted on figures \ref{fig:bt-tau-p-q}  and \ref{fig:wtbt-tau-p-q} , respectively.
\begin{figure}[htp]
\centering
\begin{tikzpicture}[ 
int/.style={draw, circle, fill,  minimum size=3, inner sep=1},
arr/.style={-triangle 60},]

\node[int] (v1) at (4,0) {};
\node[ext] (v2) at (4,1) {};
\node[ext] (v3) at (2,3) {};
\node[int, label=90:{\small $\tau(p+q+1)$}] (v4) at (3.5,3) {};
\node[int, label=90:{\small $\tau(r)$}] (v5) at (5.5,3) {};
\node[int, label=90:{\small $r+1$}] (v6) at (7,4) {};
\node[int, label=90:{\small $r+n$}] (v7) at (9,4) {};

\node[ext] (v8) at (1,4.5) {};
\node[int, label=90:{\small $\tau(p+1)$}] (v11) at (2,4.5) {};
\node[int, label=90:{\small $\tau(p+q)$}] (v12) at (3.5,4.5) {};
\node[int, label=90:{\small $\tau(1)$}] (v9) at (0.3,5.5) {};
\node[int, label=90:{\small $\tau(p)$}] (v10) at (1.7,5.5) {};

\node at (7.5,3.6) {$\cdots$};
\node at (4.4,2.7) {$\cdots$};
\node at (2.6,4.2) {$\cdots$};
\node at (1.1,5.3) {$\cdots$};

\draw (v1) edge (v2);
\draw (v2) edge (v3);
\draw (v2) edge (v4);
\draw (v2) edge (v5);
\draw (v2) edge (v6);
\draw (v2) edge (v7);
\draw (v8) edge (v9);
\draw (v8) edge (v10);
\draw (v3) edge (v8);
\draw (v3) edge (v11);
\draw (v3) edge (v12);
\end{tikzpicture}
\caption{\label{fig:bt-tau-p-q} The tree  $\bt^{p, q}_{\tau}$}
\end{figure} 
\begin{figure}[htp]
\centering
\begin{tikzpicture}[
xst/.style={draw, cross out, minimum size=5, }, 
int/.style={draw, circle, fill,  minimum size=3, inner sep=1},
arr/.style={-triangle 60},]

\node[int] (v1) at (4,0) {};
\node[ext] (v2) at (4,1) {};
\node[ext] (v3) at (1.5,2) {};
\node[ext] (v6) at (3,3) {};
\node[int , label=90:{\small $\tau(p+q+1)$}] (v9) at (5,3) {};
\node[int , label=90:{\small $\tau(r)$}] (v10) at (6.5,3) {};
\node[int , label=90:{\small $r+1$}] (v11) at (9,4) {};
\node[int, label=90:{\small $r+n$}] (v12) at (11,4) {};
\node[int , label=90:{\small $\tau(1)$}] (v4) at (0.5,3) {};
\node[int , label=90:{\small $\tau(p)$}] (v5) at (2,3) {};
\node[int , label=90:{\small $\tau(p+1)$}] (v7) at (2.2,4) {};
\node[int , label=90:{\small $\tau(p+q)$}] (v8) at (3.8,4) {};
\node at (9.2,3.6) {$\cdots$};
\node at (5.4,2.6) {$\cdots$};
\node at (3.1,3.8) {$\cdots$};
\node at (1.4,2.8) {$\cdots$};
\draw (v1) edge (v2);
\draw (v2) edge (v3);
\draw (v3) edge (v4);
\draw (v3) edge (v5);
\draw (v2) edge (v6);
\draw (v6) edge (v7);
\draw (v6) edge (v8);
\draw (v2) edge (v9);
\draw (v2) edge (v10);
\draw (v2) edge (v11);
\draw (v2) edge (v12);
\end{tikzpicture}
\caption{\label{fig:wtbt-tau-p-q} The tree $\wt{\bt}^{p, q}_{\tau}$}
\end{figure} 

Since $f(1_{r-p-q+2})$ is invariant with respect to the action 
of $S_{r-p-q+2}$ the sums involving $\mu_{\wt{\bt}^{p, q}_{\tau}}$
cancel each other.  
Furthermore, it is not hard to see that the sums involving 
$\mu_{\bt^{p, q}_{\tau}}$ form the expression 
$$
[v,w] \cdot f (1_r)\,.
$$
Thus equation \eqref{need-action} follows. 
It remains to check that the operation 
$f \mapsto v \cdot f$ is an operadic derivation and we 
leave this step as an exercise for the reader. 

\end{proof}

\subsection{The action of \texorpdfstring{$\cL_{\cO}$ on $\wt{\Tw}\cO$}{L(O) on tilde Tw O}}   
   
Let us view   $\wt{\Tw}\cO(1)$ as the Lie algebra with 
the bracket being commutator. 
 
We have an obvious degree zero map 
$$
\ka : \cL_{\cO} \to \wt{\Tw}\cO(1)
$$
defined by the formula:
\begin{equation}
\label{kappa}
\ka(v)(1_{r}) = v(1^{\mc}_{r+1})\,.
\end{equation}
where, as above, $1_r$ is the generator $\bs^{-2r}\, 1 \in \bs^{-2r}\, \bbK$ 
and $1^{\mc}_r$ is the generator $\bs^{2-2r}\, 1 \in \La^2 \coCom(r) \cong \bs^{2-2r}\, \bbK$\,.

We have the following proposition. 
\begin{prop}
\label{prop:Theta}
Let us form the semi-direct product $\cL_{\cO} \ltimes \wt{\Tw}\cO(1)$
of the dg Lie algebras $\cL_{\cO}$ and $\wt{\Tw}\cO(1)$ 
using the action of $\cL_{\cO}$ on $\wt{\Tw}\cO$ defined 
in Proposition \ref{prop:cL-action1}. Then the formula 
\begin{equation}
\label{Theta}
\Te(v) = v + \ka(v)
\end{equation}
defines a Lie algebra homomorphism 
$$
\Te: \cL_{\cO} \to  \cL_{\cO} \ltimes \wt{\Tw}\cO(1)\,.
$$
\end{prop}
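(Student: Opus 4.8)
The plan is to reduce the proposition to a single ``cocycle'' identity for $\ka$ and then to verify that identity by evaluating on the generators $1_r$. First, $\Te$ is $\bbK$-linear, and a degree count — using $|1^{\mc}_{r+1}| = 2-2(r+1) = -2r = |1_r|$, so that the relabeling $\ka(v)(1_r)=v(1^{\mc}_{r+1})$ is degree preserving — gives $|\ka(v)|=|v|$, whence $\Te$ has degree $0$. Since the cooperad $\La^2\coCom$ carries the zero differential, the differentials on $\cL_{\cO}$ and on $\wt{\Tw}\cO(1)$ come solely from $\pa_{\cO}$, the differential on the semi-direct product $\cL_{\cO}\ltimes\wt{\Tw}\cO(1)$ is the direct sum of the two (here one uses that the $\cL_{\cO}$-action of Proposition \ref{prop:cL-action1} commutes with $\pa_{\cO}$), and $\ka$ intertwines these; hence $\Te$ is a chain map. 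Writing the semi-direct-product bracket as $[(v,x),(w,y)]=\big([v,w],\; v\cdot y-(-1)^{|x||w|}\,w\cdot x+[x,y]\big)$ and putting $x=\ka(v)$, $y=\ka(w)$ (so $|x|=|v|$), the equality $\Te[v,w]=[\Te v,\Te w]$ becomes equivalent to
\begin{equation}
\label{eq:Theta-cocycle}
\ka[v,w] \;=\; v\cdot\ka(w) \;-\; (-1)^{|v||w|}\,w\cdot\ka(v) \;+\; [\ka(v),\ka(w)],
\end{equation}
where $v\cdot(-)$ is the action of Proposition \ref{prop:cL-action1} restricted to $\wt{\Tw}\cO(1)$ and the last bracket is the commutator of $\circ_1$. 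The whole proof therefore comes down to \eqref{eq:Theta-cocycle}.

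To establish \eqref{eq:Theta-cocycle} I would evaluate both sides on $1_r$, $r\ge 0$, inside $\big(\cO(r+1)\big)^{S_r}$. By \eqref{Conv-bullet} and the shuffle description of $\pi_0(\Tree_2(r+1))$, the left-hand side $[v,w](1^{\mc}_{r+1})$ is a signed sum over $2$-vertex trees with $r+1$ leaves of terms $\mu_{\bt}\big(v(1^{\mc}_{p_1})\otimes w(1^{\mc}_{p_2})\big)$, with $v$ at one nodal vertex and $w$ at the other ($p_1+p_2=r+2$), together with $-(-1)^{|v||w|}$ times the terms obtained by interchanging $v$ and $w$. Each of the three terms on the right of \eqref{eq:Theta-cocycle} is, by \eqref{eq:cL-action1} (with $f=\ka(w)$ and with $f=\ka(v)$) and by \eqref{circ-i-for-Tw-cO} taken with $n=m=1$, likewise a signed sum over $2$-vertex trees carrying an $\cO$-element built from $v$ at one vertex and one built from $w$ at the other, with one vertex grafted into an input of the other. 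Thus all four families lie in the same space and have the same combinatorial shape.

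The matching is organized by the position, among the $r+1$ leaves of each such tree, of the leaf labeled $r+1$ — that is, of the single ``real'' input of $\wt{\Tw}\cO(1)(1_r)$. Since $\La^2\coCom(n)$ is the \emph{trivial} $S_n$-representation, $v(1^{\mc}_n)$ and $w(1^{\mc}_n)$ are fully $S_n$-invariant, so the leaves at each vertex may be permuted freely; using this, the trees of $[v,w](1^{\mc}_{r+1})$ in which $r+1$ lies on the nodal vertex \emph{adjacent to the root} reassemble — with their signs — into $v\cdot\ka(w)-(-1)^{|v||w|}\,w\cdot\ka(v)$: these are the trees $\bt_{\si,p,r-p}$ of Figure \ref{fig:bt-si-p-r}, in which the $\cO$-element from the other variable is grafted into a symmetric input of the root-adjacent vertex and $r+1$ occupies the latter's designated real input, the two sub-cases ``$v$ below'' and ``$w$ below'' producing the two summands. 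The trees in which $r+1$ lies on the \emph{other} vertex reassemble into $[\ka(v),\ka(w)](1_r)=\ka(v)\circ_1\ka(w)(1_r)-(-1)^{|v||w|}\ka(w)\circ_1\ka(v)(1_r)$: these are the trees $\bt_{\si,1}$ of Figure \ref{fig:bt-si-i} with $n=m=1$, where the grafting is into the designated real input and $r+1$ occupies that of the upper vertex. Along the way one checks that the shuffle ranges agree, including at the extremes $p=0$ and $p=r$, where one vertex carries no honest leaf and becomes unary; this is exactly accounted for by the fact that $\cL_{\cO}=\Conv(\La^2\coCom,\cO)$ is built from the \emph{non-reduced} $\coCom$, so that the arity-one contributions $\cO(1)$ are present. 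Summing the two families gives \eqref{eq:Theta-cocycle}.

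I expect the only genuinely laborious step, and hence the main obstacle, to be the sign bookkeeping: keeping track of the Koszul signs produced by the suspensions $\bs^{-2r}$ and $\bs^{2-2r}$ and by transposing the tensor factors $v(1^{\mc}_{\bullet})$ and $w(1^{\mc}_{\bullet})$ past each other inside the various $\mu_{\bt}$, and confirming that the explicit sign $-(-1)^{|v||f|}$ in \eqref{eq:cL-action1} together with the $-(-1)^{|v||w|}$ in the two pieces of the commutator conspire to produce precisely the signs appearing in \eqref{eq:Theta-cocycle}. Once the signs are under control, \eqref{eq:Theta-cocycle}, and with it the proposition, follows; one should also record, as a special case of Proposition \ref{prop:cL-action1}, that $f\mapsto v\cdot f$ is a derivation for $\circ_1$ on $\wt{\Tw}\cO(1)$, which is what makes the right-hand side of \eqref{eq:Theta-cocycle} meaningful in the semi-direct product.
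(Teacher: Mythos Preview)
Your proposal is correct and follows essentially the same approach as the paper: reduce to the cocycle identity \eqref{eq:Theta-cocycle} (the paper's equation \eqref{ka-property}), then verify it by evaluating $\ka([v,w])$ on $1_r$ and splitting the resulting sum over $2$-vertex trees according to whether the distinguished leaf $r+1$ sits on the root-adjacent vertex or the upper one, which is exactly the decomposition the paper carries out in its equation \eqref{ka-bracket}. Your extra remarks on degrees and compatibility with the differentials are routine and correct, and the paper simply omits them.
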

\begin{proof}
First, let us prove that 
for every pair of homogeneous vectors 
$v, w \in \cL_{\cO}$ we have 
\begin{equation}
\label{ka-property}
\ka([v,w])
= [\ka(v), \ka(w)] + v \cdot \ka(w) 
- (-1)^{|v||w|} w \cdot \ka(v)\,.
\end{equation}

Indeed, unfolding the definition of $\ka$ we 
get\footnote{Here we use the notation for operadic 
multiplications \eqref{new-notation}.}  
\begin{equation}
\label{ka-bracket} 
\ka([v,w])(1_r) =   \sum_{p = 1}^{r}
\sum_{\tau \in \Sh_{p,r-p}}  v_{r-p+2} \big( w_p(\tau(1), \dots,  \tau(p)), \tau(p+1), 
\dots, \tau(r), r+1 \big) 
\end{equation}
$$
 + \sum_{p = 0}^{r}
\sum_{\tau \in \Sh_{p,r-p}}  v_{r-p+1} \big( w_{p+1}(\tau(1), \dots,  \tau(p), r+1), \tau(p+1), 
\dots, \tau(r) \big) 
$$
$$
- (-1)^{|v||w|} (v \leftrightarrow w)\,,
$$
where $v_{t} = v(1_{t})$ and $w_t = w(1_t)$\,. 
The first sum in \eqref{ka-bracket} equals 
$$
- (-1)^{|v||w|}( w \cdot \ka(v)) (1_r)\,.
$$
Furthermore, since $v(1_{t})$ is invariant under 
the action of $S_t$, we see that the second sum 
in \eqref{ka-bracket} equals
$$
\big(\ka(v) \circ_1  \ka(w) \big)\, (1_r)\,.
$$
Thus equation \eqref{ka-property} holds.  
Now, using  \eqref{ka-property}, it is easy to see that 
$$
[ v + \ka(v),  w + \ka(w)] = [v,w] + v \cdot \ka(w) - 
(-1)^{|v||w|} w \cdot \ka(v) + [\ka(v), \ka(w)]= 
$$
$$
= [v,w] + \ka([v,w])
$$
and the statement of the proposition follows.
\end{proof}

The following corollaries are immediate consequences 
of Propositions \ref{prop:P-1-deriv} and \ref{prop:Theta}.
\begin{corollary}
\label{cor:the-action}
For  $v \in \cL_{\cO}$ and $f \in \wt{\Tw}\cO(n)$ 
the formula
\begin{equation}
\label{the-action}
f \to v \cdot f + \de_{\ka(v)} (f)   
\end{equation}
defines an action of  the Lie algebra $\cL_{\cO}$
on the operad $\wt{\Tw}\cO$\,.  $~~~\Box$
\end{corollary}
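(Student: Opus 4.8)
The plan is to deduce the corollary formally from Propositions~\ref{prop:P-1-deriv}, \ref{prop:cL-action1} and~\ref{prop:Theta}. Recall that an action of a Lie algebra $\mathfrak g$ on an operad $P$ is precisely a morphism of Lie algebras from $\mathfrak g$ to the Lie algebra $\mathrm{Der}(P)$ of operadic derivations of $P$ (with the graded commutator as bracket). So I would produce such a morphism out of $\cL_{\cO}$ whose value on $v$ is the operation $f\mapsto v\cdot f+\de_{\ka(v)}(f)$ of~\eqref{the-action}, by first constructing the corresponding action of the semi-direct product $\cL_{\cO}\ltimes\wt{\Tw}\cO(1)$ from Proposition~\ref{prop:Theta} and then precomposing with $\Te$; note that Proposition~\ref{prop:Theta} already folds in the action of Proposition~\ref{prop:cL-action1} through the very definition of this semi-direct product, which is why the authors list the corollary as immediate from Propositions~\ref{prop:P-1-deriv} and~\ref{prop:Theta}.

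First I would define $\rho\colon \cL_{\cO}\ltimes\wt{\Tw}\cO(1)\to\mathrm{Der}(\wt{\Tw}\cO)$ by $\rho(v,b)=\bigl(f\mapsto v\cdot f+\de_b(f)\bigr)$. This is clearly linear, and $\rho(v,b)$ really is an operadic derivation: $f\mapsto v\cdot f$ is one because Proposition~\ref{prop:cL-action1} provides an action of $\cL_{\cO}$ on the operad $\wt{\Tw}\cO$, each $\de_b$ is one by Proposition~\ref{prop:P-1-deriv}, and a sum of derivations is a derivation. It remains to check that $\rho$ preserves brackets. On the sub-Lie-algebra $\cL_{\cO}\times\{0\}$ the map $\rho$ coincides with the action of Proposition~\ref{prop:cL-action1}, hence preserves brackets there; on the ideal $\{0\}\times\wt{\Tw}\cO(1)$ it is $b\mapsto\de_b$, which preserves brackets by the identity $[\de_{b_1},\de_{b_2}]=\de_{[b_1,b_2]}$ of Proposition~\ref{prop:P-1-deriv}.

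The only remaining case is the mixed bracket $[(v,0),(0,b)]$. Unwinding the semi-direct-product bracket gives $[(v,0),(0,b)]=(0,\,v\cdot b)$, where $v\cdot b\in\wt{\Tw}\cO(1)$ is the arity-one component of the action of Proposition~\ref{prop:cL-action1}; so I must verify the operator identity $[\,v\cdot(\,\cdot\,)\,,\,\de_b\,]=\de_{v\cdot b}$ on $\wt{\Tw}\cO$. I would get this from the general fact that $[D,\de_b]=\de_{D(b)}$ for every operadic derivation $D$ of an operad $P$ and every $b\in P(1)$: expand $\de_b(f)=b\circ_1 f-(-1)^{|b||f|}\sum_i f\circ_i b$, apply the graded Leibniz rule for $D$ to each summand, and observe that the terms involving $D(f)$ cancel against those of $(-1)^{|D||b|}\de_b(D(f))$, leaving exactly $\de_{D(b)}(f)$. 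Taking $D=v\cdot(\,\cdot\,)$, so that $D(b)=v\cdot b$, yields the mixed compatibility, and therefore $\rho$ is a morphism of Lie algebras.

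Finally, Proposition~\ref{prop:Theta} gives a Lie algebra homomorphism $\Te\colon\cL_{\cO}\to\cL_{\cO}\ltimes\wt{\Tw}\cO(1)$, $\Te(v)=v+\ka(v)$, so the composite $\rho\circ\Te\colon\cL_{\cO}\to\mathrm{Der}(\wt{\Tw}\cO)$ is again a morphism of Lie algebras, and by construction $(\rho\circ\Te)(v)$ is the operation $f\mapsto v\cdot f+\de_{\ka(v)}(f)$ of~\eqref{the-action}; this proves the corollary. I do not expect a genuine obstacle here, since everything is formal once the three propositions are available; the one point that needs care is the Koszul-sign bookkeeping in the identity $[D,\de_b]=\de_{D(b)}$ and in the semi-direct-product bracket — precisely the routine computation the authors leave implicit when calling the corollary ``immediate''.
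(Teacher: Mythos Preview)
Your proposal is correct and is precisely the argument the paper leaves implicit: the corollary is stated as an immediate consequence of Propositions~\ref{prop:P-1-deriv} and~\ref{prop:Theta}, and your construction of $\rho\colon\cL_{\cO}\ltimes\wt{\Tw}\cO(1)\to\mathrm{Der}(\wt{\Tw}\cO)$ followed by precomposition with $\Te$ is exactly the unfolding of that remark. The only nontrivial detail you supply beyond what the paper records is the identity $[D,\de_b]=\de_{D(b)}$ for an arbitrary operadic derivation $D$, which is indeed the routine Leibniz computation needed to handle the mixed bracket.
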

\begin{corollary}
\label{cor:MC-MC}
For every  Maurer-Cartan element $\vf \in \cL_{\cO}$, the sum
$$
\vf + \ka(\vf)
$$
is a  Maurer-Cartan element of the Lie algebra 
$ \cL_{\cO} \ltimes \wt{\Tw}\cO(1)$\,. $~~~\Box$
\end{corollary}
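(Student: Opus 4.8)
The plan is to obtain Corollary \ref{cor:MC-MC} as a formal consequence of Proposition \ref{prop:Theta}, using only the general fact that a morphism of differential graded Lie algebras sends Maurer-Cartan elements to Maurer-Cartan elements. Proposition \ref{prop:Theta} already provides that $\Te = \id_{\cL_{\cO}} + \ka$ is compatible with the Lie brackets of $\cL_{\cO}$ and of the semi-direct product $\cL_{\cO}\ltimes\wt{\Tw}\cO(1)$, so the one remaining point is that $\Te$ is also a chain map, i.e.\ that $\ka$ intertwines the differentials. This is immediate: the differential on $\cL_{\cO} = \Conv(\La^2\coCom,\cO)$ and the differential on $\wt{\Tw}\cO(1)$ are both induced solely from $\pa_{\cO}$ (the cooperad $\La^2\coCom$ carries the zero differential), while $\ka$ merely reindexes generators via $\ka(v)(1_r) = v(1^{\mc}_{r+1})$; hence $\ka(\pa v) = \pa\ka(v)$. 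Granting this, if $\vf$ is a Maurer-Cartan element of $\cL_{\cO}$ then $\Te(\vf) = \vf + \ka(\vf)$ is a Maurer-Cartan element of $\cL_{\cO}\ltimes\wt{\Tw}\cO(1)$, which is the assertion.

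For readers who prefer to see the Maurer-Cartan equation verified directly, one can argue componentwise in the semi-direct product, whose differential is the diagonal one and whose bracket, evaluated on $(\vf,\ka(\vf))$ against itself, has first component $[\vf,\vf]$ and second component $2\,\vf\cdot\ka(\vf) + [\ka(\vf),\ka(\vf)]$. The first component of the Maurer-Cartan equation for $\vf + \ka(\vf)$ is then exactly the Maurer-Cartan equation $\pa\vf + \frac12[\vf,\vf] = 0$ satisfied by $\vf$. For the second component, apply $\ka$ to this equation: since $\ka$ is a chain map the linear term produces $\pa\ka(\vf)$, and by the identity \eqref{ka-property} established in the proof of Proposition \ref{prop:Theta} we have $\ka([\vf,\vf]) = [\ka(\vf),\ka(\vf)] + 2\,\vf\cdot\ka(\vf)$ (using that $\vf$ is odd). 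Altogether,
\[
\pa\ka(\vf) + \frac12[\ka(\vf),\ka(\vf)] + \vf\cdot\ka(\vf) = 0,
\]
which is precisely the second component, and the corollary follows.

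I do not expect any real obstacle: beyond Proposition \ref{prop:Theta}, the only work is the routine bookkeeping of degree shifts and Koszul signs in checking that $\ka$ is a chain map and in spelling out the componentwise Maurer-Cartan equation in a semi-direct product of dg Lie algebras. Note that Proposition \ref{prop:P-1-deriv} is not needed for this particular corollary; it enters the companion Corollary \ref{cor:the-action}, where the intrinsic derivations $\de_{\ka(v)}$ are used to upgrade the homomorphism $\Te$ to an honest action of $\cL_{\cO}$ on the operad $\wt{\Tw}\cO$.
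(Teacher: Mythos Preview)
Your proposal is correct and follows essentially the same approach as the paper: the corollary is stated there as an immediate consequence of Proposition~\ref{prop:Theta} (together with Proposition~\ref{prop:P-1-deriv}, though as you observe the latter is only needed for the companion Corollary~\ref{cor:the-action}). Your added verification that $\ka$ intertwines the differentials induced by $\pa_{\cO}$ makes explicit a point the paper leaves implicit, and your componentwise unpacking of the Maurer-Cartan equation in the semi-direct product is a helpful elaboration of what ``immediate'' means here.
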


We finally give the definition of the operad $\Tw \cO$. 
\begin{defi}
\label{dfn:Tw-cO}
Let $\cO$ be an operad in $\Ch_{\bbK}$ and 
$\vf$ be a Maurer-Cartan element in $\cL_{\cO}$ \eqref{cL-cO} corresponding 
to an operad morphism  $\wh{\vf}$ \eqref{from-hoLie}. 
Let us also denote by $\pa^{\cO}$ the differential on 
$\wt{\Tw}\cO$ coming from the one on $\cO$\,. 
We define the operad $\Tw \cO$ in $\Ch_{\bbK}$ 
by declaring that $\Tw\cO = \wt{\Tw}\cO$ as operads 
in $\grVect_{\bbK}$ and letting 
\begin{equation}
\label{pa-Tw}
\pa^{\Tw} = \pa^{\cO} + \vf \cdot  \,+ \, \de_{\ka(\vf)}
\end{equation}
be the differential on $\Tw\cO$\,.
\end{defi}
Corollaries \ref{cor:the-action} and \ref{cor:MC-MC}
imply that  $\pa^{\Tw}$ is indeed a differential on $\Tw\cO$\,.

\begin{remark}
\label{rem:Tw-cO-0}
It is easy to see that, if $\cO(0)= \bfzero$ then 
the cochain complexes $\bs^{-2}\Tw\cO(0)$ and 
$\cL_{\cO}$ \eqref{cL-cO} are tautologically 
isomorphic. 
\end{remark}

\subsection{Algebras over \texorpdfstring{$\Tw \cO$}{Tw O}}
\label{sec:Tw-cO-alg}

Let us assume that  $V$ is an algebra over $\cO$ equipped with 
a complete decreasing filtration  (\ref{filtr-V}). We also
assume that the $\cO$-algebra structure on $V$ is 
compatible with this filtration. 

Given a Maurer-Cartan element $\al\in \cF_1 V$, the formula
\begin{equation}
\label{tw-diff}
\pa^{\al} (v) = \pa(v) + \sum_{r=1}^{\infty} \frac{1}{r!} \vf(1^{\mc}_{r+1})(\al, \dots, \al, v)
\end{equation}
defines a new (twisted) differential on $V$\,.
We will denote by $V^{\al}$ the cochain complex 
$V$ with this new differential. 
In this setting we have the following theorem:
\begin{thm}
\label{thm:twisting}
If  $V^{\al}$ is the cochain complex obtained 
from $V$ via twisting the differential by the Maurer-Cartan element $\al$
then the formula 
\begin{equation}
\label{twisting}
f(v_1, \dots, v_n) = 
\sum_{r=0}^{\infty} \frac{1}{r!} f(1_r) (\al, \dots, \al, v_1, \dots, v_n)
\end{equation}
$$
f\in \Tw\cO(n)\,,  \qquad v_i \in V
$$
defines a $\Tw\cO$-algebra structure on $V^{\al}$\,. 
\end{thm}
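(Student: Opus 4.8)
The plan is to verify directly that the formula \eqref{twisting} respects the operad structure on $\Tw\cO$ and is compatible with the twisted differential $\pa^{\al}$. Concretely, writing $\rho\colon \Tw\cO \to \End_{V^{\al}}$ for the map sending $f \in \Tw\cO(n)$ to the multilinear operation \eqref{twisting}, I would check that $\rho$ is a morphism of dg operads, which amounts to three things: (i) $\rho$ is $S_n$-equivariant and sends the operadic unit $\bu$ to $\id_V$; (ii) $\rho$ intertwines the elementary insertions $\c_i$ on $\Tw\cO$ with composition of multilinear maps; and (iii) $\rho$ intertwines $\pa^{\Tw}$ with the differential $[\pa^{\al}, -]$ on $\End_{V^{\al}}$. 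Part (i) is immediate from the definitions (the $S_r$-invariance of $f(1_r)$ together with the symmetry of inserting copies of the single element $\al$ makes the sum over shuffles collapse correctly, and $\bu(1_r)$ is $\bu_\cO$ for $r=0$ and zero otherwise).

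For part (ii), the key computation is to expand $\rho(f \c_i g)(v_1,\dots,v_{n+m-1})$ using \eqref{circ-i-for-Tw-cO1} and the fact that $V$ is an $\cO$-algebra, and compare it with $\rho(f)(v_1,\dots, \rho(g)(v_i,\dots,v_{i+m-1}),\dots)$. On the left one gets a sum over $r$, over $(p,r-p)$-shuffles $\si$, and a factor $\tfrac{1}{r!}$, with $p$ copies of $\al$ fed into the $f$-slot and $r-p$ copies into the $g$-slot; on the right one has independent sums over the number $p$ of $\al$'s in $f$ and the number $q$ of $\al$'s in $g$, with factors $\tfrac{1}{p!}\tfrac{1}{q!}$. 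The identity $\binom{r}{p}\tfrac{1}{r!} = \tfrac{1}{p!(r-p)!}$, i.e. the observation that the number of $(p,r-p)$-shuffles is exactly $\binom{r}{p}$ and that all the $\al$'s are equal, is what makes the two expressions agree after setting $r = p + q$. This is the standard ``exponential'' bookkeeping behind twisting and is routine once set up carefully; I would present it as a single display using the notation \eqref{new-notation}.

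Part (iii) is the main obstacle and the real content of the theorem. Here one must show that for $f \in \Tw\cO(n)$,
\[
\rho\bigl(\pa^{\Tw} f\bigr) = \pa^{\al}\circ \rho(f) - (-1)^{|f|}\rho(f)\circ \pa^{\al},
\]
where $\pa^{\Tw} = \pa^{\cO} + \vf\cdot\,+\,\de_{\ka(\vf)}$ from \eqref{pa-Tw} and $\pa^{\al}$ is the twisted differential \eqref{tw-diff}. The term $\pa^{\cO}$ matches the part of $[\pa^{\al},-]$ coming from the original differential $\pa$ on $V$ because $\rho$ built from the untwisted $\cO$-action is already a chain map for $\pa$. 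The remaining terms must match the commutator with the correction $\sum_{r\ge 1}\tfrac{1}{r!}\vf(1^{\mc}_{r+1})(\al,\dots,\al,-)$. I would split this into: the contribution where the extra $\al$'s produced by $\de_{\ka(\vf)}$ (which inserts $\ka(\vf)(1_r) = \vf(1^{\mc}_{r+1})$ into, or pre-composes with, $f$) land outside the $g$-block versus inside the arguments $v_1,\dots,v_n$; and the contribution of $\vf\cdot f$, which by \eqref{eq:cL-action1} inserts $\vf(1^{\mc}_p)$ with all its inputs being copies of $\al$. Using the Maurer-Cartan equation \eqref{MC-eq} for $\al$ — equivalently, that $\vf+\ka(\vf)$ is a Maurer-Cartan element of $\cL_\cO \ltimes \wt\Tw\cO(1)$ by Corollary \ref{cor:MC-MC} — one sees that the ``all arguments are $\al$'' terms assemble into $\pa(\al)$ plus the $\Lie_\infty$-brackets of $\al$'s and hence cancel, leaving exactly the two terms $\pa^{\al}\circ\rho(f)$ and $\pm\rho(f)\circ\pa^{\al}$. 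In fact the cleanest route is to package everything through $\cL_\cO \ltimes \wt\Tw\cO(1)$: the element $\Te(\vf) = \vf + \ka(\vf)$ acts on $\wt\Tw\cO$ by Corollary \ref{cor:the-action}, and $\rho$ is built so as to intertwine this action on the operad with the adjoint action of $\al$ on $\End_V$; then the statement becomes the assertion that a morphism intertwining a Maurer-Cartan element with its image takes the twisted differential to the twisted differential, which is formal. I expect the combinatorial cancellation in the MC-equation step — keeping track of signs and of which shuffles contribute — to be the one genuinely delicate point; everything else is bookkeeping of the kind already carried out in the proofs of Propositions \ref{prop:cL-action1} and \ref{prop:Theta}.
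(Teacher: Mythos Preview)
Your plan is correct and matches the paper's proof closely. Part (ii) is exactly the paper's first step: expand both sides, use $|\Sh_{p,r-p}| = \binom{r}{p}$ to convert $\tfrac{1}{p!(r-p)!}$ into $\tfrac{1}{r!}$ times a sum over shuffles, and identify the result with \eqref{circ-i-for-Tw-cO1}. For part (iii) the paper does precisely your direct computation: expand $[\pa^{\al},\rho(f)]$, add and subtract the term $(-1)^{|f|}\sum_{p}\tfrac{1}{p!}f_{p+1}(\pa\al,\al,\dots,\al,v_1,\dots,v_n)$, use the Maurer-Cartan equation for $\al$ to replace $\pa\al$ by $-\sum_q\tfrac{1}{q!}\vf_q(\al,\dots,\al)$, and then recognize the resulting pieces as $\pa^{\cO}f$, $\vf\cdot f$, and $\de_{\ka(\vf)}f$ via the shuffle count again.

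One small caution: your ``cleanest route'' via Corollary~\ref{cor:MC-MC} conflates two different Maurer--Cartan facts. That $\vf+\ka(\vf)$ is MC in $\cL_\cO\ltimes\wt\Tw\cO(1)$ is what makes $\pa^{\Tw}$ square to zero; it is \emph{not} equivalent to the MC equation for $\al\in V$, and the latter is what actually drives the cancellation here. The abstract packaging you sketch would still require checking that $\rho$ intertwines the $\cL_\cO$-action on $\wt\Tw\cO$ with something on $\End_V$, and that verification is the same direct computation. So stick with your first-described approach --- it is the paper's proof.
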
     
\begin{proof}
Let $f\in \Tw\cO(n)$, $g \in \Tw\cO(k)$, 
$$
f_r := f(1_r) \in \big(\cO(r+n)\big)^{S_r}\,, 
\qquad \textrm{and} \qquad g_r = g(1_r) \in  \big(\cO(r+k)\big)^{S_r}\,. 
$$  

Our first goal is to verify that 
\begin{equation}
\label{operad-ok}
(-1)^{|g| (|v_{i}| + \dots + |v_{i-1}|) }
f(v_1, \dots, v_{i-1}, g(v_i, \dots, v_{i+k-1}), v_{i+k}, \dots, v_{n+k-1}) 
\end{equation}
$$
= f\, \c_i \, g (v_1, \dots, v_{n+k-1})\,. 
$$

The left hand side of \eqref{operad-ok} can be rewritten 
as 
$$
(-1)^{|g| (|v_{i}| + \dots + |v_{i-1}|) }
f(v_1, \dots, v_{i-1}, g(v_i, \dots, v_{i+k-1}), v_{i+k}, \dots, v_{n+k-1})=  
$$
$$
\sum_{p,q \ge 0} \frac{(-1)^{|g| (|v_{i}| + \dots + |v_{i-1}|) } }{p! q!}
f_p(\al, \dots, \al, v_1, \dots, v_{i-1}, g_{q}(\al, \dots, \al, v_i, \dots, v_{i+k-1}), 
 v_{i+k}, \dots, v_{n+k-1})\,.
$$

Using the obvious combinatorial identity
\begin{equation}
\label{number-shuffles}
|\Sh_{p,q}| = \frac{(p+q)!}{p! q!}
\end{equation}
we rewrite the  left hand side of \eqref{operad-ok}  further
$$
\textrm{L.H.S. of \eqref{operad-ok}} = 
$$
$$
\sum_{p, q \ge 0} \frac{ (-1)^{|g| (|v_{i}| + \dots + |v_{i-1}|) }  }{(p+q)!}
|\Sh_{p,q}| f_p(\al, \dots, \al, v_1, \dots, v_{i-1}, g_{q}(\al, \dots, \al, v_i, \dots, v_{i+k-1}), 
 v_{i+k}, \dots, v_{n+k-1}) = 
$$
$$
\sum_{r=0}^{\infty}
\frac{1}{r!} 
\sum_{p= 0}^r
\sum_{\si \in \Sh_{p, r-p}}
\si \circ \vr_{r,p,i} (f_p \circ_{p+i} g_{r-p})  
(\underbrace{\al, \dots, \al}_{r~\textrm{arguments}}, v_1, \dots, v_{n+k-1}),
$$
where $\vr_{r,p,i}$ is the following permutation in $S_{r + i -1}$ 
\begin{equation}
\label{vr-r-p-i}
 \vr_{r,p,i} = 
\left(
\begin{array}{cccccc}
p+1  & \dots  & p+i-1 & p+i & \dots & r+i-1    \\
r+1  &  \dots  & r+i-1 & p+1 & \dots & r  
\end{array}
\right)\,.
 \end{equation}

Thus, using \eqref{circ-i-for-Tw-cO1}, we get  
$$
\textrm{L.H.S. of \eqref{operad-ok}} = f\, \c_i \, g (v_1, \dots, v_{n+k-1})
$$
and equation  \eqref{operad-ok} holds.

Next, we need to show that 
\begin{multline}
\label{diff-ok}
\pa^{\Tw}(f)(v_1, \dots, v_n)  = \pa^{\al} f (v_1, \dots, v_n) 
\\
- (-1)^{|f|} \sum_{i=1}^n (-1)^{|v_{i}| + \dots + |v_{i-1}|}
 f (v_1, \dots, v_{i-1}, \pa^{\al}(v_i),  v_{i+1}, \dots, v_n)\,.
\end{multline}

The right hand side of  \eqref{diff-ok} can be rewritten as 
\begin{multline*}
\textrm{R.H.S. of \eqref{diff-ok}} =
\sum_{p\ge 0}\frac{1}{p!} \pa f_p(\al, \dots, \al, v_1, \dots, v_n) + 
\sum_{p\ge 0, q\ge 1}\frac{1}{p! q!} \vf_q(\al, \dots, \al, f_p(\al, \dots, \al, v_1, \dots, v_n))
\\
- (-1)^{|f|} \sum_{i=1}^n \sum_{p\ge 0}
 \frac{(-1)^{|v_{i}| + \dots + |v_{i-1}|}}{p!}
 f_p (\al, \dots, \al, v_1, \dots, v_{i-1}, \pa(v_i),  v_{i+1}, \dots, v_n)
\\
- (-1)^{|f|} \sum_{i=1}^n \sum_{p\ge 0, q\ge 1}
 \frac{(-1)^{|v_{i}| + \dots + |v_{i-1}|}}{p! q!}
 f_p (\al, \dots, \al, v_1, \dots, v_{i-1}, \vf_q(\al, \dots, \al, v_i),  v_{i+1}, \dots, v_n)\,,
\end{multline*}

where $f_p = f(1_p)$ and $\vf_q = \vf(1^{\mc}_q)$\,.

Let us now add to and subtract from the
right hand side of  \eqref{diff-ok} the sum 
$$
- (-1)^{|f|}\sum_{p\ge 0}\frac{1}{p!} f_{p+1}(\pa \al, \al, \dots, \al, v_1, \dots, v_n)\,.
$$

Using the symmetry of $f_p = f(1_p)$  with respect to 
the action of the subgroup $S_p \subset S_{p+n}$, we get
$$
\textrm{R.H.S. of \eqref{diff-ok}} =
$$
$$
\sum_{p\ge 0}\frac{1}{p!} \pa f_p(\al, \dots, \al, v_1, \dots, v_n) 
- (-1)^{|f|}\sum_{p\ge 0}\frac{1}{p!} f_{p+1}(\pa \al, \al, \dots, \al, v_1, \dots, v_n)
$$
$$
- (-1)^{|f|} \sum_{i=1}^n \sum_{p\ge 0}
 \frac{(-1)^{|v_{i}| + \dots + |v_{i-1}|}}{p!}
 f_p (\al, \dots, \al, v_1, \dots, v_{i-1}, \pa(v_i),  v_{i+1}, \dots, v_n)
$$
$$
+ (-1)^{|f|} \sum_{p\ge 0}\frac{1}{p!} f_{p+1}(\pa \al, \al, \dots, \al, v_1, \dots, v_n)
$$
$$
+\sum_{p\ge 0, q\ge 1}\frac{1}{p! q!} \vf_q(\al, \dots, \al, f_p(\al, \dots, \al, v_1, \dots, v_n))
$$
$$
- (-1)^{|f|} \sum_{i=1}^n \sum_{p\ge 0, q\ge 1}
 \frac{(-1)^{|v_{i}| + \dots + |v_{i-1}|}}{p! q!}
 f_p (\al, \dots, \al, v_1, \dots, v_{i-1}, \vf_q(\al, \dots, \al, v_i),  v_{i+1}, \dots, v_n) = 
$$
$$
(\pa^{\cO} f) (v_1, \dots, v_n) 
$$
$$
+ (-1)^{|f|} \sum_{p\ge 0}\frac{1}{p!} f_{p+1}(\pa \al, \al, \dots, \al, v_1, \dots, v_n)
$$
$$
+\sum_{p\ge 0, q\ge 1}\frac{1}{p! q!} \vf_q(\al, \dots, \al, f_p(\al, \dots, \al, v_1, \dots, v_n))
$$
$$
- (-1)^{|f|} \sum_{i=1}^n \sum_{p\ge 0, q\ge 1}
 \frac{(-1)^{|v_{i}| + \dots + |v_{i-1}|}}{p! q!}
 f_p (\al, \dots, \al, v_1, \dots, v_{i-1}, \vf_q(\al, \dots, \al, v_i),  v_{i+1}, \dots, v_n)\,. 
$$

Due to the Maurer-Cartan equation for $\al$
$$
\pa (\al) + \frac{1}{q!} \vf_q(\al, \al, \dots, \al) = 0
$$
we have 
\begin{multline*}
+ (-1)^{|f|} \sum_{p\ge 0}\frac{1}{p!} f_{p+1}(\pa \al, \al, \dots, \al, v_1, \dots, v_n)=
\\
-(-1)^{|f|} \sum_{p\ge 0, q \ge 2}\frac{1}{p! q!}  
f_{p+1}( \vf_q(\al, \dots, \al), \al, \dots, \al, v_1, \dots, v_n)\,.
\end{multline*}
Hence, using combinatorial formula \eqref{number-shuffles}, we get 
\begin{multline*}
\textrm{R.H.S. of \eqref{diff-ok}} =
(\pa^{\cO} f) (v_1, \dots, v_n) 
+
(\vf \cdot f )(v_1, \dots, v_n) 
\\
\ka(\vf) \,\c_1\, f  (v_1, \dots, v_n)
- (-1)^{|f|} f \, \c_1 \, \ka(\vf) (v_1, \dots, v_n)\,.
\end{multline*}

Theorem \ref{thm:twisting} is proven.  
\end{proof}

Let us now observe that the dg operad $\Tw\cO$ is equipped with 
a complete descending filtration. Namely,
\begin{equation}
\label{Tw-cO-filtr}
\cF_k \Tw\cO(n) = \{f \in  \Tw\cO(n) ~|~ f(1_r) = 0 \quad \forall ~ r < k \}\,.
\end{equation}
It is clear that the operad structure on $\Tw\cO$ is compatible with 
this filtration.
The endomorphism  operad $\End_V$ also carries 
a complete descending filtration since so does $V$\,.
This observation motivates the following definition:
\begin{defi}
\label{dfn:filtered-TwO-alg}
A \und{filtered  $\Tw\cO$-algebra} is a cochain complex 
$V$ equipped with a complete descending filtration for 
which the operad map 
$$
\Tw\cO \to \End_V
$$
is compatible with the filtrations. We denote by 
$\Alg^{\filtr}_{\Tw \cO}$ the category of filtered $\Tw\cO$-algebras.
\end{defi} 
It is easy to see that the $\Tw\cO$-algebra $V^{\al}$ 
from Theorem  \ref{thm:twisting} 
is a filtered $\Tw\cO$-algebra in the sense of this definition. 
The next theorem provides us with an equivalent description 
of the category of filtered $\Tw\cO$-algebras.
\begin{thm}
\label{thm:twisting-cont}
Let $\Alg^{\MC}_{\cO}$ be the category of pairs $(V, \alpha)$, where $V$ is an $\cO$-algebra equipped with a complete descending filtration as in \eqref{filtr-V} and $\alpha\in \cF_1 V$ is a Maurer-Cartan element.
Morphisms between pairs $(V, \alpha)$ and $(V', \alpha')$ are morphisms of filtered $\cO$-algebras $f:V\to V'$
which satisfy the condition $f(\al) = \al'$\,. 
The category $\Alg^{\filtr}_{\Tw \cO}$of filtered $\Tw\cO$-algebras 
is isomorphic to the category $\Alg^{\MC}_{\cO}$\,.
\end{thm}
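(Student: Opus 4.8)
The plan is to produce a pair of mutually inverse functors between $\Alg^{\filtr}_{\Tw\cO}$ and $\Alg^{\MC}_{\cO}$. In one direction, Theorem~\ref{thm:twisting} already gives, on objects, a functor $\mT\colon \Alg^{\MC}_{\cO}\to \Alg^{\filtr}_{\Tw\cO}$ sending $(V,\al)$ to $V^{\al}$ equipped with the $\Tw\cO$-action \eqref{twisting}; the first task is to see that $\mT$ is functorial, i.e.\ that a morphism $f\colon (V,\al)\to (V',\al')$ of $\Alg^{\MC}_{\cO}$ --- a filtered $\cO$-algebra map with $f(\al)=\al'$ --- induces a morphism of filtered $\Tw\cO$-algebras $V^{\al}\to (V')^{\al'}$. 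This is immediate: the twisted differential \eqref{tw-diff} and the operations \eqref{twisting} are assembled from $\cO$-operations and insertions of the Maurer--Cartan element, so the $\cO$-linearity of $f$ together with $f(\al)=\al'$ shows $f$ is a chain map $V^{\al}\to(V')^{\al'}$ intertwining the $\Tw\cO$-action.

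For the inverse functor I exploit the fact that $\Tw\cO$ is generated, as a complete dg operad, by the sub-operad $\cO$ --- embedded via the evident morphism of $\grVect$-operads $\io\colon\cO\hookrightarrow\Tw\cO$, $\io(a)(1_0)=a$ and $\io(a)(1_r)=0$ for $r>0$ (that $\io$ is an operad map sending $\bu_{\cO}$ to the unit \eqref{bu} is read off from \eqref{circ-i-for-Tw-cO}) --- together with the single arity-zero element $\eta\in\cF_1\Tw\cO(0)$ defined by $\eta(1_1)=\bu_{\cO}$ and $\eta(1_r)=0$ otherwise. Concretely, the element of $\Tw\cO(n)$ concentrated in the component $1_r$ with value $a\in(\cO(r+n))^{S_r}$ is $\tfrac1{r!}$ times the result of inserting $r$ copies of $\eta$ into the arity-$(r+n)$ element $\io(a)$, and a general element of $\Tw\cO(n)$ is the (convergent) sum of such pieces. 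Given a filtered $\Tw\cO$-algebra $(W,\rho)$, I define $\mathcal S(W)=(V,\al)$ as follows: $V$ has the same underlying graded space as $W$; its $\cO$-algebra structure is $a\cdot(v_1,\dots,v_n):=\rho(\io(a))(v_1,\dots,v_n)$; the Maurer--Cartan element is $\al:=\rho(\eta)$, which lies in $\cF_1 W$ because $\rho$ is filtration-compatible and $\eta\in\cF_1\Tw\cO(0)$; and the differential of $V$ is $\pa_V:=\pa_W-D_{\al}$, where $D_{\al}(w):=\sum_{r\ge1}\tfrac1{r!}\,\rho\big(\io(\vf(1^{\mc}_{r+1}))\big)(\al,\dots,\al,w)$ is the untwisting operator (which does not involve $\pa_V$, so there is no circularity).

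The substance of the proof is checking that $\mathcal S(W)$ is indeed an object of $\Alg^{\MC}_{\cO}$, and this amounts to running the calculations in the proof of Theorem~\ref{thm:twisting} backwards. Applying the operad map $\rho$ to the identity $\io(a)\c_i\io(b)=\io(a\c_i b)$ shows $(V,\cdot)$ is an $\cO$-algebra in $\grVect$. Since $\pa^{\Tw}\io(a)=\io(\pa^{\cO}a)+\de_{\ka(\vf)}\io(a)$ --- the term $\vf\cdot\io(a)$ vanishes immediately from \eqref{eq:cL-action1} --- and since $\rho(\ka(\vf))=D_{\al}$ (a consequence of the generation statement above, $\ka(\vf)$ being a completed sum of operadic composites of the $\io(\vf(1^{\mc}_q))$ with copies of $\eta$), the chain-map condition for $\rho$ applied to $\io(a)$ turns into precisely the identity saying that $a\mapsto a\cdot$ is a morphism of dg operads $\cO\to\End_V$ with $\End_V$ built from $\pa_V$; equivalently $(V,\cdot,\pa_V)$ is a dg $\cO$-algebra once we know $\pa_V^2=0$. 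The latter follows from $\pa_W^2=0$ together with the chain-map condition for $\rho$ applied to the unary element $\ka(\vf)$ and the identity $\pa^{\Tw}\ka(\vf)=\ka(\vf)\c_1\ka(\vf)$, which is a repackaging of the Maurer--Cartan property of $\vf+\ka(\vf)$ from Corollary~\ref{cor:MC-MC}: indeed these give $\pa_W D_{\al}+D_{\al}\pa_W=D_{\al}^2$, whence $\pa_V^2=(\pa_W-D_{\al})^2=0$. Finally, applying the chain-map condition to $\eta$ and rewriting $\pa^{\Tw}\eta$ again through $\io(\cO)$ and $\eta$ yields, after the shuffle-count identity \eqref{number-shuffles} has reorganized the coefficients, exactly the Maurer--Cartan equation \eqref{MC-eq} for $\al$. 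Completeness of the filtrations is used throughout to make the infinite sums converge and to let $\rho$ commute with the completed sums in $\ka(\vf)$ and in the generation statement.

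It remains to check that $\mathcal S$ is functorial and that $\mathcal S$ and $\mT$ are mutually inverse. On underlying graded spaces and on maps both composites are manifestly the identity, so it suffices to match the extra structure. For $\mathcal S\circ\mT=\id$: evaluating \eqref{twisting} on $\io(a)$ returns $a\cdot(-)$ and on $\eta$ returns $\al$, while \eqref{tw-diff} gives $\pa^{\al}-D_{\al}=\pa_V$. For $\mT\circ\mathcal S=\id$: the given action $\rho$ and the action produced by $\mT\circ\mathcal S$ are two filtration-preserving dg-operad maps $\Tw\cO\to\End_W$ that agree on $\io(\cO)$ and on $\eta$ (by the construction of $\mathcal S$), hence agree on all of $\Tw\cO$ since $\io(\cO)$ and $\eta$ generate it as a complete dg operad. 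I expect the most delicate point to be the reverse-direction bookkeeping in the third paragraph --- tracking, under $\rho$, each constituent of the $\Tw\cO$-operadic structure (the elementary insertions, the intrinsic derivation $\de_{\ka(\vf)}$, and the action $\vf\cdot$) onto the right piece of the dg-$\cO$-algebra-with-Maurer--Cartan-element data, keeping all Koszul signs and shuffle-count factors straight --- but this is a rearrangement of computations already carried out in the proof of Theorem~\ref{thm:twisting}.
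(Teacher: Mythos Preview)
Your proposal is correct and follows essentially the same route as the paper's proof: the forward functor is the $\mfF$ of Theorem~\ref{thm:twisting}, and the inverse is built from the embedding $\iota\colon\cO\hookrightarrow\Tw\cO$ together with the arity-zero element $\eta$ (the paper's $u^{\circ}$), with the key computation being that iterated insertion of $\eta$ into $\iota(a)$ recovers, up to $r!$, the $1_r$-component of any $f\in\Tw\cO(n)$ (your ``generation'' statement is exactly the paper's identity \eqref{down-r-times}). The remaining verifications --- that $\pa_V:=\pa_W-\rho(\ka(\vf))$ squares to zero via Corollary~\ref{cor:MC-MC}, that $\pa_V$ is compatible with the $\iota$-induced $\cO$-action, and that $\al=\rho(\eta)$ satisfies the Maurer--Cartan equation via $\pa^{\Tw}u^{\circ}$ --- match the paper's argument step for step, only with slightly different packaging.
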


\begin{proof}
Theorem \ref{thm:twisting} yields a functor
\begin{equation}
\label{Fun-AlgMC-AlgTw}
\mfF : \Alg^{\MC}_{\cO} \to \Alg^{\filtr}_{\Tw \cO}
\end{equation}
 from  the category $\Alg^{\MC}_{\cO}$
to the category $\Alg^{\filtr}_{\Tw \cO}$ of filtered $\Tw\cO$-algebras. 
The functor $\mfF$ assigns to a pair $(V,\al)$  the 
cochain complex $V^{\al}$ with the differential $\pa^{\al}$ \eqref{tw-diff} and the $\Tw\cO$-algebra 
structure defined by equation \eqref{twisting}.

To define a functor in the opposite direction we produce a degree $2$ element 
$u^{\c} \in \cF_1 \Tw\cO(0)$ using the identity element $u_{\cO} \in \cO(1)$:
\begin{equation}
\label{u-circ}
u^{\c}(1_r) : = 
\begin{cases}
 u_{\cO}   \qquad {\rm if} \quad r=1    \\
 0  \qquad \qquad {\rm otherwise}\,.
\end{cases}  
\end{equation}

Let, as above, $\vf$ be the  Maurer-Cartan element in $\Conv(\La^2\coCom, \cO)$ corresponding 
to the morphism $\hat{\vf}: \La\Lie_{\infty} \to \cO$\,. Also, let 
$\vf_r : = \vf(1^{\mc}_r)$\,. 

Since $S_r$ acts trivially on $\vf_r$, and $\pa^{\cO} (u_{\cO}) = 0$ we get 
$$
\pa^{\Tw} (u^{\c})(1_r) = 
\sum_{\si \in \Sh_{r-1,1} } \si\big( \vf_{r} \circ_r u_{\cO} \big) -
u_{\cO} \circ_1 \vf_{r} = r \vf_r - \vf_r\,.
$$
Thus
\begin{equation}
\label{diff-u-c}
\pa^{\Tw} (u^{\c})(1_r) = (r-1) \vf_{r}\,.
\end{equation}

Let us now consider a filtered $\Tw\cO$-algebra $W$ with the differential $\pa_{W}$\,.
The element $u^{\c} \in  \cF_1 \Tw\cO(0)$ gives us a degree $2$ vector 
in $W$\,. We denote this vector by $\al$ and observe that $\al\in \cF_1 W$
since the map $\Tw\cO \to \End_W$ is compatible with the filtrations. 

Next, we remark that the formula
\begin{equation}
\label{iota-dfn}
\iota(\ga)(1_r) : =
\begin{cases}
\ga  \qquad {\rm if} ~~ r = 0 \\
 0 \qquad {\rm otherwise}\,,
\end{cases}
\qquad \ga \in \cO(n)
\end{equation}
defines an embedding 
\begin{equation}
\label{iota}
\iota: \cO \hookrightarrow \Tw\cO
\end{equation}
of operads in the category $\grVect_{\bbK}$\,.

Thus, using $\iota$ and the $\Tw\cO$-algebra structure on $W$,
we get an $\cO$-algebra structure on $W$\,. Since, in general, 
$\iota$ is not compatible with the differentials, this   $\cO$-algebra structure
on $W$ is not compatible with the differential $\pa_W$ on $W$\,.

Let $r$ be a non-negative integer. Then,
for every vector  
$$
f_r \in \big( \cO(r+n) \big)^{S_r}
$$
the elements $f_r$, $\bs^{2}\, f_r$, $\bs^{4}\, f_r$, $\dots$, 
$\bs^{2r}\, f_r$ may be viewed as vectors in 
$\Tw\cO(r+n)$, $\Tw\cO(r-1+n)$,  $\Tw\cO(r-2+n)$, $\dots$, 
$\Tw\cO(n)$, respectively. Namely, 
\begin{equation}
\label{ladder}
\bs^{2 k}\, f_r (1_p) : = 
\begin{cases}
  f_r \qquad {\rm if} ~~ p = k\,,  \\
 0 \qquad {\rm otherwise}\,.
\end{cases}
\end{equation}

Let us consider the vector 
$ \bs^{2 k} \, f_r \circ_1 u^{\c} \in \Tw \cO(r-k-1+n)$ for 
$0 \le k \le r-1$\,. It is clear that for $p \neq k+1$
$$
\big( \bs^{2 k}\, f_r \circ_1 u^{\c} \big) (1_p) = 0\,.
$$

For $p= k+1$ we use the $S_r$-invariance of $f_r$ 
and get  
$$
\big( \bs^{2 k}\, f_r \circ_1 u^{\c} \big) (1_{k+1}) =
\sum_{\si \in \Sh_{k,1}} \si  (f_r \circ_{k+1} u_{\cO})=
(k+1) f_r \,.
$$
Thus
\begin{equation}
\label{going-down}
 \bs^{2 k} \, f_r \circ_1 u^{\c} = (k+1)\bs^{2k+2} f_r\,. 
\end{equation}

Applying identity \eqref{going-down} $r$ times we get 
\begin{equation}
\label{down-r-times}
( \dots ((f_r \underbrace{\circ_1 u^{\c}) \circ_1 u^{\c}) \dots  \circ_1 u^{\c})}_{r \textrm{ times}} =
r! \bs^{2 r} f_r\,.
\end{equation}

Combining this  observation with the fact that the filtration on $W$ is 
complete, we conclude that for every vector $f \in \Tw\cO(n)$ with 
$f_r = f(1_r) \in \big( \cO(r+n) \big)^{S_r}$ we have 
\begin{equation}
\label{action-on-W}
f(w_1, w_2, \dots, w_n) = 
\sum_{r=0}^{\infty} \frac{1}{r!} \iota(f_r)(\al, \dots, \al, w_1, w_2, \dots, w_n)\,,
\qquad  w_i \in W\,,
\end{equation}
where $\iota$ is the map of operads $\cO \to \Tw\cO$ in $\grVect_{\bbK}$
defined in \eqref{iota-dfn}.

Let us denote by $\pa$ the degree $1$ operation 
$$
\pa : W \to W
$$
defined by the equation 
\begin{equation}
\label{pa-dfn}
\pa(w) := \pa_{W}(w) - \sum_{r=1}^{\infty}\frac{1}{r!} \iota(\vf_{r+1}) (\al, \dots, \al, w)\,,  
\end{equation}
where, as above, $\vf_q = \vf(1^{\mc}_q)$\,.

To prove the identity $\pa^2 = 0$, we observe that,
due to equation \eqref{action-on-W},
\begin{equation}
\label{pa-dfn-equiv}
\pa(w) = \pa_{W}(w) - \ka(\vf)(w)\,.  
\end{equation}

Hence 
$$
\pa^2(w) = - \pa_W (\ka(\vf)(w)) - \ka(\vf)(\pa_W(w))  + 
\big( \ka(\vf) \circ_1 \ka(\vf) \big) (w) =
$$
$$
- \big( \pa^{\Tw} \ka(\vf) \big)(w)  +  \big( \ka(\vf) \circ_1 \ka(\vf) \big) (w)=
$$
$$
- \big( \pa^{\cO} \ka(\vf) \big)(w) - \big( \vf \cdot \ka(\vf) \big)(w)
- 2 \big( \ka(\vf) \circ_1 \ka(\vf) \big) (w)
+  \big( \ka(\vf) \circ_1 \ka(\vf) \big) (w) =
$$
$$
- \big( \pa^{\cO} \ka(\vf)  +  \vf \cdot \ka(\vf) +
 \ka(\vf) \circ_1 \ka(\vf) \big)(w)\,. 
$$

On the other hand, the vector
$ \pa^{\cO} \ka(\vf)  +  \vf \cdot \ka(\vf) +
\ka(\vf) \circ_1 \ka(\vf) \in \Tw\cO(1)$ is zero due to 
Corollary \ref{cor:MC-MC}\,.

Thus equation \eqref{pa-dfn} (or equation \eqref{pa-dfn-equiv}) defines 
another differential on $W$\,.

Let us show that the differential $\pa$ is compatible with 
the $\cO$-algebra structure given by $\iota$ \eqref{iota}. 

Equation  \eqref{pa-dfn-equiv} implies that
for every $\ga \in \cO(n)$ and $w_1,w_2, \dots, w_n \in W$, we have 
\begin{align*}
&\pa \big( \iota(\ga) (w_1, \dots, w_n) \big) - 
\sum_{i=1}^{n}
(-1)^{|\ga|+ |w_1| + \dots + |w_{i-1}|}
\iota(\ga)(w_1, \dots, w_{i-1}, \pa (w_i), w_{i+1}, \dots, w_n)
\\&=
\pa_{W} \big( \iota(\ga) (w_1, \dots, w_n) \big) 
-\sum_{i=1}^{n}
(-1)^{|\ga|+ |w_1| + \dots + |w_{i-1}|}
\iota(\ga)(w_1, \dots, w_{i-1}, \pa_W (w_i), w_{i+1}, \dots, w_n)
\\&\quad\quad
- \ka(\vf)  \big( \iota(\ga) (w_1, \dots, w_n) \big)  
\\&\quad\quad
+ \sum_{i=1}^{n}
(-1)^{|\ga|+ |w_1| + \dots + |w_{i-1}|}
\iota(\ga)(w_1, \dots, w_{i-1}, \ka(\vf)(w_i), w_{i+1}, \dots, w_n)
\\&=
\big( \pa^{\Tw} \iota(\ga) \big)  (w_1, \dots, w_n)
- \big( \ka(\vf) \circ_1  \iota(\ga) \big) (w_1, \dots, w_n) + 
(-1)^{|\ga|}
\sum_{i=1}^{n}
\big( \iota(\ga) \circ_1 \ka(\vf) \big)
(w_1, \dots, w_n) 
\\&=
\big(\iota (\pa^{\cO} \ga) \big)  (w_1, \dots, w_n)\,,
\end{align*}

where, in the last step, we used the fact that 
$\iota(\ga)(1_r) = 0$ for all $r \ge 1$\,.

Thus the differential $\pa$ \eqref{pa-dfn} is indeed compatible 
with the $\cO$-algebra structure on $W$. 

It remains to prove that the vector $\al \in W$ is a  Maurer-Cartan element 
(with respect to the differential $\pa$).

For this purpose we observe that, 
since the map $\Tw\cO \to \End_W$ is compatible with 
the differentials, the vector 
$\pa^{\Tw} u^{\c}$ maps to $\pa_W(\al)$ in $W$\,. 

Therefore, combining equation \eqref{diff-u-c} with equation  
\eqref{action-on-W} we get 
$$
\pa_{W}(\al) = \sum_{r=2}^{\infty}\frac{r-1}{r!} \iota(\vf_r) (\al, \dots, \al)
$$
or equivalently
$$
\pa(\al) +\sum_{r=1}^{\infty}\frac{1}{r!} \iota(\vf_{r+1}) (\al, \dots, \al) = 
 \sum_{r=2}^{\infty}\frac{r-1}{r!} \iota(\vf_r) (\al, \dots, \al)\,.
$$ 

Thus $\al$ indeed satisfies the  Maurer-Cartan equation
\begin{equation}
\label{MC-alpha}
\pa(\al) + \sum_{r=2}^{\infty} \frac{1}{r!} \iota(\vf_r)(\al, \dots, \al) = 0\,.
\end{equation}

Combining the performed work, we conclude that the cochain complex 
$W$ with the differential  \eqref{pa-dfn} and the  Maurer-Cartan element $\al$
is an object of the category $\Alg^{\MC}_{\cO}$\,.

Equations \eqref{action-on-W} and \eqref{pa-dfn} imply that the 
described construction, indeed, gives a (strict) inverse for the 
functor $\mfF$ \eqref{Fun-AlgMC-AlgTw}.
\end{proof}

\subsection{Example: The dg operad $\Tw \La\Lie_{\infty}$}
In this subsection, we apply the twisting procedure to the pair 
$(\La\Lie_{\infty}, \hat{\vf})$, where 
\begin{equation}
\label{id-LaLie}
\hat{\vf}  = \id : \La\Lie_{\infty} \to \La \Lie_{\infty}\,. 
\end{equation}

Let us recall that $1^{\mc}_r$ denotes the canonical generator
$\bs^{2 - 2r}\, 1 \in \La^2 \coCom_{\c}(r)$\,. Thus, 
the  Maurer-Cartan element $\vf$ corresponding to \eqref{id-LaLie} is 
\begin{equation}
\label{vf-La-Lie}
\vf (1^{\mc}_r) = \bs\, 1^{\mc}_r\,,  \qquad r \ge 2
\end{equation}
and the element $\ka(\vf) \in \Tw \La\Lie_{\infty}(1)$ takes the form
\begin{equation}
\label{kappa-vf-La-Lie}
\ka(\vf) (1_r) = \begin{cases}
\bs \, 1^{\mc}_{r+1}  \qquad {\rm if} ~~ r \ge 1 \,, \\
  0 \qquad {\rm if} ~~ r = 0\,.
\end{cases}
\end{equation}

According to general formula \eqref{pa-Tw}, 
the differential  $\pa^{\Tw}$ on $\Tw \La\Lie_{\infty}$
is given by the equation
\begin{equation}
\label{diff-Tw-LaLie-infty}
\pa^{\Tw}(f) (1_r) = \pa^{\Cobar} (f_r) - 
(-1)^{|f|} \sum_{p=2}^{r} 
\sum_{\si \in \Sh_{p, r-p}} \si
\big(
f_{r-p+1} \circ_1  (\bs\, 1^{\mc}_p)
\big)
\end{equation}
$$
 +
\sum_{p=1}^{r} \sum_{\si \in \Sh_{p, r-p}} \si
\big(
 (\bs \, 1^{\mc}_{p+1}) \circ_{p+1}  f_{r-p}
\big) -
(-1)^{|f|} \sum_{i=1}^n \sum_{p=1}^{r-1} 
\sum_{\si \in \Sh_{p, r-p}} \si \circ \vr_{r,p,i}
\big(
f_p \circ_{p+i}  (\bs\, 1^{\mc}_{r-p+1})
\big)\,.
$$
where $f \in \Tw \La\Lie_{\infty} (n)$, $f_r = f(1_r)$ and 
$\vr_{r,p,i}$ is the permutation of $S_{r+i-1}$ defined in \eqref{vr-r-p-i}.

We will need the following lemma:
\begin{lemma}
\label{lem:laliecoalg}
If $\Tw \La \Lie_{\infty}$ is the dg operad which is obtained 
from $\La\Lie_{\infty}$ via applying the twisting procedure 
to the identity map \eqref{id-LaLie} then the equation
\begin{equation}
\label{dfn-LaLieinfty-to-Tw}
\mT (\bs 1^{\mc}_n) (1_r)  = \bs 1^{\mc}_{r + n} 
\end{equation}
defines a map of dg operads
\begin{equation}
\label{LaLieinfty-to-Tw}
\mT : \La\Lie_{\infty} ~ \to ~ \Tw \La \Lie_{\infty}\,.
\end{equation}
\end{lemma}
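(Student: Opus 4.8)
The plan is the following. Recall from \eqref{LaLie-infty} that $\La\Lie_\infty=\Cobar(\La^2\coCom)=\Op(\bs\,\La^2\coCom_{\c})$ is a \emph{free} operad in $\grVect_{\bbK}$, generated by the elements $\bs 1^{\mc}_n$, $n\ge 2$. Hence, to obtain a morphism of operads $\mT:\La\Lie_\infty\to\Tw\La\Lie_\infty$ in $\grVect_{\bbK}$ it suffices to prescribe the images $\mT(\bs 1^{\mc}_n)\in\Tw\La\Lie_\infty(n)$ of the generators, and \eqref{dfn-LaLieinfty-to-Tw} does precisely this. First I would check that the right-hand side of \eqref{dfn-LaLieinfty-to-Tw} is a legitimate element of $\Tw\La\Lie_\infty(n)=\prod_{r\ge 0}\bs^{2r}\big(\La\Lie_\infty(r+n)\big)^{S_r}$: since $\bs\,\La^2\coCom_{\c}(r+n)$ is the one-dimensional trivial $S_{r+n}$-module, the vector $\bs 1^{\mc}_{r+n}$ is $S_{r+n}$-invariant, in particular $S_r$-invariant and invariant under the copy of $S_n$ permuting the last $n$ inputs, so $\mT(\bs 1^{\mc}_n)$ is a well-defined $S_n$-invariant element, compatibly with the trivial $S_n$-action on $\bs 1^{\mc}_n$; moreover its internal degree is $3-2n=|\bs 1^{\mc}_n|$, so $\mT$ is degree-preserving. (Equivalently, by Theorem \ref{thm:from-Cobar} one may simply exhibit the corresponding Maurer--Cartan element of $\Conv(\La^2\coCom_{\c},\Tw\La\Lie_\infty)$.)

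It then remains to prove that $\mT$ commutes with the differentials, i.e. $\mT\circ\pa^{\Cobar}=\pa^{\Tw}\circ\mT$. Both sides are degree-$1$ $\mT$-derivations from $\La\Lie_\infty$ to $\Tw\La\Lie_\infty$, that is, degree-$1$ maps $D$ with $D(x\circ_i y)=D(x)\circ_i\mT(y)+(-1)^{|x|}\,\mT(x)\circ_i D(y)$: for $\mT\circ\pa^{\Cobar}$ this holds because $\pa^{\Cobar}$ is a derivation of $\La\Lie_\infty$ and $\mT$ is a morphism of operads, and for $\pa^{\Tw}\circ\mT$ because $\pa^{\Tw}$ is a derivation of $\Tw\La\Lie_\infty$ and $\mT$ is a morphism. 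Since $\La\Lie_\infty$ is freely generated as an operad by the $\bs 1^{\mc}_n$, a $\mT$-derivation is uniquely determined by its values on these generators; hence it is enough to verify, for every $n\ge 2$,
$$\mT\big(\pa^{\Cobar}(\bs 1^{\mc}_n)\big)=\pa^{\Tw}\big(\mT(\bs 1^{\mc}_n)\big).$$

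For the last step I would evaluate both sides on the generator $1_r$, for each $r\ge 0$. On the left, applying $\mT$ to formula \eqref{diff-LaLie-infty} and using that $\mT$ commutes with the $S_n$-actions and with $\circ_1$ turns the left-hand side into a sum of terms $\si\big(\mT(\bs 1^{\mc}_{n-p+1})\circ_1\mT(\bs 1^{\mc}_p)\big)(1_r)$, and expanding the composition by \eqref{circ-i-for-Tw-cO} distributes the $r$ ``Maurer--Cartan slots'' of $1_r$ between the two nodal vertices over $(q,r-q)$-shuffles. On the right one uses \eqref{diff-Tw-LaLie-infty} with $f_r=\bs 1^{\mc}_{r+n}$ and $(-1)^{|f|}=-1$ (since $|f|=3-2n$ is odd). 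Both expressions thereby become $\bbK$-linear combinations of terms $\si\big(\bs 1^{\mc}_a\circ_j\bs 1^{\mc}_b\big)$ with $a+b=r+n+1$, indexed by $2$-nodal-vertex labelled planar trees with $r+n$ leaves equipped with a partition of the leaf set into $r$ ``Maurer--Cartan'' and $n$ ``genuine'' leaves, and I would match them term by term: the four summands on the right of \eqref{diff-Tw-LaLie-infty} --- the internal cobar differential $\pa^{\Cobar}(f_r)$, the term grafting a $\bs 1^{\mc}_p$-vertex onto the first slot of $f_{r-p+1}$, the term grafting $f_{r-p}$ onto the last slot of a $\bs 1^{\mc}_{p+1}$-vertex, and the term grafting a $\bs 1^{\mc}_{r-p+1}$-vertex onto the $i$-th genuine slot of $f_p$ (reordered by the permutation $\vr_{r,p,i}$) --- correspond exactly to the possible mutual positions of the two nodal vertices relative to the Maurer--Cartan and genuine leaves produced on the left.

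The main obstacle is this final combinatorial matching. One must verify that every decorated tree occurs with equal coefficient on the two sides, that the shuffle sums of \eqref{circ-i-for-Tw-cO} combined with the permutations $\vr_{r,p,i}$ reorganize correctly, and that the Koszul signs coming from commuting the odd-degree generators $\bs 1^{\mc}_\bullet$ past one another combine with $(-1)^{|f|}=-1$ to reproduce the signs of \eqref{diff-Tw-LaLie-infty} --- in particular the overall sign $-1$ of \eqref{diff-LaLie-infty}. One should also dispatch the degenerate cases: for $r=0$ the identity reduces tautologically to $\mT$ applied to \eqref{diff-LaLie-infty}, while the values of $p$ that would create a unary vertex $\bs 1^{\mc}_1$ contribute nothing on either side because $\La^2\coCom_{\c}(1)=0$. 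The rest is routine bookkeeping.
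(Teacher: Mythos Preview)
Your proposal is correct and is essentially the same argument as the paper's, only packaged differently. The paper invokes Theorem~\ref{thm:from-Cobar} and verifies directly that the element $\al_{\mT}$ with $\al_{\mT}(1^{\mc}_n)(1_r)=\bs 1^{\mc}_{r+n}$ satisfies the Maurer--Cartan equation $\pa^{\Tw}\al_{\mT}+\al_{\mT}\bullet\al_{\mT}=0$ in $\Conv(\La^2\coCom_{\c},\Tw\La\Lie_\infty)$; your equation $\mT(\pa^{\Cobar}(\bs 1^{\mc}_n))=\pa^{\Tw}(\mT(\bs 1^{\mc}_n))$ is literally the same identity, since the left-hand side is $-\al_{\mT}\bullet\al_{\mT}$ evaluated at $1^{\mc}_n$. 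The combinatorial core is identical: the paper unfolds $\pa^{\Cobar}(\bs 1^{\mc}_{r+n})$ into a ``master sum'' over all shuffles $\la\in\Sh_{q,r+n-q}$ and shows that the remaining three sums of \eqref{diff-Tw-LaLie-infty} together with $\al_{\mT}\bullet\al_{\mT}$ partition it according to how many of the $n$ ``genuine'' leaves sit at the upper nodal vertex (none, all, exactly one, or between $2$ and $n-1$) --- which is precisely your ``mutual positions'' bookkeeping.
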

%

\begin{proof}
According to  \eqref{LaLie-infty} 
$$
\La\Lie_{\infty} = \Cobar(\La^2 \coCom)\,. 
$$
Hence, due to Theorem \ref{thm:from-Cobar}, maps of 
dg operads from $\La\Lie_{\infty}$ to $\Tw\La\Lie_{\infty}$ are 
in bijection with  Maurer-Cartan elements of the convolution Lie algebra 
\begin{equation}
\label{Conv-coCom-TwLaLie-infty}
\Conv(\La^2 \coCom_{\c}, \Tw\La\Lie_{\infty}) =
\prod_{n \ge 2} \Hom_{S_n} \big( \bs^{2-2n} \bbK ,  \Tw\La\Lie_{\infty}(n) \big)\,.
\end{equation}

Let us denote by $\al_{\mT}$ a vector in \eqref{Conv-coCom-TwLaLie-infty}
defined by the equation:
\begin{equation}
\label{al-mT}
\al_{\mT} (1^{\mc}_n)(1_r) = \bs 1^{\mc}_{r+n}\,.
\end{equation}
Since the vectors $1^{\mc}_n$ and $1_n$ carry degrees
$$
|1^{\mc}_n | = 2 - 2n\,, \qquad  |1_n | = - 2n\,,
$$
the vector $\al_{\mT}$ has degree $1$\,.

Using formula \eqref{diff-Tw-LaLie-infty} for the differential 
$\pa^{\Tw}$ on $\Tw\La\Lie_{\infty}$ we get (here $n \ge 2$)
\begin{multline}
\label{diff-Tw-al-mT}
\big( \pa^{\Tw} \al_{\mT} (1^{\mc}_n) \big) (1_r) =
\pa^{\Cobar} (  \bs 1^{\mc}_{r+n} ) + 
\sum_{p=2}^{r} \sum_{\si \in \Sh_{p, r-p}} 
\si \big(  \bs 1^{\mc}_{r-p+1+n} \circ_1 \bs 1^{\mc}_p \big) +
\\
\sum_{p=1}^r \sum_{\si \in \Sh_{p, r-p}}
\si\big(  \bs 1^{\mc}_{p+1} \circ_{p+1}   \bs 1^{\mc}_{r-p+n} \big) +
\sum_{i=1}^n \sum_{p=1}^{r-1} 
\sum_{\si \in \Sh_{p, r-p}}
\si \circ \vr_{r,p,i} \big( \bs 1^{\mc}_{p+n} \circ_{p+i}  \bs 1^{\mc}_{r -p+1}  \big)\,,
\end{multline}
where $\vr_{r,p,i}$ is the permutation defined in \eqref{vr-r-p-i}.

Unfolding $\pa^{\Cobar} (  \bs 1^{\mc}_{r+n} ) $, we get 
\begin{multline}
\label{diff-Tw-al-mT-unfold}
\big( \pa^{\Tw} \al_{\mT} (1^{\mc}_n) \big) (1_r) =
- \sum_{q=2}^{r+n-1} \sum_{\la \in \Sh_{q, r+n-q} }
\la \big( \bs 1^{\mc}_{r+n-q+1} \circ_1 \bs 1^{\mc}_{q}  \big) + 
\sum_{p=2}^{r} \sum_{\si \in \Sh_{p, r-p}} 
\si \big(  \bs 1^{\mc}_{r-p+1+n} \circ_1 \bs 1^{\mc}_p \big) +
\\
\sum_{p=1}^r \sum_{\si \in \Sh_{p, r-p}}
\si\big(  \bs 1^{\mc}_{p+1} \circ_{p+1}   \bs 1^{\mc}_{r-p+n} \big) +
\sum_{i=1}^n \sum_{p=1}^{r-1} 
\sum_{\si \in \Sh_{p, r-p}}
\si \circ \vr_{r,p,i} \big( \bs 1^{\mc}_{p+n} \circ_{p+i}  \bs 1^{\mc}_{r -p+1}  \big)\,.
\end{multline}

Next, using the definition of the binary operation \eqref{Conv-bullet} we get 
\begin{equation}
\label{almT-almT}
\begin{aligned}
\big( \al_{\mT} \bullet  \al_{\mT} (1^{\mc}_n) \big) (1_r) 
&=
\sum_{q = 2}^{n-1} \sum_{\tau \in \Sh_{q, n-q}}
\Big( \tau \big( \al_{\mT} (1^{\mc}_{n-q+1}) \circ_1  \al_{\mT} (1^{\mc}_{q}) \big) \Big) (1_r) 
\\&=
\sum_{0 \le p \le r}^{2 \le q \le n-1}
\sum_{\si \in \Sh_{p, r-p, q, n-q}}
 \si \big(\, \bs 1^{\mc}_{n+p-q+1} \circ_{p+1}
  \bs 1^{\mc}_{r-p+q} \, \big) \,.
\end{aligned}
\end{equation}

We observe that the second sum in the right hand side of \eqref{diff-Tw-al-mT-unfold} 
is obtained from 
\begin{equation}
\label{master-sum}
 \sum_{q=2}^{r+n-1} \sum_{\la \in \Sh_{q, r+n-q} }
\la \big( \bs 1^{\mc}_{r+n-q+1} \circ_1 \bs 1^{\mc}_{q}  \big)
\end{equation}
by keeping only the terms for which 
$$
\la(r+1) = r+1, \quad \la(r+2) = r+2, \quad \dots, \quad \la(r+n) = r+n\,.
$$

Next, using the fact that $S_m$ acts trivially on 
the vector $1^{\mc}_m$,  we see that the third sum 
in the right hand side of \eqref{diff-Tw-al-mT-unfold} 
is obtained from \eqref{master-sum} by keeping only the terms 
for which $q \ge n$ and 
$$
\la(q-n+1) = r+1\,, \quad \la(q-n+2) = r+2\,, \quad \dots, \quad \la(q)= r+n\,.
$$

Similarly, the last sum  in the right hand side of \eqref{diff-Tw-al-mT-unfold} 
is obtained from the sum \eqref{master-sum} by keeping only the terms
for which 
$$
\la(q) \ge r+1 \qquad \textrm{and} \qquad
\la(i) \le r \quad \forall ~~ 1 \le i \le q-1\,. 
$$

Finally, using the invariance with respect to the action of the symmetric group
once again, we see that $\big( \al_{\mT} \bullet  \al_{\mT} (1^{\mc}_n) \big) (1_r)$ 
is obtained from the sum  \eqref{master-sum} by keeping only the terms
for which 
$$
\la(q-1) \ge r+1\,, \qquad \textrm{and} \qquad \la(r+n) \ge r+1\,.
$$

Thus, the vector $\al_{\mT}$ satisfies the Maurer-Cartan equation
\begin{equation}
\label{MC-al-mT}
 \pa^{\Tw} \al_{\mT} + \al_{\mT} \bullet \al_{\mT} = 0\,.
\end{equation}

Since the morphism of dg operads 
$$
\mT :  \La\Lie_{\infty} ~ \to ~ \Tw \La \Lie_{\infty}
$$
corresponding to the  Maurer-Cartan element  $\al_{\mT}$
is defined by equation \eqref{dfn-LaLieinfty-to-Tw}, 
the lemma is proved.  
\end{proof}
\begin{remark}
\label{rem:Tw-LaLie-infty}
Let $V$ be a  $\La\Lie_{\infty}$-algebra with a differential $\pa$
and the brackets 
$$
\{~,~, \dots, ~ \}_n : V^{\otimes \, n} \to V\,.
$$
Let us assume that $V$ is equipped with a complete 
descending filtration \eqref{filtr-V} and $\al \in \cF_1 V$ is a Maurer-Cartan element.
Theorem \ref{thm:twisting} and Lemma \ref{lem:laliecoalg} imply that the formula
\begin{equation}
\label{twisted-brackets}
\{v_1, \dots, v_n\}^{\al}_n = 
\sum_{r=0}^{\infty} \frac{1}{r!} 
\{\underbrace{\al, \dots, \al}_{r ~\textrm{times}}, v_1, \dots, v_n\}_{r+n}
\end{equation}
defines a $\La\Lie_{\infty}$-structure on $V$ with the differential 
given by equation \eqref{tw-diff}. We say that this latter 
$\La\Lie_{\infty}$-structure on $V$ is obtained from the former 
one via \und{twisting by the Maurer-Cartan element} $\al$\,. 
\end{remark}

\subsection{A useful modification $\Tw^{\oplus}\cO$}
\label{sec:Tw-oplus}
In practice the morphism \eqref{from-hoLie} often
comes from the map (of dg operads)
$$
\mi : \La\Lie \to \cO\,.
$$

In this case, the above construction of twisting 
is well defined for the suboperad $\Tw^{\oplus}(\cO)\subset \Tw\cO$ with 
\begin{equation}
\label{Tw-oplus}
\Tw^{\oplus}(\cO)(n) = \bigoplus_{r \ge 0} \bs^{2r} \big( \cO(r+n) \big)^{S_r}\,. 
\end{equation}

It is not hard to see that the Maurer-Cartan element 
$$
\vf \in \Conv(\La^2 \coCom, \cO)
$$
corresponding to the composition 
$$
\mi \circ U_{\La\Lie} : \Cobar(\La^2 \coCom) \to \cO
$$
is given by the formula:
\begin{equation}
\label{al-coCom-cO}
\vf(1^{\mc}_r) = \begin{cases}
  \mi (\{a_1, a_2\}) \qquad {\rm if} ~~ r =2 \\
  0  \qquad {\rm otherwise}\,.
\end{cases}
\end{equation}

Hence
\begin{equation}
\label{cL-oplus}
\cL^{\oplus}_{\cO} =  \bigoplus_{r \ge 0} \bs^{2r-2} \big( \cO(r) \big)^{S_r}
\end{equation}
is a sub- dg Lie algebra of $\cL_{\cO}$ \eqref{cL-cO}\,.

Specifying general formula \eqref{pa-Tw} to this particular 
case, we see that  the differential $\pa^{\Tw}$ on \eqref{Tw-oplus}
is given by the equation:
\begin{equation}
\label{diff-Tw-cO}
\pa^{\Tw} (v) = - (-1)^{|v|} \sum_{\si \in \Sh_{2, r-1}}
\si \big(v \circ_1 \mi (\{a_1,a_2\}) \big) + 
\sum_{\tau \in \Sh_{1, r}} \tau \big( \mi(\{a_1, a_2\}) \circ_2  v \big)
\end{equation}
$$
- (-1)^{|v|}  \sum_{\tau' \in \Sh_{r, 1}}  \sum_{i=1}^{n}  
\tau'  \circ \vs_{r+1,r+i} \big( v \circ_{r+i} \mi(\{a_1,a_2\}) \big)\,,
$$
where 
$$
v \in  \bs^{2r} \big( \cO(r+n) \big)^{S_r}\,,
$$
and $\vs_{r+1,r+i}$ is the cycle $(r+1, r+2, \dots, r+i)$\,.

\begin{remark}
\label{rem:diff-Tw}
We should remark that, when we apply elementary insertions
in the right hand side of \eqref{diff-Tw-cO}, we view
$v$ and $\mi(\{a_1, a_2\})$ as vectors in $\cO(r+n)$ and 
$\cO(2)$ respectively. The resulting sum in the  right hand 
side of \eqref{diff-Tw-cO} is viewed as a vector in $\Tw\cO(n)$\,.  
\end{remark}

\subsection{More general version of twisting}
\label{sec:Tw-general}
The reader may object that the version of twisting we have presented so far does not even cover the most classical case of the operad $\cO = \Lie$, since in \eqref{from-hoLie} we required a map $\La\Lie_\infty \to \cO$. However, this is easily repaired:

Let $k$ be an integer and let
\[
\La^k\Lie_\infty \to \cO
\]
be an operad map. Then, by functoriality of $\La$, there is an operad map 
\[
\La\Lie_\infty \to \La^{1-k}\cO.
\]
We then define: 
\[
\Tw \cO := \La^{k-1} \Tw (\La^{1-k}\cO).
\]
In this paper we will always assume $k=1$ since (i) this is the relevant case for the Deligne conjecture 
and (ii) the signs are significantly simpler for odd $k$. 
One should however be aware that all important statements may be transcribed to the arbitrary-$k$ case by application of the functor $\La$.

\section{Categorial properties of twisting}
\label{sec:categorial}

\subsection{\texorpdfstring{$\Tw$}{Tw} as a comonad}

The goal of this section is to show that the operation $\Tw$ defines a comonad 
 on the under-category $\La \Lie_{\infty}\downarrow \Operads$, where $\Operads$ is the category of dg operads. Recall that the under-category $\La \Lie_{\infty}\downarrow \Operads$ is the category of arrows $\La \Lie_{\infty}\to \cO$ in $\Operads$, with morphisms the commutative diagrams
 \[
 \begin{tikzpicture}
\matrix (m) [matrix of math nodes, row sep=2em]
{  & \La \Lie_{\infty} &  \\
\cO &  & \cO' \\ };
\path[->,font=\scriptsize]
(m-1-2) edge (m-2-1) edge (m-2-3)
(m-2-1) edge  (m-2-3);
\end{tikzpicture}\, .
 \]
 
\subsubsection{\texorpdfstring{$\Tw$}{Tw} is an endofunctor}
Consider arrows $\La \Lie_{\infty} \to \cO \stackrel{f}{\to} \cO'$ in the category $\Operads$. 
Twisting  $\cO$ and $\cO'$, we obtain new dg operads with
\begin{align*}
\Tw \cO(n) &= \prod_{r \geq 0}  \Hom_{S_r} ( \bs^{-2r} \bbK, \cO(r+n))\,, \\
\Tw \cO'(n) &= \prod_{r \geq 0}  \Hom_{S_r} ( \bs^{-2r} \bbK, \cO'(r+n))\,.
\end{align*}
By composing morphisms on the right with (components of) the map $f:\cO \to \cO'$ 
we obtain a map (of collections)
\[
\Tw f \colon \Tw \cO \to \Tw \cO'\,.
\]
\begin{lemma}
\label{lem:Twfdef}
The above map $\Tw f : \Tw \cO \to \Tw \cO'$ is a map of dg operads.
Furthermore, if 
\[
\La \Lie_{\infty} \to \cO \stackrel{f}{\to} \cO'\stackrel{g}{\to} \cO''
\]
are maps of dg operads, then $\Tw (f \circ g)=(\Tw f)\circ (\Tw g)$.
\end{lemma}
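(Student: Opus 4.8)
The plan is to split the claim into the part about the underlying graded operads and the part about differentials. For the first part I would forget the $\La \Lie_{\infty}$-maps entirely and regard $\cO\mapsto \Tw\cO$ as an assignment on the category of graded operads: given a map $f\colon \cO\to\cO'$ of graded operads, define $\Tw f$ by $(\Tw f)(\phi)(1_r) = f\big(\phi(1_r)\big)$ for $\phi\in\Tw\cO(n)$. This is well defined because $f$ is $S_{r+n}$-equivariant, hence in particular $S_r$-equivariant, so $f(\phi(1_r))\in\big(\cO'(r+n)\big)^{S_r}$. It is a map of operads: the $i$-th elementary insertion on $\Tw\cO$, formula \eqref{circ-i-for-Tw-cO}, is assembled from the tree-multiplications $\mu_{\bt_{\si,i}}$ of $\cO$ and actions of symmetric groups, all of which an operad map preserves; and the unit \eqref{bu} goes to the unit since $f(\bu_{\cO}) = \bu_{\cO'}$. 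Functoriality — the composition identity in the lemma — is then immediate from the defining formula, being functoriality of $\Hom$ in its target variable; in particular this already settles the last sentence of the lemma, since the underlying set-maps of both sides agree.

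For the differential part one uses the hypothesis that $f$ is a morphism in $\La \Lie_{\infty}\downarrow\Operads$. Let $\vf\in\cL_{\cO}$, $\vf'\in\cL_{\cO'}$ be the Maurer-Cartan elements of Theorem \ref{thm:from-Cobar} corresponding to the two arrows out of $\La \Lie_{\infty}$. The crucial observation is that $\vf'$ is the push-forward of $\vf$ along $f$, i.e.\ $\vf'(1^{\mc}_r) = f\big(\vf(1^{\mc}_r)\big)$ for all $r$: since the composite $\La \Lie_{\infty}\to\cO\to\cO'$ through $f$ coincides with $\La \Lie_{\infty}\to\cO'$, and since under the bijection of Theorem \ref{thm:from-Cobar} post-composing a morphism out of $\Cobar(\cdot)$ with $f$ corresponds to post-composing the associated Maurer-Cartan element with $f$ (clear from the relation between $F$ and $\al_F$), this is forced. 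By formula \eqref{kappa} it follows that $\ka(\vf') = (\Tw f)(\ka(\vf))$.

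Now recall $\pa^{\Tw} = \pa^{\cO} + \vf\cdot + \de_{\ka(\vf)}$ from Definition \ref{dfn:Tw-cO}. The internal term $\pa^{\cO}$ acts levelwise on $\Tw\cO$, so $\Tw f$ intertwines $\pa^{\cO}$ and $\pa^{\cO'}$ because $f$ is a chain map. The term $\vf\cdot$ is given by \eqref{eq:cL-action1} as a combination of tree-multiplications $\mu_{\bt_{\si,p,r-p}}$ and symmetric-group actions applied to $\phi(1_{r-p+1})$ and $\vf(1^{\mc}_p)$; pushing this forward through $f$ and using $f(\vf(1^{\mc}_p)) = \vf'(1^{\mc}_p)$ turns it into $\vf'\cdot$ evaluated on $(\Tw f)(\phi)$ (the signs in \eqref{eq:cL-action1} depend only on $|\phi|$ and $|\vf|$, neither of which changes under post-composition with the degree-zero map $f$). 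Finally $\de_{\ka(\vf)}$ is the intrinsic derivation \eqref{P-1-deriv}, built purely from the operad structure; since $\Tw f$ is already an operad map and carries $\ka(\vf)$ to $\ka(\vf')$, it intertwines $\de_{\ka(\vf)}$ and $\de_{\ka(\vf')}$. Adding the three pieces yields $\Tw f\circ\pa^{\Tw} = \pa^{\Tw}\circ\Tw f$, so $\Tw f$ is a map of dg operads.

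None of the individual verifications is deep; the single point that needs genuine attention is the push-forward identity $\vf' = f_*\vf$ (and hence $\ka(\vf') = (\Tw f)(\ka(\vf))$), after which differential-compatibility reduces mechanically to the already-established functoriality of $\Tw$ on graded operads together with the explicit formulas \eqref{eq:cL-action1} and \eqref{P-1-deriv}. The only mild annoyance is keeping track of Koszul signs, but these involve solely the (unchanged) degrees $|\phi|$, $|\vf|$ and $|\ka(\vf)|$.
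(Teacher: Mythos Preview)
Your argument is correct and follows the same approach as the paper, which dispatches the lemma in one sentence by appealing to naturality (``It is clear since we only used natural operations (i.e., the operad structure) in defining $\Tw \cO$''). Your proof is simply a careful unpacking of that sentence: the operad-map and functoriality claims reduce to $f$ preserving tree-multiplications and symmetric-group actions, and differential-compatibility comes down to the push-forward identity $\vf' = f_*\vf$ forced by the commuting triangle under $\La\Lie_\infty$.
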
 
\begin{proof}
It is clear since we only used natural operations (i.e., the operad structure) in defining $\Tw \cO$.
\end{proof}

Applying $\Tw$ to $\hat{\vf} : \La \Lie_{\infty} \to \cO$, we obtain a morphism
of dg operads
\begin{equation}
\label{TwLinf-TwcO}
\Tw(\hat{\vf}) : \Tw \La \Lie_{\infty} \to \Tw \cO\,.
\end{equation}

Composing \eqref{TwLinf-TwcO} with the canonical map 
of dg operads 
$$
\mT :  \La\Lie_{\infty} \to \Tw \La \Lie_{\infty}
$$
from  Lemma \ref{lem:laliecoalg}, we obtain a morphism 
of dg operads
\begin{equation}
\label{beta-cO}
\beta_{\cO} : = \Tw(\hat{\vf}) \circ  \mT  :  \La \Lie_{\infty} \to \Tw \cO\,.
\end{equation}

From Lemma \ref{lem:Twfdef} it is then evident that the following diagram commutes:
\[
\begin{tikzpicture}
\matrix (m) [matrix of math nodes, row sep=2em]
{  & \La \Lie_{\infty} &  \\
\Tw \cO &  & \Tw \cO' \\ };
\path[->,font=\scriptsize]
(m-1-2) edge node[above] {$ \beta_{\cO}~~~~~ $} (m-2-1) edge  node[auto] {$ \beta_{\cO'} $} (m-2-3)
(m-2-1) edge node[auto] {$ \Tw f $} (m-2-3);
\end{tikzpicture}
\]
Here $f, \cO, \cO'$ are as above. Summarizing, we obtain the following result:

\begin{corollary}
\label{cor:Tw-endofun}
The operation $\Tw$ defines an endofunctor on the under-category 
$\La \Lie_{\infty}\downarrow \Operads$. ~~$\Box$
\end{corollary}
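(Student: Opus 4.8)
The plan is to assemble the facts already established and to observe that, beyond routine bookkeeping, everything needed is in place. An object of $\La\Lie_{\infty}\downarrow\Operads$ is a dg operad map $\hat{\vf}\colon\La\Lie_{\infty}\to\cO$; I send it to the object $\beta_{\cO}\colon\La\Lie_{\infty}\to\Tw\cO$ of \eqref{beta-cO}. It is worth recording explicitly at this point that the dg operad $\Tw\cO$ — both its underlying graded operad and its differential — is determined by the pair $(\cO,\hat{\vf})$ through the Maurer--Cartan element $\vf\in\cL_{\cO}$, so this assignment is well defined on objects, and since $\beta_{\cO}$ is again an arrow out of $\La\Lie_{\infty}$, it lands in the same category; this is what makes $\Tw$ an \emph{endo}functor. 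A morphism $(\cO,\hat{\vf})\to(\cO',\hat{\vf}')$ is a dg operad map $f\colon\cO\to\cO'$ with $f\circ\hat{\vf}=\hat{\vf}'$; I send it to $\Tw f\colon\Tw\cO\to\Tw\cO'$, which is a dg operad map by Lemma \ref{lem:Twfdef}.

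The one point that requires an argument is that $\Tw f$ is again a morphism in the under-category, i.e.\ that the triangle displayed before the statement commutes: $\Tw f\circ\beta_{\cO}=\beta_{\cO'}$. I would verify this by the chain
\[
\Tw f\circ\beta_{\cO}=\Tw f\circ\Tw(\hat{\vf})\circ\mT=\Tw(f\circ\hat{\vf})\circ\mT=\Tw(\hat{\vf}')\circ\mT=\beta_{\cO'},
\]
where the second equality is the functoriality assertion of Lemma \ref{lem:Twfdef}, the third is the defining relation $f\circ\hat{\vf}=\hat{\vf}'$ of a morphism in the under-category, and the fourth is again \eqref{beta-cO}. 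The only structural input here is that the map $\mT\colon\La\Lie_{\infty}\to\Tw\La\Lie_{\infty}$ of Lemma \ref{lem:laliecoalg} is a fixed map that does not depend on $\cO$, which is precisely what licenses pulling it out of the composition uniformly.

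Finally I would check the two functoriality axioms. Preservation of identities is immediate: $\Tw(\id_{\cO})$ post-composes each component with $\id_{\cO}$, hence equals $\id_{\Tw\cO}$. Preservation of composition is the remaining half of Lemma \ref{lem:Twfdef}, with due care for the composition convention used there. I do not expect any of this to present a genuine obstacle: the substantive work has already been carried out in Lemmas \ref{lem:laliecoalg} and \ref{lem:Twfdef}, so the proof of the corollary is purely a matter of packaging these into the statement that $(\cO,\hat{\vf})\mapsto(\Tw\cO,\beta_{\cO})$, $f\mapsto\Tw f$, is a functor from $\La\Lie_{\infty}\downarrow\Operads$ to itself.
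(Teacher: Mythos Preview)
Your proof is correct and follows exactly the approach the paper intends. The paper treats this corollary as an immediate consequence of Lemma~\ref{lem:Twfdef} together with the commutativity of the triangle involving $\beta_{\cO}$, $\beta_{\cO'}$, and $\Tw f$ (which it declares ``evident''); you have simply written out the verification of that triangle via $\Tw f\circ\beta_{\cO}=\Tw(f\circ\hat{\vf})\circ\mT=\beta_{\cO'}$ and the remaining functoriality axioms explicitly.
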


\begin{remark}
\label{rem:TwO-augmented}
Let $\ast$ be the initial object \eqref{ast} in the category of dg operads. 
It is easy to see that $\Tw \ast$ carries the obvious augmentation 
\begin{equation}
\label{aug-Tw-ast}
\Tw \ast \to \ast\,.
\end{equation}
Thus, if the dg operad $\cO$ in an object of $\La \Lie_{\infty}\downarrow \Operads$
has an augmentation morphism $\ve : \cO \to \ast$ then 
$\Tw \cO$ is canonically augmented. 
The desired augmentation $\Tw\cO \to \ast$ is obtained by composing 
$$
\Tw(\ve) : \Tw \cO \to \Tw \ast
$$
with \eqref{aug-Tw-ast}.
\end{remark}

\subsubsection{The natural projection}
Let $\La \Lie_{\infty} \to \cO$ be an arrow in $\Operads$.
There is the natural map
\begin{equation}
\label{eta}
\eta_\cO : \Tw \cO \to \cO
\end{equation}
projecting the product 
\[
\Tw \cO(n) = \prod_{r \geq 0}  \Hom_{S_r} ( \bs^{-2r} \bbK, \cO(r+n))
\]
to its first ($r=0$) factor.
It is easy to see that \eqref{eta} is a map of dg operads.

We claim that
\begin{lemma}
The maps $\eta_\cO$ assemble to form a natural transformation $\eta: \Tw\Rightarrow \mathit{id}$, where $\mathit{id}$ is the identity functor on $\La \Lie_{\infty}\downarrow \Operads$.
\end{lemma}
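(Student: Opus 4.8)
The plan is to reduce everything to the trivial observation that $\eta_\cO$ is by definition the projection of $\Tw\cO(n)=\prod_{r\ge 0}\Hom_{S_r}(\bs^{-2r}\bbK,\cO(r+n))$ onto its $r=0$ factor, while $\Tw$ acts on a morphism $f\colon\cO\to\cO'$ by post-composition with (the components of) $f$. First I would establish the naturality square at the level of the ambient category $\Operads$: for a map of dg operads $f\colon\cO\to\cO'$, the map $\Tw f$ sends $g\in\Tw\cO(n)$ to the family $r\mapsto f\bigl(g(1_r)\bigr)$, so that $\eta_{\cO'}\bigl(\Tw f(g)\bigr)=f\bigl(g(1_0)\bigr)=f\bigl(\eta_\cO(g)\bigr)$. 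Hence $\eta_{\cO'}\circ\Tw f=f\circ\eta_\cO$ already as maps of collections; since all four maps in the square are maps of dg operads (Lemma~\ref{lem:Twfdef} for $\Tw f$, and the remark just after \eqref{eta} for $\eta$), this is an identity of maps of dg operads.

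Next I would check that each $\eta_\cO$ is actually a morphism in the under-category $\La\Lie_{\infty}\downarrow\Operads$, i.e.\ that $\eta_\cO\circ\beta_\cO=\hat\vf$, where $\beta_\cO=\Tw(\hat\vf)\circ\mT$ as in \eqref{beta-cO}. Applying the square just established to $f=\hat\vf\colon\La\Lie_{\infty}\to\cO$ gives $\eta_\cO\circ\Tw(\hat\vf)=\hat\vf\circ\eta_{\La\Lie_{\infty}}$, so it suffices to show $\eta_{\La\Lie_{\infty}}\circ\mT=\id_{\La\Lie_{\infty}}$. This is immediate from the explicit formula \eqref{dfn-LaLieinfty-to-Tw}: $\eta_{\La\Lie_{\infty}}\bigl(\mT(\bs 1^{\mc}_n)\bigr)=\mT(\bs 1^{\mc}_n)(1_0)=\bs 1^{\mc}_{0+n}=\bs 1^{\mc}_n$, and since $\La\Lie_{\infty}=\Cobar(\La^2\coCom)$ is the operad freely generated by the collection $\bs\,\La^2\coCom_{\c}$, two maps of dg operads out of $\La\Lie_{\infty}$ agreeing on these generators coincide. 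Therefore $\eta_\cO\circ\beta_\cO=\hat\vf\circ\eta_{\La\Lie_{\infty}}\circ\mT=\hat\vf$, so $\eta_\cO$ is an arrow $\Tw\cO\to\cO$ in $\La\Lie_{\infty}\downarrow\Operads$.

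Finally I would assemble the pieces: a morphism in $\La\Lie_{\infty}\downarrow\Operads$ between the objects $\beta_\cO\colon\La\Lie_{\infty}\to\Tw\cO$ and $\hat\vf\colon\La\Lie_{\infty}\to\cO$ (resp.\ the primed versions) is precisely a map of dg operads commuting with the structure maps; the first step provides naturality with respect to an arbitrary morphism $f$ of the under-category, and the second step shows that each $\eta_\cO$ is itself such an arrow. Hence $\eta\colon\Tw\Rightarrow\id$ is a natural transformation. There is essentially no genuine obstacle here: the argument is entirely formal and amounts to bookkeeping with the product decomposition of $\Tw\cO(n)$ together with the fact that every map in sight is a chain map. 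The one point that deserves (a small amount of) care is the identity $\eta_{\La\Lie_{\infty}}\circ\mT=\id$, which must be read off from the defining formula for $\mT$ in Lemma~\ref{lem:laliecoalg} rather than from any abstract property of $\mT$.
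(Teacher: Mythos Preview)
Your proof is correct and contains the paper's argument as a proper subset: the paper only writes down the naturality square $\eta_{\cO'}\circ\Tw f=f\circ\eta_\cO$ and declares it obvious, which is precisely your first paragraph. Your second paragraph, verifying $\eta_\cO\circ\beta_\cO=\hat\vf$ so that $\eta_\cO$ is a morphism in $\La\Lie_{\infty}\downarrow\Operads$, is a point the paper leaves entirely implicit; your reduction to $\eta_{\La\Lie_{\infty}}\circ\mT=\id_{\La\Lie_{\infty}}$ via naturality applied to $f=\hat\vf$, followed by evaluation on generators using \eqref{dfn-LaLieinfty-to-Tw}, is clean and correct. So the approach is the same, but you have filled in a detail the paper omits.
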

\begin{proof}
We have to show that for all arrows $\La \Lie_{\infty} \to \cO \stackrel{f}{\longrightarrow} \cO'$ the following diagram commutes.
\[
\begin{tikzpicture}
\matrix (m) [matrix of math nodes, row sep=2em, column sep=2em]
{\Tw \cO  &\Tw \cO' \\
\cO & \cO' \\ };
\path[->,font=\scriptsize]
(m-1-1) edge node[auto] {$\Tw f$} (m-1-2)
        edge node[left] {$\eta_\cO$} (m-2-1) 
(m-2-1) edge node[auto] {$f$} (m-2-2)
(m-1-2) edge node[auto] {$\eta_{\cO'}$} (m-2-2);
\end{tikzpicture}
\]
This is obvious.
\end{proof}

\subsubsection{The comultiplication  $\mD : \Tw \to \Tw \circ \Tw$}

Let again $\La \Lie_{\infty} \to \cO$ be an arrow in $\Operads$.
Consider 
\[
\Tw \Tw \cO = \prod_{r,s \geq 0}  \Hom_{S_r}( \bs^{-2r} \bbK, \Hom_{S_s} ( \bs^{-2s} \bbK, \cO(s+r+n)))
\cong 
\prod_{r,s \geq 0}  \Hom_{S_s\times S_r}( \bs^{-2s-2r} \bbK, \cO(s+r+n))
.
\]
There are natural inclusions 
\[
\Hom_{S_{s+r} }( \bs^{-2s-2r} \bbK, \cO(s+r+n))
\to
\Hom_{S_s\times S_r}( \bs^{-2s-2r} \bbK, \cO(s+r+n))
\]
and they assemble to form a map
\begin{equation}
\label{mD-cO}
\mD_\cO \colon \Tw \cO \to \Tw \Tw \cO,
\end{equation}
which is alternatively defined by the equation: 
\begin{equation}
\label{mD-cO-dfn}
\mD_{\cO}(f)(1_r \otimes 1_s) = f(1_{s+r})\,, 
\qquad f \in \Tw \cO(n)\,.
\end{equation}

We claim that
\begin{lemma}
\label{lem:mDcO-oper-map}
The map $\mD_\cO$ is a map of operads and the diagram
\begin{equation}
\label{Linf-Tw-TwTw}
\begin{tikzpicture}
\matrix (m) [matrix of math nodes, row sep=2em]
{  & \La \Lie_{\infty} &  \\
\Tw \cO &  & \Tw \Tw \cO \\ };
\path[->,font=\scriptsize]
(m-1-2) edge  node[above] {$ \beta_{\cO}~~ $} (m-2-1) edge  node[auto] {$ \beta_{\Tw\cO} $} (m-2-3)
(m-2-1) edge node[auto] {$\mD_\cO$} (m-2-3);
\end{tikzpicture}
\end{equation}
commutes.
\end{lemma}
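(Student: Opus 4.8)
The plan is to treat the two assertions in turn, using the description of $\wt{\Tw}\cO$ recorded in the Remark after Figure~\ref{fig:bt-tau-ij} (following \cite{Fresse98}): its underlying graded operad is the completion of the operad obtained from $\cO$ by freely adjoining a single nullary generator $u$ of degree $2$, the ``formal Maurer--Cartan element'', so that a component $f(1_r)\in(\cO(r+n))^{S_r}$ of $f\in\Tw\cO(n)$ is read as ``$f(1_r)$ with $u$ plugged into $r$ of its slots'', and $\Tw\Tw\cO$ is the completion of $\cO$ with \emph{two} nullary generators $u,u'$ adjoined. Under this dictionary the defining formula \eqref{mD-cO-dfn} says that $\mD_\cO$ is the continuous extension of the map of graded operads which is the identity on $\cO$ and sends $u\mapsto u+u'$: the $(1_r\otimes 1_s)$-component of $\mD_\cO(u^{\c})$ is $u^{\c}(1_{r+s})=\delta_{r+s,1}\,u_{\cO}$, i.e.\ one ``outer'' or one ``inner'' copy of the generator, while $\mD_\cO$ is the identity on the copy $\iota(\cO)$. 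Such a map of graded operads exists by the universal property of the coproduct and extends continuously to the completions; this gives the first half of the first assertion. (Equivalently, one expands the definition \eqref{circ-i-for-Tw-cO} of $\circ_i$ for $\Tw\cO$ and for $\Tw\Tw\cO$ and checks that the shuffle sums correspond after factoring a $(p,r+s-p)$-shuffle of $\{1,\dots,r+s\}$ into an outer and an inner shuffle and using the $S_\bullet$-equivariance of the components $f(1_p)$, exactly as in the manipulations following \eqref{circ-i-for-Tw-cO1}; preservation of the unit is clear.)

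Next I would verify $\mD_\cO\circ\pa^{\Tw}_{\cO}=\pa^{\Tw}_{\Tw\cO}\circ\mD_\cO$. Write $\pa^{\Tw}_{\cO}=\pa^{\cO}+\vf\cdot\,+\,\de_{\ka(\vf)}$ as in Definition~\ref{dfn:Tw-cO}, and (its iterate) $\pa^{\Tw}_{\Tw\cO}=\wt{\pa}+\psi\cdot\,+\,\de_{\ka(\psi)}$, where $\wt{\pa}$ is the differential on $\wt{\Tw}\Tw\cO$ induced by $\pa^{\Tw}_{\cO}$ and $\psi\in\cL_{\Tw\cO}$ is the Maurer--Cartan element corresponding to $\beta_{\cO}$. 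The $\pa^{\cO}$-parts match because $\mD_\cO$ is the identity on $\cO$. For the twisting parts the crucial input is the shape of $\psi$: unwinding $\beta_{\cO}=\Tw(\hat{\vf})\circ\mT$ gives $\psi(1^{\mc}_p)(1_q)=\vf(1^{\mc}_{p+q})$, hence $\mD_\cO(\ka(\vf))=\ka(\psi)$, and more precisely $\mD_\cO$ carries ``twisting by $\vf$ along the generator $u$'' to ``twisting by $\vf$ along the generator $u+u'$'', i.e.\ to the sum of the $\psi$-action and the induced $\vf$-action on $\wt{\Tw}\Tw\cO$. Expanding both sides with \eqref{eq:cL-action1} and \eqref{P-1-deriv} and matching them term by term, aided by this identification and the correspondence of shuffle sums from the first step, finishes the argument. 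This is the heart of the computation.

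For the commutativity of the triangle \eqref{Linf-Tw-TwTw} I would argue on generators. Since $\La\Lie_{\infty}=\Cobar(\La^2\coCom)$, a map of dg operads out of $\La\Lie_{\infty}$ is determined by its restriction to the generators $\bs\,1^{\mc}_n$ (Theorem~\ref{thm:from-Cobar}), so it suffices to check that $\mD_\cO\circ\beta_{\cO}$ and $\beta_{\Tw\cO}$ agree on each $\bs\,1^{\mc}_n$. From $\beta_{\cO}=\Tw(\hat{\vf})\circ\mT$ and $\mT(\bs\,1^{\mc}_n)(1_r)=\bs\,1^{\mc}_{r+n}$ (Lemma~\ref{lem:laliecoalg}) one gets $\beta_{\cO}(\bs\,1^{\mc}_n)(1_r)=\hat{\vf}(\bs\,1^{\mc}_{r+n})=\vf(1^{\mc}_{r+n})$, so that $\mD_\cO\bigl(\beta_{\cO}(\bs\,1^{\mc}_n)\bigr)(1_r\otimes 1_s)=\beta_{\cO}(\bs\,1^{\mc}_n)(1_{r+s})=\vf(1^{\mc}_{r+s+n})$. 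On the other hand $\beta_{\Tw\cO}=\Tw(\beta_{\cO})\circ\mT$, so $\beta_{\Tw\cO}(\bs\,1^{\mc}_n)(1_r)=\beta_{\cO}(\bs\,1^{\mc}_{r+n})\in\Tw\cO(r+n)$, and its value at $1_s$ is $\vf(1^{\mc}_{s+(r+n)})=\vf(1^{\mc}_{r+s+n})$. The two coincide, so the triangle commutes.

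The main obstacle is the middle step, the differential-compatibility of $\mD_\cO$. It cannot be reduced to a check on a generating set of $\Tw\cO$, because the inclusion $\iota:\cO\hookrightarrow\Tw\cO$ is \emph{not} a chain map (the $\de_{\ka(\vf)}$ piece of $\pa^{\Tw}_{\cO}$ produces extra $\vf$-terms on $\iota(\cO)$). One is therefore forced into the explicit term-by-term comparison of $\pa^{\Tw}_{\cO}$ and $\pa^{\Tw}_{\Tw\cO}$, and the bookkeeping of the nested shuffle sums and their signs is where essentially all the work lies.
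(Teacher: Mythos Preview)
Your argument for the commutativity of the triangle \eqref{Linf-Tw-TwTw} is essentially identical to the paper's: both compute $\beta_{\cO}(\bs\,1^{\mc}_n)(1_r)=\vf_{r+n}$, then $\beta_{\Tw\cO}(\bs\,1^{\mc}_n)(1_r\otimes 1_s)=\vf_{r+s+n}$, and compare with $\mD_\cO\circ\beta_\cO$.

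For the operad-map assertion, the paper takes the direct route you mention parenthetically: it expands $\mD_\cO(f\circ_i g)(1_r\otimes 1_s)$ and $\bigl(\mD_\cO(f)\circ_i\mD_\cO(g)\bigr)(1_r\otimes 1_s)$ explicitly and matches the shuffle sums, observing that the set $\Sh_{p,\,r+s-p}$ is the disjoint union of the sets $T_{r,s,p_1,p_2}$ over $p_1+p_2=p$. Your conceptual reading of $\mD_\cO$ as the continuous operad map ``identity on $\cO$, $u\mapsto u+u'$\,'' is correct and pleasant, and it buys you the compatibility with insertions without the bookkeeping; the paper simply does the bookkeeping.

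The one substantive discrepancy is your treatment of the differential. The paper's proof does \emph{not} address compatibility with $\pa^{\Tw}$ at all; it checks only insertions, the unit, and the triangle. You, by contrast, single this out as the ``heart of the computation''. Your outline of how the pieces $\pa^{\cO}$, $\vf\cdot$, and $\de_{\ka(\vf)}$ should match is reasonable, but your final paragraph contains a mistake: the claim that the differential check ``cannot be reduced to a check on a generating set'' is false. Once $\mD_\cO$ is a map of graded operads, the expression
\[
D \;:=\; \pa^{\Tw}_{\Tw\cO}\circ\mD_\cO \;-\; \mD_\cO\circ\pa^{\Tw}_{\cO}
\]
is an $\mD_\cO$-derivation: $D(f\circ_i g)=D(f)\circ_i\mD_\cO(g)\pm\mD_\cO(f)\circ_i D(g)$. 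Since $\Tw\cO$ is (topologically) generated by $\iota(\cO)$ and $u^{\c}$ (cf.\ \eqref{down-r-times}), it suffices to check $D=0$ on $\iota(\gamma)$ for $\gamma\in\cO$ and on $u^{\c}$; that $\iota$ itself is not a chain map is irrelevant here. This reduces your ``main obstacle'' to two short computations (for $u^{\c}$ one can use \eqref{diff-u-c} on both sides), which is presumably why the paper treats the matter as routine.
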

\begin{proof}
Let us consider commutativity of diagram \eqref{Linf-Tw-TwTw} first.

By definition of $\beta_{\cO}$ \eqref{beta-cO}, we have 
$$
\beta_{\cO}( \bs 1^{\mc}_n) (1_r) =  \vf_{r+n}\,,
$$
where $\vf_n : = \hat{\vf}(\bs\, 1^{\mc}_n)$\,.

On the other hand,
\begin{equation}
\label{beta-TwcO-dfn}
\beta_{\Tw \cO} = \Tw(\beta_{\cO}) \circ \mT\,.
\end{equation}
Hence
\begin{equation}
\label{beta-TwcO}
\beta_{\Tw \cO} (\bs\, 1^{\mc}_n) (1_r \otimes 1_s) =  \beta_{\cO} (\bs\, 1^{\mc}_{r+n})(1_s) 
= \vf_{s+r+n}\,.
\end{equation}

Composing $\beta_{\cO}$ with $\mD_{\cO}$, we get 
$$
\mD_{\cO} \circ \beta_{\cO}( \bs 1^{\mc}_n) (1_{r} \otimes 1_{s}) =
 \beta_{\cO}( \bs 1^{\mc}_n) (1_{r+s}) =  \vf_{s+r+n}\,.
$$
Comparing this result with \eqref{beta-TwcO} we conclude that 
diagram \eqref{Linf-Tw-TwTw} indeed commutes.  

Let us now show that $\mD_\cO$ is a map of operads.

For this purpose we consider a pair of vectors $f \in \Tw \cO(n)$ 
and $g \in \Tw\cO(m)$ and compute
\begin{multline}
\label{mD-f-i-g}
\mD_{\cO}(f \circ_i g)(1_r \otimes 1_s) = f \circ_i g(1_{r+s}) = 
\sum_{p=0}^{r+s-p} \sum_{\si \in \Sh_{p, r+s-p}}
\si \circ \vr_{r+s,p,i} \big(
f(1_p) \circ_{p+i} g(1_{r+s-p}) \big)
\end{multline}
where the family of permutations $\{ \vr_{r,p,i} \}_{p \le r}$ is defined
in \eqref{vr-r-p-i}\,.

On the other hand, 
\begin{equation}
\label{mD-f-i-mD-g}
\begin{split}
\Big( \mD_{\cO}(f) \circ_i \mD_{\cO}(g) \Big)(1_r \otimes 1_s) &=
\sum_{p_1=0}^{r} 
\sum_{\tau\in \Sh_{p_1, r-p_1}}
\tau \circ \vr_{r,p_1,i} \big(
\mD_{\cO}(f)(1_{p_1}) \circ_{p_1 + i}
\mD_{\cO}(g)(1_{r-p_1}) \big)\, (1_s)
\\ &=
\sum_{ \substack{ 0 \le p_1 \le r \\ 0 \le p_2 \le s} } \, 
\sum_{\si \in T_{r, s, p_1,p_2}}\,
\mu_{\bt_{\si} }
\Big(
\mD_{\cO}(f)(1_{p_1}\otimes 1_{p_2}) \otimes \mD_{\cO}(g)(1_{r-p_1} \otimes 1_{s-p_2})
\Big)
\\ &=
\sum_{ \substack{ 0 \le p_1 \le r \\ 0 \le p_2 \le s}} \, 
\sum_{\si \in T_{r, s, p_1,p_2}}\,
\mu_{\bt_{\si} }
\Big(
f(1_{p_1+p_2}) \otimes g(1_{r+s-p_1-p_2})
\Big)\,,
\end{split}
\end{equation}
where $\bt_{\si}$ is the labeled planar tree depicted on figure \ref{fig:bt-si} 
and the set $ T_{r, s, p_1,p_2}$ consists of shuffles $\si \in \Sh_{p_1+p_2, r+s-p_1-p_2}$ 
satisfying the conditions:
\begin{gather*}
\si(1), \, \si(2) ,\, \dots,\, \si(p_2)~ \in~  \{1,2,\dots, s\}\,, 
\\
\si(p_2+p_1+1) ,\, \si(p_2+p_1+2) ,\, \dots ,\, \si(p_1+s) ~\in~  \{1,2,\dots, s\}\,, 
\\
\si(p_2+1),\, \si(p_2+2),\, \dots ,\, \si(p_2+p_1) ~ \in~  \{s+1, s+2,\dots, s+r\}\,, 
\\
\si(p_1+s+1) ,\, \si(p_1+s+2) ,\, \dots ,\, \si(s+r) ~ \in ~  \{s+1, s+2,\dots, s+r\}\,.
\end{gather*}
\begin{figure}[htp] 
\centering 
\begin{tikzpicture}[scale=0.5]
\tikzstyle{w}=[circle, draw, minimum size=3, inner sep=1]
\tikzstyle{b}=[circle, draw, fill, minimum size=3, inner sep=1]
\node[b] (r) at (1, -2) {};
\node[w] (v1) at (1, 0) {};
\node[b] (l1) at (-12, 4) {};
\draw (-12,4.6) node[anchor=center] {{\small $\si(1)$}};
\node[b] (l2) at (-10, 4) {};
\draw (-10,4.6) node[anchor=center] {{\small $\si(2)$}};
\draw (-8,4) node[anchor=center] {{\small $\dots$}};
\node[b] (lp1p2) at (-6, 4) {};
\draw (-6,4.6) node[anchor=center] {{\small $\si(p_2+p_1)$}};
\node[b] (lrs1) at (-2.5, 4) {};
\draw (-2.5,4.6) node[anchor=center] {{\small $s+r+1$}};
\draw (-0.5,4) node[anchor=center] {{\small $\dots$}};
\node[b] (lrs1i) at (1, 4) {};
\draw (1.2,4.6) node[anchor=center] {{\small $s+r+i-1$}};
\node[w] (v2) at (4.5, 6) {};
\node[b] (lp1p21) at (-2, 10) {};
\draw (-2,10.6) node[anchor=center] {{\small $\si(p_2+p_1+1)$}};
\draw (0,10) node[anchor=center] {{\small $\dots$}};
\node[b] (lrs) at (2, 10) {};
\draw (2,10.6) node[anchor=center] {{\small $\si(s+r)$}};
\node[b] (lrsi) at (5, 10) {};
\draw (5,10.6) node[anchor=center] {{\small $s+r+i$}};
\draw (7,10) node[anchor=center] {{\small $\dots$}};
\node[b] (lrsi1m) at (10, 10) {};
\draw (10,10.6) node[anchor=center] {{\small $s+r+i+m-1$}};
\node[b] (lrsim) at (6, 4) {};
\draw (6,4.6) node[anchor=center] {{\small $s+r+i+m$}};
\draw (9,4) node[anchor=center] {{\small $\dots$}};
\node[b] (lrsn1m) at (12, 4) {};
\draw (12.3,4.6) node[anchor=center] {{\small $s+r+n+m-1$}};
\draw (r) edge (v1);
\draw (v1) edge (l1);
\draw (v1) edge (l2);
\draw (v1) edge (lp1p2);
\draw (v1) edge (lrs1);
\draw (v1) edge (lrs1i);
\draw (v1) edge (v2);
\draw (v1) edge (lrsim);
\draw (v1) edge (lrsn1m);
\draw (v2) edge (lp1p21);
\draw (v2) edge (lrs);
\draw (v2) edge (lrsi);
\draw (v2) edge (lrsi1m);
\end{tikzpicture}
\caption{The labeled planar tree $\bt_{\si}$} \label{fig:bt-si}
\end{figure}

It is clear that for every $0 \le p \le r+s$ the (disjoint) union
$$
\bigsqcup_{p_1+p_2 = p} T_{r,s,p_1, p_2} 
$$
coincides with the set $\Sh_{p,r+s-p}$. 

Hence,
$$
\sum_{\substack{ 0 \le p_1 \le r \\ 0 \le p_2 \le s}} \, 
\sum_{\si \in T_{r, s, p_1,p_2}}\,
\mu_{\bt_{\si} }
\Big(
f(1_{p_1+p_2}) \otimes g(1_{r+s-p_1-p_2})
\Big) = \sum_{p=0}^{r+s-p} \sum_{\si \in \Sh_{p, r+s-p}}
\si \circ \vr_{r+s,p,i} \big(
f(1_p) \circ_{p+i} g(1_{r+s-p}) \big)\,.
$$ 

Thus, 
$$
\mD_{\cO}(f \circ_i g) = \mD_{\cO}(f) \, \circ_i \, \mD_{\cO}(g)\,.
$$

In other words, $\mD_{\cO}$ is compatible with all elementary 
operadic insertions. 

It is easy to see that $\mD_{\cO}$ is compatible with the units.

Lemma \ref{lem:mDcO-oper-map} is proved.
\end{proof}

\begin{lemma}
The maps $\mD_\cO$ \eqref{mD-cO-dfn} assemble to form a natural transformation 
$\mD :\Tw \Rightarrow \Tw \circ \Tw $. 
\end{lemma}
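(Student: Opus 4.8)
The plan is to verify that the square
\[
\begin{tikzpicture}
\matrix (m) [matrix of math nodes, row sep=2em, column sep=2em]
{\Tw \cO  &\Tw \Tw \cO \\
\Tw \cO' & \Tw \Tw \cO' \\ };
\path[->,font=\scriptsize]
(m-1-1) edge node[auto] {$\mD_\cO$} (m-1-2)
        edge node[left] {$\Tw f$} (m-2-1)
(m-2-1) edge node[auto] {$\mD_{\cO'}$} (m-2-2)
(m-1-2) edge node[auto] {$\Tw(\Tw f)$} (m-2-2);
\end{tikzpicture}
\]
commutes for every morphism $f\colon\cO\to\cO'$ in $\La\Lie_\infty\downarrow\Operads$; naturality with respect to the base object $\La\Lie_\infty$ is exactly the content of the commuting triangle \eqref{Linf-Tw-TwTw} established in Lemma \ref{lem:mDcO-oper-map}, so once the square above commutes we are done. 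Since $\mD_{\cO}$, $\Tw f$ and $\mD_{\Tw\cO}=\mD_{\cO}$-type maps were all shown to be morphisms of dg operads (in Lemma \ref{lem:mDcO-oper-map} and Lemma \ref{lem:Twfdef} respectively), it suffices to check the identity on the level of underlying collections, i.e.\ on elements.

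The key step is a direct computation using the explicit formula \eqref{mD-cO-dfn} for the comultiplication and the pointwise description of $\Tw f$ (post-composition with the components of $f$). First I would take $g\in\Tw\cO(n)$ and evaluate both composites on a generator $1_r\otimes 1_s$. Going around the top-right path, $\big(\Tw(\Tw f)\circ\mD_{\cO}\big)(g)(1_r\otimes 1_s)$ equals $\Tw(\Tw f)$ applied to the element $\mD_{\cO}(g)$ with $\mD_{\cO}(g)(1_r\otimes 1_s)=g(1_{s+r})$, and $\Tw(\Tw f)$ acts by applying the $(s+r+n)$-ary component $f_{s+r+n}\colon\cO(s+r+n)\to\cO'(s+r+n)$; so the result is $f_{s+r+n}\big(g(1_{s+r})\big)$. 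Going around the bottom-left path, $\big(\mD_{\cO'}\circ\Tw f\big)(g)(1_r\otimes 1_s)=(\Tw f)(g)(1_{s+r})=f_{s+r+n}\big(g(1_{s+r})\big)$ by the very same formula \eqref{mD-cO-dfn} applied in $\Tw\cO'$. The two sides coincide on every generator, hence $\Tw(\Tw f)\circ\mD_{\cO}=\mD_{\cO'}\circ\Tw f$.

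There is no real obstacle here: the only thing to be mildly careful about is the bookkeeping of the two symmetric-group actions — under the isomorphism $\Tw\Tw\cO(n)\cong\prod_{r,s\ge 0}\Hom_{S_s\times S_r}(\bs^{-2s-2r}\bbK,\cO(s+r+n))$ one must check that $\mD_{\cO}(g)$, defined by $g(1_{s+r})$, indeed lands in the $S_s\times S_r$-invariants (it does, since it is even $S_{s+r}$-invariant) and that $\Tw(\Tw f)$ restricted to these invariants is still computed by the single component $f_{s+r+n}$. Both of these are immediate from the definitions and require no genuine argument; the naturality square then follows by the one-line computation above, and combining it with the already-proved triangle \eqref{Linf-Tw-TwTw} yields the claimed natural transformation $\mD\colon\Tw\Rightarrow\Tw\circ\Tw$.
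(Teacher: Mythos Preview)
Your proposal is correct and follows essentially the same approach as the paper: both reduce the naturality square to the observation that post-composition with the components of $f$ commutes with the passage from $S_{s+r}$-invariants to $S_s\times S_r$-invariants. The paper phrases this as commutativity of the obvious square of $\Hom$-spaces, while you verify it elementwise using formula \eqref{mD-cO-dfn}; these are the same check in slightly different notation.
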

\begin{proof}
We have to show that for all arrows $f:\cO\to \cO'$ (respecting the maps from $\La\Lie_\infty$) the following diagram commutes.
\[
\begin{tikzpicture}
\matrix (m) [matrix of math nodes, row sep=2em, column sep=2em]
{\Tw \cO  &\Tw \cO' \\
\Tw\Tw\cO & \Tw\Tw\cO' \\ };
\path[->,font=\scriptsize]
(m-1-1) edge node[auto] {$\Tw f$} (m-1-2)
        edge node[left] {$\mD_\cO$} (m-2-1) 
(m-2-1) edge node[auto] {$\Tw\Tw f$} (m-2-2)
(m-1-2) edge node[auto] {$\mD_{\cO'}$} (m-2-2);
\end{tikzpicture}
\]
Unravelling the definitions this is amounts to saying that the following diagrams commute
\[
\begin{tikzpicture}
\matrix (m) [matrix of math nodes, row sep=2em, column sep=2em]
{\Hom_{S_{r+s} }( \bs^{-2r-2s} \bbK, \cO(r+s+n))  &\Hom_{S_{r+s} }( \bs^{-2r-2s} \bbK, \cO'(r+s+n)) \\
\Hom_{S_r\times S_s}( \bs^{-2r-2s} \bbK, \cO(r+s+n)) & \Hom_{S_r\times S_s}( \bs^{-2r-2s} \bbK, \cO'(r+s+n)) \\ };
\path[->,font=\scriptsize]
(m-1-1) edge node[auto] {$f\circ$} (m-1-2)
        edge[right hook->] (m-2-1) 
(m-2-1) edge node[auto] {$f\circ$} (m-2-2)
(m-1-2) edge[right hook->] (m-2-2);
\end{tikzpicture}
\]
for all $r,s,n$. This is clear.
\end{proof}

\begin{thm}
\label{thm:Twcomonad}
The functor $\Tw$ together with the natural transformations $\eta, \mD$ \eqref{eta}, \eqref{mD-cO-dfn}, is 
a comonad on the under-category  
$\La \Lie_{\infty}\downarrow \Operads$.
\end{thm}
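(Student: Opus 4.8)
The plan is to reduce the statement to a short bookkeeping check, since the substantive work has already been carried out. Corollary~\ref{cor:Tw-endofun} gives that $\Tw$ is an endofunctor of $\La\Lie_\infty\downarrow\Operads$, and the lemmas preceding the theorem establish that $\eta$ and $\mD$ are natural transformations $\Tw\Rightarrow\mathit{id}$ and $\Tw\Rightarrow\Tw\circ\Tw$ between endofunctors of $\La\Lie_\infty\downarrow\Operads$; in particular each $\eta_\cO$ and each $\mD_\cO$ is a morphism in the under-category (for $\mD_\cO$ this is the commutativity of diagram~\eqref{Linf-Tw-TwTw}, and for $\eta_\cO$ it amounts to $\eta_\cO\circ\beta_\cO=\hat{\vf}$, which holds because $\beta_\cO(\bs 1^{\mc}_n)(1_0)=\vf_n=\hat{\vf}(\bs 1^{\mc}_n)$). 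Hence all that is left is to verify the comonad axioms: the two counit identities $(\eta_{\Tw\cO})\circ\mD_\cO=\id_{\Tw\cO}=(\Tw\eta_\cO)\circ\mD_\cO$ and the coassociativity identity $(\mD_{\Tw\cO})\circ\mD_\cO=(\Tw\mD_\cO)\circ\mD_\cO$. Each side of each identity is a morphism of dg operads by the lemmas just cited, so it suffices to check the underlying maps of graded collections, and for this the explicit formulas $\eta_\cO(f)=f(1_0)$ and $\mD_\cO(f)(1_r\otimes1_s)=f(1_{r+s})$ of \eqref{eta} and \eqref{mD-cO-dfn} are all that is needed.

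For the counit axioms, fix $f\in\Tw\cO(n)$. The element $(\eta_{\Tw\cO}\circ\mD_\cO)(f)\in\Tw\cO(n)$ takes on $1_s$ the value $\mD_\cO(f)(1_0\otimes1_s)=f(1_s)$, so it equals $f$. Likewise $(\Tw\eta_\cO\circ\mD_\cO)(f)\in\Tw\cO(n)$ takes on $1_r$ the value $\eta_\cO\big(\mD_\cO(f)(1_r)\big)=\mD_\cO(f)(1_r\otimes1_0)=f(1_r)$, so it too equals $f$.

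For coassociativity, use the identification
\[
\Tw\Tw\Tw\cO(n)\;\cong\;\prod_{r,s,t\ge0}\Hom_{S_r\times S_s\times S_t}\big(\bs^{-2r-2s-2t}\bbK,\;\cO(r+s+t+n)\big).
\]
Under it, $(\mD_{\Tw\cO}\circ\mD_\cO)(f)$ splits the outer pair of indices and takes on $1_r\otimes1_s\otimes1_t$ the value $\mD_\cO(f)(1_{r+s}\otimes1_t)=f(1_{r+s+t})$, while $(\Tw\mD_\cO\circ\mD_\cO)(f)$ splits the inner pair and takes on the same generator the value $\mD_\cO(f)(1_r\otimes1_{s+t})=f(1_{r+s+t})$. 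Since $f(1_{r+s+t})$ is $S_{r+s+t}$-invariant, hence a fortiori $S_r\times S_s\times S_t$-invariant, both composites genuinely land in the displayed product and agree term by term, so they coincide.

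There is no real obstacle here: the content of the theorem sits in the earlier facts that $\Tw$ is an endofunctor and that $\eta,\mD$ are natural --- the last of which rests on the operad-map property of $\mD_\cO$ proved in Lemma~\ref{lem:mDcO-oper-map}, the only genuinely computational step. The comonad axioms themselves are formal, because $\mD$ is ``concatenate indices'' and $\eta$ is ``discard the $r=0$ index,'' and concatenation of indices is associative and unital. The one point worth a moment's care --- and the reason the two coassociativity composites are literally equal rather than merely coherently isomorphic --- is that the nested identifications of $S_a\times S_b$-invariants inside $S_{a+b}$-invariants used at each application of $\mD$ are strictly compatible.
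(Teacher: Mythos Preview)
Your proof is correct and follows essentially the same approach as the paper's: both verify the counit and coassociativity axioms by unwinding the formulas $\eta_\cO(f)=f(1_0)$ and $\mD_\cO(f)(1_r\otimes1_s)=f(1_{r+s})$ and observing that the required identities reduce to the associativity and unitality of index-concatenation. Your write-up is in fact a bit more explicit than the paper's (which dispatches each axiom with ``this is clear''), and you helpfully record the check that $\eta_\cO\circ\beta_\cO=\hat\vf$, which the paper leaves implicit in asserting that $\eta$ is a natural transformation on the under-category.
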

\begin{proof}
We have to verify the defining relations for a comonad. The two co-unit relations boil down to the statement that for any operad $\cO$ the compositions
\begin{gather*}
\Tw \cO \stackrel{\mD_\cO}{~\longrightarrow~} \Tw \Tw \cO \stackrel{\eta_{\Tw\cO}}{~\longrightarrow~} \Tw \cO \\
\Tw \cO \stackrel{\mD_\cO}{~\longrightarrow~} \Tw \Tw \cO \stackrel{\Tw \eta_\cO}{~\longrightarrow~} \Tw \cO
\end{gather*}
are the identity maps on $\Tw\cO$. This statement follows immediately from the definitions. Next consider the co-associativity axiom. In our case it boils down to the statement that for any operad $\cO$ the diagram
\[
\begin{tikzpicture}
\matrix (m) [matrix of math nodes, row sep=2em, column sep=3em]
{ \Tw \cO & \Tw\Tw\cO   \\
\Tw \Tw \cO & \Tw \Tw \Tw \cO \\ };
\path[->,font=\scriptsize]
(m-1-1) edge node[left] {$\mD_{\cO}$}(m-2-1) 
        edge node[auto] {$\mD_\cO$} (m-1-2)
(m-2-1) edge node[auto] {$\Tw(\mD_\cO)$} (m-2-2)
(m-1-2) edge node[auto] {$\mD_{\Tw \cO}$} (m-2-2);
\end{tikzpicture}
\]
commutes. Unravelling the definitions, we have to show that the following diagram commutes
\[
\begin{tikzpicture}
\matrix (m) [matrix of math nodes, row sep=2em, column sep=3em]
{ \Hom_{S_{r+s+t}} ( \bs^{-2r-2s-2t} \bbK, \cO(r+s+t+n)) 
& \Hom_{S_{r+s}\times S_t} ( \bs^{-2r-2s-2t} \bbK, \cO(r+s+t+n))   \\
\Hom_{S_r\times S_{s+t}} ( \bs^{-2r-2s-2t} \bbK, \cO(r+s+t+n)) 
& \Hom_{S_r\times S_s\times S_t} ( \bs^{-2r-2s-2t} \bbK, \cO(r+s+t+n)) \\ };
\path[right hook->,font=\scriptsize]
(m-1-1) edge (m-2-1) 
        edge (m-1-2)
(m-2-1) edge (m-2-2)
(m-1-2) edge (m-2-2);
\end{tikzpicture}
\]
for all $r,s,t,n$. This is again clear.
\end{proof}

\subsection{Coalgebras over the comonad \texorpdfstring{$\Tw$}{Tw}}
\label{sec:Tw-coalg}
Let us now consider coalgebras over the comonad $\Tw$. 
These are arrows of operads $\La \Lie_{\infty}\to \cO$ together with an operad map 
\[
c\colon \cO \to \Tw\cO
\]
such that the following axioms hold:
\begin{itemize}
\item The following diagram commutes:
\begin{equation}
\label{diag-LaLie-infty}
\begin{tikzpicture}
\matrix (m) [matrix of math nodes, row sep=2em]
{  & \La \Lie_{\infty} &  \\
\cO &  & \Tw \cO \\ };
\path[->,font=\scriptsize]
(m-1-2) edge (m-2-1) edge (m-2-3)
(m-2-1) edge node[auto] {$c$} (m-2-3);
\end{tikzpicture}
\end{equation}
\item The composition 
\begin{equation}
\label{comp-c-eta}
\cO \stackrel{c}{\to} \Tw\cO \stackrel{\eta_\cO}{\to} \cO
\end{equation}
is the identity.
\item The following diagram commutes:
\begin{equation}
\label{diag-c-Twc-mD}
\begin{tikzpicture}
\matrix (m) [matrix of math nodes, row sep=2em, column sep =3em]
{ \cO & \Tw\cO   \\
\Tw \cO & \Tw \Tw \cO \\ };
\path[->,font=\scriptsize]
(m-1-1) edge node[auto] {$c$}(m-2-1) 
        edge node[auto] {$c$} (m-1-2)
(m-2-1) edge node[auto] {$\Tw c$} (m-2-2)
(m-1-2) edge node[auto] {$\mD_{\cO}$} (m-2-2);
\end{tikzpicture}
\end{equation}
\end{itemize}

\begin{remark}
\label{rem:Tw-coalg-notation}
It often happens that a map from the dg operad $\La\Lie_{\infty}$ to $\cO$ is clear 
from the context. In this case, we abuse the notation and say that $\cO$ is a 
$\Tw$-coalgebra if the corresponding arrow  $\La\Lie_{\infty} \to \cO $ carries a coalgebra 
structure over the comonad $\Tw$. 
\end{remark}

\begin{example}
In Subsection \ref{sec:LaLieGerhomfixed} below, we will show that the operads
$\La\Lie$ and $\Ger$ carry canonical structures of a $\Tw$-coalgebra.
Furthermore, it is not hard to see that, the canonical morphism of 
dg operads
$$
\mT : \La\Lie_{\infty} \to \Tw  \La\Lie_{\infty}
$$ 
from Lemma \ref{lem:laliecoalg} equips $\La\Lie_{\infty}$ with a structure of a $\Tw$-coalgebra.
 \end{example}

\begin{example}
If $\La \Lie_\infty\to \cO$ is an arrow, then $\Tw \cO$ is a $\Tw$-coalgebra
(a cofree $\Tw$-coalgebra).
\end{example}

Another example of a  $\Tw$-coalgebra is given in the next section.

\subsection{$\Ger_{\infty}$ is canonically a $\Tw$-coalgebra}
\label{sec:Ginf-Twcoalg}

Let us recall from \cite{GJ}, \cite{GK}, \cite{Hinich} that 
$$
\Ger_{\infty} : = \Cobar(\Ger^{\vee})\,,
$$
where $\Ger^{\vee}$ is the cooperad Koszul dual to $\Ger$. Concretely, $\Ger^{\vee}$ is obtained from
$\La^{-2}\Ger$ by taking the linear dual.
The operad $\La\Lie_\infty$ is a sub-operad of $\Ger_{\infty}$, and we denote the inclusion by
\[
\iota: \La\Lie_\infty \to \Ger_{\infty}.
\]

Our goal, in this section, is to show 
that $\Ger_{\infty}$ is a $\Tw$-coalgebra. In order to do this, we have to complete two tasks:
\begin{enumerate}

\item Construct a map of dg operads $c: \Ger_{\infty}\to \Tw\Ger_{\infty}$. 

\item Verify that $c$ satisfies the axioms for a $\Tw$-coalgebra.
\end{enumerate}

To complete the first task 
we consider the free $\La^{-2}\Ger$-algebra $\La^{-2}\Ger_n$ in $n$ dummy 
variables $b_1, \dots, b_n$ of degree zero. The $n$-th space $\La^{-2}\Ger(n)$ of 
the operad $\La^{-2}\Ger$ is spanned by $\La^{-2}\Ger$-monomials in $b_1, \dots, b_n$
in which each dummy variable $b_i$ appears exactly once. It is clear that the operad 
$\La^{-2}\Ger$ is generated by the two vectors $b_1b_2, \{b_1, b_2\}\in \La^{-2}\Ger(2)$
of degrees $2$ and $1$, respectively.  

Next, we consider the ordered partitions of the set $\{1, 2, \dots, n\}$ 
\begin{equation}
\label{sp-partition}
\{i_{11}, i_{12}, \dots, i_{1 p_1}\} \sqcup 
\{i_{21}, i_{22}, \dots, i_{2 p_2}\} \sqcup \dots \sqcup \{i_{t1}, i_{t 2}, \dots, i_{t p_t}\}
\end{equation}
satisfying the following properties: 
\begin{itemize}

\item for each $1 \le \beta \le t$ the index $i_{\beta p_{\beta}}$ is 
the biggest among $i_{\beta 1}, \dots, i_{\beta p_{\beta}}$

\item $i_{1 p_1}  <  i_{2 p_2} < \dots <  i_{t p_t}$ (in particular, $i_{t p_t} = n$).

\end{itemize}

It is not hard to see that for each $n \ge 1$ the monomials 
\begin{equation}
\label{La-2Ger-n-basis}
\{ b_{i_{11}},  \dots, \{ b_{i_{1 (p_1-1)}}, b_{i_{1 p_1}} \br \dots
\{ b_{i_{t1}},  \dots, \{ b_{i_{t (p_t-1)}}, b_{i_{t p_t}} \br
\end{equation}
corresponding to all ordered partitions \eqref{sp-partition} satisfying the above properties
form a basis\footnote{Using this fact, it is easy to see that $\dim \La^{-2}\Ger(n) =n!$}
of $\La^{-2}\Ger(n)$\,. We denote by $I_n$ the set of the ordered 
partitions \eqref{sp-partition} and reserve the notation
\begin{equation}
\label{the-basis-n}
\big\{ w_{n,i} \big\}_{i \in I_n}
\end{equation}
and
\begin{equation}
\label{the-dual-basis}
\{w^*_{n, i}\}_{i \in I_n}
\end{equation}
for the basis of $\La^{-2}\Ger(n)$ formed by monomials \eqref{La-2Ger-n-basis}
and the dual basis of $\Ger^{\vee}(n) = \big( \La^{-2}\Ger(n) \big)^*$\,, respectively.

We observe that, for every $r \ge 0$ and for every basis vector
$w_{n,i}$, the monomial
$$
b_1 \dots b_r \, w_{n,i}(b_{r+1}, \dots, b_{r+n})
$$ 
belongs to the basis of  $\La^{-2} \Ger(r+n)$\,. 
In particular, we denote by 
$$
\big( b_1 \dots b_r \, w_{n,i}(b_{r+1}, \dots, b_{r+n} ) \big)^*
$$
the basis vector of $\Ger^{\vee}(r+n)$
which is dual to $b_1 \dots b_r \, w_{n, i}(b_{r+1}, \dots, b_{r+n}) $
in $\La^{-2}\Ger(r+n)$\,.

Since for every $r\ge 0$ and $n \ge 2$ the element
$\bs^{2r+1} \big( b_1 \dots b_r \, w_{n,i}(b_{r+1}, \dots, b_{r+n} ) \big)^*$
can be viewed as a vector in 
$$
\bs^{2r+1}\big( \Ger^{\vee}(r+n) \big)^{S_r} \subset  \Tw\Ger_{\infty}(n)\,,
$$
the formula
\begin{equation}
\label{al-mG}
\al_{\mG} = 
\sum_{n \ge 2,\, r \ge 0} ~ \sum_{i \in I_n}~ \bs^{2r+1}\,
\big( b_1 \dots b_r \, w_{n,i}(b_{r+1}, \dots, b_{r+n} ) \big)^*
\otimes w_{n,i}
\end{equation}
defines a degree $1$ element in the dg Lie algebra 
\begin{equation}
\label{Conv-Ginf-TwGinf}
\Conv(\Ger^{\vee}_{\c}, \Tw \Ger_{\infty}) \cong
\prod_{n \ge 2} \Big( \Tw \Ger_{\infty}(n) \otimes 
\La^{-2} \Ger(n) \Big)^{S_n}\,.
\end{equation}

We claim that 
\begin{prop}
\label{prop:al-mG}
Equation \eqref{al-mG} defines a Maurer-Cartan element of the 
dg Lie algebra \eqref{Conv-Ginf-TwGinf}.
\end{prop}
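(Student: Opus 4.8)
The plan is to verify directly that $\al_\mG$ satisfies the Maurer--Cartan equation
$$
\pa^{\Tw}\al_\mG \;+\; \al_\mG \bullet \al_\mG \;=\; 0
$$
in the convolution Lie algebra \eqref{Conv-Ginf-TwGinf}. By Theorem \ref{thm:from-Cobar} this is the same as saying that the rule $\bs\, w^*_{n,i}\mapsto \al_\mG(w^*_{n,i})$ extends to a morphism of dg operads $c:\Ger_{\infty}=\Cobar(\Ger^{\vee})\to\Tw\Ger_{\infty}$, which is exactly the assertion of the proposition. The computation runs in close parallel with the proof of Lemma \ref{lem:laliecoalg}: I would evaluate both terms on each cogenerator $w^*_{n,i}$ (with $n\ge 2$, $i\in I_n$) and on each $1_r$, landing in $\Ger_{\infty}(r+n)$, and check that everything cancels; the only new feature is that the deconcatenation coproduct of $\La^2\coCom$ used there is replaced by the combinatorially richer coproduct of $\Ger^{\vee}$.

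First I would record two combinatorial ingredients. (1) The cobar differential $\pa^{\Cobar}$ on $\Ger_{\infty}=\Op(\bs\,\Ger^{\vee}_{\c})$ is dual to operadic composition in $\La^{-2}\Ger$, so that on the generator $\bs\big(b_1\cdots b_r\,w_{n,i}(b_{r+1},\dots,b_{r+n})\big)^*$ it is the sum, over all ways of writing the monomial $X:=b_1\cdots b_r\,w_{n,i}(b_{r+1},\dots,b_{r+n})\in\La^{-2}\Ger(r+n)$ as a two--vertex composite $\mu_{\bt}(Y,Z)$ with $Y,Z$ basis monomials of the form \eqref{La-2Ger-n-basis}, of the corresponding terms; such decompositions amount to choosing a connected subexpression $Z$ of $X$ and contracting it to a single variable. (2) The differential on $\Tw\Ger_{\infty}$ is \eqref{pa-Tw} with $\vf\in\cL_{\Ger_{\infty}}$ the Maurer--Cartan element associated with the sub--operad inclusion $\iota:\La\Lie_{\infty}\hookrightarrow\Ger_{\infty}$; since the sub--cooperad $\La^2\coCom\subset\Ger^{\vee}=\La^2\Ger^*$ dual to $\Ger\twoheadrightarrow\Com$ is spanned in arity $p$ by the pure product $w^*_{p,i_0}=(b_1\cdots b_p)^*$ ($i_0$ the partition into singletons), we have, up to sign, $\vf(1^{\mc}_p)=\bs\,(b_1\cdots b_p)^*$ for $p\ge 2$ and $\vf(1^{\mc}_p)=0$ otherwise. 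Substituting this into \eqref{pa-Tw} and unravelling as in the derivation of \eqref{diff-Tw-LaLie-infty}, $\big(\pa^{\Tw}\al_\mG(w^*_{n,i})\big)(1_r)$ equals $\pa^{\Cobar}\big(\bs(b_1\cdots b_r\,w_{n,i})^*\big)$ plus three families of correction terms coming from $\vf\cdot{}$ and $\de_{\ka(\vf)}$, each of which splits off, inserts below, or merges a pure product $(b_1\cdots b_p)^*$, summed over suitable shuffles and the permutations $\vr_{r,p,i}$ of \eqref{vr-r-p-i}.

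On the other side, $\al_\mG\bullet\al_\mG$ is computed from $\D w^*_{n,i}$ --- a sum over the decompositions of the monomial $w_{n,i}$ alone --- followed by $\circ$--composition in $\Tw\Ger_{\infty}$ via \eqref{Conv-bullet} and \eqref{circ-i-for-Tw-cO1}, which yields a sum indexed by a decomposition of $w_{n,i}$ together with a distribution of the $r$ leading product variables between the two resulting pieces. The core of the argument is the matching: classify all decompositions $X=\mu_{\bt}(Y,Z)$ of $X$ by the position of the contracted subexpression $Z$ relative to the leading factors $b_1,\dots,b_r$ and to the block structure of $w_{n,i}$, and show that (a) those with $Z$ entirely inside $w_{n,i}$ and $Y$ still carrying all of $b_1,\dots,b_r$ as leading factors recombine precisely into $\big(\al_\mG\bullet\al_\mG(w^*_{n,i})\big)(1_r)$; (b) those that split off, absorb, or merge leading product factors $b_j$ account for the three correction families above; and (c) the remaining decompositions cancel in pairs, because of the $S_r$--invariance built into the spaces $\Tw\Ger_{\infty}(\,\cdot\,)$ together with the $S_p$--invariance of the $(b_1\cdots b_p)^*$, in the same way such invariances are used in the proofs of Proposition \ref{prop:cL-action1} and Lemma \ref{lem:laliecoalg}. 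This gives $\pa^{\Tw}\al_\mG=-\al_\mG\bullet\al_\mG$. Conceptually the identity reflects the fact that, writing $\Ger=\Com\circ\Lie$, the cooperad $\Ger^{\vee}$ is built out of $\La^2\coCom$ and a shifted $\coLie$, and $c$ is essentially the cooperadic comultiplication applied to the $\La^2\coCom$--tensor factor, which automatically makes it a chain map.

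The \emph{main obstacle} is precisely this bookkeeping. Unlike the $\La^2\coCom$ case, where every decomposition is a shuffle, a basis monomial of $\La^{-2}\Ger(r+n)$ can be cut both along its product structure and, independently, along the right--nested bracket blocks of \eqref{La-2Ger-n-basis}, so the admissible subexpressions must be enumerated carefully and all signs tracked --- those from the suspensions $\bs^{2r+1}$, from $\La^{-2}$, and from the shuffle and $\vr_{r,p,i}$ permutations. I would therefore first carry out the matching in the special case where $w_{n,i}$ is a single bracket block, $w_{n,i}=\{b_1,\{b_2,\dots,\{b_{n-1},b_n\br$, to pin down all the signs, and only afterwards treat the general monomial, reducing every coefficient to the identity $|\Sh_{p,q}|=(p+q)!/(p!q!)$ of \eqref{number-shuffles} and to the $S_r$--invariance of $w_{n,i}$ and of $\vf$.
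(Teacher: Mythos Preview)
Your approach is essentially the paper's: compute the cobar part of the differential on $\al_\mG(w^*_{n,i})(1_r)=\bs^{2r+1}(b_1\cdots b_r\,w_{n,i})^*$, compute the twist corrections coming from $\vf\cdot{}$ and $\de_{\ka(\vf)}$, compute $\al_\mG\bullet\al_\mG$, and match. The paper packages the first step as a separate lemma (Lemma~\ref{lem:comult-coGer}), where all two--vertex decompositions of $X=b_1\cdots b_r\,w_{n,i}$ are classified into four tree families $\bt^{below}_{\si}$, $\bt^{above}_{\si}$, $\bt^{k}_{\si}$, $\bt^{p,q}_{\si,\tau}$ according to how many of the leaves $r+1,\dots,r+n$ sit at the upper vertex (namely $0$, $n$, $1$, or $2\le q\le n-1$).

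The one place where your sketch is off is the split (a)/(c). The terms of $\pa^{\Cobar}$ that cancel against $\al_\mG\bullet\al_\mG$ are \emph{not} only those with ``$Z$ entirely inside $w_{n,i}$'': they are the full $\bt^{p,q}_{\si,\tau}$ family, where the upper piece carries $q$ of the variables $b_{r+1},\dots,b_{r+n}$ (with $2\le q\le n-1$) together with an \emph{arbitrary} subset of the leading $b_1,\dots,b_r$. This is exactly what you yourself observe when you say that $\al_\mG\bullet\al_\mG(w^*_{n,i})(1_r)$ is indexed by a decomposition of $w_{n,i}$ \emph{together with a distribution of the $r$ leading variables between the two pieces}; that distribution is precisely the shuffle $\si$ and the parameter $p$ in $\bt^{p,q}_{\si,\tau}$. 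With (a) corrected in this way, the four families exhaust all two--vertex trees with both arities $\ge 2$, so your (c) is vacuous: there are no ``remaining'' decompositions that have to cancel in pairs. The rest of your outline --- matching $\bt^{below}$, $\bt^{above}$, $\bt^{k}$ against the three twist corrections --- is exactly what the paper does.
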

The proof of this proposition is quite technical so we postpone it to
Subsection \ref{sec:al-mG} given below.

Due to Theorem \ref{thm:from-Cobar}, maps from $\Ger_{\infty}$
to $\Tw \Ger_{\infty}$ are in bijection with Maurer-Cartan elements of the dg Lie algebra 
\eqref{Conv-Ginf-TwGinf}. Hence the Maurer-Cartan element $\al_{\mG}$ \eqref{al-mG}
defines a map of dg operads 
\begin{equation}
\label{Ginf-to-TwGinf}
c: \Ger_{\infty} \to \Tw \Ger_{\infty}\,.
\end{equation}

To prove that $c$ equips $\Ger_{\infty}$ with the structure of a $\Tw$-coalgebra, 
we have to verify three conditions.
The first condition states that the following diagram shall commute:
\[
\begin{tikzpicture}
\matrix (m) [matrix of math nodes, row sep=2em]
{  & \La \Lie_{\infty} &  \\
\Ger_\infty &  & \Tw \Ger_\infty \\ };
\path[->,font=\scriptsize]
(m-1-2) edge (m-2-1) node[auto] {$\iota$} edge (m-2-3)
(m-2-1) edge node[auto] {$c$} (m-2-3);
\end{tikzpicture}.
\]
Here the right hand arrow is defined as the composition 
\[
\La \Lie_{\infty} \to \Tw\La \Lie_{\infty} \stackrel{\Tw \iota}{\longrightarrow}  \Tw \Ger_\infty.
\]
It sends the generator $\bs\, 1^{\mc}_n$ of $\La\Lie_{\infty}$ to (cf. \eqref{dfn-LaLieinfty-to-Tw})
\[
\sum_{r\geq 0} \bs^{2r+1} (b_1 b_2 \cdots b_{r+n})^*.
\]
On the other hand, the left and bottom morphisms in the diagram under consideration send  $\bs \, 1^{\mc}_n$ to
\[
c\circ  \iota(\bs\, 1^{\mc}_n) = c \big(\bs (b_1\cdots b_n)^* \big) = \sum_{r\geq 0} \bs^{2r+1}  (b_1 b_2 \cdots b_{r+n})^*\,.
\]
Hence this condition is satisfied. The second condition for $\Tw$-coalgebras to be checked says that the composition
\[
\Ger_{\infty}\to \Tw\Ger_\infty \to \Ger_{\infty}
\]
shall be the identity. It is obviously satisfied.
The third condition states that the two compositions below shall be the same.
\begin{gather*}
\Ger_{\infty}\stackrel{c}{\longrightarrow} 
\Tw\Ger_\infty \stackrel{\mD_{\Ger_\infty} }{\longrightarrow} \Tw\Tw\Ger_\infty \\
\Ger_{\infty}\stackrel{c}{\longrightarrow} 
\Tw\Ger_\infty \stackrel{\Tw(c)}{\longrightarrow} \Tw\Tw\Ger_\infty 
\end{gather*}
In fact, unfolding the definitions one finds that both compositions operate 
on generators $\bs w^*_{n,i}$ of $\Ger_{\infty}$ as follows:
\[
\bs \, w^*_{n,i} ~\mapsto~ \sum_{r\geq 0} \sum_{s\geq 0}
\bs^{2r+2s+1} b_1 \dots b_{r+s} w^*_{n,i}(b_{r+s+1}, \dots, b_{r+s+n})
\]
Thus the third condition is also satisfied.

Summarizing, we obtain the following theorem:
\begin{thm}
\label{thm:Ginf-Twcoalg}
The morphism of dg operads 
\begin{equation}
\label{mG}
c : \Ger_{\infty} \to \Tw\Ger_{\infty}
\end{equation}
corresponding to the Maurer-Cartan element $\al_{\mG}$ \eqref{al-mG}
equips $\Ger_{\infty}$ with a $\Tw$-coalgebra structure. $\Box$
\end{thm}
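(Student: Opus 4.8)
The plan is to exploit the computations already assembled above the statement and reduce the theorem to three checks on generators. The essential input is Proposition~\ref{prop:al-mG}, which asserts that $\al_{\mG}$ from \eqref{al-mG} is a genuine Maurer--Cartan element of $\Conv(\Ger^{\vee}_{\c},\Tw\Ger_{\infty})$; granting it, Theorem~\ref{thm:from-Cobar} produces the map of dg operads $c:\Ger_{\infty}=\Cobar(\Ger^{\vee})\to\Tw\Ger_{\infty}$, determined on the cogenerators $\bs\,w^*_{n,i}$ of $\Ger_{\infty}$ by
\[
c(\bs\,w^*_{n,i}) \;=\; \al_{\mG}(w^*_{n,i}) \;=\; \sum_{r\ge 0}\bs^{2r+1}\bigl(b_1\cdots b_r\,w_{n,i}(b_{r+1},\dots,b_{r+n})\bigr)^{*}.
\]
It then remains to verify the three axioms of a coalgebra over the comonad $\Tw$: commutativity of the triangle \eqref{diag-LaLie-infty}, the co-unit identity \eqref{comp-c-eta}, and commutativity of the co-multiplication square \eqref{diag-c-Twc-mD}.

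For \eqref{diag-LaLie-infty} I would evaluate both composites on the generator $\bs\,1^{\mc}_n$ of $\La\Lie_{\infty}$. The Koszul-dual inclusion $\iota:\La\Lie_{\infty}\to\Ger_{\infty}$ sends $\bs\,1^{\mc}_n$ to $\bs\,(b_1\cdots b_n)^{*}$, the cogenerator dual to the pure commutative monomial; plugging this into the formula for $c$ yields $\sum_{r\ge 0}\bs^{2r+1}(b_1\cdots b_{r+n})^{*}$. The other composite $\La\Lie_{\infty}\xrightarrow{\mT}\Tw\La\Lie_{\infty}\xrightarrow{\Tw\iota}\Tw\Ger_{\infty}$ produces the same element, directly from the explicit formula \eqref{dfn-LaLieinfty-to-Tw} for $\mT$. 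The co-unit axiom \eqref{comp-c-eta} is immediate, since $\eta_{\Ger_{\infty}}$ is the projection onto the $r=0$ component of $\Tw\Ger_{\infty}$ and the $r=0$ term of $c(\bs\,w^*_{n,i})$ is $\bs\,w^*_{n,i}$ itself.

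For the square \eqref{diag-c-Twc-mD}, note that all four arrows are maps of dg operads whose sources are (the free operads underlying) cobar constructions, so it suffices to compare $\mD_{\Ger_{\infty}}\circ c$ with $\Tw(c)\circ c$ on the cogenerators $\bs\,w^*_{n,i}$. Using the formula for $c$, the description \eqref{mD-cO-dfn} of $\mD$ (which simply re-indexes the pair of unary slots $1_r\otimes 1_s$ as $1_{r+s}$), and naturality of $\Tw$, one unwinds both composites to the common value
\[
\sum_{r,s\ge 0}\bs^{2r+2s+1}\,b_1\cdots b_{r+s}\,w^*_{n,i}(b_{r+s+1},\dots,b_{r+s+n})\;\in\;\bigl(\Ger^{\vee}(r+s+n)\bigr)^{S_r\times S_s}\subset\Tw\Tw\Ger_{\infty}(n),
\]
which closes the square. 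Combining the three checks with the map $c$ furnished above completes the proof, modulo Proposition~\ref{prop:al-mG}.

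I expect the genuine difficulty to lie not in the theorem itself but in that postponed Proposition~\ref{prop:al-mG}: verifying $\pa^{\Tw}\al_{\mG}+\al_{\mG}\bullet\al_{\mG}=0$ forces one to match the cobar differential on $\Ger^{\vee}$ against the twisting differential \eqref{diff-Tw-cO} on $\Tw\Ger_{\infty}$, and this in turn requires delicate sign- and shuffle-bookkeeping over the monomial basis \eqref{La-2Ger-n-basis} of $\La^{-2}\Ger$ --- a computation of the same flavour as, but considerably more involved than, the model case of Lemma~\ref{lem:laliecoalg}. Everything in the statement proper is formal manipulation on generators, so the real content is concentrated entirely in Proposition~\ref{prop:al-mG}, whose proof I would carry out separately in Subsection~\ref{sec:al-mG}.
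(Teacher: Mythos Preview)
Your proposal is correct and follows essentially the same approach as the paper: grant Proposition~\ref{prop:al-mG} to obtain $c$ via Theorem~\ref{thm:from-Cobar}, then verify the three $\Tw$-coalgebra axioms by evaluating on the cogenerators $\bs\,w^*_{n,i}$ (resp.\ $\bs\,1^{\mc}_n$), with the observation that the real content is deferred to the Maurer--Cartan verification in Subsection~\ref{sec:al-mG}. Your formulas for each of the three checks match the paper's computations line for line.
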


\subsubsection{The proof of Proposition \ref{prop:al-mG}}
\label{sec:al-mG}

The proof of Theorem \ref{thm:Ginf-Twcoalg} is based on  Proposition \ref{prop:al-mG}. 
Here we prove this proposition.

Let $n,r$ be a pair of integers with $r \ge 0$ and $n \ge 2$\,.
In the groupoid $\Tree_2(r+n)$ we consider the objects  $\bt^{below}_{\si}$, 
$\bt^{above}_{\si}$,  $\bt^{k}_{\si}$,  $\bt^{p,q}_{\si, \tau}$ shown 
on figures \ref{fig:lower}, \ref{fig:higher},  \ref{fig:one-above}, and \ref{fig:ge-two}, 
respectively. 
\begin{figure}[htp]
\centering
\begin{minipage}[t]{0.48\linewidth}
\centering
\begin{tikzpicture}[scale=0.5]
\tikzstyle{w}=[circle, draw, minimum size=3, inner sep=1]
\tikzstyle{b}=[circle, draw, fill, minimum size=3, inner sep=1]
\node[b] (l1) at (-1, 4) {};
\draw (-1,4.6) node[anchor=center] {{\small $\si(1)$}};
\draw (0,3.8) node[anchor=center] {{\small $\dots$}};
\node[b] (lp) at (1, 4) {};
\draw (1,4.6) node[anchor=center] {{\small $\si(p)$}};
\node[w] (vv) at (0, 2.5) {};
\node[b] (lp1) at (3, 2.5) {};
\draw (2.8,3.1) node[anchor=center] {{\small $\si(p+1)$}};
\draw (4,2.4) node[anchor=center] {{\small $\dots$}};
\node[b] (lr) at (5, 2.5) {};
\draw (5,3.1) node[anchor=center] {{\small $\si(r)$}};
\node[b] (lr1) at (7, 2.5) {};
\draw (7,3.1) node[anchor=center] {{\small $r+1$}};
\draw (8.5,2.4) node[anchor=center] {{\small $\dots$}};
\node[b] (lrn) at (10, 2.5) {};
\draw (10,3.1) node[anchor=center] {{\small $r+n$}};
\node[w] (v) at (4, 0) {};
\node[b] (r) at (4, -1) {};
\draw (vv) edge (l1);
\draw (vv) edge (lp);
\draw (v) edge (vv);
\draw (v) edge (lp1);
\draw (v) edge (lr);
\draw (v) edge (lr1);
\draw (v) edge (lrn);
\draw (r) edge (v);
\end{tikzpicture}
\caption{\label{fig:lower} The tree $\bt^{below}_{\si}$\,. Here $2 \le p \le r$ and
$\si \in \Sh_{p, r-p}$}
\end{minipage}
\begin{minipage}[t]{0.48\linewidth}
\centering
\begin{tikzpicture}[scale=0.5]
\tikzstyle{w}=[circle, draw, minimum size=3, inner sep=1]
\tikzstyle{b}=[circle, draw, fill, minimum size=3, inner sep=1]
\node[b] (l1) at (0, 5) {};
\draw (0,5.6) node[anchor=center] {{\small $\si(1)$}};
\draw (1,5) node[anchor=center] {{\small $\dots$}};
\node[b] (lp) at (2, 5) {};
\draw (2,5.6) node[anchor=center] {{\small $\si(p)$}};
\node[b] (lr1) at (4, 5) {};
\draw (4,5.6) node[anchor=center] {{\small $r+1$}};
\draw (5.5,5) node[anchor=center] {{\small $\dots$}};
\node[b] (lrn) at (7, 5) {};
\draw (7,5.6) node[anchor=center] {{\small $r+n$}};
\node[w] (vv) at (4, 3) {};
\node[b] (lp1) at (7, 3) {};
\draw (7,3.6) node[anchor=center] {{\small $\si(p+1)$}};
\draw (8.5,3) node[anchor=center] {{\small $\dots$}};
\node[b] (lr) at (10, 3) {};
\draw (10,3.6) node[anchor=center] {{\small $\si(r)$}};
\node[w] (v) at (7, 1) {};
\node[b] (r) at (7, 0) {};
\draw (vv) edge (l1);
\draw (vv) edge (lp);
\draw (vv) edge (lr1);
\draw (vv) edge (lrn);
\draw (v) edge (vv);
\draw (v) edge (lp1);
\draw (v) edge (lr);
\draw (r) edge (v);
\end{tikzpicture}
\caption{\label{fig:higher} The tree $\bt^{above}_{\si}$\,.
Here $0 \le p \le r-1$ and $\si \in \Sh_{p, r-p}$}
\end{minipage}
\end{figure}
\begin{figure}[htp]
\centering
\begin{tikzpicture}[scale=0.5]
\tikzstyle{w}=[circle, draw, minimum size=3, inner sep=1]
\tikzstyle{b}=[circle, draw, fill, minimum size=3, inner sep=1]

\node[b] (lp1) at (0, 5) {};
\draw (-0.2,5.6) node[anchor=center] {{\small $\si(p+1)$}};
\draw (1.5,5) node[anchor=center] {{\small $\dots$}};
\node[b] (lr) at (3, 5) {};
\draw (3,5.6) node[anchor=center] {{\small $\si(r)$}};
\node[b] (lrk) at (5, 5) {};
\draw (5.1,5.6) node[anchor=center] {{\small $r+k$}};
\node[w] (vv) at (2, 2) {};

\node[b] (l1) at (-7, 2) {};
\draw (-7,2.6) node[anchor=center] {{\small $\si(1)$}};
\draw (-6,2) node[anchor=center] {{\small $\dots$}};
\node[b] (lp) at (-5, 2) {};
\draw (-5,2.6) node[anchor=center] {{\small $\si(p)$}};
\node[b] (lr1) at (-3, 2) {};
\draw (-3,2.6) node[anchor=center] {{\small $r+1$}};
\draw (-1.5,2) node[anchor=center] {{\small $\dots$}};
\node[b] (lr1k) at (0, 2) {};
\draw (0,2.6) node[anchor=center] {{\small $r+k-1$}};

\node[b] (lrk1) at (5, 2) {};
\draw (5,2.6) node[anchor=center] {{\small $r+k+1$}};
\draw (6.5,2) node[anchor=center] {{\small $\dots$}};
\node[b] (lrn) at (8, 2) {};
\draw (8,2.6) node[anchor=center] {{\small $r+n$}};
\node[w] (v) at (0, -1) {};
\node[b] (r) at (0, -2) {};
\draw (vv) edge (lp1);
\draw (vv) edge (lr);
\draw (vv) edge (lrk);
\draw (v) edge (vv);
\draw (v) edge (l1);
\draw (v) edge (lp);
\draw (v) edge (lr1);
\draw (v) edge (lr1k);
\draw (v) edge (lrk1);
\draw (v) edge (lrn);
\draw (r) edge (v);
\end{tikzpicture}
\caption{\label{fig:one-above} The tree $\bt^{k}_{\si}$\,.
Here $ 2-n \le p \le r-1$, $1 \le k \le n$, and $\si \in \Sh_{p, r-p}$ }
\end{figure}
\begin{figure}[htp]
\centering
\begin{tikzpicture}[scale=0.5]
\tikzstyle{w}=[circle, draw, minimum size=3, inner sep=1]
\tikzstyle{b}=[circle, draw, fill, minimum size=3, inner sep=1]
\node[b] (lp1) at (0, 5) {};
\draw (0,5.6) node[anchor=center] {{\small $\si(p+1)$}};
\draw (1.5,5) node[anchor=center] {{\small $\dots$}};
\node[b] (lr) at (3, 5) {};
\draw (3,5.6) node[anchor=center] {{\small $\si(r)$}};
\node[b] (lr1) at (5, 5) {};
\draw (5.2,5.6) node[anchor=center] {{\small $r+\tau(1)$}};
\draw (6.5,5) node[anchor=center] {{\small $\dots$}};
\node[b] (lrq) at (8, 5) {};
\draw (8.2,5.6) node[anchor=center] {{\small $r+\tau(q)$}};
\node[w] (vv) at (4, 2) {};
\node[b] (l1) at (0, 2) {};
\draw (0,2.6) node[anchor=center] {{\small $\si(1)$}};
\draw (1,2) node[anchor=center] {{\small $\dots$}};
\node[b] (lp) at (2, 2) {};
\draw (2,2.6) node[anchor=center] {{\small $\si(p)$}};
\node[b] (lrq1) at (7, 2) {};
\draw (7.2,2.6) node[anchor=center] {{\small $r+\tau(q+1)$}};
\draw (9,2) node[anchor=center] {{\small $\dots$}};
\node[b] (lrn) at (11, 2) {};
\draw (11,2.6) node[anchor=center] {{\small $r+\tau(n)$}};
\node[w] (v) at (4, -1) {};
\node[b] (r) at (4, -2) {};
\draw (vv) edge (lp1);
\draw (vv) edge (lr);
\draw (vv) edge (lr1);
\draw (vv) edge (lrq);
\draw (v) edge (l1);
\draw (v) edge (lp);
\draw (v) edge (vv);
\draw (v) edge (lrq1);
\draw (v) edge (lrn);
\draw (r) edge (v);
\end{tikzpicture}
\caption{\label{fig:ge-two} The tree $\bt^{p,q}_{\si, \tau}$\,. Here 
$0 \le p \le r$, $2 \le q \le n-1$, $\si \in \Sh_{p, r-p}$ and $\tau \in  \Sh_{q, n-q} $}
\end{figure}

It is clear that the trees   $\bt^{below}_{\si}$, 
$\bt^{above}_{\si}$,  $\bt^{k}_{\si}$,  $\bt^{p,q}_{\si, \tau}$ are all mutually
non-isomorphic. Furthermore, if both nodal vertices of a tree $\bt \in \Tree_2(n)$ 
have valencies $\ge 3$, then $\bt$ is isomorphic to one of the trees in the list:   $\bt^{below}_{\si}$, 
$\bt^{above}_{\si}$,  $\bt^{k}_{\si}$,  $\bt^{p,q}_{\si, \tau}$\,.

We will need the following technical statement:
\begin{lemma}
\label{lem:comult-coGer}
Let $n,r$ be a pair of integers with $r \ge 0$, $n \ge 2$\,, 
$w_{n,i}$ be a vector in the basis \eqref{the-basis-n}, and 
$$
\big( b_1 \dots b_r \, w_{n,i}(b_{r+1}, \dots, b_{r+n} ) \big)^*
$$
be the basis vector of $\Ger^{\vee}(r+n)$ dual to 
$b_1 \dots b_r \, w_{n,i}(b_{r+1}, \dots, b_{r+n} ) \in \La^{-2}\Ger(r+n)$\,.
Then
\begin{equation}
\label{cobar-bbbw}
\pa^{\Cobar} \bs\, (b_1 \dots b_r w_{n,i}(b_{r+1}, \dots, b_{r+n}))^* =
\end{equation}
$$
- \sum_{\substack{ 2 \le p \le r \\[0.1cm]  \si \in \Sh_{p, r-p} }} (-1)^{|w_{n,i}|} \big( \bt^{below}_{\si};  
 \bs\, (b_1 \dots b_{r-p+1} w_{n,i}(b_{r-p+2}, \dots, b_{r-p+1+n}))^* \otimes
 \bs\, (b_1 \dots b_p)^* \big) 
$$
$$
- \sum_{\substack{ 0 \le p \le r-1 \\[0.1cm]  \si \in \Sh_{p, r-p} }}  \big( \bt^{above}_{\si};  
 \bs\,( b_1 \dots b_{r-p+1})^* \otimes
 \bs\, (b_1 \dots b_{p} w_{n,i}(b_{p+1}, \dots, b_{p+n}))^* 
\big) 
$$
$$
- \sum_{\substack{ 2-n \le p \le r-1 \\[0.1cm]  \si \in \Sh_{p, r-p} }}  ~ \sum_{k=1}^n
\, (-1)^{|w_{n,i}|} \,
\big( \bt^{k}_{\si};   
  \bs\, (b_1 \dots b_p w_{n,i}(b_{p+1}, \dots, b_{p+n}))^* \otimes 
  \bs\, (b_1 \dots b_{r-p+1})^*
 \big)
$$
$$
- \sum_{\substack{ 0 \le p \le r \\[0.1cm]  2 \le q \le n-1 }}
\sum_{\substack{ \si \in \Sh_{p, r-p} \\[0.1cm]  \tau \in \Sh_{q, n-q} }} \sum_{i_1, i_2}  
(-1)^{ | w_{n-q+1 , i_1}| | w_{q , i_2}| + |w_{n-q+1 , i_1}|} f^i_{i_1 i_2}
\big( \bt^{p,q}_{\si, \tau} ~ ;~\bs\, (b_1 \dots b_p w_{n-q+1 , i_1}(b_{p+1}, \dots, b_{p+n-q+1}))^*
$$
$$
\otimes ~ 
\bs\, (b_1 \dots b_{r-p} w_{q , i_2}(b_{r-p+1}, \dots, b_{r-p+q}))^*
\big)\,, 
$$
where the coefficients $f_{i_1 i_2}^i \in \bbK$ are defined by the equation
\begin{equation}
\label{f-ii1i2-dfn}
\tau (w_{n-q+1 , i_1} \circ_1  w_{q , i_2}) = \sum_{i \in I_n} f^{i}_{i_1 i_2} w_{n, i}\,.
\end{equation}
\end{lemma}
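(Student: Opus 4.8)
The plan is to compute the left-hand side of \eqref{cobar-bbbw} directly from the definition of the cobar differential and then to organize the resulting terms by the shape of the underlying two-vertex tree.

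First I would use that the cooperad $\Ger^{\vee}$ carries no internal differential, so $\pa^{\Cobar}=\pa''$ and, by \eqref{pa-prpr} (and $\bsi\bs X=X$),
$$
\pa^{\Cobar}\bs\, X = -\sum_{z\in\pi_0(\Tree_2(r+n))} (\bs\otimes\bs)\big(\bt_z;\D_{\bt_z}(X)\big),
\qquad X := \big(b_1\dots b_r\, w_{n,i}(b_{r+1},\dots,b_{r+n})\big)^{*}.
$$
Since $\Ger^{\vee}$ is coaugmented with $\Ger^{\vee}_{\c}(1)=\bfzero$, the only classes $z$ that contribute are those for which both nodal vertices of $\bt_z$ have valency $\ge 3$; as recorded in the paragraph preceding the lemma, these are precisely the classes of the planar trees $\bt^{below}_{\si}$, $\bt^{above}_{\si}$, $\bt^{k}_{\si}$, $\bt^{p,q}_{\si,\tau}$ of the figures, so it remains to evaluate $\D_{\bt_z}(X)$ for each of these four families. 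I would then invoke that $\Ger^{\vee}=(\La^{-2}\Ger)^{*}$ as a cooperad, so that each cocomposition $\D_{\bt}\colon \Ger^{\vee}(r+n)\to \Ger^{\vee}(r_1)\otimes\Ger^{\vee}(r_2)$, with $r_1,r_2$ the arities of the lower and upper nodal vertices, is up to a Koszul sign the transpose of the operadic insertion $\mu_{\bt}\colon \La^{-2}\Ger(r_1)\otimes\La^{-2}\Ger(r_2)\to\La^{-2}\Ger(r+n)$ that plants the second factor into the slot of the first indexed by the internal edge and reshuffles leaves according to the planar structure. Thus the coefficient of a tensor of dual PBW basis vectors $u_1^{*}\otimes u_2^{*}$ in $\D_{\bt}(X)$ equals, up to an explicit sign, the coefficient of the monomial $b_1\dots b_r\, w_{n,i}(b_{r+1},\dots,b_{r+n})$ in the normal-form expansion of $\mu_{\bt}(u_1\otimes u_2)$.

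The computation therefore reduces to a question in $\La^{-2}\Ger$: for each slot of the monomial $b_1\dots b_r\, w_{n,i}(b_{r+1},\dots,b_{r+n})$, into which pairs $(u_1,u_2)$ of basis monomials does it decompose under $\mu_{\bt}$, once one uses graded commutativity of the product, the Leibniz rule for the degree $1$ bracket, and the normal form \eqref{sp-partition}? Because the letters $b_1,\dots,b_r$ occur as singleton blocks while $w_{n,i}$ uses only $b_{r+1},\dots,b_{r+n}$, a case analysis on how the internal edge separates these letters yields exactly four mutually exclusive patterns:
\begin{itemize}
\item the whole word $w_{n,i}$ stays on the lower vertex and a genuine sub-product $b_{\si(1)}\cdots b_{\si(p)}$ with $p\ge 2$ of the prefix is carried up the internal edge — this gives the $\bt^{below}_{\si}$ terms, with upper decoration $(b_1\cdots b_p)^{*}$;
\item the whole word $w_{n,i}$, together with a (possibly empty) sub-product of the prefix, is carried up while the rest of the prefix stays below — this gives the $\bt^{above}_{\si}$ terms;
\item exactly one letter $b_{r+k}$ of $w_{n,i}$ is carried up, together with some prefix letters, and is detached from its bracket via the Leibniz rule — this gives the $\bt^{k}_{\si}$ terms;
\item the word $w_{n,i}$ is genuinely split, a sub-word of $2\le q\le n-1$ letters going up and the complementary word staying below; reassembling the split is precisely the operadic composition $w_{n-q+1,i_1}\circ_1 w_{q,i_2}$ in $\La^{-2}\Ger$ followed by re-expansion in the PBW basis, which is encoded by the structure constants $f^i_{i_1 i_2}$ of \eqref{f-ii1i2-dfn} — this gives the $\bt^{p,q}_{\si,\tau}$ terms.
\end{itemize}

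Finally I would track the signs. Each contribution acquires $(-1)^{|u_1|}$ from commuting the outer $\bs$ past the lower decoration $u_1$ inside $(\bs\otimes\bs)(\bt_z;-)$, plus the Koszul sign from realizing $\D_{\bt}$ as a transpose, plus, in the last two cases, the sign produced by the Leibniz detachment and by renormalizing the split bracket word. A short degree count gives $|b_1\cdots b_s\,w|_{\La^{-2}\Ger}\equiv |w_{n,i}|$ or $0\pmod 2$ according to whether $w$ contains $w_{n,i}$ or is a pure product, which already matches the signs $(-1)^{|w_{n,i}|}$ (for $\bt^{below}_{\si},\bt^{k}_{\si}$) and the absence of a sign (for $\bt^{above}_{\si}$) in \eqref{cobar-bbbw}; for $\bt^{p,q}_{\si,\tau}$ the various contributions combine to $(-1)^{|w_{n-q+1,i_1}||w_{q,i_2}|+|w_{n-q+1,i_1}|}$. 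Collecting the four families together with the global sign $-1$ from \eqref{pa-prpr} yields \eqref{cobar-bbbw}. I expect the sign bookkeeping in the $\bt^{k}_{\si}$ and $\bt^{p,q}_{\si,\tau}$ cases — in particular the interplay of the Leibniz rule, the PBW renormalization, and the definition \eqref{f-ii1i2-dfn} of the $f^i_{i_1 i_2}$ — to be the main obstacle, whereas the tree enumeration and the first two cases are essentially routine.
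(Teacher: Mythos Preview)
Your proposal is correct and follows essentially the same route as the paper: the paper reduces the lemma to the four identities $\D_{\bt^{below}_{\si}}(X)$, $\D_{\bt^{above}_{\si}}(X)$, $\D_{\bt^{k}_{\si}}(X)$, $\D_{\bt^{p,q}_{\si,\tau}}(X)$ computed via the identification $\Ger^{\vee}=(\La^{-2}\Ger)^{*}$ and the pairing with $\mu_{\bt}$, exactly as you outline. The paper works out the $\bt^{p,q}_{\si,\tau}$ case in detail (precisely the one you flag as most delicate) and leaves the other three as analogous.
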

\begin{proof}
The statement of the lemma follows from these equations:
\begin{equation}
\label{D-bt-below-si}
\D_{\bt^{below}_{\si}} \big( b_1 \dots b_r w_{n,i}(b_{r+1}, \dots, b_{r+n}) \big)^* = 
\end{equation}
$$
(b_1 \dots b_{r-p+1} w_{n,i}(b_{r-p+2}, \dots, b_{r-p+1+n}))^* \otimes
 (b_1 \dots b_p)^*\,, 
$$
\begin{equation}
\label{D-bt-above-si}
\D_{\bt^{above}_{\si}} \big( b_1 \dots b_r w_{n,i}(b_{r+1}, \dots, b_{r+n}) \big)^* = 
\end{equation}
$$
 ( b_1 \dots b_{r-p+1})^* \otimes
  (b_1 \dots b_{p} w_{n,i}(b_{p+1}, \dots, b_{p+n}))^* \,,
$$
\begin{equation}
\label{D-bt-bullet-si}
\D_{\bt^{k}_{\si}} \big( b_1 \dots b_r w_{n,i}(b_{r+1}, \dots, b_{r+n}) \big)^* = 
\end{equation}
$$
 (b_1 \dots b_p w_{n,i}(b_{p+1}, \dots, b_{p+n}))^* \otimes 
  (b_1 \dots b_{r-p+1})^*\,,
$$
and 
\begin{equation}
\label{D-bt-pq-si-tau}
\D_{\bt^{p,q}_{\si, \tau}} \big(b_1 \dots b_r w_{n,i}(b_{r+1}, \dots, b_{r+n})\big)^* =
\end{equation}
$$
\sum_{\substack{i_1 \in I_{n-q+1},\\ i_2 \in I_{q}} } 
(-1)^{ | w_{n-q+1 , i_1}| \cdot | w_{q , i_2}|} 
f^i_{i_1 i_2}  (b_1 \dots b_p w_{n-q+1 , i_1}(b_{p+1}, \dots, b_{p+n-q+1}))^*
$$
$$
\otimes ~  (b_1 \dots b_{r-p} w_{q , i_2}(b_{r-p+1}, \dots, b_{r-p+q}))^*\,,
$$
where the coefficients $f^i_{i_1 i_2}$ are defined by equation \eqref{f-ii1i2-dfn}.

Let us prove that equation \eqref{D-bt-pq-si-tau} holds. 

In general, we have
$$
\D_{\bt^{p,q}_{\si, \tau}} \big(b_1 \dots b_r w_{n,i}(b_{r+1}, \dots, b_{r+n})\big)^* =
$$
\begin{equation}
\label{Dbt-pq-general}
\sum_{\substack{j_1 \in I_{n+p-q+1}, \\  j_2 \in I_{r-p+q}} } 
 g^{i}_ {j_1, j_2}  w^*_{n+p-q+1, j_1}  \otimes w^*_{r-p+q, j_2}\,, 
\qquad g^i_{j_1 j_2} \in \bbK\,.
\end{equation}

It is not hard to see that, if the basis vector $w_{n+p-q+1, j_1}$
is not of the form 
$$
b_1 \dots b_p w_{n-q+1 , i_1}(b_{p+1}, \dots, b_{p+n-q+1})
$$
or the basis vector $w_{r-p+q, j_2}$ is not of the form
$$
b_1 \dots b_{r-p} w_{q , i_2}(b_{r-p+1}, \dots, b_{r-p+q})\,,
$$
then 
$$
\big(b_1 \dots b_r w_{n,i}(b_{r+1}, \dots, b_{r+n})\big)^* 
\Big(\mu_{\bt^{p,q}_{\si, \tau}} ( w_{n+p-q+1, j_1} 
\otimes w_{r-p+q, j_2}
\Big) = 0\,.
$$

In other words, $g^i_{j_1 j_2} = 0$ unless the basis vector 
$w_{n+p-q+1, j_1}$ is of the form 
$$
b_1 \dots b_p w_{n-q+1 , i_1}(b_{p+1}, \dots, b_{p+n-q+1})
$$
and the basis vector $w_{r-p+q, j_2}$ is of the form
$$
b_1 \dots b_{r-p} w_{q , i_2}(b_{r-p+1}, \dots, b_{r-p+q})\,.
$$

Hence, 
\begin{equation}
\label{Dbt-pq-precise}
\D_{\bt^{p,q}_{\si, \tau}} \big(b_1 \dots b_r w_{n,i}(b_{r+1}, \dots, b_{r+n})\big)^* =
\end{equation}
$$
\sum_{\substack{i_1 \in I_{n-q+1},\\ i_2 \in I_{q}} } 
\wt{f}^i_{i_1 i_2}  (b_1 \dots b_p w_{n-q+1 , i_1}(b_{p+1}, \dots, b_{p+n-q+1}))^*
\otimes  (b_1 \dots b_{r-p} w_{q , i_2}(b_{r-p+1}, \dots, b_{r-p+q}))^*
$$
for some coefficients $\wt{f}^i_{i_1 i_2} \in \bbK$\,.

On the other hand, we have
$$
\D_{\bt^{p,q}_{\si, \tau}} \big(b_1 \dots b_r w_{n,i}(b_{r+1}, \dots, b_{r+n})\big)^* 
\Big(  b_1 \dots b_p w_{n-q+1 , i_1}(b_{p+1}, \dots, b_{p+n-q+1})
$$
$$  
\otimes ~ b_1 \dots b_{r-p} w_{q , i_2}(b_{r-p+1}, \dots, b_{r-p+q}) \Big) =
$$
\begin{equation}
\label{comp-bt-pq-si-tau}
\big(b_1 \dots b_r w_{n,i}(b_{r+1}, \dots, b_{r+n})\big)^* 
\Big(\mu_{\bt^{p,q}_{\si, \tau}} \big(
 b_1 \dots b_p w_{n-q+1 , i_1}(b_{p+1}, \dots, b_{p+n-q+1}) 
\end{equation}
$$ 
\otimes ~ b_1 \dots b_{r-p} w_{q , i_2}(b_{r-p+1}, \dots, b_{r-p+q})
\big) \Big) =
$$
$$
(-1)^{\ve_{i_1 i_2}}
\big(b_1 \dots b_r w_{n,i}(b_{r+1}, \dots, b_{r+n})\big)^* \Big(
b_{\si(1)} \dots b_{\si(p)}
$$
$$ 
w_{n-q+1 , i_1}\big( b_{\si(p+1)} \dots b_{\si(r)} w_{q , i_2}(b_{r+ \tau(1)}, \dots, b_{r+ \tau(q)}), 
b_{r+ \tau(q+1)}, \dots, b_{r+\tau(n)} \big) \Big)\,,
$$
where the sign factor $(-1)^{ \ve_{i_1 i_2} }$ comes from permuting the  $\La^{-2}\Ger$-monomial 
$w_{q, i_2}$ with brackets  $\{~,~\}$\,.

Since in the variables $b_1, \dots, b_r$ in the $\La^{-2}\Ger$-monomial 
$b_1 \dots b_r w_{n,i}(b_{r+1}, \dots, b_{r+n})$ only enter $\La^{-1}\Lie$ words 
of length $1$\,, we have 
$$
\D_{\bt^{p,q}_{\si, \tau}} \big(b_1 \dots b_r w_{n,i}(b_{r+1}, \dots, b_{r+n})\big)^* 
\Big(  b_1 \dots b_p w_{n-q+1 , i_1}(b_{p+1}, \dots, b_{p+n-q+1})
$$
$$  
\otimes ~ b_1 \dots b_{r-p} w_{q , i_2}(b_{r-p+1}, \dots, b_{r-p+q}) \Big) =
$$
\begin{equation}
\label{upshot-bt-pq-si-tau}
(-1)^{ \ve_{i_1 i_2} }
\big(b_1 \dots b_r w_{n,i}(b_{r+1}, \dots, b_{r+n})\big)^*  \Big(
b_{\si(1)} \dots b_{\si(r)}
\end{equation}
$$
w_{n-q+1 , i_1}\big( w_{q , i_2}(b_{r+ \tau(1)}, \dots, b_{r+ \tau(q)}), 
b_{r+ \tau(q+1)}, \dots, b_{r+\tau(n)} \big)
\Big) =
$$
$$
(-1)^{ \ve_{i_1 i_2} } \big(b_1 \dots b_r w_{n,i}(b_{r+1}, \dots, b_{r+n})\big)^*  \Big(
b_{1} \dots b_{r}
$$
$$
w_{n-q+1 , i_1}\big( w_{q , i_2}(b_{r+ \tau(1)}, \dots, b_{r+ \tau(q)}), 
b_{r+ \tau(q+1)}, \dots, b_{r+\tau(n)} \big)
\Big) = f^i_{i_1 i_2}\,.
$$
Thus $\wt{f}^i_{i_1 i_2} = (-1)^{ | w_{n-q+1 , i_1}| | w_{q , i_2}|}  f^i_{i_1 i_2}$ 
and equation \eqref{D-bt-pq-si-tau} holds. 

Using the identification $\Ger^{\vee} = \La^{-2}\Ger^*$ in the similar way, 
it is easy to prove that equations  \eqref{D-bt-below-si},  
\eqref{D-bt-above-si},  \eqref{D-bt-bullet-si} also hold.  

Lemma \ref{lem:comult-coGer} follows. 
\end{proof}

To prove  Proposition \ref{prop:al-mG} we need to show that the element 
$\al_{\mG}$ \eqref{al-mG} satisfies the Maurer-Cartan equation 
\begin{equation}
\label{MC-al-mG}
\pa^{\Cobar} \al_{\mG} +
\pa^{\Tw} \al_{\mG}  + \al_{\mG} \bul
\al_{\mG} = 0
\end{equation}
in the dg Lie algebra \eqref{Conv-Ginf-TwGinf}. 

Equation \eqref{MC-al-mG} unfolds as follows:
\begin{equation}
\label{MC-al-mG-unfold}
\sum_{n \ge 2,\, r \ge 0} ~ \sum_{i \in I_n}~ \bs^{2r}\, \pa^{\Cobar}
\Big( \bs \, \big( b_1 \dots b_r \, w_{n,i}(b_{r+1}, \dots, b_{r+n} ) \big)^* \Big)
\otimes w_{n,i} +
\end{equation}
$$
\sum_{n \ge 2,\, r \ge 0} ~ \sum_{i \in I_n}~ \pa^{\Tw}\, \Big(  \bs^{2r+1}\, 
\big( b_1 \dots b_r \, w_{n,i}(b_{r+1}, \dots, b_{r+n} ) \big)^* \Big)
\otimes w_{n,i}
$$
$$
+  \sum_{\substack{ 0 \le p \le r \\[0.1cm]  2 \le q \le n-1 }}
\sum_{\substack{ \si \in \Sh_{p, r-p} \\[0.1cm]  \tau \in \Sh_{q, n-q} }} \sum_{i_1, i_2}
(-1)^{ | w_{n-q+1 , i_1}| (| w_{q , i_2}| + 1)} \,
 \big( \bt^{p,q}_{\si, \tau};  \bs^{2p+1}\,  (b_1 \dots b_p w_{n-q+1 , i_1}(b_{p+1}, \dots, b_{p+n-q+1}))^*
$$
$$
\otimes~  \bs^{2(r-p)+1}\, (b_1 \dots b_{r-p} w_{q , i_2}(b_{r-p+1}, \dots, b_{r-p+q}))^*  \big)
~\otimes ~ f^i_{i_1 i_2} w_{n,i} = 0\,,
$$
where the coefficients $f^i_{i_1 i_2}$ are defined by equations \eqref{f-ii1i2-dfn}.

Let us now use Lemma \ref{lem:comult-coGer}.

The contribution to 
$$
\sum_{n \ge 2,\, r \ge 0} ~ \sum_{i \in I_n}~ \bs^{2r}\, \pa^{\Cobar}
\Big( \bs \, \big( b_1 \dots b_r \, w_{n,i}(b_{r+1}, \dots, b_{r+n} ) \big)^* \Big)
\otimes w_{n,i}
$$  
coming from the last sum in the right hand side of  \eqref{cobar-bbbw}
cancels with the third sum in equation \eqref{MC-al-mG-unfold}.
 
The contributions to
$$
\sum_{n \ge 2,\, r \ge 0} ~ \sum_{i \in I_n}~ \bs^{2r}\, \pa^{\Cobar}
\Big( \bs \, \big( b_1 \dots b_r \, w_{n,i}(b_{r+1}, \dots, b_{r+n} ) \big)^* \Big)
\otimes w_{n,i}
$$  
coming from the remaining sums in the right hand side of  \eqref{cobar-bbbw}
cancel the sum
$$
\sum_{n \ge 2,\, r \ge 0} ~ \sum_{i \in I_n}~ \pa^{\Tw}\, \Big(  \bs^{2r}\, 
\big( b_1 \dots b_r \, w_{n,i}(b_{r+1}, \dots, b_{r+n} ) \big)^* \Big)
\otimes w_{n,i}\,.
$$

Thus $\al_{\mG}$ satisfies \eqref{MC-al-mG} and Proposition \ref{prop:al-mG} follows.  

\hfill \qed

\section{Homotopy theoretic properties of the twisting procedure}
\label{sec:twisting-homotopy}

Let us prove that the functor $\Tw$ preserves quasi-isomorphisms.
\begin{thm}
\label{thm:Twpreservesqiso}
Let $\La \Lie_\infty\to \cO \stackrel{F}{\to} \cO'$ be a morphism of 
the under-category  $\La \Lie_{\infty}\downarrow \Operads$
with $F$ being a quasi-isomorphism. 
Then, the map $\Tw(F): \Tw\cO \to \Tw\cO'$ is also a quasi-isomorphism.
\end{thm}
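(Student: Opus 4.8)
The plan is to compare $\Tw\cO$ and $\Tw\cO'$ through the complete descending filtration $\cF_\bullet$ introduced in \eqref{Tw-cO-filtr}, for which $\cF_k\Tw\cO(n)$ consists of those $f$ with $f(1_r)=0$ for all $r<k$. Since $\Tw(F)$ acts by $\Tw(F)(f)(1_r)=F\big(f(1_r)\big)$ and $F(0)=0$, the map $\Tw(F)$ is filtration preserving; moreover $\Tw\cO(n)$ and $\Tw\cO'(n)$ are complete, being the inverse limits of the finite products $\Tw\cO(n)/\cF_k\Tw\cO(n)\cong\prod_{0\le r<k}\bs^{2r}\big(\cO(r+n)\big)^{S_r}$ with surjective transition maps. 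So it suffices to show that $\Tw(F)$ is a quasi-isomorphism on the associated graded, and then to invoke the standard comparison for complete filtered complexes.

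First I would check that on the associated graded the differential $\pa^{\Tw}=\pa^{\cO}+\vf\cdot\,+\de_{\ka(\vf)}$ reduces to $\pa^{\cO}$. The summand $\pa^{\cO}$ preserves each weight $r$, whereas the two other summands strictly raise the filtration degree: by \eqref{eq:cL-action1} the element $\vf\cdot f\,(1_r)$ is assembled only from the terms $f(1_{r-p+1})$ with $p\ge 2$ (because $\vf(1^{\mc}_p)=0$ for $p\le 1$), and by \eqref{P-1-deriv} together with \eqref{circ-i-for-Tw-cO} every term of $\de_{\ka(\vf)}(f)(1_r)$ involves some $f(1_j)$ with $j\le r-1$; hence each of them carries $\cF_k$ into $\cF_{k+1}$. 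Consequently $\mathrm{gr}_r\Tw\cO(n)=\bs^{2r}\big(\cO(r+n)\big)^{S_r}$ with the differential induced from $\cO$, and $\mathrm{gr}_r\Tw(F)$ is simply the map on $S_r$-invariants induced by $F(r+n)\colon\cO(r+n)\to\cO'(r+n)$, where $S_r\subset S_{r+n}$ acts as in \eqref{S-r-realize}.

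Next I would use that $\bbK$ has characteristic zero. The map $F(r+n)$ is a quasi-isomorphism of complexes of $S_{r+n}$-modules, hence in particular of complexes of $S_r$-modules; and over a field of characteristic zero the functor $(-)^{S_r}$ of taking $S_r$-invariants is exact, being the image of the idempotent $\tfrac{1}{r!}\sum_{\si\in S_r}\si$ and thus a natural direct summand of the identity functor. Therefore $(-)^{S_r}$ commutes with cohomology and $\big(F(r+n)\big)^{S_r}$ is again a quasi-isomorphism, so $\mathrm{gr}\,\Tw(F)$ is a quasi-isomorphism.

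Finally I would pass from the associated graded back to $\Tw(F)$ itself. An induction on $k$, using the five lemma applied to the short exact sequences relating consecutive truncations $\Tw\cO(n)/\cF_k\Tw\cO(n)$, shows that $\Tw(F)$ induces quasi-isomorphisms on all these finite quotients; since the towers $\{\Tw\cO(n)/\cF_k\}_k$ and $\{\Tw\cO'(n)/\cF_k\}_k$ have surjective transition maps and $\Tw\cO(n)$, $\Tw\cO'(n)$ are their inverse limits, the associated Milnor $\lim^{1}$ sequences for cohomology map isomorphically term by term, and a further application of the five lemma yields $H^{\bullet}(\Tw\cO(n))\cong H^{\bullet}(\Tw\cO'(n))$ --- this is exactly the content of the auxiliary lemma on complete filtered complexes. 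The one point that deserves care is precisely this last step, because the graded pieces here are infinite products rather than finite-dimensional spaces; but the completeness of $\cF_\bullet$ established above is what makes the $\lim^{1}$-sequence argument applicable, so I do not anticipate a genuine obstacle --- the essential inputs are the filtration argument isolating $\pa^{\cO}$ and the characteristic-zero exactness of invariants.
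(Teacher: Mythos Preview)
Your proof is correct and follows essentially the same strategy as the paper: both use the complete descending filtration by $r$-weight, identify the associated graded with $S_r$-invariants of the underlying cochain complexes, invoke exactness of $(-)^{S_r}$ in characteristic zero, and conclude via a complete-filtration lemma. The only cosmetic difference is that the paper packages the endgame by working with the mapping cone $C = \Tw\cO(n)\oplus\bs\,\Tw\cO'(n)$ and showing directly that $\Gr C$ is acyclic, then invoking Lemma~\ref{lem:filtered}; this avoids spelling out the Milnor $\lim^1$ argument and the inductive five-lemma step you sketch, but the content is identical.
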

\begin{proof}
We want to show that $\Tw\cO(n) \to \Tw\cO'(n)$ is a quasi-isomorphism for every $n=0,1,2, \dots$. 
This is equivalent to saying that the mapping cone 
\[
C := \Tw\cO(n)\oplus \bs\, \Tw\cO'(n)
\]
is acyclic for every $n$. There is a natural complete filtration $C=\mF_0 \supset \mF_1 \supset \cdots$ such that 
\[
\mF_p = \prod_{r \geq p}  \Hom_{S_r} ( \bs^{-2r} \bbK, \cO(r+n)) \oplus \prod_{r \geq p} 
\bs\, \Hom_{S_r} ( \bs^{-2r} \bbK, \cO'(r+n))\, .
\]

Note that the associated graded complex for this filtration is
\[
\bigoplus_{p}
\mF_p / \mF_{p+1} \cong \bigoplus_{p} \, \Hom_{S_p} ( \bs^{-2p} \bbK, \cO(p+n) \oplus \bs \, \cO'(p+n)).
\]
The complex $\cO(p+n) \oplus \bs \, \cO'(p+n)$ is the mapping cone of $\cO(p+n) \to \cO'(p+n)$ and hence acyclic by assumption. Since taking invariants with respect to a finite group action commutes with taking cohomology, we conclude that the associated graded is acyclic as well. Thus, by Lemma \ref{lem:filtered} from Appendix \ref{app:filtered-lem}, 
the statement of the proposition follows.
\end{proof}

\begin{example}
\label{exam:Tw-q-iso}
Since the canonical maps 
$$
U_{\La\Lie} : \La\Lie_{\infty}  \to  \La\Lie \,, \qquad 
U_{\Ger} : \Ger_{\infty} \to \Ger
$$
are quasi-isomorphisms of dg operads, the morphisms
$$
\Tw (U_{\La\Lie}) : \Tw(\La\Lie_{\infty})  \to  \Tw(\La\Lie)\,,
$$
and
$$
\Tw(U_{\Ger}) :  \Tw \Ger_{\infty} \to \Tw \Ger
$$
are also  quasi-isomorphisms of dg operads.
\end{example}

As we will see below in Section \ref{sec:distr-law}, many objects of the 
under-category    $\La \Lie_{\infty}\downarrow \Operads$ satisfy the 
following remarkable property:
\begin{defi}
\label{dfn:homot-fixed}
An arrow $\La \Lie_\infty\to \cO$ is called a \emph{homotopy fixed point} for $\Tw$ 
if the counit map $\eta_\cO: \Tw\cO \to \cO$ is a quasi-isomorphism.
\end{defi}

If the map from $\La \Lie_\infty$ is clear from the context, we will say, by abusing the notation, that $\cO$ is 
a homotopy fixed point of $\Tw$.

Theorem \ref{thm:Twpreservesqiso} implies that, if a dg operad $\cO$ is a homotopy fixed point 
for $\Tw$ and a dg operad $\cO'$ is quasi-isomorphic to $\cO$, then $\cO'$ is also 
a homotopy fixed point for $\Tw$\,.

\begin{example}
\label{ex:laliegerhomfixed}
In Subsection \ref{sec:LaLieGerhomfixed} below, we will show that the operads
$\La\Lie$ and $\Ger$  are homotopy fixed points for $\Tw$\,. Hence, by Theorem \ref{thm:Twpreservesqiso}, 
the dg operads $\La\Lie_\infty$ and $\Ger_\infty$ are also  homotopy fixed points for $\Tw$\,.
\end{example}

\begin{example}
In Section \ref{sec:Br-homotopyfixed} below, we will show that the dg operad 
$\Br$ governing braces algebras (see Section \ref{sec:Br}) is  a homotopy fixed point for $\Tw$.
\end{example}

\subsection{The distributive law and the functor \texorpdfstring{$\Tw$}{Tw} }
\label{sec:distr-law}
In this section, we describe a large class of dg operads 
which are simultaneously $\Tw$-coalgebras and homotopy fixed 
points for $\Tw$\,.

Let us recall \cite{GJ} that, for collections $P$ and $Q$ in $\mC$,
the formula
\begin{equation}
\label{plethysm}
P \odot Q (n)  =  \bigoplus_{r \ge 0} 
P(r)  \otimes_{S_r}  \Big(\, \bigoplus_{k_1 + \dots + k_r = n} 
\Ind^{S_n}_{S_{k_1} \times \dots \times S_{k_r} }  Q (k_1) \otimes \dots 
\otimes  Q (k_r) \,\Big)
\end{equation}
defines a monoidal structure on the category of collections. 
The product \eqref{plethysm} is known as plethysm.

Let $P$ be an arbitrary dg operad. Furthermore, let 
$\cO$ be the collection of cochain complexes 
\begin{equation}
\label{O-P-odot-LaLie}
\cO = P \odot \La \Lie 
\end{equation}
which is obtained by computing the plethysm \eqref{plethysm}
of $P$ with $\La \Lie$\,.

Let $ \vs_{1,i}$ be the cycle $(1,2, \dots, i)$ in $S_{n+1}$\,.
It is not hard to see that imposing the relation 
\begin{equation}
\label{distrib-law}
\{a_1, a_2\} \circ_2 \ga = (-1)^{|\ga|} \sum_{i=1}^n \vs_{1,i} (\ga \circ_i \{a_1, a_2\} ) \qquad \forall ~~ \ga \in P(n)     
\end{equation}
and using the operad structures on $P$ and on $\La\Lie$ we get
a natural dg operad structure on the collection \eqref{O-P-odot-LaLie}. 
Following \cite{Markl-distr} and \cite[Section 8.6]{Loday-Vallette},
we say that the dg operad $\cO$ is 
obtained from $P$ and $\La\Lie$ using the {\it distributive law}
\eqref{distrib-law}.

\begin{example}
\label{ex:Ger-Com-circ-LaLie}
The operad $\Ger$ is obtained from the operad $\Com$ via the above construction.
$$
\Ger = \Com \odot \La \Lie\,. 
$$
\end{example}

For dg operads obtained in this way, we have the 
following straightforward proposition:  
\begin{prop}
\label{prop:cO-distrib}
Let $\cO$ be the dg operad which is obtained via taking 
the plethysm \eqref{O-P-odot-LaLie} and imposing relation
\eqref{distrib-law}. Then for every $\ga \in \cO(n)$ we have
\begin{equation}
\label{beta-ga}
\{a_1, a_2\} \circ_2 \ga = (-1)^{|\ga|} \sum_{i=1}^n \vs_{1,i} (\ga \circ_i \{a_1, a_2\})\,.     
\end{equation}
Furthermore, for every $\cO$-algebra $V$, the adjoint action $\{v, ~\}$ is a derivation 
of the $\cO$-algebra structure on $V$ for every $v \in V$\,. ~~$\Box$
\end{prop}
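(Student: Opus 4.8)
The plan is to prove the two assertions simultaneously, since the displayed identity \eqref{beta-ga} is exactly the operadic form of the claim that $\{v,\cdot\}$ is a derivation: once \eqref{beta-ga} holds for every $\gamma\in\cO(n)$, applying the structure morphism $\cO\to\End_V$ turns it verbatim into the identity $\{v,\gamma(v_1,\dots,v_n)\}=\sum_i\pm\,\gamma(v_1,\dots,\{v,v_i\},\dots,v_n)$. So the whole proposition reduces to establishing \eqref{beta-ga}. To do this I would use that, by construction of the distributive-law operad, $\cO$ is generated as an operad by $P$ together with the binary element $\{a_1,a_2\}\in\La\Lie(2)\subset\cO(2)$ (the latter being inserted via the embedding $\ell\mapsto\id_P\otimes\ell$); hence every $\gamma\in\cO(n)$ is a linear combination of iterated elementary insertions $\circ_i$ built from elements of $P$ and copies of $\{a_1,a_2\}$. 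I would then argue by induction on the number of such insertions. The base cases are: $\gamma\in P(n)$, where \eqref{beta-ga} is precisely the imposed relation \eqref{distrib-law}; $\gamma=\{a_1,a_2\}$, where \eqref{beta-ga} unwinds (taking account of the shift $\La\Lie(2)=\bs^{-1}\Lie(2)\otimes\sgn_2$) to the graded Jacobi identity for $\La\Lie$ rewritten as ``$\{a_0,\cdot\}$ is a derivation of $\{\cdot,\cdot\}$''; and $\gamma=\id\in\cO(1)$, which is immediate since $\vs_{1,1}$ is trivial.

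For the inductive step, suppose \eqref{beta-ga} holds for $\gamma_1\in\cO(n_1)$ and $\gamma_2\in\cO(n_2)$ and consider $\gamma=\gamma_1\circ_j\gamma_2$. By associativity of operadic composition, $\{a_1,a_2\}\circ_2(\gamma_1\circ_j\gamma_2)=(\{a_1,a_2\}\circ_2\gamma_1)\circ_{j+1}\gamma_2$. I would apply the inductive hypothesis for $\gamma_1$ to the inner factor $\{a_1,a_2\}\circ_2\gamma_1$ and then insert $\gamma_2$ in the appropriate slot. The terms of that expansion indexed by the arguments $i\ne j$ of $\gamma_1$ contribute, after inserting $\gamma_2$ into the (unbracketed) $j$-th argument, exactly the terms of $(-1)^{|\gamma|}\sum_i\vs_{1,i}\big((\gamma_1\circ_j\gamma_2)\circ_i\{a_1,a_2\}\big)$ whose bracket sits on an argument of $\gamma_1$; the single term indexed by $i=j$ produces $\{a_1,\cdot\}$ applied to the output of $\gamma_2$, and applying the inductive hypothesis for $\gamma_2$ to this re-expands it into the remaining terms of the right-hand side, namely those whose bracket sits on an argument of $\gamma_2$. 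Collecting everything gives \eqref{beta-ga} for $\gamma_1\circ_j\gamma_2$, completing the induction.

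The hard part is purely bookkeeping: keeping track of the cyclic permutations $\vs_{1,i}$ and of the Koszul sign $(-1)^{|\gamma|}$ as the new variable $a_1$ is threaded past the arguments of $\gamma_1$ and of $\gamma_2$, and verifying that the shift in the position of the $j$-th argument caused by inserting a bracket (when $i\le j$) matches up on both sides. I expect this to be transparent if one performs the whole computation inside the free $\cO$-algebra on degree-zero dummy variables $a_0,a_1,a_2,\dots$, using the identification of $\cO(m)$ with a subspace of this free algebra recalled in Section~\ref{sec:prelim-oper}: there $\vs_{1,i}$ just becomes ``the monomial in which $a_0$ occupies the $i$-th input slot'', the signs are dictated by the Koszul rule, and after the two applications of the inductive hypothesis both sides of \eqref{beta-ga} for $\gamma_1\circ_j\gamma_2$ are manifestly the same sum of monomials. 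With \eqref{beta-ga} established for all $\gamma$, the ``Furthermore'' statement is immediate as explained above.
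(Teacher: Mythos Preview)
The paper does not actually give a proof of this proposition: it is labelled ``straightforward'' and closed with a $\Box$ immediately after the statement. Your inductive argument---observing that $\cO$ is generated as an operad by $P$ and $\{a_1,a_2\}$, verifying the base cases (distributive law, Jacobi, identity), and then propagating \eqref{beta-ga} through an elementary insertion $\gamma_1\circ_j\gamma_2$---is a correct and natural way to supply the omitted details, and your observation that the ``Furthermore'' clause is simply \eqref{beta-ga} read through the structure map $\cO\to\End_V$ is exactly right.
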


\begin{remark}
\label{rem:BV-non}
Proposition \ref{prop:cO-distrib} implies that the operad $\BV$
governing the Batalin-Vilkovisky (BV) algebra \cite{BV} is not an operad 
which is obtained via the above construction. Indeed, the unary operation 
$\de$ on a BV algebra $V$ satisfies the relation 
$$
\de \big( \{v_1, v_2\} \big) + \{\de (v_1), v_2\} + (-1)^{|v_1|} \{v_1, \de (v_2)\} = 0\,.
$$
Hence, in general, the adjoint action $\{v,~\}$ is not a derivation of 
the BV algebra structure on $V$\,.
\end{remark}

Let $\cO$ be the dg operad which is obtained via taking 
the plethysm \eqref{O-P-odot-LaLie} and imposing relation
\eqref{distrib-law}. It is obvious that the dg operad $\cO$ receives a natural 
embedding
\begin{equation}
\label{LaLie-to-cO}
\mi : \La \Lie \hookrightarrow \cO\,.
\end{equation}

Composing $\mi$ with the canonical quasi-isomorphism 
\eqref{LaLie-infty-LaLie} $U_{\La\Lie} :  \La \Lie_{\infty} \to  \La \Lie$ we 
get a map of dg operads
\begin{equation}
\label{LaLie-infty-to-cO}
\hat{\vf} : =  \mi \circ U_{\La\Lie} ~ : ~ \La \Lie_{\infty} ~\to~ \cO\,.
\end{equation}
Hence, we may apply the twisting procedure to the pair $(\cO, \hat{\vf})$
and obtain a dg operad $\Tw\cO$\,.

According to Section \ref{sec:Tw-oplus}, the spaces 
\begin{equation}
\label{Tw-oplus-cO}
\Tw^{\oplus} \cO(n) = \bigoplus \bs^{2r} \big( \cO(r+n) \big)^{S_r}
\end{equation}
form a sub- dg operad of $\Tw\cO$\,. 

It turns our that the dg operad $\Tw^{\oplus} \cO$ coincides 
with $\Tw\cO$, provided the dg operad $P$ satisfies 
a minor technical condition. Namely,
\begin{prop}
\label{prop:Tw-oplus-Tw}
Let $P$ be a dg operad for which there exists a integer
$N$ such that for each $n\ge 0$ and for every $v \in P(n)$
\begin{equation}
\label{bounded-below}
| v | \ge N\,.
\end{equation}
If the dg operad $\cO$ is obtained via taking 
the plethysm \eqref{O-P-odot-LaLie} and imposing relation
\eqref{distrib-law} then  
\begin{equation}
\label{Tw-oplus-cO-TwcO}
\Tw^{\oplus} \cO = \Tw \cO\,. 
\end{equation}
\end{prop}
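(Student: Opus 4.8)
The plan is to reduce the asserted identity to a one-line degree estimate. Recall that $\Tw^{\oplus}\cO$ was introduced in Section~\ref{sec:Tw-oplus} as a sub-dg-operad of $\Tw\cO$, and that the two collections differ \emph{only} in that $\Tw\cO(n)=\prod_{r\ge 0}\bs^{2r}\big(\cO(r+n)\big)^{S_r}$ is a product while $\Tw^{\oplus}\cO(n)=\bigoplus_{r\ge 0}\bs^{2r}\big(\cO(r+n)\big)^{S_r}$ is the corresponding direct sum. Hence it suffices to show that, for each fixed arity $n$ and each fixed cohomological degree $d$, only finitely many of the graded pieces $\bs^{2r}\big(\cO(r+n)\big)^{S_r}_d$ are nonzero: then every homogeneous element of the product has finite support, so it already lies in the direct sum, giving $\Tw\cO(n)=\Tw^{\oplus}\cO(n)$ as graded vector spaces. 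The operad structure and the differential then agree automatically, since $\Tw^{\oplus}\cO$ is already a sub-dg-operad of $\Tw\cO$.

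First I would control the degrees occurring in $\cO(m)=(P\odot\La\Lie)(m)$. Imposing the distributive law \eqref{distrib-law} only passes to a quotient, which is spanned by the images of plethystic monomials $p(w_1,\dots,w_k)$ with $p\in P(k)$ and $w_j\in\La\Lie(k_j)$, $k_1+\cdots+k_k=m$; so it is enough to bound such a monomial from below. Since $\La\Lie(k_j)=\bs^{1-k_j}\Lie(k_j)\otimes\sgn_{k_j}$ is concentrated in degree $1-k_j$, and $|p|\ge N$ by hypothesis~\eqref{bounded-below}, the monomial has degree $|p|+\sum_{j=1}^k(1-k_j)=|p|+k-m\ge N+k-m\ge N-m$. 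Therefore every $v\in\cO(r+n)$ satisfies $|v|\ge N-(r+n)$, and consequently the corresponding generator $\bs^{2r}v$ of $\bs^{2r}\cO(r+n)$ has degree $|v|+2r\ge N-n+r$. Taking $S_r$-invariants does not change degrees, so $\bs^{2r}\big(\cO(r+n)\big)^{S_r}$ lives entirely in degrees $\ge N-n+r$.

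Then I would conclude: fixing $n$ and $d$, the summand $\bs^{2r}\big(\cO(r+n)\big)^{S_r}_d$ vanishes whenever $r>d-N+n$, hence for all but finitely many $r$. Thus in each degree the product $\prod_r$ is a finite product and coincides with $\bigoplus_r$, which is exactly the claim. I do not expect a genuine obstacle here; the only delicate point is the bookkeeping in the plethysm, namely keeping track of the fact that a ``long'' Lie word (large $k_j$, and hence very negative degree) is necessarily paired with a correspondingly large total arity $r+n$ of $\cO$, so that the shift $\bs^{2r}$ over-compensates. The hypothesis that $P$ is bounded below in degree is precisely what prevents the $P$-part from contributing unboundedly negative degrees and thereby breaking this balance; without it the conclusion can fail.
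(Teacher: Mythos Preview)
Your argument is correct and follows the same approach as the paper: both show that in each fixed degree only finitely many $r$ contribute, via the same degree bookkeeping on plethystic monomials $p(w_1,\dots,w_k)$, arriving at the identical bound $r\le d+n-N$. One small inaccuracy: imposing the distributive law \eqref{distrib-law} does not pass to a quotient---as stated in Section~\ref{sec:distr-law}, $\cO=P\odot\La\Lie$ \emph{as a collection}, and the relation only serves to define the operad multiplication---but this does not affect your degree estimate.
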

\begin{proof}
Let $m$ be an integer and $n$ be a non-negative integer. 
Our goal is to prove every sum
\begin{equation}
\label{sum}
\sum_{r=0}^{\infty} w_r\,, \qquad w_r \in  \bs^{2r} \big( \cO(r+n) \big)^{S_r}
\end{equation}
of a fixed degree $m$ has only finitely many terms.

For every $r\ge 0$, the graded vector space  $\bs^{2r} \big( \cO(r+n) \big)^{S_r}$
is spanned by vectors of the form
\begin{equation}
\label{this-form}
\sum_{\si \in S_r} 
\si  (v ; X_1, X_2, \dots, X_t )\,, 
\end{equation}
where $v \in P(t)$ for some $t$ and $X_i$ are 
vectors in $\La\Lie(k_i)$ such that 
\begin{equation}
\label{sum-k-i}
\sum_{i=1}^t k_i = n + r\,.
\end{equation}

Since a vector \eqref{this-form} carries degree $m$ in 
$$
\bs^{2r} \big( \cO(r+n) \big)^{S_r}\,,
$$
we obtain the following equation 
\begin{equation}
\label{m-r-v-ki}
m = 2r + |v| + \sum_{i=1}^t (1-k_i)\,.
\end{equation}

Combining \eqref{sum-k-i} with \eqref{m-r-v-ki} we get 
\begin{equation}
\label{m-r-v-t-n}
m = r + |v| + t - n 
\end{equation}
and hence  
\begin{equation}
\label{r-equals}
r = m + n - t -|v|\,. 
\end{equation}
 
Since $t \ge 0$ and $|v| \ge N $, we deduce that 
\begin{equation}
\label{r-less-or-eq}
r \le m + n - N\,. 
\end{equation}

Thus the sum in \eqref{sum} has indeed only finitely many terms.  
\end{proof}

The following theorem is the central result of this 
section\footnote{An idea of this proof is borrowed from \cite{grt}.}.
\begin{thm}
\label{thm:distrib-homot-fixed}
If an object  $(\cO, \hat{\vf})$ of the under-category 
 $\La \Lie_{\infty}\downarrow \Operads$ is obtained via taking 
the plethysm \eqref{O-P-odot-LaLie} of a dg operad $P$ with 
$\La \Lie$ and imposing relation
\eqref{distrib-law}, then  $(\cO, \hat{\vf})$ is canonically a $\Tw$-coalgebra. 
If, in addition, the dg operad $P$ satisfies the condition of Proposition \ref{prop:Tw-oplus-Tw},
then $\cO$ is a homotopy fixed point for $\Tw$\,.
\end{thm}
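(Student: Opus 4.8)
The statement consists of two independent assertions, which I would prove in turn.

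\emph{The $\Tw$-coalgebra structure.} The plan is to show that the $\grVect_{\bbK}$-operad inclusion $\iota:\cO\hookrightarrow\Tw\cO$ of \eqref{iota-dfn} is already the desired coalgebra map $c$. The only point needing an argument is that $\iota$ is a morphism of \emph{dg} operads. For a level-$0$ element $\iota(\ga)$ only finitely many terms of $\pa^{\Tw}$ occur, so I would evaluate $\pa^{\Tw}\iota(\ga)$ by formula \eqref{diff-Tw-cO}: all three sums on the right land in level $1$ and, after matching the shuffles and the signs, their total is $\mi(\{a_1,a_2\})\circ_2\ga-(-1)^{|\ga|}\sum_{i=1}^n\vs_{1,i}\big(\ga\circ_i\mi(\{a_1,a_2\})\big)$, which vanishes by the distributive law \eqref{beta-ga} of Proposition \ref{prop:cO-distrib}. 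Hence $\pa^{\Tw}\iota(\ga)=\iota(\pa^{\cO}\ga)$, so $c:=\iota$ is a dg operad map. The three coalgebra axioms are then essentially formal: $\eta_{\cO}\circ c=\id$ \eqref{comp-c-eta} holds by construction; for \eqref{diag-c-Twc-mD} both $\mD_{\cO}\circ c$ and $\Tw(c)\circ c$ send $\ga$ to the element of $\Tw\Tw\cO$ concentrated in bidegree $(0,0)$ with value $\ga$; and for \eqref{diag-LaLie-infty} it is enough to compare $c\circ\hat{\vf}$ with $\beta_{\cO}$ on the generators $\bs\,1^{\mc}_n$ of the quasi-free operad $\La\Lie_{\infty}=\Cobar(\La^2\coCom)$, where, since $\hat{\vf}=\mi\circ U_{\La\Lie}$, both maps send $\bs\,1^{\mc}_n$ to $\iota\big(\mi(\{a_1,a_2\})\big)$ when $n=2$ and to $0$ otherwise.

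\emph{The homotopy-fixed-point property.} Assume in addition \eqref{bounded-below}, so that $\Tw\cO=\Tw^{\oplus}\cO$ by Proposition \ref{prop:Tw-oplus-Tw} and $\Tw\cO(n)=\bigoplus_{r\ge 0}\bs^{2r}\big(\cO(r+n)\big)^{S_r}$ with $\pa^{\Tw}=\pa^{\cO}+D$, where $\pa^{\cO}$ preserves the level and $D:=\vf\cdot\,+\,\de_{\ka(\vf)}$ raises it by exactly one. Since $c=\iota$ is a dg section of $\eta_{\cO}$, the projector $\id-\iota\,\eta_{\cO}$ has image the subcomplex $\ker\eta_{\cO}=\bigoplus_{r\ge 1}\bs^{2r}(\cO(r+n))^{S_r}$, and it suffices to show that this subcomplex is acyclic for each $n$. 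I would run the spectral sequence of the complete level filtration \eqref{Tw-cO-filtr}; it converges since, by \eqref{bounded-below} and the degree bookkeeping in Proposition \ref{prop:Tw-oplus-Tw}, only finitely many levels contribute in any fixed degree. On the $E_1$-page one uses that $\La\Lie$ has zero differential and that $S_r$-invariants is exact in characteristic zero, so that $H(\cO)=H(P)\odot\La\Lie$ is again an operad of the form \eqref{O-P-odot-LaLie} with zero differential, and $(E_1,d_1)$ is identified with $\big(\ker\eta_{H(\cO)},\,D\big)$. This reduces the claim to the case where $\cO$ itself has zero differential, in which case $\pa^{\Tw}=D$ and what must be shown is that $\big(\bigoplus_{r\ge 1}\bs^{2r}(\cO(r+n))^{S_r},\,D\big)$ is acyclic.

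For this remaining step I would follow the idea of \cite{grt}. Identifying $\bigoplus_{r\ge 0}\bs^{2r}(\cO(r+n))^{S_r}$ with the multilinear-in-$x_1,\dots,x_n$ part of the free $\cO$-algebra on $\bbK\,\al\oplus\langle x_1,\dots,x_n\rangle$ (with $|\al|=2$, the level recording the number of $\al$'s), the differential $D$ becomes a Chevalley--Eilenberg-type operator that adjoins a new copy of $\al$ and brackets it in. On the positive-level part I would exhibit an explicit contracting homotopy: a degree $-1$ operator built from the nullary--unary element $u^{\c}$ of \eqref{u-circ}, which on each level-$r$ summand removes one copy of $\al$ and rescales by $1/r$, in the spirit of the identities \eqref{going-down}--\eqref{down-r-times}. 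Checking that this operator realizes a homotopy from $\id$ to $\iota\,\eta_{\cO}$ reduces, after unwinding the plethysm \eqref{O-P-odot-LaLie} and using the derivation property \eqref{beta-ga} to carry the computation through the $P$-factor of each monomial, to an elementary computation in free shifted Lie algebras. This last verification is the technically heaviest part of the argument and the step I expect to be the main obstacle; once it is in place, $\eta_{\cO}$ is a quasi-isomorphism and $\cO$ is a homotopy fixed point for $\Tw$. \qed
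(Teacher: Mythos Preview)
Your treatment of the $\Tw$-coalgebra structure is essentially identical to the paper's: the coalgebra map is the canonical embedding $\emb_{\cO}$ of \eqref{cO-TwcO}, its compatibility with the differentials follows from \eqref{beta-ga}, and the three coalgebra axioms are immediate.

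For the homotopy-fixed-point assertion your overall strategy also matches the paper's: use Proposition~\ref{prop:Tw-oplus-Tw} to pass to $\Tw^{\oplus}\cO$, identify $\Tw^{\oplus}\cO(n)$ with the multilinear-in-$x_1,\dots,x_n$ piece of the free $\cO$-algebra on a degree-$2$ variable together with $x_1,\dots,x_n$, and run a filtration argument to strip away $\pa^{\cO}$ and reduce to a purely combinatorial statement about the level-raising operator. The paper filters by $\deg_a-|\cdot|$ \eqref{filtr-PLie-a} rather than by level directly, but your convergence argument based on \eqref{bounded-below} is equally valid.

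The genuine gap is the final step. The identities \eqref{going-down}--\eqref{down-r-times} do not furnish the contracting homotopy you describe: the operation $\,\cdot\circ_1 u^{\c}$ \emph{raises} the level index while \emph{lowering} the operadic arity, so it is not an endomorphism of $\Tw\cO(n)$ and cannot serve as a homotopy there. More concretely, in the free-algebra model there is no well-defined way to ``remove one copy of $\al$'' from a Lie monomial such as $\{\al,x_1\}$ and land back in the same complex; the $u^{\c}$ trick belongs to the proof of Theorem~\ref{thm:twisting-cont}, where it serves a different purpose. The paper handles this step by a different mechanism: after the plethysm decomposition it reduces everything, via K\"unneth, to Lemma~\ref{lem:La-Lie-a-pr}, the assertion that $\La\Lie''(a_1,\dots,a_n)\hookrightarrow\La\Lie'(a,a_1,\dots,a_n)$ with $\de(a)=\tfrac12\{a,a\}$ is a quasi-isomorphism. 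That lemma is proved not by a global homotopy but by rewriting each summand $\La\Lie'(a,a_{i_1},\dots,a_{i_k})$ as a piece of a tensor algebra via the explicit isomorphism $\nu$ of \eqref{nu} and then applying K\"unneth to the decomposition \eqref{T-de-T-sum}. So your outline is sound up to the last step, but that step needs this tensor-algebra/K\"unneth computation (or an equivalent substitute) rather than the $u^{\c}$ device.
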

\begin{proof} Identity \eqref{beta-ga} implies that the canonical embedding 
$$
\emb_{\cO} : \cO \to \Tw\cO
$$
\begin{equation}
\label{cO-TwcO}
\big(\emb_{\cO}(v)\big)(1_r) = 
\begin{cases}
 v \qquad {\rm if} ~~ r = 0 \,, \\
 0  \qquad {\rm otherwise}
\end{cases}
\end{equation}
is compatible with the differentials $\pa^{\Tw}$ and $\pa^{\cO}$\,.
Furthermore, it is easy to see that the diagrams 
$$
\begin{minipage}[t]{0.4\linewidth} 
\centering 
\begin{tikzpicture}
\matrix (m) [matrix of math nodes, row sep=2.6em, column sep=2.6em]
{\cO & \Tw \cO \\
       & \cO \\ };
\path[->,font=\scriptsize]
(m-1-1) edge node[auto] {$\emb_{\cO}$} (m-1-2) 
edge node[auto] {$\id$} (m-2-2)
(m-1-2) edge  node[auto] {$\eta_{\cO}$} (m-2-2); 
\end{tikzpicture}
\end{minipage}
~~
\begin{minipage}[t]{0.4\linewidth} 
\centering 
\begin{tikzpicture}
\matrix (m) [matrix of math nodes, row sep=2.6em, column sep=2.6em]
{\cO & \Tw \cO \\
  \Tw \cO    & \Tw \Tw\cO \\ };
\path[->,font=\scriptsize]
(m-1-1) edge node[auto] {$\emb_{\cO}$} (m-1-2) 
edge node[left] {$\emb_{\cO}$} (m-2-1) 
(m-1-2) edge  node[auto] {$\Tw(\eta_{\cO})$} (m-2-2)
(m-2-1) edge  node[auto] {$\mD_{\cO}$} (m-2-2); 
\end{tikzpicture}
\end{minipage}
$$
commute. Thus $(\cO, \hat{\vf})$ is indeed a coalgebra over the comonad $\Tw$\,.

Let us now observe that the map $\emb_{\cO}$ \eqref{cO-TwcO} lands 
into the sub- dg operad $\Tw^{\oplus}\cO$\,. Furthermore,
if the dg operad $P$ satisfies the condition of Proposition \ref{prop:Tw-oplus-Tw},
then 
$$
\Tw^{\oplus}\cO  =  \Tw \cO\,.
$$

Thus, we need to prove that the map
\begin{equation}
\label{emb-cO-n}
\emb_{\cO} \Big|_{\cO(n)}  : \cO(n) \hookrightarrow \Tw^{\oplus}\cO (n) 
\end{equation}
is a quasi-isomorphism of cochain complexes for every $n$\,.
For this purpose we consider the free $\cO$-algebra  
\begin{equation}
\label{cO-a-ai}
\cO(a, a_1, a_2, \dots, a_n)
\end{equation}
generated by $n$ dummy  variables $a_1,a_2, \dots, a_n$ of degree zero and one dummy variable $a$
of degree $2$\,. 
Next, we denote by $\de$ the degree $1$ derivation of the 
$\cO$-algebra  $\cO(a, a_1, a_2, \dots, a_n)$
defined by the formulas:
\begin{equation}
\label{delta-cO-a-ai}
\de (a) = \frac{1}{2} \{a,a\}\,, \qquad 
\de (a_i) = 0 \qquad \forall~~ 1 \le i \le n\,. 
\end{equation}
Due to the Jacobi identity, we have $\de^2 = 0$\,. 
Hence, $\de$ is a differential on $\cO$-algebra  
\eqref{cO-a-ai}. We will combine $\de$
with the differential $\pa^{\cO}$ coming from $\cO$ and consider 
the $\cO$-algebra \eqref{cO-a-ai}  with the
differential 
\begin{equation}
\label{diff-cO-a-ai}
\pa^{\cO} + \de\,.
\end{equation}

Let us denote by 
\begin{equation}
\label{cOprime-a-ai}
\cO'(a, a_1, a_2, \dots, a_n) 
\end{equation}
the subcomplex of \eqref{cO-a-ai} which is spanned by 
$\cO$-monomials in which each variable from the set $\{a_1,a_2, \dots, a_n\}$
appears exactly once. 

It is not hard to see that the formula 
\begin{equation}
\label{TwcO-cO-a-ai}
\psi (\bs^{2r}\, v) = \frac{1}{r!}\, v \otimes a^{\otimes \, r} \otimes a_1 \otimes a_2 
\otimes \dots \otimes a_n\,, \qquad 
v \in \big( \cO(r+n) \big)^{S_r}
\end{equation}
gives us an obvious isomorphism of graded vector spaces
\begin{equation}
\label{psi-map}
\psi :  \Tw^{\oplus}\cO (n) = 
\bigoplus_{r=0}^{\infty}  \big( \cO(r+n) \big)^{S_r} ~ \to ~ 
\cO'(a, a_1, a_2, \dots, a_n)\,.
\end{equation}
Using identity \eqref{beta-ga}, one can show that $\psi$ intertwines 
the differentials $\pa^{\Tw}$ on $\Tw^{\oplus}\cO (n)$ and 
\eqref{diff-cO-a-ai} on \eqref{cOprime-a-ai}. Hence, $\psi$ \eqref{psi-map} is 
an isomorphism of cochain complexes. 

Let us denote by
\begin{equation}
\label{cO-pr-ai}
\cO''(a_1,a_2, \dots, a_n) 
\end{equation}
the subcomplex of the free $\cO$-algebra $\cO(a_1,a_2, \dots, a_n)$
which is spanned by monomials in which each 
variable from the set $\{a_1,a_2, \dots, a_n\}$
appears exactly once. 

We observe that the assignment
$$
v \mapsto v \otimes a_1\otimes a_2 \otimes \dots \otimes a_n\,, 
\qquad v \in \cO(n)
$$
gives us an obvious identification between the cochain complex 
$\cO(n)$ and \eqref{cO-pr-ai}.

We also observe that the composition $\psi \circ \emb_{\cO}$ 
operates by the formula
\begin{equation}
\label{psi-emb-cO}
\psi \circ \emb_{\cO} (v) = v\otimes a_1\otimes a_2 \otimes \dots \otimes a_n\,.
\end{equation}

Thus, our goal is to show that the embedding 
\begin{equation}
\label{cO-TwcO-ai}
\cO''(a_1, a_2, \dots, a_n)  \hookrightarrow \cO'(a, a_1,a_2, \dots, a_n) 
\end{equation}
is a quasi-isomorphism of cochain complexes. 

For this purpose we introduce the free $\La\Lie$-algebra 
\begin{equation}
\label{La-Lie-a-a12-n}
\La\Lie(a, a_1, a_2, \dots, a_n)
\end{equation}
generated by $n$ dummy 
variables $a_1,a_2, \dots, a_n$ of degree zero and one dummy variable $a$
of degree $2$\,. 

We consider the $\La\Lie$-algebra  $\La\Lie(a, a_1, a_2, \dots, a_n)$
with the differential $\de$ defined in \eqref{delta-cO-a-ai}.
 
In addition, we introduce two subspaces
\begin{equation}
\label{La-Lie-pr}
\La\Lie''(a_1, a_2, \dots, a_n) \subset \La\Lie(a, a_1, a_2, \dots, a_n)
\end{equation}
and
\begin{equation}
\label{La-Lie-a-pr}
\La\Lie'(a, a_1, a_2, \dots, a_n) \subset \La\Lie(a, a_1, a_2, \dots, a_n)\,.
\end{equation}
Here  $\La\Lie'(a, a_1, a_2, \dots, a_n)$ is spanned by $\La\Lie$-monomials 
in   $\La\Lie(a, a_1, a_2, \dots, a_n)$ which involve each variable from 
the set $\{a_1, a_2, \dots, a_n\}$ at most once, and 
$\La\Lie''(a_1, a_2, \dots, a_n)$ is spanned by $\La\Lie$-monomials 
in   $\La\Lie'(a, a_1, a_2, \dots, a_n)$ which do not involve the variable $a$
at all. 

It is clear that both subspaces \eqref{La-Lie-pr} and \eqref{La-Lie-a-pr}
are subcomplexes of \eqref{La-Lie-a-a12-n}. Moreover, the restriction 
of the differential $\de$ to \eqref{La-Lie-pr} is zero.

The proof of Theorem \ref{thm:distrib-homot-fixed} is based on the following statement:
\begin{lemma}
\label{lem:La-Lie-a-pr}
The embedding 
\begin{equation}
\label{emb-LaLie-pr}
\emb : \La\Lie''(a_1, a_2, \dots, a_n)
\hookrightarrow
\La\Lie'(a, a_1, a_2, \dots, a_n)
\end{equation}
is a quasi-isomorphism of cochain complexes.  
In other words, for every cocycle $c \in \La\Lie'(a, a_1, a_2, \dots, a_n)$, 
there exists a vector $c_1 \in \La\Lie'(a, a_1, a_2, \dots, a_n)$ such that 
$$
c - \de(c_1) \in \La\Lie''(a_1, a_2, \dots, a_n)\,.
$$
\end{lemma}
The proof of this lemma is somewhat technical so 
we give it in Subsection \ref{sec:lemma-proof} below.
To deduce the desired statement from Lemma \eqref{lem:La-Lie-a-pr},
we observe that the cone of the embedding 
\eqref{cO-TwcO-ai} is a direct summand in the cone of the embedding 
\begin{equation}
\label{PLie-PLie}
P \big( \La\Lie''(a_1, a_2, \dots, a_n) \big)
  \hookrightarrow 
P\big( \La\Lie'(a, a_1, a_2, \dots, a_n) \big) \,.
\end{equation}
Hence the map \eqref{cO-TwcO-ai} is a quasi-isomorphism if so is
the map \eqref{PLie-PLie}. 

To prove that the embedding  \eqref{PLie-PLie} is a quasi-isomorphism 
we consider the following increasing filtration on the cochain 
complex $P\big( \La\Lie'(a, a_1, a_2, \dots, a_n) \big)$: 
\begin{equation}
\label{filtr-PLie-a}
\dots \subset
\cF^m P\big( \La\Lie'(a, a_1, a_2, \dots, a_n) \big) \subset 
\cF^{m+1} P\big( \La\Lie'(a, a_1, a_2, \dots, a_n) \big)\subset \dots\,,
\end{equation}
where 
$$
\cF^m  P\big( \La\Lie'(a, a_1, a_2, \dots, a_n)
$$ 
is spanned by $\cO$-monomials
$w$ for which 
$$
\deg_a(w) - |w| \le m\,,
$$
with $\deg_a(w)$ being the degree of $w$ in $a$\,.
Since the differential $\pa^{\cO}$ does 
not change the degree in $a$ and the differential $\de$ raises 
it by $1$ the total differential $\pa^{\cO} + \de$ is compatible with 
the filtration \eqref{filtr-PLie-a}. 
Restricting \eqref{filtr-PLie-a} to the subcomplex 
$P \big( \La\Lie''(a_1, a_2, \dots, a_n) \big)$ we get the ``silly'' filtration  
\begin{equation}
\label{silly-filtr}
\cF^m P \big( \La\Lie''(a_1, a_2, \dots, a_n) \big)^k = 
\begin{cases}
 P \big( \La\Lie''(a_1, a_2, \dots, a_n) \big)^k  \qquad {\rm if} ~~  k \ge  -m\,, \\
   \bfzero \qquad {\rm otherwise}
\end{cases}
\end{equation}
with the zero differential on the associated graded complex. 

To describe the associated graded complex for \eqref{filtr-PLie-a}
we denote by $\wt{P}$ the operad in $\grVect_{\bbK}$ which is obtained 
from $P$ by forgetting the differential. It is clear from the construction that 
the associated graded complex 
$$
\Gr P\big( \La\Lie'(a, a_1, a_2, \dots, a_n) \big)
$$
is isomorphic to the cochain complex
\begin{equation}
\label{isom-Gyryam}
\wt{P}\big( \La\Lie'(a, a_1, a_2, \dots, a_n) \big)
\end{equation}
with the differential $\de$\,.
Using the K\"unneth theorem and the fact that cohomology 
commutes with taking coinvariants, we deduce from Lemma   
\eqref{lem:La-Lie-a-pr} that the map 
$$
\Big( \wt{P}(n) \otimes \big( \La\Lie''(a_1, a_2, \dots, a_n) \big)^{\otimes\, n} \Big)_{S_n} 
\to 
\Big( \wt{P}(n) \otimes \big( \La\Lie'(a, a_1, a_2, \dots, a_n) \big)^{\otimes\, n} \Big)_{S_n} 
$$
is a quasi-isomorphism of cochain complexes for every $n$\,. 
Thus the embedding \eqref{PLie-PLie} induces a quasi-isomorphism on 
the level of associated graded complexes. Combining this observation 
with the fact that the filtrations \eqref{filtr-PLie-a}
and \eqref{silly-filtr} are locally bounded and cocomplete, we deduce, 
from Lemma A.3 in \cite[Appendix A]{notes}, that  the embedding \eqref{PLie-PLie}
is also a quasi-isomorphism. 
Hence the map \eqref{cO-TwcO-ai} is a  quasi-isomorphism of 
cochain complexes  and the theorem is proved. 
 \end{proof}

\subsubsection{The operads $\La\Lie$, $\Ger$,  $\La\Lie_{\infty}$, and $\Ger_{\infty}$ 
are homotopy fixed points for $\Tw$}
\label{sec:LaLieGerhomfixed}

Theorem \ref{thm:distrib-homot-fixed} has the following useful Corollary: 
\begin{corollary}
\label{cor:LaLie-Ger}
The operads  $\La\Lie$ and $\Ger$ carry a canonical $\Tw$-coalgebra structure. 
Moreover, these operads are homotopy fixed points for $\Tw$.  $\Box$
\end{corollary}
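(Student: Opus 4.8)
The plan is to deduce both assertions directly from Theorem \ref{thm:distrib-homot-fixed} by exhibiting $\La\Lie$ and $\Ger$ as instances of the distributive-law construction \eqref{O-P-odot-LaLie}--\eqref{distrib-law}. The only work is to name the operad $P$ in each case and to check that it satisfies the harmless bound \eqref{bounded-below} of Proposition \ref{prop:Tw-oplus-Tw}.

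First I would treat $\La\Lie$. Take $P = \ast$, the monoidal unit for the plethysm \eqref{plethysm} (the operad concentrated in arity $1$ with $\ast(1) = \bbK$); then $\ast \odot \La\Lie = \La\Lie$, the relation \eqref{distrib-law} is vacuous (it only constrains $\ga \in \ast(1) = \bbK$, for which both sides reduce to $\{a_1,a_2\}$), and the induced map $\hat\vf : \La\Lie_\infty \to \La\Lie$ of \eqref{LaLie-infty-to-cO} is simply $U_{\La\Lie}$, so $(\La\Lie, U_{\La\Lie})$ is an object of $\La\Lie_\infty \downarrow \Operads$ obtained by the construction of Theorem \ref{thm:distrib-homot-fixed}. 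Since $\ast$ is concentrated in degree $0$ it satisfies \eqref{bounded-below} with $N=0$, so both conclusions of that theorem apply: $\La\Lie$ is canonically a $\Tw$-coalgebra and a homotopy fixed point for $\Tw$. For $\Ger$ I would invoke Example \ref{ex:Ger-Com-circ-LaLie}, namely $\Ger = \Com \odot \La\Lie$ with the distributive law \eqref{distrib-law}; the operad $\Com$ is again concentrated in degree $0$, hence satisfies \eqref{bounded-below} with $N=0$, and Theorem \ref{thm:distrib-homot-fixed} once more yields that $\Ger$ is canonically a $\Tw$-coalgebra and a homotopy fixed point for $\Tw$.

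There is essentially no obstacle here; the statement is a genuine corollary. The only things one verifies are the two identifications $\La\Lie = \ast \odot \La\Lie$ and $\Ger = \Com \odot \La\Lie$ (the latter being Example \ref{ex:Ger-Com-circ-LaLie}) together with the trivial degree bounds on $\ast$ and $\Com$, after which the word \emph{canonical} in the statement is pinned down as referring to the coalgebra structure produced by the embedding $\emb_\cO : \cO \to \Tw\cO$ of \eqref{cO-TwcO} in the proof of Theorem \ref{thm:distrib-homot-fixed}. (As a byproduct, combining this with Theorem \ref{thm:Twpreservesqiso} and the quasi-isomorphisms $U_{\La\Lie}$, $U_{\Ger}$ of Example \ref{exam:Tw-q-iso} shows that $\La\Lie_\infty$ and $\Ger_\infty$ are homotopy fixed points for $\Tw$ as well, cf.\ Example \ref{ex:laliegerhomfixed}.)
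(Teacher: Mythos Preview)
Your proposal is correct and is exactly the intended argument: the paper states this corollary with no proof, implicitly expecting the reader to recognize $\La\Lie = \ast \odot \La\Lie$ and $\Ger = \Com \odot \La\Lie$ (the latter being Example \ref{ex:Ger-Com-circ-LaLie}) and apply Theorem \ref{thm:distrib-homot-fixed}. You have simply made the implicit explicit, including the trivial degree bound on $\ast$ and $\Com$ and the identification of the canonical coalgebra map with $\emb_\cO$.
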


Combining Corollary \ref{cor:LaLie-Ger} with Theorem \ref{thm:Twpreservesqiso}, we immediately 
conclude that\footnote{The same result for the operad $\Lie_{\infty}$ 
was obtained in \cite[Section 7]{C-Lazarev} by J. Chuang and A. Lazarev.}  
\begin{corollary}
\label{cor:LaLie-Ger-inf}
The dg operads  $\La\Lie_{\infty}$ and $\Ger_{\infty}$ are homotopy fixed points for $\Tw$. $\Box$  
\end{corollary}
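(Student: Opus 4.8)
The plan is to obtain this as an immediate consequence of Corollary~\ref{cor:LaLie-Ger} and Theorem~\ref{thm:Twpreservesqiso}, using the general principle (recorded in the remark following Definition~\ref{dfn:homot-fixed}) that being a homotopy fixed point for $\Tw$ is invariant under quasi-isomorphisms of dg operads in the under-category $\La\Lie_{\infty}\downarrow\Operads$.

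First I would note that the canonical maps $U_{\La\Lie}\colon\La\Lie_{\infty}\to\La\Lie$ and $U_{\Ger}\colon\Ger_{\infty}\to\Ger$ are quasi-isomorphisms of dg operads, and that they are in fact morphisms of the under-category $\La\Lie_{\infty}\downarrow\Operads$: the structure arrows $\La\Lie_{\infty}\to\Ger_{\infty}$ and $\La\Lie_{\infty}\to\Ger$ are the inclusion $\iota$ of Section~\ref{sec:Ginf-Twcoalg} and its composite with $U_{\Ger}$, respectively, and the relevant triangle commutes by construction; for $\La\Lie_{\infty}\to\La\Lie$ the analogous statement is trivial. Then I would apply Theorem~\ref{thm:Twpreservesqiso} to these two morphisms to conclude that $\Tw(U_{\La\Lie})\colon\Tw\La\Lie_{\infty}\to\Tw\La\Lie$ and $\Tw(U_{\Ger})\colon\Tw\Ger_{\infty}\to\Tw\Ger$ are again quasi-isomorphisms.

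Finally I would run the diagram chase. Consider the naturality square of the counit $\eta\colon\Tw\Rightarrow\mathit{id}$ associated with $U_{\Ger}$: its horizontal edges are $\Tw(U_{\Ger})$ and $U_{\Ger}$, and its vertical edges are $\eta_{\Ger_{\infty}}$ and $\eta_{\Ger}$. By Corollary~\ref{cor:LaLie-Ger} (which rests on Theorem~\ref{thm:distrib-homot-fixed} applied to $\Ger=\Com\odot\La\Lie$) the map $\eta_{\Ger}$ is a quasi-isomorphism, and $U_{\Ger}$, $\Tw(U_{\Ger})$ are quasi-isomorphisms by the previous step; since $\eta_{\Ger}\circ\Tw(U_{\Ger})=U_{\Ger}\circ\eta_{\Ger_{\infty}}$, the two-out-of-three property of quasi-isomorphisms forces $\eta_{\Ger_{\infty}}$ to be a quasi-isomorphism, which is precisely the assertion that $\Ger_{\infty}$ is a homotopy fixed point for $\Tw$. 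The same square with $U_{\La\Lie}$, $\La\Lie_{\infty}$, $\La\Lie$ in place of $U_{\Ger}$, $\Ger_{\infty}$, $\Ger$ handles the $\La\Lie_{\infty}$ case. There is no real obstacle here: all the substance is already contained in Corollary~\ref{cor:LaLie-Ger} and Theorem~\ref{thm:Twpreservesqiso}, and the only point deserving a moment's care is verifying that $U_{\La\Lie}$ and $U_{\Ger}$ are morphisms of the under-category so that Theorem~\ref{thm:Twpreservesqiso} is applicable, which is immediate from the definitions of $\iota$ and of the canonical quasi-isomorphisms.
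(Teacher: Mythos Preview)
Your proposal is correct and follows exactly the paper's approach: the paper simply says that combining Corollary~\ref{cor:LaLie-Ger} with Theorem~\ref{thm:Twpreservesqiso} yields the result, invoking the general principle (stated right after Definition~\ref{dfn:homot-fixed}) that homotopy fixed points are preserved under quasi-isomorphisms in the under-category. Your diagram chase with the naturality square of $\eta$ and the two-out-of-three property is precisely what makes that general principle work, so you have just spelled out in detail what the paper leaves implicit.
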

\begin{remark}
\label{rem:Linf-Ginf}
Let us recall that, due to Lemma \ref{lem:laliecoalg} and Theorem
\ref{thm:Ginf-Twcoalg},
both dg operads $\La\Lie_{\infty}$ and $\Ger_{\infty}$ carry 
canonical $\Tw$-coalgebra structures. In fact, the canonical map 
of operads 
$$
\mT :  \La\Lie_{\infty} \to \Tw \La\Lie_{\infty}
$$
was used to introduce the notion of $\Tw$-coalgebra.  
\end{remark}

\subsubsection{Proof of Lemma \ref{lem:La-Lie-a-pr}}
\label{sec:lemma-proof}
The proof of Theorem \ref{thm:distrib-homot-fixed} was based on 
Lemma  \ref{lem:La-Lie-a-pr}. Here we give a proof of this lemma.
We consider a non-empty ordered subset  $\{i_1< i_2 < \dots < i_k\}$ of $\{1,2,\dots, n\}$
and denote by 
\begin{equation}
\label{La-Lie-prpr}
\La\Lie'(a,a_{i_1}, \dots, a_{i_k})
\end{equation}
the subcomplex of $ \La\Lie'(a,a_1, \dots, a_n)$ which is spanned by $\La\Lie$-monomials
in $\La\Lie(a,a_{i_1}, \dots, a_{i_k})$ involving each variable in the set 
$\{a_{i_1}, \dots, a_{i_k} \}$ exactly once. 

It is clear that $ \La\Lie'(a,a_1, \dots, a_n)$ splits into the direct
sum of subcomplexes: 
\begin{equation}
\label{LaLiepr-decomp}
\La\Lie'(a,a_1, \dots, a_n) = \bbK\L a, \{a,a\}\R ~\oplus~ \bigoplus_{\{i_1< i_2 < \dots < i_k\}}
\La\Lie'(a,a_{i_1}, \dots, a_{i_k})\,,
\end{equation}
where the summation runs over all non-empty 
ordered subsets $\{i_1< i_2 < \dots < i_k\}$ of $\{1,2,\dots, n\}$\,.

It is not hard to see that the subcomplex $ \bbK\L a, \{a,a\}\R$ is 
acyclic. Thus our goal is to show that every cocycle in $\La\Lie'(a,a_{i_1}, \dots, a_{i_k})$
is cohomologous to cocycle in the intersection 
$$
\La\Lie'(a,a_{i_1}, \dots, a_{i_k}) \cap  \La\Lie''(a_1, a_2, \dots, a_n)\,.
$$

To prove this fact we consider the tensor algebra
\begin{equation}
\label{T-1-2-k}
T\big( \bbK \L \bsi a, \bsi a_{i_1}, \bsi a_{i_2}, \dots, \bsi a_{i_{k-1}} \R \big) 
\end{equation}
in the variables $\bsi a, \bsi a_{i_1}, \bsi a_{i_2}, \dots, \bsi a_{i_{k-1}}$ and denote by 
\begin{equation}
\label{Tpr-1-2-k}
T'(\bsi a, \bsi a_{i_1}, \bsi a_{i_2}, \dots, \bsi  a_{i_{k-1}}) 
\end{equation}
the subspace of \eqref{T-1-2-k} which is spanned by monomials 
involving each variable from the set 
$$
\{\bsi a_{i_1}, \bsi a_{i_2}, \dots, \bsi a_{i_{k-1}}\}
$$
exactly once. 

It is not hard to see that the formula 
\begin{equation}
\label{nu}
\nu (x_{j_1} \otimes x_{j_2} \otimes \dots \otimes x_{j_N}) = 
\{\bs\,x_{j_1}, \{\bs\,x_{j_2} ,\{ \dots \{\bs\, x_{j_{N}}, a_{i_k} \}..\}
\end{equation}
defines an isomorphism of the graded vector spaces
$$
\nu : T'(\bsi a, \bsi a_{i_1}, \bsi a_{i_2}, \dots, \bsi  a_{i_{k-1}})  \stackrel{\cong}{\longrightarrow} 
\La\Lie''(a,a_{i_1}, \dots, a_{i_k})\,.
$$ 

Let us denote by $\de_T$ a degree $1$ derivation of 
the tensor algebra \eqref{T-1-2-k} defined by the equations 
\begin{equation}
\label{de-T}
\de_T (\bsi a_{i_t}) = 0\,, \qquad
\de_T (\bsi a) = \bsi a \otimes \bsi a\,. 
\end{equation}
It is not hard to see that $(\de_T)^2=0$\,. Thus, $\de_T$ is 
a differential on the tensor algebra \eqref{T-1-2-k}\,.

The subspace \eqref{Tpr-1-2-k} is obviously a subcomplex of \eqref{T-1-2-k}. 
Furthermore, using the following consequence of Jacobi identity 
$$
\{a,\{a, X\}\} = - \frac{1}{2} \{\{a,a\},X\}\,, \qquad \forall ~~X \in \La\Lie(a,a_1, \dots, a_n),
$$
it is easy to show that 
$$
\de \circ \nu = \nu \circ \de_T\,.
$$

Thus $\nu$ is an isomorphism from the cochain complex 
$$
\big( T'(\bsi a, \bsi a_{i_1}, \bsi a_{i_2}, \dots, \bsi  a_{i_{k-1}}), \de_T \big)
$$
to the cochain complex 
$$
\big( \La\Lie'(a,a_{i_1}, \dots, a_{i_k}), \de \big)\,.
$$

To compute cohomology of the cochain complex 
\begin{equation}
\label{T-de-T}
\Big( T\big( \bbK \L \bsi a, \bsi a_{i_1}, \bsi a_{i_2}, \dots, \bsi a_{i_{k-1}} \R \big), 
 \de_T
\Big)
\end{equation}
we observe that the truncated tensor algebra 
\begin{equation}
\label{undT-bsi-a}
\und{T}_{\,\bsi a}: = \und{T}\big(\bbK\L \bsi a \R\big) 
\end{equation}
forms an acyclic subcomplex of \eqref{T-de-T}.

We also observe that the cochain complex 
 \eqref{T-de-T}
splits into the direct sum of subcomplexes
\begin{equation}
\label{T-de-T-sum}
T\big( \bbK \L \bsi a, \bsi a_{i_1}, \bsi a_{i_2}, \dots, \bsi a_{i_{k-1}} \R \big)  = 
T \big( \bbK \L \bsi a_{i_1}, \bsi a_{i_2}, \dots, \bsi a_{i_{k-1}} \R \big)  ~ \oplus 
\end{equation}
$$
\bigoplus_{m \ge 2,\, p_1, \dots, p_m}
V^{\otimes\, p_1}_{a_{\bul}} \otimes \und{T}_{\,\bsi a}
\otimes V^{\otimes\, p_2}_{a_{\bul}}\otimes 
\und{T}_{\,\bsi a} \otimes \dots  \otimes 
V^{\otimes\, p_{m-1}}_{a_{\bul}}\otimes \und{T}_{\,\bsi a} \otimes  V^{\otimes\, p_m}_{a_{\bul}}\,,
$$
where $V_{a_{\bul}}$ is the cochain complex 
$$ 
V_{a_{\bul}}: = \bbK \L \bsi a_{i_1}, \bsi a_{i_2}, \dots, \bsi a_{i_{k-1}}  \R
$$
with the zero differential and the summation runs over all 
combinations $(p_1, \dots, p_m)$ of integers satisfying the conditions
$$
p_1, p_m \ge 0, \qquad p_2, \dots, p_{m-1} \ge 1\,. 
$$

By K\"unneth's theorem all the subcomplexes 
$$
V^{\otimes\, p_1}_{a_{\bul}} \otimes \und{T}_{\,\bsi a}
\otimes V^{\otimes\, p_2}_{a_{\bul}}\otimes 
\und{T}_{\,\bsi a} \otimes \dots  \otimes 
V^{\otimes\, p_{m-1}}_{a_{\bul}}\otimes \und{T}_{\,\bsi a} \otimes  V^{\otimes\, p_m}_{a_{\bul}}
$$
are acyclic. Hence for every cocycle $c$  in \eqref{T-de-T}
there exists a vector $c_1$ in  \eqref{T-de-T}
such that 
$$
c - \de_T (c_1) \in T \big( \bbK \L \bsi a_{i_1}, \bsi a_{i_2}, \dots, \bsi a_{i_{k-1}} \R \big)\,.
$$

Combining this observation with the fact that the 
subcomplex  \eqref{Tpr-1-2-k} is a direct summand in  \eqref{T-de-T}, 
we conclude that, for every cocycle $c$ in   \eqref{Tpr-1-2-k} there 
exists a vector $c_1$  in   \eqref{Tpr-1-2-k}  such that 
$$
c- \de_T(c_1) \in  T'(\bsi a, \bsi a_{i_1}, \bsi a_{i_2}, \dots, \bsi  a_{i_{k-1}})
~\cap~  T \big( \bbK \L \bsi a_{i_1}, \bsi a_{i_2}, \dots, \bsi a_{i_{k-1}} \R \big)\,.
$$

Since the map $\nu$ \eqref{nu} is an isomorphism from the 
cochain complex   \eqref{Tpr-1-2-k} with the differential $\de_T$ to 
the cochain complex \eqref{La-Lie-prpr} with the differential $\de$, 
we deduce that every cocycle in \eqref{La-Lie-prpr} is cohomologous to 
a unique cocycle in the intersection
$$
\La\Lie'(a,a_{i_1}, \dots, a_{i_k}) \cap \La\Lie''(a_1, \dots, a_n)\,.
$$

Therefore  every cocycle in $\La\Lie'(a,a_1, \dots, a_n)$
is cohomologous to a unique cocycle in the subcomplex
$$ 
\La\Lie''(a_1, \dots, a_n)\,.
$$  

Lemma \ref{lem:La-Lie-a-pr} is proved.  ~~~$\Box$

\subsubsection{The operads $\As$ and $\As_{\infty}$ are $\Tw$-coalgebras 
and are homotopy fixed points for $\Tw$}

Recall that $\As$ is the operad which governs associative 
algebras without unit. 

We have the obvious map of operads
$$
\Lie \to \As
$$
and hence the maps of operads
\begin{equation}
\label{LaLie-LaAs}
\mi : \La \Lie \to \La \As\,,
\end{equation}
and
\begin{equation}
\label{LaLie-infty-LaAs}
\La \Lie_{\infty} \to \La \As\,.
\end{equation}
In other words, the operad $\La\As$ is naturally an object of 
the under-category  $\La \Lie_{\infty}\downarrow \Operads$. 

Let us show that 
\begin{prop}
\label{prop:LaAs}
The operad $\La\As$ is naturally a $\Tw$-coalgebra. 
Furthermore, $\La\As$ is a homotopy fixed point for the functor $\Tw$\,.
\end{prop}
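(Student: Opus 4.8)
The plan is to rerun the proof of Theorem~\ref{thm:distrib-homot-fixed} for $\cO=\La\As$. The one point to watch is that, in contrast to $\Ger=\Com\odot\La\Lie$ (Example~\ref{ex:Ger-Com-circ-LaLie}), the operad $\La\As$ is \emph{not} of the form $P\odot\La\Lie$ with a distributive law --- indeed $\La\As$ is generated by a single binary operation, while any $P\odot\La\Lie$ with $P(1)=\bbK$ carries at least two independent binary generators (those of $P$ together with the $\La\Lie$-bracket, which the distributive law~\eqref{distrib-law} never eliminates) --- so Theorem~\ref{thm:distrib-homot-fixed} does not apply verbatim. However, inspection of its proof shows that the only structural property of $P\odot\La\Lie$ that enters is identity~\eqref{beta-ga}. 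So the first step is to check that $\La\As$ satisfies~\eqref{beta-ga}: for every $\ga\in\La\As(n)$ one has $\{a_1,a_2\}\circ_2\ga=(-1)^{|\ga|}\sum_{i=1}^n\vs_{1,i}(\ga\circ_i\{a_1,a_2\})$, i.e.\ the adjoint action of $\mi(\{a_1,a_2\})$ is a derivation of every $\La\As$-operation. This holds because in an associative algebra the (degree-shifted) commutator satisfies the graded Leibniz rule with respect to the product, and $\As$ is generated by its product, so the commutator acts by derivations on all composites.

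Granting~\eqref{beta-ga}, the $\Tw$-coalgebra structure is produced exactly as in the first half of the proof of Theorem~\ref{thm:distrib-homot-fixed}. The canonical embedding $\emb_{\La\As}\colon\La\As\to\Tw\La\As$ of~\eqref{cO-TwcO} is a dg-operad map: its compatibility with the differentials amounts, through formula~\eqref{diff-Tw-cO}, to identity~\eqref{beta-ga} (recall $\pa^{\La\As}=0$). The triangle over $\La\Lie_\infty$, the counit identity $\eta_{\La\As}\circ\emb_{\La\As}=\id$, and the coassociativity square all commute by inspection, so $c:=\emb_{\La\As}$ gives the canonical $\Tw$-coalgebra structure on $\La\As$.

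For the homotopy fixed point property we must show $\eta_{\La\As}\colon\Tw\La\As\to\La\As$ is a quasi-isomorphism (Definition~\ref{dfn:homot-fixed}). Proposition~\ref{prop:Tw-oplus-Tw} does not apply literally (there is no ``$P$''), but a direct degree count substitutes for it: $\La\As(m)$ is concentrated in degree $1-m$, so $\bs^{2r}(\La\As(r+n))^{S_r}\subset\Tw\La\As(n)$ sits in degree $r+1-n$; hence every graded component of $\Tw\La\As(n)$ is finite dimensional, the product in its definition is a direct sum, and $\Tw\La\As=\Tw^{\oplus}\La\As$. Since $\eta_{\La\As}\circ\emb_{\La\As}=\id$, it suffices to prove that $\emb_{\La\As}(n)\colon\La\As(n)\hookrightarrow\Tw^{\oplus}\La\As(n)$ is a quasi-isomorphism. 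As in the proof of Theorem~\ref{thm:distrib-homot-fixed}, identity~\eqref{beta-ga} shows that the map $\psi$ of~\eqref{psi-map} identifies $\Tw^{\oplus}\La\As(n)$ with the subcomplex $\La\As'(a,a_1,\dots,a_n)$ of the free $\La\As$-algebra on one degree-$2$ generator $a$ and degree-$0$ generators $a_1,\dots,a_n$ spanned by words containing each $a_i$ exactly once, with differential $\de$ as in~\eqref{delta-cO-a-ai}, and under $\psi$ the map $\emb_{\La\As}$ becomes the inclusion of the subcomplex $\La\As''(a_1,\dots,a_n)$ of $a$-free words. This is the associative analogue of Lemma~\ref{lem:La-Lie-a-pr}, but easier, since one now works inside the tensor algebra rather than inside a free Lie algebra.

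It remains to compute $H^{\bullet}(\La\As'(a,a_1,\dots,a_n))$ (for $n\geq 1$; the case $n=0$ is immediate from $\La\As(0)=0$ together with acyclicity of $\bigoplus_{k\geq 1}\bbK\,a^k$). Splitting a word along the positions of $a_1,\dots,a_n$ exhibits $\La\As'(a,a_1,\dots,a_n)$, as a complex, as a direct sum over $\si\in S_n$ of tensor products (up to an overall degree shift) $G_0\otimes\cdots\otimes G_n$ of $n+1$ ``gap'' complexes, each $G_j\cong\bigoplus_{k\geq 0}\bbK\,a^k$ with $\de(a^k)=c_k\,a^{k+1}$ and $c_k\neq 0$ in characteristic zero (one finds, up to normalisation, $c_k=1$ for $k$ odd and $c_k=2$ for $k$ even). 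Thus the reduced part of each $G_j$ is acyclic, $H^{\bullet}(G_j)=\bbK$, and by the K\"unneth theorem $H^{\bullet}(G_0\otimes\cdots\otimes G_n)=\bbK$, represented by the word $a_{\si(1)}\cdots a_{\si(n)}$. Summing over $\si$ gives $H^{\bullet}(\La\As'(a,a_1,\dots,a_n))=\La\As''(a_1,\dots,a_n)$, which is exactly the image of $\emb_{\La\As}$, so $\emb_{\La\As}$ --- and hence $\eta_{\La\As}$ --- is a quasi-isomorphism. The only genuine bookkeeping in the whole argument is the sign verification that $\psi$ intertwines $\pa^{\Tw}$ with $\de$; this is identical to the corresponding step in the proof of Theorem~\ref{thm:distrib-homot-fixed} and again rests solely on~\eqref{beta-ga}. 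So the main (and only modest) obstacle is organisational: isolating which parts of that earlier proof survive the absence of a $P\odot\La\Lie$ presentation, and supplying the two direct substitutes above (the degree count for $\Tw=\Tw^{\oplus}$, and the gap/K\"unneth computation in place of Lemma~\ref{lem:La-Lie-a-pr}).
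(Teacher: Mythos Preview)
Your approach is essentially identical to the paper's: both verify identity~\eqref{beta-ga} directly for $\La\As$, use it to show the canonical embedding $\emb_{\La\As}$ is a dg map satisfying the $\Tw$-coalgebra axioms, and then reduce the homotopy fixed point statement to the acyclicity (above degree zero) of the ``gap'' complexes in $\La\As'(a,a_1,\dots,a_n)$; the paper simply asserts the last step as ``not hard to see'' where you spell out the K\"unneth argument. One small correction: in each gap complex one actually has $c_k=0$ for even $k\ge 2$ (e.g.\ $\de(a\cdot a)=(a\cdot a)\cdot a + a\cdot(a\cdot a)=0$ by the sign-twisted associativity of Remark~\ref{rem:La-As}), not $c_k=2$; the differential pattern is $0,\id,0,\id,\dots$, so the conclusion $H^\bullet(G_j)=\bbK$ and the rest of your argument are unaffected.
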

\begin{remark}
\label{rem:La-As}
Before we proceed to the proof, we should remark that 
the associativity law for $\La \As$-algebras has a ``funny''
sign factor. Namely, a $\La\As$-algebra structure on a graded vector space $V$
is a degree $-1$ binary operation $\cdot$ on $V$ satisfying the associativity condition
$$
(v_1 \cdot v_2) \cdot v_3 = - (-1)^{|v_1|} v_1 \cdot (v_2 \cdot v_3)\,.
$$
\end{remark}
\begin{proof}
Although the operad $\La\As$ is not obtained\footnote{Instead, $\La\As$ carries 
a natural filtration, such that $\Gr\, \La\As \cong \La \Com \odot \La \Lie$\,.} 
via taking a plethysm of  $\La\Lie$ with another operad, 
it is not hard to see that for every vector $v \in \La \As(n)$
\begin{equation}
\label{beta-ga-As}
\mi(\{a_1, a_2\}) \circ_2 v = (-1)^{|v|} \sum_{i=1}^n \vs_{1,i} \big( v \circ_i \mi(\{a_1, a_2\})  \big)\,,     
\end{equation}
where $\vs_{1,i}$ is the cycle $(1,2, \dots, i)$\,.

Using this identity, it is easy to see that the canonical embedding
$$
\emb : \La \As \to \Tw\, \La \As
$$
\begin{equation}
\label{emb-LaAs}
\big(\emb(v)\big)(1_r) = 
\begin{cases}
 v \qquad {\rm if} ~~ r = 0 \,, \\
 0  \qquad {\rm otherwise}
\end{cases}
\end{equation}
is compatible with the differentials, i.e.
$$
\pa^{\Tw}\big( \emb(v) \big) = 0\,, \qquad \forall~~ v \in \La\As(n)\,. 
$$

A straightforward verification shows that the map $\emb$ 
\eqref{emb-LaAs} satisfies the axioms of the $\Tw$-coalgebra.  

To show that $\La\As$ is a homotopy fixed point for $\Tw$, we consider the 
free $\La\As$-algebra $\La\As(a, a_1, \dots, a_n)$ in $n$ dummy variables $a_1, \dots, a_n$ of 
degree $0$ and one dummy variable $a$ of degree $2$. 

The algebra  $\La\As(a, a_1, \dots, a_n)$ carries the differential $\de$ defined 
by the formulas:
\begin{equation}
\label{de-TwLaAs}
\de (a) = a \cdot a\,, \qquad \de (a_i) = 0\,, \qquad \forall~~ 1\le i \le n\,.
\end{equation}

Next, we denote by  $\La\As'(a, a_1, \dots, a_n)$ the subcomplex of 
$\La\As(a, a_1, \dots, a_n)$ which is spanned by $\La\As$-monomials in which 
each variable from the set $\{ a_1, \dots, a_n\}$ appears exactly once. 
It is not hard to see that the cochain complexes 
$$
\Tw\, \La\As (n) \qquad \textrm{and} \qquad \La\As'(a, a_1, \dots, a_n)
$$ 
are isomorphic and, moreover, the natural embedding
$$
\La\As(n) \hookrightarrow  \La\As'(a, a_1, \dots, a_n)
$$
induces an isomorphism on the level of cohomology.

This observation implies that the embedding $\emb$ \eqref{emb-LaAs}
is a quasi-isomorphism of dg operads. Hence $\La\As$ is indeed a 
homotopy fixed point for the functor $\Tw$\,.
\end{proof}

According to Section \ref{sec:Tw-general}, we may ask the 
same questions about the operad $\As$ keeping in mind 
the canonical map $\Lie \to \As$.  
For this case, Proposition \ref{prop:LaAs} 
gives us the following obvious corollary:
\begin{corollary}
\label{cor:As}
The operad $\As$ is naturally a $\Tw$-coalgebra. 
Furthermore, $\As$ is a homotopy fixed point for the functor $\Tw$\,. ~~~$\Box$
\end{corollary}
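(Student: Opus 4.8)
The plan is to deduce Corollary \ref{cor:As} from Proposition \ref{prop:LaAs} by transporting the latter along the shift functor $\La$, exactly as suggested in Section \ref{sec:Tw-general}. First I would recall that the operad map $\Lie \to \As$ (equivalently, the composition $\Lie_{\infty} \to \Lie \to \As$) makes $\As$ an object of the under-category $\Lie_{\infty}\downarrow \Operads$, and that $\Tw\As$ is \emph{defined} to be $\La^{-1}\Tw(\La\As)$, where $\La\As$ is regarded as an object of $\La\Lie_{\infty}\downarrow \Operads$ via the arrow $\La\Lie_{\infty}\to \La\As$ obtained from $\Lie\to\As$ by applying $\La$.

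Next I would use that $\La:\Operads\to\Operads$ is an isomorphism of categories with inverse $\La^{-1}$, and that it restricts to an isomorphism between $\Lie_{\infty}\downarrow\Operads$ and $\La\Lie_{\infty}\downarrow\Operads$. By the very definition of $\Tw$ in the arbitrary-shift setting of Section \ref{sec:Tw-general}, this isomorphism conjugates the comonad $\Tw$ on $\La\Lie_{\infty}\downarrow\Operads$ of Theorem \ref{thm:Twcomonad}, together with its counit $\eta$ and comultiplication $\mD$, into the corresponding comonad on $\Lie_{\infty}\downarrow\Operads$. Granting this, coalgebras over the two comonads correspond bijectively under $\cO\mapsto\La\cO$. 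Applying $\La^{-1}$ to the coalgebra structure $\emb:\La\As\to\Tw(\La\As)$ furnished by Proposition \ref{prop:LaAs} then yields a map $c:=\La^{-1}(\emb):\As\to\La^{-1}\Tw(\La\As)=\Tw\As$, and it only remains to observe that the three coalgebra axioms for $c$ are the images under $\La^{-1}$ of the corresponding axioms for $\emb$, which hold by Proposition \ref{prop:LaAs}.

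For the homotopy fixed point assertion I would note that the counit $\eta_{\As}:\Tw\As\to\As$ is, by construction, $\La^{-1}$ applied to $\eta_{\La\As}:\Tw(\La\As)\to\La\As$; since the latter is a quasi-isomorphism by Proposition \ref{prop:LaAs} and the functor $\La^{-1}$ (a suspension shift tensored with a sign representation in each arity) is exact, $\eta_{\As}$ is a quasi-isomorphism as well. The one genuinely delicate point — the step on which I would spend the most care — is the clean verification that conjugation by $\La$ really does intertwine the comonad $(\Tw,\eta,\mD)$ on $\La\Lie_{\infty}\downarrow\Operads$ with the comonad $\Tw$ attached to $\Lie_{\infty}$ in Section \ref{sec:Tw-general}, so that coalgebra structures and the homotopy fixed point condition transport as claimed; once this routine but slightly fiddly bookkeeping is in place, everything else is formal. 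One could instead rerun the direct argument of Proposition \ref{prop:LaAs} for $\As$ verbatim, but the signs are less pleasant, which is why the transport along $\La$ is the preferable route.
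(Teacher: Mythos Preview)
Your proposal is correct and follows exactly the approach the paper intends: the paper presents Corollary \ref{cor:As} as an immediate consequence of Proposition \ref{prop:LaAs} via the transport-along-$\La$ machinery of Section \ref{sec:Tw-general}, and simply marks it with a $\Box$. You have unpacked precisely the bookkeeping the paper leaves implicit, including the point that conjugation by $\La$ intertwines the comonads and hence carries coalgebra structures and the homotopy fixed point property from $\La\As$ to $\As$.
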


Let us denote by $\As_{\infty}$ the dg operad which governs 
$A_{\infty}$-algebras, i.e. 
\begin{equation}
\label{A-infty}
\As_{\infty} = \Cobar(\La \coAs)\,. 
\end{equation}
 
We claim that
\begin{corollary}
\label{cor:As-infty}
The dg operad $\As_{\infty}$ is naturally a $\Tw$-coalgebra. 
Furthermore, $\As_{\infty}$ is a homotopy fixed point for the functor $\Tw$\,. 
\end{corollary}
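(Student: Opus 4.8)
The plan is to reduce, via Section~\ref{sec:Tw-general}, to proving both statements for $\La\As_\infty$ together with its canonical map $\La\Lie_\infty\to\La\As_\infty$ (one then applies $\La^{-1}$ to obtain the statement for $\As_\infty$), and to dispose of the homotopy-fixed-point assertion first because it is essentially free. The canonical quasi-isomorphism $\As_\infty\to\As$ gives, by functoriality of $\La$, a quasi-isomorphism $\La\As_\infty\to\La\As$ that is a morphism of the under-category $\La\Lie_\infty\downarrow\Operads$. Since Proposition~\ref{prop:LaAs} asserts that $\La\As$ is a homotopy fixed point for $\Tw$, the consequence of Theorem~\ref{thm:Twpreservesqiso} recorded after Definition~\ref{dfn:homot-fixed} applies and shows that $\La\As_\infty$ is a homotopy fixed point as well; concretely, $\eta_{\La\As_\infty}$ fits into a commuting square with $\eta_{\La\As}$, with $\Tw$ of the above quasi-isomorphism, and with the quasi-isomorphism itself, and two-out-of-three finishes the argument.

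For the $\Tw$-coalgebra structure the plan is to imitate the construction on $\Ger_{\infty}$ carried out in Section~\ref{sec:Ginf-Twcoalg}. Write $\La\As_\infty=\Cobar(\mathcal D)$, where $\mathcal D=(\La\As)^\vee$ is the Koszul-dual cooperad of $\La\As$ (a suspension of $\coAs$), whose $n$-th space has a basis indexed by $\La\As$-monomials in $n$ distinct inputs, equivalently by linear orders on $\{1,\dots,n\}$. By Theorem~\ref{thm:from-Cobar}, a map of dg operads $c\colon\La\As_\infty\to\Tw\La\As_\infty$ is the same datum as a Maurer--Cartan element of $\Conv(\mathcal D_{\c},\Tw\La\As_\infty)$. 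I would define such an element $\al_{\As}$ by the direct analogue of formula~\eqref{al-mG}: on the generator dual to the word $x_{i_1}\cdots x_{i_n}$, its value on $1_r$ is the suitably suspended $S_r$-invariant obtained by summing, over all ways of interleaving $r$ extra inputs into the $n+1$ gaps of that word, the corresponding dual $\La\As$-monomials viewed in $\Tw\La\As_\infty(n)$. This is exactly the operadic form of the classical formula for twisting an $A_\infty$-structure by a Maurer--Cartan element: the $r$ extra inputs represent copies of the Maurer--Cartan element, inserted in all gaps --- in contrast to the purely $r=0$ embedding $\emb$ that did the job for $\La\As$ itself in Proposition~\ref{prop:LaAs}, and reflecting the fact that, unlike $\La\Com$, the operad $\La\As$ carries no slot-independent product.

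Two verifications then remain. The main one --- and the expected obstacle --- is that $\al_{\As}$ solves the Maurer--Cartan equation $\pa^{\Cobar}\al_{\As}+\pa^{\Tw}\al_{\As}+\al_{\As}\bul\al_{\As}=0$ in $\Conv(\mathcal D_{\c},\Tw\La\As_\infty)$. As in the proof of Proposition~\ref{prop:al-mG}, its core is a computation of the cobar comultiplications $\D_\bt$ on the generators $\bigl(x_1\cdots x_r\,w(x_{r+1},\dots,x_{r+n})\bigr)^*$ of $\mathcal D$, in the spirit of Lemma~\ref{lem:comult-coGer}, after which the resulting families of trees must be matched term by term against the four pieces of $\pa^{\Tw}$ read off from~\eqref{pa-Tw} and against $\al_{\As}\bul\al_{\As}$. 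The combinatorics is of the same kind as for $\Ger_\infty$ but lighter, $\coAs$ being a simpler cooperad than $\Ger^\vee$; the new subtlety is keeping careful track of the ``funny'' associativity signs of $\La\As$ from Remark~\ref{rem:La-As}, and of the fact that non-cocommutativity makes the interleavings genuinely ordered. The second verification is that the map $c$ determined by $\al_{\As}$ satisfies the three $\Tw$-coalgebra axioms; unfolding the definitions exactly as in Section~\ref{sec:Ginf-Twcoalg}, compatibility with $\La\Lie_\infty$ reduces to comparing $c$ with the composite $\La\Lie_\infty\stackrel{\mT}{\longrightarrow}\Tw\La\Lie_\infty\to\Tw\La\As_\infty$ on the relevant generators, the counit axiom is immediate from the $r=0$ term, and co-associativity reduces to the identity that both composites send a generator to the sum over $(r,s)$ of the word with $r+s$ interleaved inputs --- the same partition-of-shuffles identity that underlies Lemma~\ref{lem:mDcO-oper-map}. (A less explicit alternative for the coalgebra part would be to lift the coalgebra map of $\La\As$ through the surjective quasi-isomorphism $\eta_{\La\As_\infty}\colon\Tw\La\As_\infty\to\La\As_\infty$ using freeness of $\La\As_\infty$, but arranging the strict co-associativity axiom in this way requires additional care, so I would present the explicit Maurer--Cartan element as the main argument.) Granting this, the corollary follows.
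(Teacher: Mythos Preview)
Your proposal is correct and follows essentially the same approach as the paper: the homotopy-fixed-point claim is deduced from Theorem~\ref{thm:Twpreservesqiso} together with the result for $\As$ (equivalently $\La\As$), and the $\Tw$-coalgebra structure is produced via an explicit Maurer--Cartan element encoding the ``interleave $r$ extra inputs in all possible gaps'' formula, exactly as in the $\Ger_\infty$ case. The only cosmetic difference is the choice of suspension: the paper works with $\La^{-1}\As_\infty=\Cobar(\coAs)$ rather than $\La\As_\infty$, citing simpler sign factors, and then leaves both the Maurer--Cartan verification and the coalgebra axioms to the reader, whereas you outline them in more detail.
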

\begin{proof}
The second claim is an obvious consequence of Theorem \ref{thm:Twpreservesqiso} 
and Corollary \ref{cor:As}. So it remains to prove the first claim.

Because of sign factor, it is more convenient to prove that 
the dg operad 
\begin{equation}
\label{La-A-infty}
\La^{-1} \As_{\infty} = \Cobar(\coAs) 
\end{equation}
is a $\Tw$-coalgebra. Then the desired statement will follow
from the arguments of Section \ref{sec:Tw-general}. 

Our goal is to produce a map of dg operads 
\begin{equation}
\label{c-As-infty}
c :   \Cobar(\coAs) \to \Tw \Cobar(\coAs)
\end{equation}
and verify axioms of the coalgebra over $\Tw$\,.

For this purpose we recall that $\As(n)$ has the 
canonical basis 
\begin{equation}
\label{As-n-basis}
\Big\{ a_{\si(1)} a_{\si(2)} \dots a_{\si(n)} \Big\}_{\si \in S_n}\,, 
\end{equation}
where $a_1, a_2, \dots, a_n$ are dummy variables of degree zero. 

Next, we denote by $(a_{\si(1)} a_{\si(2)} \dots a_{\si(n)})^*$ 
vectors of the dual basis in $\coAs(n) = \big( \As(n) \big)^*$ and 
observe that the formula $(n \ge 2, r \ge 0)$
\begin{equation}
\label{al-A-infty}
\al \big( (a_{\si(1)} a_{\si(2)} \dots a_{\si(n)})^* \big) (1_r) = 
\sum_{\la \in \Sh_{r,n}}
\sum_{\tau \in S_r} \bs \, 
 \la \, ( a_{\tau(1)} \dots a_{\tau(r)} a_{r + \si(1)} a_{r + \si(2)} \dots a_{r +\si(n)})^*
\end{equation}
defines a degree $1$ vector in the dg Lie algebra $\Conv( \coAs_{\c} ,  \Tw \Cobar(\coAs) )$\,.
Here 
$$
 \bs \, 
 \la \, ( a_{\tau(1)} \dots a_{\tau(r)}  a_{r + \si(1)} a_{r + \si(2)} \dots a_{r +\si(n)})^*
$$
is viewed as a vector in $\bs \, \coAs(r+n) \subset  \Cobar(\coAs)$\,.

A direct computation shows that $\al$ is a Maurer-Cartan element of the 
dg Lie algebra
$$
\Conv \big( \coAs_{\c} ,  \Tw \Cobar(\coAs) \big)
$$

Hence, by Theorem \ref{thm:from-Cobar}, $\al$ gives 
us a map of dg operads \eqref{c-As-infty}. 

It is easy to check that this map satisfies all the axioms of the 
coalgebra over the comonad $\Tw$. So we leave the verification of 
these axioms to the reader.  
\end{proof}

\begin{remark}
\label{rem:Lazarev}
In Section 7 of \cite{C-Lazarev}, J. Chuang and A. Lazarev also 
proved that $\As$ and $\As_{\infty}$ are homotopy fixed points for the functor $\Tw$\,. 
\end{remark}

\subsubsection{Example: a $\Tw$-coalgebra which is not a homotopy fixed 
point for $\Tw$}
\label{sec:example-non-fixed}

We constructed a large class of $\Tw$-coalgebras each of which 
is a  homotopy fixed point for $\Tw$\,. Let us now give an example of 
a  $\Tw$-coalgebra which is not a homotopy fixed 
point for $\Tw$\,.
Let $P$ be an operad in the category $\grVect_{\bbK}$ such that $P(n) \neq 0$
for all $n \ge 1$\,. (For example, the operad $\As$ would work.) 
Let 
$$
\hat{\vf} : \La \Lie_{\infty} \to P
$$
be the zero map. 
Then the dg operads  $\Tw P$ and $\Tw (\Tw P)$ with the spaces 
\begin{equation}
\label{Tw-P-n}
\Tw P (n) = \prod_{r \ge 0} \bs^{2r} \big( P(r+n) \big)^{S_r} 
\end{equation}
and
\begin{equation}
\label{TwTw-P-n}
\Tw (\Tw P) (n) = \prod_{r,s \ge 0} \bs^{2r+2s} \big( P(r+s+n) \big)^{S_r \times S_s} 
\end{equation}
carry the zero differentials.
The operad $\Tw P$ is a $\Tw$-coalgebra by construction, however, 
\begin{prop}
\label{prop:TwP-non-fixed}
The operad $\Tw P$ is not a homotopy fixed point for $\Tw$\,.
\end{prop}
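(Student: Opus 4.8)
The plan is to show that, since the structure map $\hat{\vf}\colon\La\Lie_\infty\to P$ is zero, the twisting construction introduces no differential whatsoever, so that both $\Tw P$ and $\Tw\Tw P$ carry the zero differential; the counit $\eta_{\Tw P}$ then fails to be a quasi-isomorphism for the cheap reason that it is not even injective. First I would record that the Maurer--Cartan element $\vf\in\cL_P$ corresponding to $\hat{\vf}$ is $0$, hence $\ka(\vf)=0$ as well; since $P$ lives in $\grVect_{\bbK}$ and thus has zero differential, formula \eqref{pa-Tw} in Definition~\ref{dfn:Tw-cO} gives $\pa^{\Tw}=\pa^{P}+\vf\cdot\,+\,\de_{\ka(\vf)}=0$ on $\Tw P$, which is exactly the assertion of \eqref{Tw-P-n}.

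Next I would identify the arrow $\beta_P\colon\La\Lie_\infty\to\Tw P$ that exhibits $\Tw P$ as an object of $\La\Lie_\infty\downarrow\Operads$ and check that it is the zero map: by the definition \eqref{beta-cO} of $\beta_\cO$ together with Lemma~\ref{lem:laliecoalg}, one has $\beta_P(\bs 1^{\mc}_n)(1_r)=\hat{\vf}(\bs 1^{\mc}_{r+n})=0$ for all $n\ge 2$ and $r\ge0$. Hence the Maurer--Cartan element in $\cL_{\Tw P}$ that governs the twisting of $\Tw P$ is again $0$, and the computation of the previous paragraph, applied verbatim, shows that $\Tw\Tw P$ also carries the zero differential, i.e.\ \eqref{TwTw-P-n} holds with $\pa=0$.

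Finally I would examine the counit $\eta_{\Tw P}\colon\Tw\Tw P\to\Tw P$. By construction it is the projection of
\[
\Tw\Tw P(n)\;\cong\;\prod_{r,s\ge0}\bs^{2r+2s}\big(P(r+s+n)\big)^{S_r\times S_s}
\]
onto the factors with $s=0$, which reassemble to $\Tw P(n)$; its kernel is $\prod_{r\ge0,\,s\ge1}\bs^{2r+2s}\big(P(r+s+n)\big)^{S_r\times S_s}$, and for every $n$ this contains the summand $\bs^{2}P(n+1)$ (take $r=0$, $s=1$), which is nonzero by the hypothesis $P(m)\ne0$ for $m\ge1$. Since both $\Tw P$ and $\Tw\Tw P$ have zero differential, a map between them is a quasi-isomorphism precisely when it is an isomorphism; as $\eta_{\Tw P}$ is not injective it is not an isomorphism, so $\Tw P$ is not a homotopy fixed point for $\Tw$. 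The one place that deserves care is the bookkeeping showing that all contributions to the twisting differential --- $\pa^{P}$ on $\wt{\Tw}P$, the action term $\vf\cdot$, and the intrinsic derivation $\de_{\ka(\vf)}$ --- vanish; but each of these reduces immediately to $\hat{\vf}=0$ via $\vf=0$ and $\ka(\vf)=0$, so there is really no obstacle here, only an unwinding of definitions already in place.
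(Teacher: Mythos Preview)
Your proof is correct and follows essentially the same approach as the paper: both arguments observe that with $\hat\vf=0$ the differentials on $\Tw P$ and $\Tw\Tw P$ vanish, and then exhibit a nonzero element (or summand) in the kernel of $\eta_{\Tw P}$. You are more explicit than the paper in unwinding why $\pa^{\Tw}=0$ on both levels (tracing through $\vf=0$, $\ka(\vf)=0$, and $\beta_P=0$), and you conclude slightly more abstractly by pointing to an entire summand $\bs^2 P(n+1)$ in the kernel, whereas the paper writes down a specific averaged cocycle; these are the same idea. One small point of care: depending on which index you call ``outer'' in the identification $\Tw\Tw P(n)\cong\prod_{r,s}\bs^{2r+2s}(P(r+s+n))^{S_r\times S_s}$, the counit kills the $r\ge1$ factors rather than the $s\ge1$ factors---your argument is internally consistent, but be sure your labeling matches the paper's convention when you write it up.
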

\begin{proof}
Let $r, n, s$ be a positive integers and let $v$ be a non-zero vector 
in $P(r+s+n)$. Such a vector exists, since $P(n)\neq 0$ for all $n \ge 1$\,. 
The sum 
\begin{equation}
\label{Av-Av-v}
\sum_{\si \in S_r \times S_s} \bs^{2r+2s} \si(v)
\end{equation}
can be viewed as a vector in $\Tw(\Tw P)(n)$\,.
Since the differential on $\Tw(\Tw P)$ is non-zero, then vector 
\eqref{Av-Av-v} is a non-trivial cocycle. 
On the other hand, since $r \ge 1$, the counit map 
$$
\eta_{\Tw P}  : \Tw(\Tw P) \to \Tw P
$$ 
sends vector \eqref{Av-Av-v} to zero. 
Thus $\eta_{\Tw P}$ is not a quasi-isomorphism.
\end{proof}

\section{Twisting $\cO$-algebra structures by  Maurer-Cartan elements}
\label{sec:twisting-algebras} 
Let $\cO$ be an object of the under-category  $\La \Lie_{\infty}\downarrow \Operads$\,. 
Following Section \ref{sec:Tw-cO-alg} we consider the category $\Alg^{\MC}_{\cO}$
whose objects are pairs 
$$
(V, \al)\,,
$$
where $V$ is an $\cO$-algebra equipped with a complete descending
filtration\footnote{As above, we assume that the  $\cO$-algebra structure on $V$ is 
compatible with the filtration \eqref{filtr-V}.}  and  $\al \in \cF_1 V^2$ is 
a Maurer-Cartan element. Morphisms in $\Alg^{\MC}_{\cO}$ are morphisms 
of filtered $\cO$-algebras $f : V \to V'$ which send $\al$ to $\al'$.  

Recall that  Theorem \ref{thm:twisting} yields a functor
\begin{equation}
\label{mfF}
\mfF : \Alg^{\MC}_{\cO} \to \Alg^{\filtr}_{\Tw \cO}
\end{equation}
 from  the category $\Alg^{\MC}_{\cO}$
to the category $\Alg^{\filtr}_{\Tw \cO}$ of filtered $\Tw\cO$-algebras. 
More precisely, the functor $\mfF$ assigns to a pair $(V,\al)$  the 
cochain complex $V^{\al}$ with the differential $\pa^{\al}$ \eqref{tw-diff} and the $\Tw\cO$-algebra 
structure defined by equation \eqref{twisting}.

If the arrow $\hat{\vf} : \La\Lie_{\infty}  \to \cO$ is a $\Tw$-coalgebra then 
we have a morphism of dg operads
\begin{equation}
\label{cO-TwcO-here}
c: \cO \to \Tw \cO
\end{equation}
which induces a functor 
\begin{equation}
\label{c-star}
c^* : \Alg^{\filtr}_{\Tw \cO} \to \Alg_{\cO}
\end{equation}
from the category $ \Alg^{\filtr}_{\Tw \cO}$ to the category $\Alg_{\cO}$ 
of $\cO$-algebras.  

Composing \eqref{mfF} and \eqref{c-star} we get the functor 
\begin{equation}
\label{ATw}
\ATw : = c^* \circ \mfF : \Alg^{\MC}_{\cO} \to \Alg_{\cO}\,.
\end{equation}
As a cochain complex $\ATw(V, \al)$ is  $V^{\al}$ with the differential $\pa^{\al}$ 
\eqref{tw-diff} and the $\cO$-algebra structure is induced by the map $c: \cO \to \Tw \cO$\,.
 
For a pair $(V,\al)\in  \Alg^{\MC}_{\cO}$, we say that the $\cO$-algebra  
$\ATw(V,\al)$ is obtained from $V$ via {\it twisting by a Maurer-Cartan element} $\al$\,.
We refer to the construction of $\ATw(V,\al)$ as {\it the twisting procedure}\footnote{Note that this
twisting procedure for $\cO$-algebras is defined only if the dg operad $\cO$ is a coalgebra 
over the comonad $\Tw$.} for $\cO$-algebras.  

Axioms of a coalgebra over the comonad $\Tw$ listed in Section \ref{sec:Tw-coalg}
imply that the twisting procedure satisfies the following properties:
  
\begin{itemize}
\item[{\bf P1}] Considered as the $\La \Lie_\infty$-algebra, $\ATw(V,\al)$  is obtained 
via twisting the $\La \Lie_\infty$ algebra $V$ by the Maurer-Cartan element $\al$. 

\item[{\bf P2}] Twisting by the zero Maurer-Cartan element $\al = 0$ does not change the $\cO$-algebra, 
i.e., $\ATw(V,0) = V$\,. 

\item[{\bf P3}] If $\al$ is a  Maurer-Cartan element of $V$ and $\al'$ is a  Maurer-Cartan element 
of $\ATw(V,\al)$  then the twisted $\cO$-algebras 
$$
\ATw\big( \ATw(V,\al), \al' \big) 
\qquad \textrm{and} \qquad 
\ATw(V, \al + \al')
$$
coincide.
\end{itemize}

Indeed, Property {\bf P1} follows easily from commutativity of diagram \eqref{diag-LaLie-infty}.

Equation \eqref{twisting} implies that, if $\al = 0$, then the $\Tw\cO$-algebra structure 
on $V^{\al} = V$ factors through the counit map $\eta_{\cO} : \Tw\cO \to \cO$. 
Thus  Property {\bf P2} holds because the composition \eqref{comp-c-eta} is 
the identity map on $\cO$\,.

Finally, Property {\bf P3} follows from commutativity of diagram \eqref{diag-c-Twc-mD}.

Let us now consider functorial properties of $\ATw$ with respect to morphisms 
in the under-category  $\La \Lie_{\infty}\downarrow \Operads$\,. 

For any arrow $\Psi : \cO \to \cO'$ which fits into the commutative diagram  
\begin{equation}
\label{Psi-diag}
\begin{tikzpicture}
\matrix (m) [matrix of math nodes, row sep=2em]
{  & \La \Lie_{\infty} &  \\
\cO &  & \cO' \\ };
\path[->,font=\scriptsize]
(m-1-2) edge node[above] {$\hat{\vf}~~$} (m-2-1) edge node[auto] {$\hat{\vf}'$} (m-2-3)
(m-2-1) edge node[auto] {$\Psi$} (m-2-3);
\end{tikzpicture}
\end{equation}
we can obviously extend the functor 
$$
\Psi^* : \Alg_{\cO'} \to \Alg_{\cO}
$$
to
\begin{equation}
\label{Psi-star}
\Psi^* :  \Alg^{\MC}_{\cO'} \to \Alg^{\MC}_{\cO}\,.
\end{equation}

Furthermore, we observe that the diagram
\begin{equation}
\label{Psi-star-TwPsi-star}
\begin{tikzpicture}
\matrix (m) [matrix of math nodes, row sep=3em, column sep=4em]
{\Alg^{\MC}_{\cO'}  & \Alg^{\MC}_{\cO}  \\
\Alg^{\filtr}_{\Tw\cO'} & \Alg^{\filtr}_{\Tw\cO} \\ };
\path[->,font=\scriptsize]
(m-1-1) edge node[auto] {$\Psi^*$} (m-1-2) edge node[auto] {$\mfF$} (m-2-1)
(m-2-1) edge node[auto] {$\big(\Tw\Psi\big)^*$} (m-2-2)
 (m-1-2) edge node[auto] {$\mfF$} (m-2-2);
\end{tikzpicture}
\end{equation}
commutes because the construction of the functor $\mfF$ \eqref{Fun-AlgMC-AlgTw}
is functorial in $\cO$\,.

If the dg operads $\cO$ and $\cO'$ are $\Tw$-coalgebras and the map $\Psi$ is 
compatible with the $\Tw$-coalgebra structures then the diagram   
\begin{equation}
\label{TwPsi-star-c-c-pr}
\begin{tikzpicture}
\matrix (m) [matrix of math nodes, row sep=3em, column sep=4em]
{\Alg^{\filtr}_{\Tw\cO'} & \Alg^{\filtr}_{\Tw\cO} \\ 
\Alg_{\cO'}  & \Alg_{\cO}  \\};
\path[->,font=\scriptsize]
(m-1-1) edge node[auto] {$\big( \Tw\Psi \big)^*$} (m-1-2) edge node[auto] {$(c')^*$} (m-2-1)
(m-2-1) edge node[auto] {$\Psi^*$} (m-2-2)
 (m-1-2) edge node[auto] {$c^*$} (m-2-2);
\end{tikzpicture}
\end{equation}
commutes. 

Thus, combining commutative diagrams \eqref{Psi-star-TwPsi-star}
and \eqref{TwPsi-star-c-c-pr} we arrive at the following functorial 
property of the twisting procedure. 
\begin{thm}
\label{thm:atwfunctor}
If $\Psi$ is a map of $\Tw$-coalgebras $\cO$ and $\cO'$ then the diagram 
\begin{equation}
\label{funct-ATw}
\begin{tikzpicture}
\matrix (m) [matrix of math nodes, row sep=3em, column sep=4em]
{\Alg^{\MC}_{\cO'} & \Alg^{\MC}_{\cO} \\ 
\Alg_{\cO'}  & \Alg_{\cO}  \\};
\path[->,font=\scriptsize]
(m-1-1) edge node[auto] {$\Psi^*$} (m-1-2) edge node[auto] {$\ATw_{\cO'}$} (m-2-1)
(m-2-1) edge node[auto] {$\Psi^*$} (m-2-2)
 (m-1-2) edge node[auto] {$\ATw_{\cO}$} (m-2-2);
\end{tikzpicture}
\end{equation}
commutes. $~~~\Box$
\end{thm}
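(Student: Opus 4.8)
The plan is to obtain the square \eqref{funct-ATw} as the vertical pasting of the two squares \eqref{Psi-star-TwPsi-star} and \eqref{TwPsi-star-c-c-pr} along their common arrow $\big(\Tw\Psi\big)^*\colon\Alg^{\filtr}_{\Tw\cO'}\to\Alg^{\filtr}_{\Tw\cO}$, using the factorization $\ATw=c^*\circ\mfF$ from \eqref{ATw}. Granting that both constituent squares commute, the outer rectangle of the pasting is a square whose top and bottom edges are $\Psi^*$, whose left edge is $(c')^*\circ\mfF=\ATw_{\cO'}$ and whose right edge is $c^*\circ\mfF=\ATw_{\cO}$, which is exactly \eqref{funct-ATw}. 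On objects the argument is then the chain
\[
\ATw_{\cO}\circ\Psi^* = c^*\circ\mfF\circ\Psi^* = c^*\circ\big(\Tw\Psi\big)^*\circ\mfF = \Psi^*\circ(c')^*\circ\mfF = \Psi^*\circ\ATw_{\cO'},
\]
with the second equality supplied by \eqref{Psi-star-TwPsi-star} and the third by \eqref{TwPsi-star-c-c-pr}; the same chain on morphisms finishes it, so all the content sits in the commutativity of the two sub-squares.

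First I would confirm that \eqref{Psi-star-TwPsi-star} commutes, i.e.\ that $\mfF$ is natural in $\cO$. This reduces to inspecting the explicit description of $\mfF$ from the proof of Theorem \ref{thm:twisting-cont}: for a pair $(V,\al)$ the twisted differential \eqref{tw-diff} and the $\Tw\cO$-action \eqref{twisting} on $V^{\al}$ are assembled purely from the structure maps of $\cO$ together with the Maurer-Cartan element $\vf$. Since a morphism $\Psi$ in $\La\Lie_{\infty}\downarrow\Operads$ satisfies $\Psi\circ\wh{\vf}=\wh{\vf}'$ (diagram \eqref{Psi-diag}), it carries the data defining the $\Tw\cO'$-algebra structure on $V^{\al}$ to the data defining the pulled-back $\Tw\cO$-algebra structure; on underlying maps both composite functors act by the identity, so the square commutes strictly.

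Next I would confirm that \eqref{TwPsi-star-c-c-pr} commutes. This is precisely the condition that the operad map $\Psi$ is a morphism of $\Tw$-coalgebras, i.e.\ that $\Tw\Psi\circ c=c'\circ\Psi$ as maps $\cO\to\Tw\cO'$. Applying to this operad-level identity the (contravariant) assignment sending an operad map to its pullback functor on algebra categories immediately yields the commuting square \eqref{TwPsi-star-c-c-pr}. The same remark makes $\Psi^*$ well defined on the Maurer-Cartan categories $\Alg^{\MC}$, as indicated in \eqref{Psi-star}.

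The main obstacle I anticipate is organizational rather than conceptual: to make the naturality of $\mfF$ fully precise one must track that \emph{every} ingredient in the proof of Theorem \ref{thm:twisting-cont} --- not only \eqref{tw-diff} and \eqref{twisting}, but also the inverse functor built from the element $u^{\circ}$ and the recovery of $\al$ and of the differential $\pa$ on a filtered $\Tw\cO$-algebra --- depends on $\cO$ only through its operad structure and the class $\vf$, and hence is transported correctly along $\Psi$. Once this is checked, Theorem \ref{thm:atwfunctor} is a formal diagram chase.
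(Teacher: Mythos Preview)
Your proposal is correct and matches the paper's own argument: the theorem is stated with a $\Box$ because it follows immediately from pasting the two commutative squares \eqref{Psi-star-TwPsi-star} and \eqref{TwPsi-star-c-c-pr}, exactly as you describe. Your concern about tracking the inverse functor through Theorem~\ref{thm:twisting-cont} is unnecessary here, since only the forward functor $\mfF$ (given by \eqref{tw-diff} and \eqref{twisting}) is used, and its naturality in $\cO$ is all that is needed.
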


At the end of this section, we observe that, under a mild technical condition,  
the functor $\ATw$ preserves quasi-isomorphisms. More precisely,
\begin{prop}
\label{prop:ATw-preserves}
Let $\cO$ be a dg operad equipped with a $\Tw$-coalgebra structure and let 
$f : (V, \al) \to (V', \al')$ be a morphism in $\Alg^{\MC}_{\cO}$\,. 
Let us assume that the filtrations on $V$ and $V'$ are bounded above. If $f$ is a quasi-isomorphism of the underlying cochain complexes
$(V, \pa)$ and $(V', \pa')$ then $f$ is also a quasi-isomorphism 
from $\ATw(V, \al)$ to $\ATw(V', \al')$\,.
\end{prop}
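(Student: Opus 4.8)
The plan is to reduce the assertion to the acyclicity of a mapping cone and then exploit the filtration. Recall from the construction of $\ATw$ (see \eqref{ATw}, \eqref{twisting}, \eqref{tw-diff}) that, as a cochain complex, $\ATw(V,\al)$ is $V$ with the twisted differential
\[
\pa^{\al} = \pa + \de_{\al}\,, \qquad \de_{\al}(v) = \sum_{r \geq 1} \frac{1}{r!}\, \vf(1^{\mc}_{r+1})(\al, \dots, \al, v)\,,
\]
and likewise $\ATw(V',\al')$ is $(V', \pa'^{\al'})$ with $\pa'^{\al'} = \pa' + \de_{\al'}$. Since $\al \in \cF_1 V$, $\al' \in \cF_1 V'$ and the $\cO$-algebra structures are compatible with the filtrations \eqref{filtr-V}, the operators $\de_{\al}$ and $\de_{\al'}$ strictly raise the filtration: $\de_{\al}(\cF_k V) \subseteq \cF_{k+1} V$ and $\de_{\al'}(\cF_k V') \subseteq \cF_{k+1} V'$. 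Because $f$ is a morphism of filtered $\cO$-algebras with $f(\al) = \al'$, it respects the filtrations and intertwines $\pa^{\al}$ and $\pa'^{\al'}$; so the task is to show that $f\colon (V, \pa^{\al}) \to (V', \pa'^{\al'})$ is a quasi-isomorphism, equivalently that its mapping cone $C=\Cone(f)$ is acyclic.

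First I would filter $C$ by the filtration $\mF_{\bullet}$ induced from $\cF_{\bullet}$ on $V$ and $V'$. This filtration is complete, because those on $V$ and $V'$ are, and it is bounded above by hypothesis. The differential of $C$ decomposes as $D + \Delta$, where $D$ is the cone differential of the \emph{untwisted} map $f\colon (V,\pa) \to (V',\pa')$ and $\Delta$ is assembled from $\de_{\al}$ and $\de_{\al'}$; by the above, $D$ preserves $\mF_{\bullet}$ while $\Delta$ strictly raises it. Consequently, on the associated graded the contribution of $\Delta$ vanishes, so that $\Gr C$, with the differential induced by $D+\Delta$, coincides with the associated graded of the mapping cone $C_0$ of $f\colon (V,\pa) \to (V',\pa')$. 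By Lemma \ref{lem:filtered} in Appendix \ref{app:filtered-lem}, $C$ is acyclic as soon as $\Gr C$ is; hence the problem reduces to proving that $\Gr C_0$ is acyclic.

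For this last reduction I would use that $f\colon (V,\pa) \to (V',\pa')$ is a quasi-isomorphism together with the boundedness of the filtrations: since $\cF_{\bullet}$ is bounded above (and complete), the filtration spectral sequences of $(V,\pa)$ and $(V',\pa')$ are strongly convergent, and comparing them along $f$ one should obtain that the induced map $\Gr f$ is a quasi-isomorphism; equivalently, one argues by descending induction on the finitely many filtration steps, at each stage using that $\pa$, $\pa'$ and $f$ preserve $\cF_{\bullet}$. Then $\Gr C \cong \Cone(\Gr f)$ is acyclic, and Lemma \ref{lem:filtered} closes the argument. I expect this last point — upgrading the quasi-isomorphism of the underlying complexes to a quasi-isomorphism of the associated graded complexes — to be the delicate step, and it is exactly where the hypothesis that the filtrations are bounded above is used: it guarantees that the relevant filtration spectral sequences are strongly convergent, so that the comparison argument applies.
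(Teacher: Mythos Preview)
Your approach is exactly the paper's: filter the twisted mapping cone $C$ by the given filtrations, observe that the perturbation coming from the Maurer--Cartan elements strictly raises filtration, and invoke Lemma~\ref{lem:filtered}. The paper simply asserts that ``the associated graded complex $\Gr C$ is isomorphic to the cone $V\oplus\bs V'$ of the morphism $f:V\to V'$'' and hence acyclic; you are more careful and recognise correctly that what one actually obtains is $\Gr C\cong\Gr C_0$ with $C_0$ the \emph{untwisted} cone, so that one still needs $\Gr C_0$ to be acyclic, equivalently $\Gr f$ to be a quasi-isomorphism.

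Unfortunately the step you flag as delicate does not go through. The spectral-sequence comparison you invoke runs the wrong way: strong convergence lets you deduce that $f$ is a quasi-isomorphism from $\Gr f$ being one, not conversely. In fact the statement (and hence the paper's proof) fails without an extra hypothesis. Take $\cO=\La\Lie$ and $V'=\bbK a\oplus\bbK b\oplus\bbK c$ with $|a|=0$, $|b|=1$, $|c|=2$, differential $\pa' a=b$, the single nontrivial bracket $\{c,a\}=-b$, and filtration $\cF_1 V'=\bbK b\oplus\bbK c$, $\cF_2 V'=0$. Then $\al'=c\in\cF_1 V'$ is Maurer--Cartan and the twisted differential $\pa'^{\al'}=\pa'+\{c,-\}$ vanishes, so $H^{\bul}(V'^{\al'})$ is three-dimensional. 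With $V=\bbK c_0$ in degree~$2$, trivial differential and bracket, $\cF_1 V=V$, $\al=c_0$, the map $f(c_0)=c$ is a morphism in $\Alg^{\MC}_{\La\Lie}$ and a quasi-isomorphism of the untwisted complexes, yet $H^{\bul}(V^{\al})$ is one-dimensional. Your argument can be salvaged only under an additional assumption such as ``$\Gr f$ is a quasi-isomorphism'' or ``$\pa,\pa'$ preserve filtration degree exactly'', either of which makes the paper's identification $\Gr C_0\cong C_0$ legitimate.
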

\begin{proof}
The cone 
$$
C = V^{\al} \oplus \bs (V')^{\al'}
$$
of the morphism $f : V^{\al} \to  (V')^{\al'}$ carries the obvious descending 
filtration 
$$
\cF_m C: =
\cF_m V^{\al}\, \oplus \, \bs \cF_m (V')^{\al'}\,.
$$
This filtration is complete and bounded above. 

Furthermore, the associated graded complex 
$$
\Gr C 
$$
is isomorphic to the cone 
$$
V \oplus \bs V'
$$
of the morphism $f: V \to V'$\,.

Hence $\Gr C$ is acyclic and, by Lemma \ref{lem:filtered}, the cone 
of the morphism $f : V^{\al} \to  (V')^{\al'}$ is also acyclic. 
 
Proposition \ref{prop:ATw-preserves} follows. 
\end{proof}

\section{The operad of brace trees \texorpdfstring{$\BT$}{BT}}
\label{sec:BT}
The remaining four sections of the paper are devoted to an application 
of the developed machinery to Deligne's conjecture. We will introduce an auxiliary 
operad $\BT$, define the braces operad $\Br$ as a suboperad of 
$\Tw \BT$, and finally, prove Theorem \ref{thm:main} stated in the 
Introduction. 

Let us start by defining the operad of brace trees $\BT$. 

To define the space $\BT(n)$ we introduce an auxiliary set $\cT(n)$\,. 
An element of this set is a planar tree $T$ equipped with 
a bijection between $\{1,2, \dots, n\}$ and the set 
$$
V(T) \setminus \{\textrm{root vertex}\}
$$
of non-root vertices. For $n=0$ the set $\cT(n)$ is empty.

We call elements of $\cT(n)$ {\it brace trees}. 
Examples of brace trees are shown on figures \ref{fig:T6}
and \ref{fig:id-BT}. 
\begin{figure}[htp] 
\begin{minipage}[t]{0.45\linewidth} 
\centering 
\begin{tikzpicture}[scale=0.5]
\tikzstyle{lab}=[circle, draw, minimum size=5, inner sep=1]
\tikzstyle{n}=[circle, draw, fill, minimum size=5]
\tikzstyle{root}=[circle, draw, fill, minimum size=0, inner sep=1]
\node[root] (r) at (0, 0) {};
\node [lab] (v1) at (0,1) {$2$};
\node [lab] (v2) at (-1,2) {$3$};
\node [lab] (v3) at (-2.2,3.3) {$1$};
\node [lab] (v4) at (-1,3.3) {$5$};
\node [lab] (v5) at (0.2,3.3) {$4$};
\node [lab] (v6) at (1,2) {$6$};
\draw (r) edge (v1);
\draw (v1) edge (v2);
\draw (v1) edge (v6);
\draw (v2) edge (v3);
\draw (v2) edge (v4);
\draw (v2) edge (v5);
\end{tikzpicture}
\caption{A brace tree $T' \in \cT(6)$} \label{fig:T6}
\end{minipage}
~
\begin{minipage}[t]{0.45\linewidth} 
\centering 
\begin{tikzpicture}[scale=0.5]
\tikzstyle{lab}=[circle, draw, minimum size=5, inner sep=1]
\tikzstyle{n}=[circle, draw, fill, minimum size=5]
\tikzstyle{root}=[circle, draw, fill, minimum size=0, inner sep=1]
\node[root] (r) at (0, 0) {};
\node [lab] (v1) at (0,1.5) {$1$};
\draw (r) edge (v1);
\end{tikzpicture}
\caption{The brace tree $T_{\id}\in \cT(1)$} \label{fig:id-BT}
\end{minipage}
\end{figure} 
On figures, non-root vertices are depicted by white circles  
with the corresponding numbers inscribed.

The $n$-th space $\BT(n)$ of $\BT$ consists of  linear combinations 
of elements in $\cT(n)$\,. 
 The structure of a graded vector 
space on $\BT(n)$ is obtained by  declaring that each non-root
edge carries degree $-1$\,.
In other words, for every $T \in \cT(n)$ we have  
\begin{equation}
\label{deg-T}
|T| = 1 - |E(T)|\,, 
\end{equation}
where $E(T)$ is the set of all edges of $T$\,.
A simple combinatorics shows that for every brace tree 
$T \in \cT(n)$ we have 
\begin{equation}
\label{deg-T-n}
|T| = 1 - n\,. 
\end{equation}
Hence, the graded vector space $\BT(n)$ is concentrated in the single degree $1-n$\,.
(In particular, the operad $\BT$ may only carry the zero differential.)

Since $\cT(0)$ is empty, we have 
\begin{equation}
\label{BT-0}
\BT(0) =\bfzero\,.
\end{equation}
Furthermore, since
in $\cT(1)$ we have only element $T_{\id}$ (see figure \ref{fig:id-BT}), 
\begin{equation}
\label{BT-1}
\BT(1) = \bbK\,.
\end{equation}

\subsection{The operad structure on \texorpdfstring{$\BT$}{BT}}
Let $T \in \cT(n) $, $T' \in \cT(k)$ and $1\le i \le k$. Our goal 
is to define the output of the elementary insertion  $T' \circ_i  T  \in \BT(n+k-1)$\,.

Let $v_i$ be the non-root vertex of $T'$ with label $i$\,.
If $v_i$ is a leaf (i.e. $v_i$ does not have 
incoming edges) then the vector $T' \circ_i T \in \BT(n+k-1)$ is, up to sign, represented by a brace 
tree $T''$ which is obtained from $T'$ by erasing the vertex $v_i$ and gluing the 
brace tree $T$ via identifying the root edge of $T$ with the edge originating 
at $v_i$\,. After this operation we relabel elements of the set 
$$
V(T'') \setminus  \{\textrm{root vertex}\} = \big( V(T)  \setminus \{\textrm{root vertex}\} \big)
 \sqcup (V(T') \setminus \{v_i\})
$$
in the obvious way. The sign factor in front of $T''$ is obtained by keeping 
track of the reordering of non-root edges of $T'$ and $T$. 

Let us now consider the case when $v_i$ has $q \ge 1$ incoming 
edges. Since $T'$ is a planar tree, these $q$ incoming edges 
are totally ordered. So we denote them by $e_1, e_2, \dots, e_q$
keeping in mind that 
\begin{equation}
\label{edges-are-ordered}
e_1 < e_2 < \dots < e_q\,.
\end{equation}

The desired vector $T' \circ_i T \in \BT(n+k-1)$ is represented 
by the sum 
\begin{equation}
\label{circ-i-BT}
T' \circ_i T = \sum_{\al} (-1)^{f(\al)} T_{\al} 
\end{equation}
where $ T_{\al}$ is obtained from $T'$ and $T$ following these steps: 
\begin{itemize}

\item first, we erase the vertex $v_i$ and 
glue the brace tree $T$  via identifying the root edge $T$ with 
the edge originating from $v_i$\,,

\item second, we attach the edges $e_1, e_2, \dots, e_q$ to 
vertices in the set 
\begin{equation}
\label{non-root-V-T}
V(T)  \setminus  \{\textrm{root vertex}\}\,.
\end{equation}
 
\item finally, we relabel elements of the set 
$$
V(T'') \setminus  \{\textrm{root vertex}\} = \big( V(T)  \setminus \{\textrm{root vertex}\} \big)
 \sqcup (V(T') \setminus \{v_i, \textrm{root vertex} \})
$$
in the  obvious way. 
\end{itemize}
Ways of connecting the edges $e_1, e_2, \dots, e_q$ to 
vertices in the set \eqref{non-root-V-T} should 
satisfy the following condition
\begin{cond} 
\label{cond:connect-BT}
The restriction of the total order on the 
set $E(T_{\al})$ of $T_{\al}$ to the subset 
$\{e_1, e_2, \dots, e_q\}$ should coincide 
with the order \eqref{edges-are-ordered}\,.
\end{cond}   
This condition can be reformulated in geometric 
terms as follows.  If we choose a small tubular neighborhood of the tree
$T$ (drawn on the plane) and walk along its boundary starting from
the root vertex in the clockwise direction then we will cross the edges 
in this order: first, we will cross $e_1$, second, we will cross $e_2$, 
third, we will cross $e_3$,  and so on.  

The summation in \eqref{circ-i-BT} goes over all ways
$\al$ of connecting the edges $e_1, e_2, \dots, e_q$ to 
vertices in the set \eqref{non-root-V-T} satisfying 
Condition \ref{cond:connect-BT}.

To define the sign factors $(-1)^{f(\al)}$ in  \eqref{circ-i-BT} 
we extend the total orders on the sets $E(T') $ and 
$(E(T)\setminus \{\textrm{root edge}\})$ to the disjoint 
union  
\begin{equation}
\label{ET-ETpr}
E(T') \sqcup (E(T)\setminus \{\textrm{root edge}\})
\end{equation}
by declaring that all elements of  $E(T')$ are smaller 
than elements of  $E(T)\setminus \{\textrm{root edge}\}$.

Next we observe that the set \eqref{ET-ETpr} is 
naturally isomorphic to the set $E(T_{\al})$ of edges of 
$T_{\al}$\,. On the other hand, the set  $E(T_{\al})$ carries 
a possibly different total order coming from planar structure 
on $T_{\al}$\,.

So the factor  $(-1)^{f(\al)}$ is the sign of the permutation 
which connects these total orders on the set 
$$
E(T_{\al}) \cong E(T') \sqcup (E(T)\setminus \{\textrm{root edge}\})\,.
$$

\begin{example}
\label{ex:BT-ins}
Let $T'$ (resp. $T_{\cc}$) be the brace tree depicted on figure 
\ref{fig:T6} (resp. figure \ref{fig:T12}). The result of the insertion 
$T' \c_2 T$ is the sum of brace trees shown on figure 
\ref{fig:BT-ins}.
\begin{figure}[htp] 
\centering 
\begin{tikzpicture}[scale=0.5]
\tikzstyle{lab}=[circle, draw, minimum size=5, inner sep=1]
\tikzstyle{n}=[circle, draw, fill, minimum size=5]
\tikzstyle{root}=[circle, draw, fill, minimum size=0, inner sep=1]
\node[root] (r) at (0, 0) {};
\node [lab] (v1) at (0,1) {$1$};
\node [lab] (v2) at (0,2.2) {$2$};
\draw (r) edge (v1);
\draw (v1) edge (v2);
\end{tikzpicture}
\caption{The brace tree $T_{\cc} \in \cT(2)$} \label{fig:T12}
\end{figure}
\begin{figure}[htp] 
\begin{minipage}[t]{0.3\linewidth} 
\centering 
\begin{tikzpicture}[scale=0.5]
\tikzstyle{lab}=[circle, draw, minimum size=5, inner sep=1]
\tikzstyle{n}=[circle, draw, fill, minimum size=5]
\tikzstyle{root}=[circle, draw, fill, minimum size=0, inner sep=1]
\draw (-6,1.5) node[anchor=center] {{$T' \,\c_2\, T \quad =$}};
\node[root] (r) at (0, 0) {};
\node [lab] (v1) at (0,1) {$2$};
\node [lab] (v2) at (-2,2) {$4$};
\node [lab] (v3) at (-3.2,3.3) {$1$};
\node [lab] (v4) at (-2,3.3) {$6$};
\node [lab] (v5) at (-0.8,3.3) {$5$};
\node [lab] (v6) at (0,2.2) {$7$};
\node [lab] (vv) at (1,2.2) {$3$};
\draw (r) edge (v1);
\draw (v1) edge (v2);
\draw (v1) edge (v6);
\draw (v1) edge (vv);
\draw (v2) edge (v3);
\draw (v2) edge (v4);
\draw (v2) edge (v5);
\end{tikzpicture}
\end{minipage}
\begin{minipage}[t]{0.3\linewidth} 
\centering 
\begin{tikzpicture}[scale=0.5]
\tikzstyle{lab}=[circle, draw, minimum size=5, inner sep=1]
\tikzstyle{n}=[circle, draw, fill, minimum size=5]
\tikzstyle{root}=[circle, draw, fill, minimum size=0, inner sep=1]
\draw (-5,1.5) node[anchor=center] {{$-$}};
\node[root] (r) at (0, 0) {};
\node [lab] (v1) at (0,1) {$2$};
\node [lab] (v2) at (-2,2) {$4$};
\node [lab] (v3) at (-3.2,3.3) {$1$};
\node [lab] (v4) at (-2,3.3) {$6$};
\node [lab] (v5) at (-0.8,3.3) {$5$};
\node [lab] (v6) at (1,3.5) {$7$};
\node [lab] (vv) at (1,2.2) {$3$};
\draw (r) edge (v1);
\draw (v1) edge (v2);
\draw (vv) edge (v6);
\draw (v1) edge (vv);
\draw (v2) edge (v3);
\draw (v2) edge (v4);
\draw (v2) edge (v5);
\end{tikzpicture}
\end{minipage}
\begin{minipage}[t]{0.3\linewidth} 
\centering 
\begin{tikzpicture}[scale=0.5]
\tikzstyle{lab}=[circle, draw, minimum size=5, inner sep=1]
\tikzstyle{n}=[circle, draw, fill, minimum size=5]
\tikzstyle{root}=[circle, draw, fill, minimum size=0, inner sep=1]
\draw (-5,1.5) node[anchor=center] {{$-$}};
\node[root] (r) at (0, 0) {};
\node [lab] (v1) at (0,1) {$2$};
\node [lab] (v2) at (-2,2) {$4$};
\node [lab] (v3) at (-3.2,3.3) {$1$};
\node [lab] (v4) at (-2,3.3) {$6$};
\node [lab] (v5) at (-0.8,3.3) {$5$};
\node [lab] (v6) at (0,2.2) {$3$};
\node [lab] (vv) at (1,2.2) {$7$};
\draw (r) edge (v1);
\draw (v1) edge (v2);
\draw (v1) edge (v6);
\draw (v1) edge (vv);
\draw (v2) edge (v3);
\draw (v2) edge (v4);
\draw (v2) edge (v5);
\end{tikzpicture}
\end{minipage}
\begin{minipage}[t]{\linewidth} 
\vspace{0.5cm}
\end{minipage}
\begin{minipage}[t]{0.3\linewidth} 
\centering 
\begin{tikzpicture}[scale=0.5]
\tikzstyle{lab}=[circle, draw, minimum size=5, inner sep=1]
\tikzstyle{n}=[circle, draw, fill, minimum size=5]
\tikzstyle{root}=[circle, draw, fill, minimum size=0, inner sep=1]
\draw (-5,1.5) node[anchor=center] {{$-$}};
\node[root] (r) at (0, 0) {};
\node [lab] (v1) at (0,1) {$2$};
\node [lab] (v2) at (-1,3) {$4$};
\node [lab] (v3) at (-2.2,4.3) {$1$};
\node [lab] (v4) at (-1,4.3) {$6$};
\node [lab] (v5) at (0.2,4.3) {$5$};
\node [lab] (v6) at (1.5,3) {$7$};
\node [lab] (vv) at (0,2.2) {$3$};
\draw (r) edge (v1);
\draw (vv) edge (v2);
\draw (vv) edge (v6);
\draw (v1) edge (vv);
\draw (v2) edge (v3);
\draw (v2) edge (v4);
\draw (v2) edge (v5);
\end{tikzpicture}
\end{minipage}
\begin{minipage}[t]{0.3\linewidth} 
\centering 
\begin{tikzpicture}[scale=0.5]
\tikzstyle{lab}=[circle, draw, minimum size=5, inner sep=1]
\tikzstyle{n}=[circle, draw, fill, minimum size=5]
\tikzstyle{root}=[circle, draw, fill, minimum size=0, inner sep=1]
\draw (-5,1.5) node[anchor=center] {{$-$}};
\node[root] (r) at (0, 0) {};
\node [lab] (v1) at (0,1) {$2$};
\node [lab] (v2) at (-1,3) {$4$};
\node [lab] (v3) at (-2.2,4.3) {$1$};
\node [lab] (v4) at (-1,4.3) {$6$};
\node [lab] (v5) at (0.2,4.3) {$5$};
\node [lab] (v6) at (1,2.5) {$7$};
\node [lab] (vv) at (-1, 1.7) {$3$};
\draw (r) edge (v1);
\draw (vv) edge (v2);
\draw (v1) edge (v6);
\draw (v1) edge (vv);
\draw (v2) edge (v3);
\draw (v2) edge (v4);
\draw (v2) edge (v5);
\end{tikzpicture}
\end{minipage}
\begin{minipage}[t]{0.3\linewidth} 
\centering 
\begin{tikzpicture}[scale=0.5]
\tikzstyle{lab}=[circle, draw, minimum size=5, inner sep=1]
\tikzstyle{n}=[circle, draw, fill, minimum size=5]
\tikzstyle{root}=[circle, draw, fill, minimum size=0, inner sep=1]
\draw (-5,1.5) node[anchor=center] {{$-$}};
\node[root] (r) at (0, 0) {};
\node [lab] (v1) at (0,1) {$2$};
\node [lab] (v2) at (-1,3) {$4$};
\node [lab] (v3) at (-2.2,4.3) {$1$};
\node [lab] (v4) at (-1,4.3) {$6$};
\node [lab] (v5) at (0.2,4.3) {$5$};
\node [lab] (v6) at (1,2.5) {$7$};
\node [lab] (vv) at (-2, 2) {$3$};
\draw (r) edge (v1);
\draw (v1) edge (v2);
\draw (v1) edge (v6);
\draw (v1) edge (vv);
\draw (v2) edge (v3);
\draw (v2) edge (v4);
\draw (v2) edge (v5);
\end{tikzpicture}
\end{minipage}
\caption{The vector $T' \,\c_2\, T \in \BT(7)$} \label{fig:BT-ins}
\end{figure} 
\end{example}

The symmetric group $S_n$ acts on $\BT(n)$ in 
the obvious way by rearranging the labels. It is not hard to 
see that operations \eqref{circ-i-BT} together with this action 
give us an operad structure on $\BT$ with the identity 
element represented by the brace tree $T_{\id}$ depicted 
on figure \ref{fig:id-BT}.

\section{The operad \texorpdfstring{$\Tw \BT$}{Tw BT}}
\label{sec:TwBT}
The operad $\BT$ receives a natural map from $\La\Lie$
\begin{equation}
\label{LaLie-BT}
\vf: \La\Lie \to \BT \,.
\end{equation}
Since the operad  $\La\Lie$ is generated by the binary bracket operation 
$\{\cdot,\cdot\} \in \La\Lie(2)$, the map \eqref{LaLie-BT} is 
uniquely determined by its value  $\vf(\{\cdot,\cdot\})$, 
which equals   
\begin{equation}
\label{LaLie-BT-form}
\vf(\{\cdot,\cdot\}) =  T_{\cc} + \si_{12} T_{\cc}\,,
\end{equation}
where $T_{\cc}$ is the brace tree depicted on figure \ref{fig:T12}
and $\si_{12}$ is the transposition in $S_2$\,.
The desired compatibility with the Jacobi relation 
\begin{equation}
\label{vf-Jacobi}
\vf(\{\cdot,\cdot\}) \, \c_1 \, \vf(\{\cdot,\cdot\})  +  \textrm{cyclic permutations}\,(1,2,3) = 0
\end{equation}
can be checked by a straightforward computation.

In this section we give an explicit description for 
the twisted version $\Tw\BT$ of $\BT$ corresponding to the map 
\eqref{LaLie-BT-form}.
As a graded vector space,
\begin{equation}
\label{TwBT-n}
\Tw\BT(n) = \prod_{r = 0}^{\infty} \bs^{2r} 
\big( \BT(r+n) \big)^{S_r}\,.
\end{equation}
Using the observation that for every $m$ the space $\BT(m)$ is concentrated in the 
single degree $1-m$ we conclude that the subspace 
$\Tw\BT^p(n)$ of degree $p$ vectors in $\Tw\BT(n)$ 
is spanned by vectors of the form
\begin{equation}
\label{T-averaged}
\sum_{\si \in S_r} \si (T)
\end{equation}
where $T$ is an arbitrary brace tree in $\cT(r+n)$
and $r = p+n-1$\,. In particular, 
\begin{equation}
\label{TwBT-TwBT-oplus}
\Tw \BT = \Tw^{\oplus} \BT 
\end{equation}
(cf. Section \ref{sec:Tw-oplus}).

To represent vectors \eqref{T-averaged}, it is convenient 
to extend the set $\cT(n)$ to another auxiliary set  $\cT^{\tw}(n)$\,. 
An element  of $\cT^{\tw}(n)$ is a planar tree $T$ equipped with the 
following data: 
\begin{itemize}

\item a partition of the set $V(T)$ of vertices
$$
V(T) = V_{\lab}(T)  \sqcup V_{\nu}(T) \sqcup V_{root}(T)  
$$  
into the singleton $V_{root}(T)$ consisting of the root vertex, 
the set $V_{\lab}(T)$ consisting of $n$ vertices, and the set $V_{\nu}(T)$
consisting of vertices which we call {\it neutral};

\item a bijection between the set $V_{\lab}(T)$ and the set $\{1,2, \dots, n\}$;

\end{itemize}
We also call elements of $\cT^{\tw}(n)$ brace trees.
Figures \ref{fig:T12}, \ref{fig:cup}, \ref{fig:cup-opp}
show examples of brace trees in $\cT^{\tw}(2)$\,.
Figures \ref{fig:Tbul1} and \ref{fig:T1bul} show examples of 
a brace tree in $\cT^{\tw}(1)$\,. 
\begin{figure}[htp] 
\begin{minipage}[t]{0.45\linewidth}
\centering 
\begin{tikzpicture}[scale=0.5]
\tikzstyle{lab}=[circle, draw, minimum size=5, inner sep=1]
\tikzstyle{n}=[circle, draw, fill, minimum size=6, inner sep=0]
\tikzstyle{root}=[circle, draw, fill, minimum size=0, inner sep=1]
\node[root] (r) at (1, 0) {};
\node[n] (v) at (1, 1) {};
\node [lab] (v1) at (0.3,2.3) {$1$};
\node [lab] (v2) at (1.7,2.3) {$2$};
\draw (r) edge (v);
\draw (v) edge (v1);
\draw (v) edge (v2);
\end{tikzpicture}
\caption{A brace tree $T_{\cup}\in \cT^{\tw}(2)$} \label{fig:cup}
\end{minipage} 
\begin{minipage}[t]{0.45\linewidth}
\centering 
\begin{tikzpicture}[scale=0.5]
\tikzstyle{lab}=[circle, draw, minimum size=5, inner sep=1]
\tikzstyle{root}=[circle, draw, fill, minimum size=0, inner sep=1]
\node[root] (r) at (1, 0) {};
\node[n] (v) at (1, 1) {};
\node [lab] (v2) at (0.3,2.3) {$2$};
\node [lab] (v1) at (1.7,2.3) {$1$};
\draw (r) edge (v);
\draw (v) edge (v1);
\draw (v) edge (v2);
\end{tikzpicture}
\caption{A brace tree $T_{\cup^{opp}}\in \cT^{\tw}(2)$} \label{fig:cup-opp}
\end{minipage} 
\end{figure} 
\begin{figure}[htp] 
\begin{minipage}[t]{0.45\linewidth}
\centering 
 \begin{tikzpicture}[scale=0.5]
\tikzstyle{lab}=[circle, draw, minimum size=5, inner sep=1]
\tikzstyle{root}=[circle, draw, fill, minimum size=0, inner sep=1]
\node[root] (r) at (1, 0) {};
\node [n] (v) at (1,1) {};
\node [lab] (vv) at (1,2.3) {$1$};
\draw (r) edge (v);
\draw (v) edge (vv);
\end{tikzpicture}
\caption{A brace tree $T_{\bul 1} \in \cT^{\tw}(1)$} \label{fig:Tbul1}
\end{minipage} 
\begin{minipage}[t]{0.45\linewidth}
\centering 
 \begin{tikzpicture}[scale=0.5]
\tikzstyle{lab}=[circle, draw, minimum size=5, inner sep=1]
\tikzstyle{root}=[circle, draw, fill, minimum size=0, inner sep=1]
\node[root] (r) at (1, 0) {};
\node [lab] (v) at (1,1) {$1$};
\node [n] (vv) at (1,2.3) {};
\draw (r) edge (v);
\draw (v) edge (vv);
\end{tikzpicture}
\caption{A brace tree $T_{1 \bul} \in \cT^{\tw}(1)$} \label{fig:T1bul}
\end{minipage} 
\end{figure}
On figures, neutral vertices of a brace tree in $\cT^{\tw}(n)$ 
are depicted by black circles and 
vertices in $V_{\lab}(T)$ are depicted by white circles  
with the corresponding numbers inscribed.

We have the obvious bijection between brace trees in 
$\cT^{\tw}(n)$ with $r$ neutral vertices and  
linear combinations \eqref{T-averaged}\,. This bijection 
assigns to a brace tree $T'$ with $r$ neutral vertices 
the linear combination \eqref{T-averaged} where $T$ is obtained 
from $T'$ by labeling the neutral vertices by $1,2, \dots, r$ in any 
possible way and shifting the labels for vertices in  $V_{\lab}(T)$ up by $r$\,.  

In virtue of this bijection, the $n$-th space $\Tw\BT(n)$ of $\Tw\BT$ is the 
space of (finite) linear combinations of
brace trees in $\cT^{\tw}(n)$\,. Furthermore,
each brace tree $T' \in \cT^{\tw}(n)$
carries the degree 
\begin{equation}
\label{deg-in-cTtw}
|T| = 2 |V_{\nu}(T)| - |E(T)| + 1\,, 
\end{equation}
where $E(T)$ is the set of all edges of $T$\,.
In other words,  each non-root
edge carries degree $-1$ and each neutral vertex carries degree $2$\,.

Using this description of $\Tw\BT$ it is easy to define
the elementary insertions in terms of brace trees. 
Indeed, let $T \in \cT^{\tw}(n) $, $T' \in \cT^{\tw}(k)$, $1\le i \le k$
and let $v_i$ be the vertex in $V_{\lab}(T')$ with label $i$\,.
If  $v_i$  is a leaf (i.e. $v_i$ does not have 
incoming edges) then the vector $T' \circ_i T \in \Tw\BT(n+k-1)$ is represented by a tree $T''$
which is obtained from $T'$ by erasing  the vertex $v_i$ and gluing the 
tree $T$ via identifying the root edge of $T$ with the edge originating 
at $v_i$\,. After this operation we relabel elements of the set 
$V_{\lab}(T'') = V_{\lab}(T) \sqcup (V_{\lab}(T') \setminus \{v_i\})$ in the 
obvious way. The sign factor in front of $T''$ is obtained by keeping 
track of the reordering of non-root edges of $T'$ and $T$.

Let us now consider the case when $v_i$ has $q \ge 1$ incoming 
edges. Since  the tree $T'$ is planar these $q$ incoming edges 
are totally ordered. So we denote them by $e_1, e_2, \dots, e_q$
keeping in mind that 
\begin{equation}
\label{e1e2dots}
e_1 < e_2 < \dots < e_q\,.
\end{equation}

The desired vector $T' \circ_i T \in \Tw\BT(n+k-1)$ is represented 
by the sum 
\begin{equation}
\label{circ-i-TwBT}
T' \circ_i T = \sum_{\al} (-1)^{f(\al)} T_{\al} 
\end{equation}
where $ T_{\al}$ is obtained from $T'$ and $T$ following these steps: 
\begin{itemize}

\item first, we erase the vertex $v_i \in V_{\lab}(T')$ and 
glue the tree $T$  via identifying the root edge $T$ with 
the edge originating from $v_i$\,,

\item second, we attach the edges $e_1, e_2, \dots, e_q$ to 
vertices in the set $V_{\lab}(T) \sqcup V_{\nu}(T)$   
 
\item finally, we relabel elements of the set 
$V_{\lab}(T'') = V_{\lab}(T) \sqcup (V_{\lab}(T') \setminus \{v_i\})$ in the 
obvious way. 
\end{itemize}
Ways of connecting the edges $e_1, e_2, \dots, e_q$ to 
vertices in the set $V_{\lab}(T) \sqcup V_{\nu}(T)$ should 
obey the obvious analog of Condition \ref{cond:connect-BT}:
\begin{cond} 
\label{cond:connect-Tw}
The restriction of the total order on the 
set $E(T_{\al})$ of $T_{\al}$ to the subset 
$\{e_1, e_2, \dots, e_q\}$ should coincide 
with the order \eqref{e1e2dots}\,.
\end{cond}   
The summation in \eqref{circ-i-TwBT} goes over all ways
$\al$ of connecting the edges $e_1, e_2, \dots, e_q$ to 
vertices in the set  $V_{\lab}(T) \sqcup V_{\nu}(T)$ satisfying 
Condition \ref{cond:connect-Tw}. 
The sign factor $(-1)^{f(\al)}$ is define in the same way as for 
elementary insertions in $\BT$\,. 

\begin{example}
\label{ex:TwBT-ins}
Let $T_{\cc}$ (resp. $T_{\cup}$) be the brace tree in $\cT^{\tw}(2)$ 
depicted on figure \ref{fig:T12} (resp. figure \ref{fig:cup}).
Then the vector $T \circ_1 T_{\cup} \in \Tw\BT(3)$ equals to 
the sum shown on figure \ref{fig:TwBT-ins}.
\begin{figure}[htp]
\begin{minipage}[t]{\linewidth}
\centering
\begin{tikzpicture}[scale=0.5]
\tikzstyle{lab}=[circle, draw, minimum size=5, inner sep=1]
\tikzstyle{root}=[circle, draw, fill, minimum size=0, inner sep=1]
\draw (-2,1.5) node[anchor=center] {{$T \circ_1 T_{\cup} \quad =$}};
\node[root] (r) at (2.5, 0) {};
\node[n] (v) at (2.5, 1.5) {};
\node [lab] (v3) at (1,3) {$3$};
\node [lab] (v1) at (2.5,3) {$1$};
\node [lab] (v2) at (4,3) {$2$};
\draw (r) edge (v);
\draw (v) edge (v1);
\draw (v) edge (v2);
\draw (v) edge (v3);
\draw (6,1.5) node[anchor=center] {{$-$}};
\node[root] (r) at (9, 0) {};
\node[n] (v) at (9, 1.5) {};
\node [lab] (v1) at (8,2.5) {$1$};
\node [lab] (v2) at (10,3) {$2$};
\node [lab] (v3) at (8,4) {$3$};
\draw (r) edge (v);
\draw (v) edge (v1);
\draw (v) edge (v2);
\draw (v1) edge (v3);
\draw (12,1.5) node[anchor=center] {{$-$}};
\node[root] (r) at (14.5, 0) {};
\node[n] (v) at (14.5, 1.5) {};
\node [lab] (v1) at (13,3) {$1$};
\node [lab] (v3) at (14.5,3) {$3$};
\node [lab] (v2) at (16,3) {$2$};
\draw (r) edge (v);
\draw (v) edge (v1);
\draw (v) edge (v2);
\draw (v) edge (v3);
\end{tikzpicture}
\end{minipage}
~\\[0.5cm]
\begin{minipage}[t]{\linewidth}
\centering
\begin{tikzpicture}[scale=0.5]
\tikzstyle{lab}=[circle, draw, minimum size=5, inner sep=1]
\tikzstyle{root}=[circle, draw, fill, minimum size=0, inner sep=1]
\draw (-1,1.5) node[anchor=center] {{$+$}};
\node[root] (r) at (2, -0.5) {};
\node[n] (v) at (2, 1) {};
\node [lab] (v1) at (1,2) {$1$};
\node [lab] (v2) at (3,2) {$2$};
\node [lab] (v3) at (3,3.5) {$3$};
\draw (r) edge (v);
\draw (v) edge (v1);
\draw (v) edge (v2);
\draw (v2) edge (v3);
\draw (6,1.5) node[anchor=center] {{$+$}};
\node[root] (r) at (8.5, 0) {};
\node[n] (v) at (8.5, 1.5) {};
\node [lab] (v1) at (7,3) {$1$};
\node [lab] (v2) at (8.5,3) {$2$};
\node [lab] (v3) at (10,3) {$3$};
\draw (r) edge (v);
\draw (v) edge (v1);
\draw (v) edge (v2);
\draw (v) edge (v3);
\end{tikzpicture}
\end{minipage}
\caption{The elementary insertion $T \circ_1 T_{\cup}$} \label{fig:TwBT-ins}
\end{figure}
\end{example}

\begin{remark}
\label{rem:cL-BT}
Let us recall that $\BT(0) = \bfzero$. Hence, due to Remark \ref{rem:Tw-cO-0}, 
the graded vector space of the dg Lie algebra
\begin{equation}
\label{cL-BT}
\cL_{\BT} = \Conv(\La^2 \coCom, \BT) 
\end{equation}
is canonically identified with $\bs^{-2} \Tw\BT(0)$
Thus,  we can use brace trees in $\cT^{\tw}(0)$ to 
represent vectors in the dg Lie algebra \eqref{cL-BT}. 

However, the degree formula should be adjusted as follows: 
a brace tree $T \in \cT^{\tw}(0)$ representing a vector in
$\cL_{\BT}$ carries the degree
\begin{equation}
\label{deg-T-cLBT}
|T| = 2 |V_{\nu}(T)|  - |E(T)| -1\,. 
\end{equation}
For example, the vector $T_{\bb}$ depicted on figure \ref{fig:bul-bul}
\begin{figure}
\centering
\begin{tikzpicture}[scale=0.5]
\tikzstyle{lab}=[circle, draw, minimum size=5, inner sep=1]
\tikzstyle{root}=[circle, draw, fill, minimum size=0, inner sep=1]
\node[root] (r) at (0, 0) {};
\node[n] (v) at (0, 1.5) {};
\node [n] (vv) at (0,3) {};
\draw (r) edge (v);
\draw (v) edge (vv);
\end{tikzpicture}
\caption{The brace tree $T_{\bb}\in \cT^{\tw}(0)$} \label{fig:bul-bul}
\end{figure}  
is the Maurer-Cartan element in $\cL_{\BT}$ corresponding to the 
map  \eqref{LaLie-BT}. This vector carries degree $1$\,.
\end{remark}

\subsection{The differential on \texorpdfstring{$\Tw\BT$}{Tw BT}}
Following the general procedure, the differential $\pa(T)$ of 
a brace tree $T \in \Tw\BT(n)$ is 
given by the formula
\begin{equation}
\label{pa-TwBT}
\pa (T) = T_{\bb} \cdot  T +  \ka(T_{\bb})  \circ_1  T 
-(-1)^{|T|} \sum_{i=1}^n T  \circ_i  \ka(T_{\bb})\,. 
\end{equation}
Here $T_{\bb} \cdot  \,$ denotes the auxiliary action  
\eqref{eq:cL-action1} of the Lie algebra $\cL_{\BT}$ on 
the operad $\Tw\BT$ and 
\begin{equation}
\label{ka-MC}
\ka(T_{\bb})  = T_{\bul1} + T_{1\bul}\,, 
\end{equation}
where $T_{\bul1}$ and $T_{1\bul}$ are the brace trees 
depicted on figures \ref{fig:Tbul1} and \ref{fig:T1bul}, respectively.
Unfolding the definition of the  auxiliary action \eqref{eq:cL-action1} 
and using \eqref{ka-MC} we get 
\begin{equation}
\label{pa-TwBT1}
\pa (T) =  
-(-1)^{|T|} \sum_{v \in V_{\nu}(T)}  
T_{v} \circ_{n+1} T_{\bb} + T_{1\bul} \circ_1 T + 
 T_{\bul1} \circ_1 T
\end{equation}
$$
-(-1)^{|T|} \sum_{i=1}^n T \circ_i   T_{1\bul} 
-(-1)^{|T|} \sum_{i=1}^n T \circ_i   T_{\bul1}\,. 
$$
where $T_v$ is a brace tree in $\cT^{\tw}(n+1)$ which is
obtained from $T$ by replacing the neutral vertex $v$ by 
a ``white'' vertex with label $n+1$\,. 
In other words, $T_{\bb} \cdot  T$ is  the sum 
over insertions of the brace tree $T_{\bb}$ (see fig. \ref{fig:bul-bul})
into neutral vertices of $T$ with appropriate signs.  
The expression $ T_{1\bul} \circ_1 T$ is the sum of brace 
trees which are obtained from $T$ by attaching the single edge with 
a neutral vertex on one end to all non-root vertices of $T$\,.
The brace tree $T_{\bul 1} \circ_1 T$ is obtained from the tree $T$
by dividing the root edge into two parts and inserting a neutral vertex 
in the middle. The expression $ T \circ_i   T_{1\bul} $ (resp. 
the expression $T \circ_i   T_{\bul1}$ ) is obtained from $T$ by 
inserting $T_{1\bul} $ (resp. $  T_{\bul1} $) into the vertex of $T$ with 
label $i$\,.

\begin{example}
\label{ex:pa-TwBT}
Let $T_{\cc}$, $T_{\cup}$, $T_{\cup^{opp}}$, $T_{\bul 1}$
be the brace trees depicted on figures \ref{fig:T12},
\ref{fig:cup}, \ref{fig:cup-opp}, and \ref{fig:Tbul1}.
Direct computations show that $T_{\cup}$ and $T_{\cup^{opp}}$ 
are $\pa$-closed and $T_{\cc}$ is not $\pa$-closed. Instead, we have
\begin{equation}
\label{pa-T-cc}
\pa(T_{\cc}) = T_{\cup^{opp}} - T_{\cup}\,.
\end{equation}
In other words, $T_{\cup}$ is cohomologous to $T_{\cup^{opp}}$\,.

For the brace tree  $T_{\bul 1}$ we have 
\begin{equation}
\label{pa-Tbul1}
\pa (T_{\bul 1}) = T_{\bul\bul 1}\,.
\end{equation}
where $T_{\bul\bul 1}$ is the brace tree depicted 
on figure \ref{fig:Tbulbul1}.
\begin{figure}
\centering
\begin{tikzpicture}[scale=0.5]
\tikzstyle{lab}=[circle, draw, minimum size=5, inner sep=1]
\tikzstyle{root}=[circle, draw, fill, minimum size=0, inner sep=1]
\node[root] (r) at (0, 0) {};
\node[n] (v) at (0, 1.3) {};
\node [n] (vv) at (0, 2.6) {};
\node [lab] (v1) at (0, 3.9) {$1$};
\draw (r) edge (v);
\draw (v) edge (vv);
\draw (vv) edge (v1);
\end{tikzpicture}
\caption{The brace tree $T_{\bul\bul 1}\in \cT^{\tw}(1)$} \label{fig:Tbulbul1}
\end{figure}  
\end{example}

The operad $\Tw \BT$ acts naturally on the Hochschild cochain complex 
$\Cbu(A)$ of any (flat) $A_{\infty}$-algebra $A$\,. 
This action is described in Appendix \ref{app:action}.

\section{The braces operad \texorpdfstring{$\Br$}{Br} \`a la Kontsevich-Soibelman}
\label{sec:Br} 
In this section we define a suboperad $\Br$ of $\Tw\BT$\,.
The operad $\Br$ coincides with what  M. Kontsevich and Y. Soibelman
called the ``minimal operad'' in \cite{K-Soi}. By slightly abusing notation
we will call $\Br$ the operad of braces and we will refer to algebras over 
$\Br$ as brace algebras.  At the end of this section we will 
show that the embedding $\Br\hookrightarrow \Tw\BT$ is a
quasi-isomorphism of operads. 

We will say that a brace tree $T \in \cT^{\tw}(n)$ is {\it admissible} if $T$ satisfies the 
following condition: 
\begin{cond}
\label{cond:braces-trees}
Every neutral vertex of $T$ has at least $2$ incoming edges.
\end{cond}
Thus figures \ref{fig:T12}, \ref{fig:cup}, and \ref{fig:cup-opp} 
show examples of admissible brace trees while
figures \ref{fig:Tbul1}, \ref{fig:T1bul}, \ref{fig:bul-bul} 
show examples of inadmissible brace trees.

We define $\Br(n)$ as the subspace of $\Tw\BT(n)$ which 
is spanned by admissible brace trees. 
It is easy to see that, in $\cT^{\tw}(0)$ there are no 
admissible brace trees. Hence, 
\begin{equation}
\label{Br-0}
\Br(0) = \bfzero.
\end{equation}
Furthermore, in $\cT^{\tw}(1)$, there is 
exactly one admissible brace tree. This brace tree is depicted on 
figure \ref{fig:id-BT}. Thus 
\begin{equation}
\label{Br-1}
\Br(1) = \bbK\,.
\end{equation}

We claim that 
\begin{prop}
\label{prop:Br}
The sub-collection $\big\{ \Br(n)\big\}_{n \ge 0}$ is a suboperad 
of $\Tw\BT$\,.
\end{prop}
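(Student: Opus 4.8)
The plan is to verify that the graded subcollection $\{\Br(n)\}_{n\ge0}$ is preserved by each ingredient of the operad structure on $\Tw\BT$: that it contains the operadic unit, is stable under the symmetric group actions, is closed under the elementary insertions $\circ_i$, and --- since $\Tw\BT$ is a dg operad --- is preserved by the differential $\pa$ of \eqref{pa-TwBT}. The first two are immediate. The unit of $\Tw\BT$ is represented by the brace tree $T_{\id}$ of figure \ref{fig:id-BT}, which has no neutral vertices, so Condition \ref{cond:braces-trees} holds vacuously; and the $S_n$-action only permutes the labels of the vertices in $V_{\lab}(T)$, leaving the underlying planar tree, the set $V_{\nu}(T)$ of neutral vertices, and the set of incoming edges of each neutral vertex unchanged, hence preserving admissibility.

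For closedness under $\circ_i$ the key point is that the insertion rule \eqref{circ-i-TwBT} can only \emph{enlarge} the set of incoming edges of a neutral vertex, never shrink it and never create a new neutral vertex. Fix admissible brace trees $T\in\cT^{\tw}(n)$, $T'\in\cT^{\tw}(k)$ and an index $1\le i\le k$, and let $v_i\in V_{\lab}(T')$ be the $i$-th labeled vertex of $T'$. Since $v_i$ is labeled rather than neutral, every summand $T_\alpha$ appearing in $T'\circ_i T$ has neutral-vertex set $V_{\nu}(T_\alpha)=V_{\nu}(T')\sqcup V_{\nu}(T)$. A neutral vertex lying in $V_{\nu}(T')$ retains exactly the incoming edges it had inside $T'$, since none of the local surgeries --- erasing $v_i$, grafting the root edge of $T$ onto the edge formerly issuing from $v_i$, redistributing the edges $e_1,\dots,e_q$ among the vertices of $T$ --- touches it; a neutral vertex lying in $V_{\nu}(T)$ retains all of its incoming edges from $T$ and merely, possibly, acquires some of the edges $e_j$. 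In either case it still has at least two incoming edges, because $T$ and $T'$ were admissible, so $T_\alpha$ is admissible. The same bookkeeping covers the leaf case $q=0$, where the single resulting tree $T''$ again satisfies $V_{\nu}(T'')=V_{\nu}(T')\sqcup V_{\nu}(T)$ with no neutral vertex gaining or losing an edge. Thus $\circ_i$ carries $\Br(k)\otimes\Br(n)$ into $\Br(n+k-1)$; the sign factors $(-1)^{f(\alpha)}$ play no role, admissibility being a property of the individual trees.

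The remaining, and genuinely delicate, point is that $\pa$ maps $\Br$ into $\Br$, and I expect this to be the main obstacle: for admissible $T$ the expression \eqref{pa-TwBT1} is a priori only an element of $\Tw\BT(n)$, several of whose terms contribute individual \emph{non-admissible} brace trees. Indeed $T_{\bul1}\circ_1 T$ always inserts a neutral vertex of in-degree $1$ on the root edge, $T_{1\bul}\circ_1 T$ always grafts a neutral leaf onto some non-root vertex of $T$, and each of $T_{\bb}\cdot T$, $T\circ_i T_{1\bul}$, $T\circ_i T_{\bul1}$ contains summands in which a freshly created neutral vertex has fewer than two incoming edges. The plan is to show that all of these inadmissible summands cancel within the total sum $\pa(T)$: one classifies each inadmissible summand by the location and defect type of its offending neutral vertex --- a pendant neutral leaf hanging from a prescribed non-root vertex, a neutral vertex of in-degree $1$ sitting on a prescribed edge of $T$, or a neutral vertex of in-degree $1$ inside a grafted copy of $T_{\bb}$ --- and then exhibits a sign-reversing involution on the set of all inadmissible summands occurring across the five terms of \eqref{pa-TwBT1}. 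The substance of the work is the accompanying sign computation, carried out with the degree formula \eqref{deg-in-cTtw} and the Koszul rule attached to $\circ_i$; this cancellation is, in essence, the statement of Kontsevich and Soibelman \cite{K-Soi} that their ``minimal operad'' carries a well-defined differential. Granting it, $\pa(\Br)\subseteq\Br$, and together with the previous paragraphs this establishes that $\{\Br(n)\}_{n\ge0}$ is a sub-dg-operad of $\Tw\BT$.
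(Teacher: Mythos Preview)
Your proposal is correct and follows essentially the same approach as the paper: both argue that closure under $\circ_i$ is immediate because insertion into a labeled vertex neither creates neutral vertices nor removes incoming edges from existing ones, and both identify closure under $\pa$ as the real content, to be proved by pairing off the non-admissible summands of \eqref{pa-TwBT1} via a sign-reversing matching. The only difference is one of presentation: the paper makes the pairing explicit by treating the valence-$2$ and valence-$1$ cases separately with accompanying pictures (each bad edge-splitting term is matched with the corresponding term coming from the adjacent vertex; each pendant neutral leaf from a vertex splitting is matched with the corresponding term in $\ka(T_{\bb})\circ_1 T$), whereas you describe the same cancellation in words and defer the sign check to \cite{K-Soi}.
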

\begin{proof}
It is obvious that for admissible brace trees $T'\in \cT^{\tw}(k)$ 
and $T \in \cT^{\tw}(n)$ 
$$
T' \circ_i T\,, \qquad 1\le i \le k
$$
are linear combinations of admissible brace trees. 
Thus it remains to prove that for every brace tree 
$\pa T$ is a linear combination of admissible brace trees. 
Concretely, we have to show that in $\pa T$ there occur no 
brace trees with 1- or 2-valent neutral vertices. 

Let $e(T)$ be the number of all edges of $T$.  
The vertex splitting operations produce
$2 e(T)$ terms with a 2-valent neutral vertex, 2 for each of the $e(T)$ edges. 
Concretely,
\[
\begin{tikzpicture}[scale=.7]
\node[ngr] (v1) at (0,0.3) {};
\draw (v1) -- +(135:1) (v1)-- +(-90:1) (v1) -- ++(45:1) ;
\node[ngr] (v2) at (0,1.7) {};
\draw (v2) -- +(135:1) (v2)-- +(90:1) (v2) -- ++(45:1) ;
\draw (v2) edge (v1);
\draw [->] (1.5,1) -- (2.5,1);
\begin{scope}[xshift=4cm]
\node[ngr] (v1) at (0,0) {};
\draw (v1) -- +(135:1) (v1)-- +(-90:1) (v1) -- ++(45:1) ;
\node[ngr] (v2) at (0,2) {};
\node[n] (v3) at (0,.8) {};
\draw (v2) -- +(135:1) (v2)-- +(90:1) (v2) -- ++(45:1) ;
\draw (v2) edge (v3) (v3) edge (v1);
\end{scope}
\begin{scope}[xshift=7cm]
\node[ngr] (v1) at (0,0) {};
\draw (v1) -- +(135:1) (v1)-- +(-90:1) (v1) -- ++(45:1);
\node[ngr] (v2) at (0,2) {};
\node[n] (v3) at (0,1.2) {};
\draw (v2) -- +(135:1) (v2)-- +(90:1) (v2) -- ++(45:1) ;
\draw (v2) edge (v3) (v3) edge (v1);
\end{scope}
\end{tikzpicture}
\]
Here the left term comes from splitting of the lower gray vertex and the right term comes from splitting of 
the upper gray vertex. 
The upper gray vertex can be either labeled or neutral, in particular, it may have 
valency $1$. The lower gray vertex may be either a labeled vertex, or a neutral vertex, or the root 
vertex. (Of course, in the latter case the lower vertex has valency $1$.) 
The remainder of the brace tree is omitted. Comparing the signs factors, we see that
these two terms cancel each other, so the differential does not produce valence 2 neutral vertices.

Next consider valence 1 neutral vertices. 
Again, the corresponding brace trees come in pairs:
\[
\begin{tikzpicture}[scale=.7]
\node[ngr] (v1) at (0,0) {};
\draw (v1) -- +(135:1.5) (v1)-- +(-90:1) (v1) -- ++(45:1.5);
\draw [->] (1.5,0) -- (2.5,0);
\begin{scope}[xshift=4cm]
\node[ngr] (v1) at (0,0) {};
\draw (v1) -- +(135:1.5) (v1)-- +(-90:1) (v1) -- ++(45:1.5) ;
\node[n] (v3) at (0,.8) {};
\draw (v3) edge (v1);
\end{scope}
\begin{scope}[xshift=7cm]
\node[ngr] (v1) at (0,0) {};
\draw (v1) -- +(135:1.5) (v1)-- +(-90:1) (v1) -- ++(45:1.5) ;
\node[n] (v3) at (0,1.8) {};
\draw (v3) edge (v1);
\end{scope}
\end{tikzpicture}
\]
Here the left term comes from splitting of the shown gray (i.e., white or black) vertex 
and the right term comes from 
the linear combination $\ka(T_{\bb})  \circ_1  T $ in \eqref{pa-TwBT}.
Again, checking the signs, we see that these two terms cancel each other and no valence 1 neutral vertex is produced.

\end{proof}

\subsection{A simpler description of the differential on \texorpdfstring{$\Br$}{Br}}
Let $T$ be an admissible brace tree in $\cT^{\tw}(n)$, 
$v$ be a vertex in $V_{\lab}(T)$ carrying label $i$\,, 
and $\ka(T_{\bb})$ be the sum $T_{\bul1} + T_{1\bul}$ of inadmissible brace 
trees depicted  on figures \ref{fig:Tbul1} and \ref{fig:T1bul}\,.
We denote by $\pa_{v} (T)$  a vector in $ \Br(n)$
which is obtained from the sum
$$
T \circ_i \ka(T_{\bb})
$$
by omitting all non-admissible brace trees. 
Let $v$ be a neutral vertex of $T$\,. 
To define $\pa_v(T)$ we  change the color of  vertex $v$  
to white and assign to $v$ label $n+1$\,.
We denote this new brace tree by $T_v$
and obtain $\pa_v(T)$ from the sum 
$$
T_v \circ_{n+1} T_{\bb} 
$$
via omitting all non-admissible brace trees.

Finally, we declare that the differential $\pa$ on $\Br$
is the vector $\pa(T)\in \Br(n)$ is represented by 
the sum 
\begin{equation}
\label{pa-Br}
\pa(T) = \sum_{v \in V_{\lab}(T)\sqcup  V_{\nu}(T) } \pm \pa_v (T)\,,
\end{equation}
where the sign factors are determined in the same way as in formula 
\eqref{pa-TwBT}.

\subsection{The inclusion \texorpdfstring{$\Br \hookrightarrow \Tw\BT$}{of Br into Tw BT} is a quasi-isomorphism}  
\label{sec:Br-TwBT}
The goal of this section is to prove the following theorem 
\begin{thm}
\label{thm:Br-TwBT}
The operad map 
$$
\Br \hookrightarrow \Tw\BT
$$
is a quasi-isomorphism.
\end{thm}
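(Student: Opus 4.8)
Since $\Br(n)$ is a subcomplex of $\Tw\BT(n)$ for every $n$ (Proposition \ref{prop:Br}), the inclusion is a quasi-isomorphism if and only if the quotient complex
\[
Q(n):=\Tw\BT(n)/\Br(n)
\]
is acyclic for all $n$. A basis of $Q(n)$ is given by the \emph{inadmissible} brace trees in $\cT^{\tw}(n)$, i.e. those having at least one neutral vertex with fewer than two incoming edges; call such a vertex \emph{defective}. The differential $\bar\pa$ induced on $Q(n)$ is obtained from \eqref{pa-TwBT1} by discarding every admissible summand. The plan is to equip $Q(n)$ with a complete decreasing filtration whose associated graded differential is a purely local operation at a distinguished defective vertex, to prove acyclicity of the associated graded by an explicit contracting homotopy, and to conclude with the filtered-complex lemma (Lemma \ref{lem:filtered} of Appendix \ref{app:filtered-lem}). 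Since $\BT(m)$ sits in the single degree $1-m$, the number of neutral vertices of a brace tree in $\cT^{\tw}(n)$ is determined by its degree, so any filtration of $Q(n)$ by a sub-count of the vertices is automatically bounded in each fixed cohomological degree, which is what the filtered-complex lemma requires.

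For a brace tree $T$ define its \emph{core} $c(T)$ to be the admissible brace tree obtained by repeatedly deleting neutral leaves and contracting the outgoing edge of every valence-one neutral vertex; the neutral and labelled vertices of $T$ that survive into $c(T)$ are called \emph{structural}. Since $\bar\pa$ only ever adds neutral vertices and edges, the core can never shrink, so filtering $Q(n)$ by $|E(c(T))|$ (and then, within a fixed core type, by the number of defective vertices lying weakly above the largest structural vertex) gives a decreasing filtration by subcomplexes in which every move of $\bar\pa$ that enlarges the core lands in strictly higher filtration. On the associated graded the differential $\bar\pa_0$ acts only on the ``fuzz'' — the defective neutral vertices and the edges among them — and, after singling out the canonical defective vertex $v_0(T)$ (the smallest one in the total order on $V(T)$), it reduces to the two moves of \eqref{pa-TwBT1} that touch $v_0$: attaching a pendant neutral leaf at $v_0$, and splitting a neutral vertex off of $v_0$. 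To see that $(\Gr Q(n),\bar\pa_0)$ is acyclic I would introduce the homotopy $h$ which, on the summand of fuzz-trees in which $v_0$ carries its canonically-last child as a neutral leaf, deletes that leaf, and is zero on the complementary summand; then $\bar\pa_0 h+h\bar\pa_0=\id$ by the very cancellations that underlie Proposition \ref{prop:Br} — the pairing of the two edge-splitting terms and the pairing of the $\ka(T_{\bb})\circ_1(-)$ contribution against a splitting term — now read in the opposite direction. Since the filtration is exhausting and bounded in each degree, Lemma \ref{lem:filtered} upgrades this to acyclicity of $Q(n)$, completing the proof.

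The hard part will be the simultaneous calibration of the filtration: it must be coarse enough that the leading differential $\bar\pa_0$ is exactly the local ``grow/split at $v_0$'' operation, yet fine enough that this $\bar\pa_0$ is genuinely acyclic via the homotopy $h$ above. Interlocked with this is the sign bookkeeping — the insertion signs $(-1)^{f(\al)}$ of \eqref{circ-i-BT}, which record reorderings of edge sets, are what make both the verification that the filtration is by subcomplexes and the cancellations in $\bar\pa_0 h+h\bar\pa_0$ nontrivial. This is a finite if tedious computation once the filtration is fixed; everything else is formal homological algebra.
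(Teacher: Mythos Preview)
Your overall strategy is reasonable, but the filtration you describe does not achieve the localization you need. After filtering by $|E(c(T))|$, the leading differential still acts at \emph{every} vertex of $T$: it splits every neutral vertex (structural or defective), attaches a neutral leaf at every non-root vertex, and inserts at every labelled vertex. Your secondary filtration is indexed by a count referencing the \emph{largest structural} vertex, while $v_0$ is the \emph{smallest defective} one; I do not see any mechanism by which this forces the associated graded differential to act only at $v_0$. More seriously, $v_0(T)$ is not stable under $\bar\pa$: splitting a structural neutral vertex that precedes $v_0$ in the total order produces (on the core-preserving part of $\pa$) a new defective vertex strictly smaller than the old $v_0$, so a homotopy of the form ``delete the last neutral-leaf child of $v_0$'' cannot satisfy $\bar\pa_0 h + h\bar\pa_0=\id$. (Your proposal resembles the argument of Appendix~\ref{app:brhomfixed}, but that works because the distinguished vertex there is a \emph{structural} black vertex, held fixed by a prior filtration on the number of such vertices; your $v_0$ lives in the variable part.) Finally, the cancellations in Proposition~\ref{prop:Br} concern $\pa T$ for \emph{admissible} $T$ and show that its defective summands cancel pairwise; read on the inadmissible locus they do not furnish a contracting homotopy.

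The paper's proof avoids choosing a distinguished vertex. One filters $\Tw\BT$ by the single scalar $\nu_{\le 2}(T)-\deg(T)$, so that on the associated graded only the piece of $\pa$ raising $\nu_{\le 2}$ by one survives. The associated graded then splits as a direct sum over \emph{cores} $\Gamma'$ (contract all bivalent neutral vertices, but \emph{retain} the univalent ones), and for fixed $\Gamma'$ it factors as a tensor product $\bigotimes_{e\in E(\Gamma')} V_e$, where $V_e$ records the length of the string of bivalent neutral vertices sitting on $e$. Each $V_e$ is a complex $\bbK\to\bbK\to\bbK\to\cdots$ with alternating differentials, having cohomology $0$ when $e$ meets a univalent neutral vertex and $\bbK$ otherwise; hence $H^{\bullet}(\Gr\Tw\BT)\cong\Br$, and a locally bounded, cocomplete filtration argument finishes. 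The structural insight your approach is missing is this multiplicative decomposition over edges, which replaces any need for a single-vertex homotopy.
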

\begin{proof}

Let $T$ be a brace tree in $\cT^{\tw}(n)$ and let $\nu_{\le 2}(T)$ be the 
total number of neutral vertices of valence $\le 2$\,. For example, a brace 
tree $T$ is admissible if and only if
$$
\nu_{\le 2}(T) = 0\,.
$$

We observe that for every brace tree  $T \in \cT^{\tw}(n)$ the 
linear combination 
$$
\pa (T)  
$$
involves brace trees $T_i$ with $\nu_{\le 2}(T_i)  = \nu_{\le 2}(T)$ or 
$\nu_{\le 2}(T_i) = \nu_{\le 2}(T) + 1$\,. 

This observation allows us to introduce on $\Tw \BT$ the 
following increasing filtration 
\begin{equation}
\label{filtr-TwBT}
\dots \subset \cF^{m-1} \Tw\BT
\subset \cF^m \Tw\BT \subset \cF^{m+1} \Tw\BT   
\subset \dots\,,
\end{equation}
where $\cF^m \Tw\BT(n)$ consists of linear combinations of brace trees 
$T \in \cT^{\tw}(n)$ satisfying the condition:
\begin{equation}
\label{filtr-TwBT-dfn}
\nu_{\le 2}(T) - \deg(T) \le m\,.
\end{equation}

The restriction of \eqref{filtr-TwBT} on $\Br$ 
gives us the ``silly'' filtration
\begin{equation}
\label{silly-filtr-Br}
\cF^m \Br(n)^k = 
\begin{cases}
 \Br(n)^k  \qquad {\rm if} ~~  k \ge  -m\,, \\
   \bfzero \qquad {\rm otherwise}
\end{cases}
\end{equation}
with the associated graded complex carrying the zero 
differential. 

To describe the associated graded complex for $\Tw\BT$, we 
introduce the notion of a {\it core} for brace trees in $\cT^{\tw}(n)$\,.
Namely, for a brace tree $\Gamma  \in \cT^{\tw}(n)$, its core 
$\Gamma'$ obtained by deleting all neutral bivalent vertices and 
connecting their incident edges. Here is an example:
\begin{center}
\begin{tikzpicture}[scale=.7,
ext/.style={draw, circle, minimum size=5, inner sep=1, }, 
int/.style={draw, circle, fill, inner sep=1,  minimum size=3},
arr/.style={-triangle 60},]

\begin{scope}[]
\node [ext] (v3) at (-3.5,-2) {};
\node [ext] (v13) at (-5,-0.5) {};
\node [ext] (v11) at (-6,0.5) {};
\node [int] (v2) at (-3.5,-2.5) {};
\node [int] (v1) at (-3.5,-3) {};
\node [int] (v15) at (-4,-1.5) {};
\node [int] (v14) at (-4.5,-1) {};
\node [int] (v4) at (-3.5,-1.5) {};
\node [int] (v5) at (-3.5,-1) {};
\node [int] (v6) at (-3.5,-0.5) {};
\node [int] (v9) at (-4,0) {};
\node [int] (v10) at (-4.5,0.5) {};
\node [int] (v7) at (-3,0) {};
\node [int] (v8) at (-2.5,0.5) {};
\node [int] (v12) at (-5.5,0) {};
\draw (v1) edge (v2);
\draw (v2) edge (v3);
\draw (v3) edge (v4);
\draw (v4) edge (v5);
\draw (v5) edge (v6);
\draw (v6) edge (v7);
\draw (v7) edge (v8);
\draw (v6) edge (v9);
\draw (v9) edge (v10);
\draw (v11) edge (v12);
\draw (v12) edge (v13);
\draw (v13) edge (v14);
\draw (v14) edge (v15);
\draw (v15) edge (v3);
\draw(v1) -- (-3.5,-3.5);
\end{scope}
\begin{scope}[shift={(6.5,0)}]
\node [ext] (v3) at (-3.5,-2) {};
\node [ext] (v13) at (-5,-0.5) {};
\node [ext] (v11) at (-6,0.5) {};
\node [int] (v6) at (-3.5,-0.5) {};
\node [int] (v10) at (-4.5,0.5) {};
\node [int] (v8) at (-2.5,0.5) {};
\draw(v3) -- (-3.5,-3.5);

\end{scope}
\node at (-1,-1.5) {\huge$\Rightarrow$};
\draw (v3) edge (v13);
\draw (v13) edge (v11);
\draw (v3) edge (v6);
\draw (v6) edge (v10);
\draw (v6) edge (v8);
\draw(v3);
\node at (-4,1.25) {a brace tree};
\node at (2.5,1.25) {its \emph{core}};
\end{tikzpicture}
\end{center}

It is easy to see that the associated graded complex
$$
\Gr \Tw \BT(n)
$$
splits into the direct sum over all possible cores with 
$n$ labeled vertices
\begin{equation}
\label{direct-sum}
\bigoplus_{\text{cores }\Gamma'} \Tw\BT_{\Gamma'}\,,
\end{equation}
where $ \Tw\BT_{\Gamma'}$ is spanned by brace trees with 
the given core $\Gamma'$\,. 

Furthermore, for each core $\Gamma'$ the subcomplex $\Tw\BT_{\Gamma'}$ is 
isomorphic to the tensor product  
\[
\Tw\BT_{\Gamma'} \cong \bigotimes_{\textrm{edges }e \textrm{ of } \Gamma'} V_e\,,
\]
where
\[
V_e = 
\begin{cases}
\bbK \stackrel{\mathit{id}}{\to} \bbK \stackrel{0}{\to} \bbK \stackrel{\mathit{id}}{\to} \bbK \cdots & \quad \text{if $e$ connects to a univalent neutral vertex,} \\
\bbK \stackrel{0}{\to} \bbK \stackrel{\mathit{id}}{\to} \bbK \stackrel{0}{\to} \bbK \cdots & \quad \text{otherwise.} \\
\end{cases}
\]
The various copies of $\bbK$ correspond to strings of bivalent neutral vertices of various lengths:
\[
\begin{tikzpicture}[
xst/.style={draw, cross out, minimum size=5, }, 
int/.style={draw, circle, fill, inner sep=1, minimum size=5},
ext/.style={draw, circle, fill, gray, inner sep=1, minimum size=5},
arr/.style={-triangle 60},]

\node[ext] (v1) at (-3,-3) {};
\draw (v1) -- +(45:.5) (v1) -- +(135:.5) (v1) -- +(-90:.5) ;
\node [int] (v2) at (-3,-2) {};
\node [int] (v4) at (-2,-2) {};
\node [int] (v5) at (-2,-1) {};
\node [int] (v7) at (-1,-2) {};
\node [int] (v8) at (-1,-1) {};
\node [int] (v9) at (-1,0) {};
\node [int] (v13) at (2.5,-2) {};
\node [int] (v16) at (3.5,-1) {};
\node [int] (v17) at (3.5,-2) {};
\node [ext] (v3) at (-2,-3) {};
\node [ext] (v6) at (-1,-3) {};
\node [ext] (v12) at (2.5,-1) {};
\node [ext] (v15) at (3.5,0) {};
\node [ext] (v10) at (1.5,-2) {};
\node [ext] (v11) at (1.5,-3) {};
\node [ext] (v14) at (2.5,-3) {};
\node [ext] (v18) at (3.5,-3) {};
\node at (0,-2) {$\cdots\quad$  or};
\node at (4.25,-2) {$\cdots$ };
\draw (v1) edge (v2);
\draw (v3) edge (v4);
\draw (v4) edge (v5);
\draw (v6) edge (v7);
\draw (v7) edge (v8);
\draw (v8) edge (v9);
\draw (v10) edge (v11);
\draw (v12) edge (v13);
\draw (v13) edge (v14);
\draw (v15) edge (v16);
\draw (v16) edge (v17);
\draw (v17) edge (v18);

\draw (v1) -- +(45:.5) (v1) -- +(135:.5) (v1) -- +(-90:.5) ;
\draw (v3) -- +(45:.5) (v3) -- +(135:.5) (v3) -- +(-90:.5) ;
\draw (v6) -- +(45:.5) (v6) -- +(135:.5) (v6) -- +(-90:.5) ;
\draw (v11) -- +(45:.5) (v11) -- +(135:.5) (v11) -- +(-90:.5) ;
\draw (v14) -- +(45:.5) (v14) -- +(135:.5) (v14) -- +(-90:.5) ;
\draw (v18) -- +(45:.5) (v18) -- +(135:.5) (v18) -- +(-90:.5) ;
\draw (v10) -- +(45:.5) (v10) -- +(135:.5) (v10) -- +(90:.5) ;
\draw (v12) -- +(45:.5) (v12) -- +(135:.5) (v12) -- +(90:.5) ;
\draw (v15) -- +(45:.5) (v15) -- +(135:.5) (v15) -- +(90:.5) ;
\end{tikzpicture}
\]
Here, a gray vertex is either a labeled vertex or a neutral vertex with $\ge 2$ incoming 
edges. Any lower gray vertex may also be the root vertex.
The rest of the brace tree is omitted, and just indicated by some stubs at the gray vertices.
Clearly the cohomology of $V_e$ is trivial if $e$ connects to a univalent neutral vertex and $\bbK$ otherwise. 
The representative in the latter case is a single edge (i.e., a string of zero neutral vertices). 

Therefore the cochain complex $\Tw\BT_{\Gamma'}$ is acyclic if $\Gamma'$ contains at least one 
univalent neutral vertex and 
\begin{equation}
\label{H-for-TwBT-Gpr}
H^{\bul} \big( \Tw\BT_{\Gamma'} \big) \cong \bbK \L \{ \Gamma' \} \R
\end{equation}
if the core $\Gamma'$  does not have univalent neutral vertices.
Hence the embedding 
$$
\Br(n) \hookrightarrow \Tw \BT(n)
$$ 
induces a quasi-isomorphism on the level of associated graded complexes. 

Combining this observation with the fact that the filtrations \eqref{filtr-TwBT} 
and \eqref{silly-filtr-Br} are cocomplete and locally bounded, we deduce 
Theorem \ref{thm:Br-TwBT} from \cite[Lemma A.3]{notes}. 
\end{proof}

\subsection{The dg operad $\Br$ is a homotopy fixed point of $\Tw$}
\label{sec:Br-homotopyfixed}

Various solutions of the Deligne conjecture on Hochschild cochain complex 
\cite{Bat-Markl}, \cite{BF}, \cite{Swiss}, \cite{M-Smith}, \cite{M-Smith2}, \cite{K-Soi}, \cite{Dima-dg}, 
\cite{Vor} imply that the dg operad $\Br$ is quasi-isomorphic to the dg operad 
$$
C_{-\bul}(E_2, \bbK)
$$
of singular chains for the little disc operad $E_2$\,.

Combining this statement with the formality \cite{K-mot}, \cite{Dima-disc}
for the dg operad $C_{-\bul}(E_2, \bbK)$, we conclude that the dg operad 
$\Br$ (as well as $\Tw\BT$) is quasi-isomorphic to the operad 
$\Ger$. 

Thus Theorem \ref{thm:Twpreservesqiso} and Corollary \ref{cor:LaLie-Ger}
imply that 
\begin{corollary}
\label{cor:Br-homot-fixed}
The dg operads $\Br$ and $\Tw\BT$ are homotopy fixed points for $\Tw$\,. ~~~$\Box$
\end{corollary}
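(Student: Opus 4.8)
The plan is to build the claimed quasi-isomorphism $\Br\simeq\Ger$ out of already-available ingredients and then invoke the two stability results from Section \ref{sec:twisting-homotopy}. First I would assemble the chain of quasi-isomorphisms of dg operads
\[
\Br \;\simeq\; C_{-\bul}(E_2,\bbK) \;\simeq\; \Ger,
\]
where the first quasi-isomorphism is one of the several known solutions of the (chain-level) Deligne conjecture cited in the text (\cite{K-Soi}, \cite{Bat-Markl}, \cite{Dima-dg}, \cite{Vor}, etc.), identifying $\Br$ with singular chains of the little $2$-discs operad, and the second is the formality of $C_{-\bul}(E_2,\bbK)$ due to Kontsevich and Tamarkin (\cite{K-mot}, \cite{Dima-disc}), using $H_{-\bul}(E_2,\bbK)\cong\Ger$. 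One must of course check that these quasi-isomorphisms are compatible with the relevant maps from $\La\Lie_\infty$ (equivalently, from $\La\Lie$) so that they live in the under-category $\La\Lie_\infty\downarrow\Operads$; this is standard, since the bracket on all three operads comes from the $E_2$-structure, but it is the one point where a little care is needed. By the same token $\Tw\BT$ is quasi-isomorphic to $\Br$ by Theorem \ref{thm:Br-TwBT}, hence also quasi-isomorphic to $\Ger$.

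Next I would recall from Corollary \ref{cor:LaLie-Ger} that $\Ger$ is a homotopy fixed point for $\Tw$, i.e.\ the counit $\eta_\Ger:\Tw\Ger\to\Ger$ is a quasi-isomorphism. Then I would apply the remark following Theorem \ref{thm:Twpreservesqiso}: if a dg operad $\cO$ is a homotopy fixed point for $\Tw$ and $\cO'$ is quasi-isomorphic to $\cO$ (as an object of $\La\Lie_\infty\downarrow\Operads$), then $\cO'$ is a homotopy fixed point for $\Tw$ as well. Concretely, this uses that $\Tw$ preserves quasi-isomorphisms (Theorem \ref{thm:Twpreservesqiso}) together with naturality of $\eta$: in the commutative square relating $\eta_\Ger$ and $\eta_{\Br}$ along the quasi-isomorphism $\Br\to\Ger$ (or a zig-zag thereof), three of the four arrows are quasi-isomorphisms, hence so is the fourth, namely $\eta_{\Br}$. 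The same argument with $\Tw\BT$ in place of $\Br$ gives that $\Tw\BT$ is a homotopy fixed point for $\Tw$, which proves Corollary \ref{cor:Br-homot-fixed}.

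The only genuine subtlety is the zig-zag issue: the quasi-isomorphisms above need not be directed all the same way, so ``$\Br$ is quasi-isomorphic to $\Ger$'' really means connected by a chain $\Br\leftarrow\cdots\rightarrow\cdots\leftarrow\Ger$ in $\La\Lie_\infty\downarrow\Operads$. One then propagates the homotopy-fixed-point property along each edge of the zig-zag, using Theorem \ref{thm:Twpreservesqiso} and naturality of $\eta$ at each step, so that the hypothesis at $\Ger$ (Corollary \ref{cor:LaLie-Ger}) transports to $\Br$ and to $\Tw\BT$. I expect this bookkeeping — checking that every operad appearing in the zig-zag is naturally an object of the under-category and that the maps respect the $\La\Lie_\infty$-structure — to be the main (though essentially formal) obstacle; an alternative, model-independent proof avoiding the appeal to formality is sketched in Appendix \ref{app:brhomfixed}.
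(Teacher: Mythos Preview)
Your proposal is correct and follows essentially the same route as the paper: establish $\Br\simeq C_{-\bul}(E_2,\bbK)\simeq\Ger$ via Deligne's conjecture and formality, then invoke Corollary \ref{cor:LaLie-Ger} together with Theorem \ref{thm:Twpreservesqiso} to transport the homotopy-fixed-point property along the zig-zag to $\Br$ and $\Tw\BT$. If anything, you are more explicit than the paper about the under-category compatibility and the zig-zag bookkeeping, which the paper leaves implicit.
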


\subsection{$\Br$ is a $\Tw$-coalgebra}

Consider the composition
\[
\Br\to \Tw\BT \to \Tw\Tw\BT.
\]
We claim that the image actually lands in $\Tw\Br \subset \Tw\Tw\BT$. Indeed, neither the first nor the second map creates new neutral vertices, and hence no neutral vertices of valence 1 or 2 in particular. Hence we have a map 
\[
c : \Br\to \Tw\Br.
\]
It follows almost immediately from the $\Tw$ colagebra axioms for $\Tw\BT$ that this map endows $\Br$ with the structure of a $\Tw$-coalgebra. Summarizing, we arrive at the following result.
\begin{lemma}
$\Br$ is a sub $\Tw$-coalgebra of $\Tw\BT$.  $~~~\Box$
\end{lemma}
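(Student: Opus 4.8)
The plan is to realize $\Br$, with the structure map $c$ described above, as a $\Tw$-subcoalgebra of the cofree $\Tw$-coalgebra $\Tw\BT$, whose coalgebra structure is the comultiplication $\mD_{\BT}\colon \Tw\BT \to \Tw\Tw\BT$ (Theorem~\ref{thm:Twcomonad}; for any object $\cO$ of the under-category, $\Tw\cO$ is a $\Tw$-coalgebra via $\mD_{\cO}$, see Subsection~\ref{sec:Tw-coalg}). This reduces the statement to three checks: (i) $\mD_{\BT}$ carries $\Br\subset\Tw\BT$ into $\Tw\Br\subset\Tw\Tw\BT$, so that $c:=\mD_{\BT}|_{\Br}$ is a well-defined map of collections; (ii) $c$ is then automatically a morphism of dg operads over the appropriate arrow from $\La\Lie_\infty$; and (iii) the $\Tw$-coalgebra axioms for $(\Br,c)$ hold, being the restrictions of the comonad identities for $\Tw\BT$.

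The only step carrying combinatorial content is (i), and even it is short. In the brace-tree language of Section~\ref{sec:TwBT}, an element of $\Tw\BT(n)$ is a linear combination of brace trees in $\cT^{\tw}(n)$, while $\Tw\Tw\BT(n)$ is spanned by planar trees with $n$ labeled vertices and two species of neutral vertex -- call them ``outer'' and ``inner'' -- and in these terms $\Tw\Br(n)$ is the span of those two-species brace trees whose \emph{inner} neutral vertices each have at least $2$ incoming edges (the outer ones being unrestricted, as they come from labeled vertices of an admissible brace tree). Unwinding the defining formula $\mD_\cO(f)(1_r\otimes 1_s)=f(1_{r+s})$ together with the inclusion of invariants $(\BT(r+s+n))^{S_{r+s}}\hookrightarrow(\BT(r+s+n))^{S_r\times S_s}$ used in the construction of $\mD$, one sees that $\mD_{\BT}$ sends a brace tree $T\in\cT^{\tw}(n)$ to the sum, over all ways of declaring each neutral vertex of $T$ to be outer or inner, of the resulting two-species brace tree. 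In particular $\mD_{\BT}$ leaves the underlying planar tree, its edges, and the number of incoming edges at every neutral vertex untouched. Hence if $T$ is admissible in the sense of Condition~\ref{cond:braces-trees}, every brace tree occurring in $\mD_{\BT}(T)$ still has all its neutral vertices -- of both species, hence in particular the inner ones -- with at least $2$ incoming edges, so it lies in $\Tw\Br(n)$. This gives (i).

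For (ii): $\Br\subset\Tw\BT$ and $\Tw\Br\subset\Tw\Tw\BT$ are sub-dg-operads (Proposition~\ref{prop:Br} applied to $\BT$ and then to $\Tw\BT$) and $\mD_{\BT}$ is a map of dg operads (Lemma~\ref{lem:mDcO-oper-map}), so its (co)restriction $c$ is one too; the relevant arrow $\La\Lie_\infty\to\Br$ is the composite $\La\Lie_\infty\xrightarrow{U_{\La\Lie}}\La\Lie\xrightarrow{\mi}\BT\hookrightarrow\Tw\BT$, which lands in $\Br$ because $\mi(\{a_1,a_2\})=T_{\cc}+\si_{12}T_{\cc}$ involves no neutral vertices, and it agrees with $\beta_{\BT}$. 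For (iii), the three axioms of Subsection~\ref{sec:Tw-coalg} for $(\Br,c)$ -- compatibility with the map from $\La\Lie_\infty$, the counit identity $\eta_{\Br}\circ c=\id_{\Br}$, and the coassociativity $\mD_{\Br}\circ c=(\Tw c)\circ c$ -- follow by restricting to $\Br$ the corresponding facts for $\Tw\BT$: the first is the restriction of diagram~\eqref{Linf-Tw-TwTw} with $\cO=\BT$, and the last two are precisely the co-unit and co-associativity relations of the comonad $\Tw$ (Theorem~\ref{thm:Twcomonad}) for $\cO=\BT$, once one notes that $\eta_{\Tw\BT}$ and $\mD_{\Tw\BT}$ restrict to $\eta_{\Br}$ and $\mD_{\Br}$ on $\Tw\Br$, and that $\Tw$ takes the inclusion $\Br\hookrightarrow\Tw\BT$ to the inclusion $\Tw\Br\hookrightarrow\Tw\Tw\BT$. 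The same remark shows the inclusion $\Br\hookrightarrow\Tw\BT$ is a morphism of $\Tw$-coalgebras, which is exactly the assertion that $\Br$ is a sub-$\Tw$-coalgebra. The only genuine obstacle is the bookkeeping of keeping track of the three nested sub-dg-operads; the one new input -- that redistributing neutral vertices among the two species changes nothing about their numbers of incoming edges -- is immediate from the definitions.
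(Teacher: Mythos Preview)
Your proof is correct and follows essentially the same approach as the paper. The paper's argument is the short paragraph preceding the lemma: consider the composition $\Br\hookrightarrow\Tw\BT\xrightarrow{\mD_{\BT}}\Tw\Tw\BT$, observe that neither map creates new neutral vertices (hence none of valence $\le 2$), so the image lands in $\Tw\Br$, and then the coalgebra axioms follow from the comonad identities for $\Tw\BT$. Your argument unpacks exactly these steps, with the added benefit of making the combinatorial description of $\mD_{\BT}$ on brace trees explicit (sum over ways of declaring each neutral vertex outer or inner) and spelling out why the three axioms of Subsection~\ref{sec:Tw-coalg} restrict correctly.
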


\section{Every solution of Deligne's conjecture is homotopic to a one that is compatible with twisting}
\label{sec:theproof}

We are now ready to prove Theorem \ref{thm:main}, which roughly states that  
every solution of Deligne's conjecture is homotopic to a one that is compatible 
with twisting.

We will deduce this result from the following more general statement:
\begin{thm}
\label{thm:general}
Let $\La \Lie_\infty\to \cO$, $\La \Lie_\infty\to \cO'$ be two objects of the 
under-category $\La \Lie_\infty\downarrow \Operads$ and let $\cO \stackrel{F}{\to} \Tw\cO'$ be 
a morphism of dg operads compatible with the maps from $\La \Lie_\infty$.
If $(\cO, c)$ is a $\Tw$-coalgebra and the dg operads
$\cO$, $\Tw\cO'$ are homotopy fixed points for $\Tw$, then the morphism 
\begin{equation}
\label{F-pr}
F' := (\Tw (\eta_{\cO'}\circ F)) \circ c \colon \cO\to \Tw\cO'
\end{equation}
is homotopy equivalent to $F$. Furthermore, $F'$ is a morphism of $\Tw$-coalgebras.
\end{thm}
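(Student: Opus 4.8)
The plan is to establish the two assertions in turn: that $F'$ is a map of $\Tw$-coalgebras is essentially formal, whereas the homotopy $F'\simeq F$ is the substantial part.

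\emph{$F'$ is a morphism of $\Tw$-coalgebras.} Recall that $\Tw\cO'$ is the cofree $\Tw$-coalgebra on $\cO'$, with structure map $\mD_{\cO'}$, so that $\Tw$-coalgebra maps $(\cO,c)\to\Tw\cO'$ are in natural bijection with arrows $\cO\to\cO'$ of the under-category $\La\Lie_\infty\downarrow\Operads$, via $G\mapsto\eta_{\cO'}\circ G$ with inverse $g\mapsto\Tw(g)\circ c$ (this is the Eilenberg--Moore adjunction between $\Tw$-coalgebras and $\La\Lie_\infty\downarrow\Operads$). The arrow $\eta_{\cO'}\circ F\colon\cO\to\cO'$ does lie in the under-category: compatibility with the maps from $\La\Lie_\infty$ follows from that of $F$ together with the identity $\eta_{\La\Lie_\infty}\circ\mT=\id$, which is immediate from \eqref{dfn-LaLieinfty-to-Tw} and the definition of $\eta$. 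Since $F'=\Tw(\eta_{\cO'}\circ F)\circ c$ is by construction the image of $\eta_{\cO'}\circ F$ under this bijection, $F'$ is a $\Tw$-coalgebra map. One may equivalently verify the three axioms \eqref{diag-LaLie-infty}, \eqref{comp-c-eta}, \eqref{diag-c-Twc-mD} by hand, using naturality of $\eta$ and $\mD$, the comonad relations of Theorem~\ref{thm:Twcomonad}, and the coalgebra axioms for $(\cO,c)$.

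\emph{Factoring $F$ and $F'$ through one auxiliary map.} Set $G:=\Tw(F)\circ c\colon\cO\to\Tw\Tw\cO'$. Naturality of $\eta\colon\Tw\Rightarrow\id$ applied to $F$ gives $\eta_{\Tw\cO'}\circ\Tw(F)=F\circ\eta_{\cO}$, hence, using the coalgebra counit $\eta_{\cO}\circ c=\id$, one has $\eta_{\Tw\cO'}\circ G=F$. On the other hand, functoriality of $\Tw$ (Lemma~\ref{lem:Twfdef}) gives $\Tw(\eta_{\cO'})\circ\Tw(F)=\Tw(\eta_{\cO'}\circ F)$, hence $\Tw(\eta_{\cO'})\circ G=\Tw(\eta_{\cO'}\circ F)\circ c=F'$. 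Thus $F$ and $F'$ are obtained from the \emph{same} map $G$ by post-composing with the two canonical maps $\eta_{\Tw\cO'},\ \Tw(\eta_{\cO'})\colon\Tw\Tw\cO'\to\Tw\cO'$, and it suffices to show these two maps become homotopic after precomposition with $G$.

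\emph{The two projections agree up to homotopy.} Both $\eta_{\Tw\cO'}$ and $\Tw(\eta_{\cO'})$ are retractions of the comultiplication $\mD_{\cO'}$: these are exactly the two counit triangle identities established in the proof of Theorem~\ref{thm:Twcomonad}. By hypothesis $\Tw\cO'$ is a homotopy fixed point, i.e. $\eta_{\Tw\cO'}$ is a quasi-isomorphism; since $\mD_{\cO'}$ is a section of it, $\mD_{\cO'}$ is a quasi-isomorphism (two-out-of-three), and therefore so is the other retraction $\Tw(\eta_{\cO'})$. In the homotopy category of dg operads the class $[\mD_{\cO'}]$ is then invertible and both $[\eta_{\Tw\cO'}]$ and $[\Tw(\eta_{\cO'})]$ are its two-sided inverse, so they coincide; consequently $[F]=[\eta_{\Tw\cO'}][G]=[\Tw(\eta_{\cO'})][G]=[F']$. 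To promote this equality in the homotopy category to an honest homotopy $F'\simeq F$, one uses that every dg operad is fibrant together with a cofibrant model of the source $\cO$ — in the situation of Theorem~\ref{thm:main} the source $\cO=\Ger_\infty=\Cobar(\Ger^\vee)$ is already cofibrant, while in general the homotopy-fixed-point hypothesis on $\cO$ makes $c$ a quasi-isomorphism, which is what one needs to transport the homotopy; alternatively one writes down an explicit homotopy by a contracting-homotopy argument in the spirit of the proof of Theorem~\ref{thm:distrib-homot-fixed}.

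\emph{The main obstacle.} Every step above except the last is a formal manipulation with the comonad $\Tw$, and the single genuinely non-trivial input is that $\Tw\cO'$ (and not merely $\cO'$) is a homotopy fixed point, forcing $\mD_{\cO'}$ and hence both projections $\eta_{\Tw\cO'}$ and $\Tw(\eta_{\cO'})$ to be quasi-isomorphisms. The step I expect to need the most care is the passage from ``$[F]=[F']$ in the homotopy category'' to an actual homotopy: this is where the cofibrancy of the source (valid for $\Ger_\infty$) and the homotopy-fixed-point hypothesis on $\cO$ enter, and where a careful model-categorical or explicit-homotopy argument is required.
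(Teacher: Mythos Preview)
Your argument is essentially the paper's own proof, repackaged linearly rather than as a diagram chase: both define $G=\Tw(F)\circ c$, show $F=\eta_{\Tw\cO'}\circ G$ and $F'=\Tw(\eta_{\cO'})\circ G$, and then argue that $\eta_{\Tw\cO'}$ and $\Tw(\eta_{\cO'})$ coincide in the homotopy category because each is a one-sided inverse of $\mD_{\cO'}$ and the homotopy-fixed-point hypothesis on $\Tw\cO'$ makes $\mD_{\cO'}$ a quasi-isomorphism. Your ``main obstacle'' paragraph is unnecessary: the paper's Remark~\ref{rem:homotopy-cat} makes clear that ``homotopy equivalent'' is to be read as equality in the homotopy category, so no promotion to an explicit homotopy is required (and, as you implicitly notice, neither your argument nor the paper's actually uses the hypothesis that $\cO$ itself is a homotopy fixed point).
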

This theorem can be restated as follows: if $\cO$ and $\Tw \cO'$ are 
homotopy fixed points for $\Tw$ (with $\cO$ being a $\Tw$-coalgebra) 
then each homotopy class of maps of dg operads $\cO \to \Tw \cO'$ has 
at least one representative which respects the $\Tw$-coalgebra structures
on $\cO$ and $\Tw \cO'$. Furthermore, such a representative may be written down explicitly.
\begin{remark}
\label{rem:homotopy-cat}
The homotopy category of dg operads can be constructed using 
the standard technics of closed model categories. For more details 
we refer the reader to  \cite{Hinich-hom},  \cite{Hinich-err}, 
and \cite[Part II, Appendix A.4]{MV-nado}. In the proof below we use merely
the existence of the homotopy category of dg operads. 
\end{remark}
\begin{proof} 
Abbreviate $f:=\eta_{\cO'}\circ F$ and consider the following commutative diagram:
\begin{equation}
\label{f-dfn}
\begin{tikzpicture}
\matrix (m) [matrix of math nodes, row sep=2em, column sep =3em]
{ \cO & \Tw\cO'   \\
      & \cO' \\ };
\path[->,font=\scriptsize]
(m-1-1) edge node[below] {$f$}(m-2-2) 
        edge node[auto] {$F$} (m-1-2)
(m-1-2) edge node[auto] {$\eta_{\cO'}$} (m-2-2);
\end{tikzpicture}
\end{equation}
Applying the functor $\Tw$ to \eqref{f-dfn}
we obtain the commutative diagram:
\begin{equation}
\label{Tw-f-dfn}
\begin{tikzpicture}
\matrix (m) [matrix of math nodes, row sep=2em, column sep =3em]
{ \Tw\cO & \Tw\Tw\cO'   \\
         & \Tw\cO' \\ };
\path[->,font=\scriptsize]
(m-1-1) edge node[below] {$\Tw f$}(m-2-2) 
        edge node[auto] {$\Tw F$} (m-1-2)
(m-1-2) edge node[auto] {$\Tw \eta_{\cO'}$} (m-2-2);
\end{tikzpicture}
\end{equation}
Next consider the following diagram:
\begin{equation}
\label{master-diag}
\begin{tikzpicture}[descr/.style={fill=white,inner sep=2.5pt},
ccell/.style={}]
\matrix (m) [matrix of math nodes, row sep=5em, column sep =5em]
{ \Tw\cO & \Tw\Tw\cO'   \\
  \cO    & \Tw\cO' \\ };
\path[->,font=\scriptsize]
(m-1-1) edge node[descr] {$\Tw f$}(m-2-2) 
        edge node[auto] {$\Tw F$} (m-1-2)
        edge[bend right] node[left] {$\eta_{\cO}$} (m-2-1)
(m-1-2) edge node[below, sloped] {$\Tw \eta_{\cO'}$} (m-2-2)
        edge[bend left] node[right] {$\eta_{\Tw\cO'}$} (m-2-2)
(m-2-1) edge node[right] {$c$} (m-1-1)
        edge node[auto] {$F$} (m-2-2);
\draw (m-2-1) +(.8,.8) node[ccell] {?};
\end{tikzpicture}
\end{equation}
Our goal is to show that the triangle marked with ``?'' is homotopy commutative.
We will do this by showing that all the other cells (including the big cell around the diagram) homotopy commute. We saw above that the upper right triangle commutes. Furthermore the big outer square commutes because $\eta$ is a natural transformation. The small cell on the left commutes since $\eta_\cO\circ c=\mathit{id}_\cO$ by the definition of a $\Tw$ coalgebra. Finally consider the small cell on the right. Note that, by the defining property of the counit, the following diagram commutes:
\[
\begin{tikzpicture}
\matrix (m) [matrix of math nodes, row sep=2em, column sep =3em]
{ \Tw\Tw\cO' & \Tw\cO'   \\
  \Tw\cO' & \Tw\Tw\cO' \\ };
\path[->,font=\scriptsize]
(m-1-1) edge node[auto] {$\Tw \eta_{\cO'}$}(m-1-2) 
(m-2-2) edge node[right] {$\eta_{\Tw\cO'}$} (m-1-2)
(m-2-1) edge node[auto] {$\mD_{\cO'}$} (m-1-1)
        edge node[auto] {$\mD_{\cO'}$} (m-2-2)
;
\end{tikzpicture}
\]
All arrows are quasi-isomorphisms since $\Tw\cO'$ is a homotopy fixed point of $\Tw$. Hence the morphisms $\Tw \eta_{\cO'}$ and $\eta_{\Tw\cO'}$ are homotopic.
This shows that the triangle ? in the previous diagram is homotopy commutative. 

It remains to prove that $F'$ \eqref{F-pr} is a morphism of $\Tw$-coalgebras. 

For this purpose we consider the following diagram
\begin{equation}
\label{diag-forTw-morph}
\begin{tikzpicture}[descr/.style={fill=white,inner sep=2.5pt},
ccell/.style={}, baseline=(current  bounding  box.center)]
\matrix (m) [matrix of math nodes, row sep=4em, column sep =4em]
{ \Tw\cO & \Tw\Tw\cO & \Tw\Tw \cO'   \\
  \cO    & \Tw\cO &  \Tw \cO' \\ };
\path[->,font=\scriptsize]
(m-2-1) edge node[auto] {$c$}(m-1-1)  edge node[auto] {$c$}(m-2-2)
(m-1-1)  edge node[auto] {$\Tw(c)$}(m-1-2)
(m-2-2)  edge node[auto] {$\mD_{\cO}$}(m-1-2)
 edge node[auto] {$\Tw(f)$}(m-2-3)
(m-1-2)  edge node[auto] {$\Tw\Tw(f)$} (m-1-3)
(m-2-3) edge node[auto] {$\mD_{\cO'}$}(m-1-3); 
\end{tikzpicture}
\end{equation}

The left square of diagram \eqref{diag-forTw-morph} 
commutes since $c: \cO \to \Tw\cO$ is the comultiplication map 
for the $\Tw$-coalgebra $\cO$. The right square of  diagram \eqref{diag-forTw-morph} 
commutes since $\mD$ is a natural transformation from $\Tw$ to $\Tw \Tw$\,. 
Thus the ambient rectangle forms a commutative diagram. 

On the other hand, the composition $\Tw (f) \circ c$ of the lower horizontal arrows 
is $F'$ and the composition $\Tw\Tw(f) \circ \Tw(c)$ of upper horizontal arrows 
is $\Tw(F')$\,. Hence $F'$ is indeed a morphism of $\Tw$-coalgebras. 

Theorem \ref{thm:general} is proved. 
\end{proof}

In order to prove Theorem \ref{thm:main} we need a technical 
property of operad maps from $\Ger_{\infty}$ to $\BT$\,. 

So let $f : \Ger_{\infty} \to \BT$ be a map of dg operads and let
\begin{equation}
\label{MC-eta-F}
\al \in \Conv(\Ger^{\vee}, \BT) \cong  \prod_{n \ge 2} 
\Big( \BT(n) \otimes \La^{-2}\Ger(n) \Big)^{S_n}
\end{equation}
be the Maurer-Cartan element which corresponds to $f$\,.

Using the fact that every vector in $\BT(n)$ carries degree $1-n$, we deduce 
that $\al$ has the form
\begin{equation}
\label{al-form}
\al = \sum_{n\ge 2,\, i} ~ \sum_{\si \in S_n}  \si \Big( X_{n,i} \otimes v^1_{n,i} v^2_{n,i} \Big)\,,   
\end{equation}
where $X_{n,i} \in \BT(n)$ and 
$$
v^1_{n,i} v^2_{n,i} \in \bs^2 S^2(\La^{-1}\Lie(b_1, \dots, b_n)) \cap \La^{-2}\Ger(n)\,. 
$$

We claim that 
\begin{prop}
\label{prop:Ger-infty-BT}
If  $f : \Ger_{\infty} \to \BT$ is an operad map satisfying the condition 
\begin{equation}
\label{f-Tcc}
f(\bs \, 1^{\mc}_2)  = T_{\cc} + \si_{12} (T_{\cc})\,,
\end{equation}
with $T_{\cc}$ being the brace tree shown on figure \ref{fig:T12}, then 
the Maurer-Cartan element $\al$ does not involve monomials of the 
form
$$
T \otimes b_1 v(b_2, \dots b_n)\,, \qquad n \ge 3\,,
$$
where $v$ is a $\La^{-1}\Lie$-monomial and $T$ is a brace tree
with vertex\phantom{a} \tikz{\node [circle, draw, minimum size=5, inner sep=1] (v) at (0,0) {$1$};} 
\phantom{a}having valency $1$ or $2$. 
\end{prop}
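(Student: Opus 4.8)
The plan is to argue by induction on the arity $n$, exploiting the fact that $f$ is a chain map from $\Ger_{\infty}=\Cobar(\Ger^{\vee})$, whose differential is $\pa^{\Cobar}$, to the operad $\BT$, whose differential vanishes; equivalently, that the Maurer--Cartan element $\al$ satisfies $\al\bullet\al=0$ in $\Conv(\Ger^{\vee}_{\c},\BT)$. Before the induction I would record two elementary facts that do not use \eqref{f-Tcc}. First, since $\BT(m)$ is concentrated in degree $1-m$ while a basis monomial of $\La^{-2}\Ger(m)$ built from $t$ $\La^{-1}\Lie$-monomials lies in degree $m+t-2$, a degree count forces $f(\bs w^{*})=0$ unless $w$ is a product of exactly two $\La^{-1}\Lie$-monomials; this re-proves the shape \eqref{al-form} and also shows $f(\bs\{b_1,b_2\}^{*})=0$, so that together with \eqref{f-Tcc} the map $f$ is completely pinned down in arity $2$. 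Second, inserting one brace tree into a leaf of another, or splicing a bivalent vertex into an edge, leaves all other valencies unchanged, while reattaching the stray incoming edge of $\si_{12}(T_{\cc})$ to a vertex can only raise its valency; hence the insertions $\circ_i$ of $\BT$ against $T_{\cc}$ and against $\si_{12}(T_{\cc})$ never \emph{decrease} the valency of a pre-existing vertex, and in particular the vertex labelled by the distinguished single letter survives such an insertion with its valency unchanged.

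For the inductive step I would evaluate $f(\pa^{\Cobar}\bs w^{*})=0$ on the generator $w=b_1\cdot b_2\cdot v(b_3,\dots,b_{n+1})$, with $v$ a $\La^{-1}\Lie$-monomial of length $n-1$. Each term of $\pa^{\Cobar}\bs w^{*}$ is a two-vertex tree whose pieces are basis monomials of $\La^{-2}\Ger$, and applying $f$ turns it into an operadic insertion of the two brace trees $f(\bs(\cdot)^{*})$. By the degree observation, the only pieces that survive are the fixed arity-$2$ element $T_{\cc}+\si_{12}(T_{\cc})$ and $\al$-coefficients of monomials of the form (single letter)$\cdot$(Lie monomial) or (Lie monomial)$\cdot$(single letter) — coefficients of exactly the shape in the statement — of arity at most $n$. (A factor of fewer than two letters is forced into arity $2$; a single Lie monomial is killed by $f$; and a product of two Lie monomials each of length $\ge2$ cannot occur in a decomposition of $w$ since it would produce at least two non-singleton factors, whereas $w$ has only one. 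These points require a short count of Lie-monomial factors.) The contributions whose mixed piece has arity $<n$ are, by the inductive hypothesis, built only from brace trees whose distinguished vertex has valency $\ge3$, and by the valency-monotonicity their insertions against $T_{\cc}+\si_{12}(T_{\cc})$ preserve this; so those contributions assemble into a combination of ``good'' brace trees. Since the total vanishes, the remaining contributions — which are precisely the insertions of $T_{\cc}+\si_{12}(T_{\cc})$ against the arity-$n$ mixed coefficients $X^{\,\mathrm{letter}\cdot v}_{n}$ and $X^{\,v\cdot\mathrm{letter}}_{n}$ — must likewise form a combination of good brace trees.

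It then remains to conclude that $X^{b_1\cdot v}_{n}$ itself involves no brace tree with the vertex of $b_1$ of valency $1$ or $2$. Here I would argue that if such a bad brace tree $T_0$ occurred in $X^{b_1\cdot v}_{n}$, then the insertion $T_{\cc}\circ_{2}T_0$ (which merely splices a bivalent vertex into the root edge of $T_0$ and leaves all other valencies untouched) would produce a bad brace tree inside the ``remaining contributions'' — one whose local structure near the distinguished vertex, and near the newly created bivalent vertex, distinguishes it both from all good brace trees and from the bad brace trees produced by the other terms (the various $\si_{12}(T_{\cc})$-insertions and the $X\circ_{?}(T_{\cc}+\si_{12}(T_{\cc}))$ terms, which are recorded by \emph{distinct} two-vertex trees $\bt_z$ and hence supported on disjoint families). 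A matching of the sign factors then shows this bad term cannot cancel, contradicting what was proved in the previous paragraph. The base case $n=3$ (so $v=\{b_2,b_3\}$) is handled the same way, starting from $f(\pa^{\Cobar}\bs(b_1 b_2\{b_3,b_4\})^{*})=0$, where only $T_{\cc}+\si_{12}(T_{\cc})$ and arity-$3$ mixed coefficients appear.

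The main obstacle I anticipate is exactly this last extraction step: the Maurer--Cartan relation only constrains the \emph{images} of $X^{b_1\cdot v}_{n}$ under the operations $(T_{\cc}+\si_{12}(T_{\cc}))\circ_i(-)$, and because the $T_{\cc}$-insertion creates fresh bivalent vertices, one must control precisely which output brace trees carry a low-valence vertex labelled by $b_1$ and verify by a careful sign and support analysis that they cannot be cancelled by the good part or by each other. I also expect the full enumeration of the two-vertex decompositions of $w^{*}$, together with the bookkeeping of relabelings and signs in the insertions $\circ_i$ of $\BT$, to be the most tedious ingredient of the proof.
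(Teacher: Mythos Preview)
Your overall strategy is sound and in fact quite close to the paper's: both arguments examine the Maurer--Cartan constraint projected onto monomials of the shape $b_i b_{i+1} v_\tau$ (three $\La^{-1}\Lie$-factors, two of them single letters) and show that a ``bad'' coefficient $T\otimes b_i v_\tau$ would leave an uncancellable witness there. Your degree count, your claim that products of two long Lie monomials cannot appear in the relevant decompositions, and your valency-monotonicity observation are all correct and are used (implicitly or explicitly) in the paper as well.

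The paper, however, organizes the argument differently and this is exactly what resolves the ``extraction step'' you flag as the obstacle. Two moves are key. First, the paper passes from invariants to coinvariants via the averaging isomorphism (Proposition~\ref{prop:bullet-coinv}), so that $\big(\BT(n)\otimes\La^{-2}\Ger(n)\big)_{S_n}\cong \bbK\langle\cT'(n)\rangle\otimes\La^{-2}\Ger(n)$ acquires the clean basis $\{T\otimes w_{n,i}\}$; this eliminates the ambiguity in your ``support and sign'' analysis. Second, the paper writes down an explicit injective operation $V^{\cced}_i$ on brace trees with a low-valency vertex $i$ (attach a new leaf $i{+}1$ if $i$ is univalent; splice $i{+}1$ into the incoming edge if $i$ is bivalent) and proves the single identity
\[
\al_0\bullet' \big(T\otimes b_i v_\tau\big) + \big(T\otimes b_i v_\tau\big)\bullet' \al_0 \;=\; \pm\, V^{\cced}_i(T)\otimes b_i b_{i+1} v_\tau \;+\;(\text{other basis vectors}).
\]
Your proposed witness $T_{\cc}\circ_2 T_0$ catches only the bivalent-root-edge case; $V^{\cced}_i$ handles both local pictures uniformly and is what makes the non-cancellation transparent: in $V^{\cced}_i(T)$ the vertices $i$ and $i{+}1$ both have valency $\le 2$, which immediately rules out any contribution from a $\bullet'$-product of two terms whose distinguished vertices have valency $\ge 3$.

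Finally, the paper does \emph{not} induct. The role your induction plays---ensuring that all higher--higher pairings entering the relevant coefficient involve only ``good'' terms---is absorbed in the paper into a single direct statement: good$\bullet'$good never produces a $V^{\cced}_i(T)$. (If one wants to be fully careful about bad$\bullet'$bad contributions, a minimal-arity choice of the offending $T$ suffices; your induction achieves the same thing.) So your plan would work, but the two missing ingredients above are precisely what turn the acknowledged obstacle into a short computation.
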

We will postpone the technical proof of this proposition to Section \ref{sec:Ger-infty-BT} 
and complete the proof of Theorem \ref{thm:main}.

According to Section \ref{sec:Ginf-Twcoalg}, $\Ger_{\infty}$ is a $\Tw$-coalgebra.
Furthermore, by Corollaries \ref{cor:LaLie-Ger-inf} and \ref{cor:Br-homot-fixed}
the dg operads $\Ger_{\infty}$ and $\Tw\BT$  are homotopy 
fixed points for $\Tw$. Thus, given an operad quasi-isomorphism
$F:  \Ger_{\infty} \to \Br$ we can apply Theorem \ref{thm:general} 
to $\cO=\Ger_\infty$, $\cO'=\BT$ and the composition 
$$
\wt{F} : \Ger_{\infty} \stackrel{F}{~\longrightarrow~} \Br  ~\hookrightarrow~ \Tw\BT\,.
$$

Since $\wt{F}$ is a quasi-isomorphism from $\Ger_{\infty}$ to $\Tw\BT$, 
the composition 
$$
f  = \eta_{\BT} \circ \wt{F} : \Ger_{\infty} \to \BT
$$
satisfies condition \eqref{f-Tcc}. Hence, applying Proposition \ref{prop:Ger-infty-BT}, 
we conclude that for every basis vector $v \in \La^{-1}\Lie(n-1)$ the 
linear combination
$$
 \eta_{\BT} \circ \wt{F}\big(\,\bs \,( b_1 v(b_2, \dots b_n) )^*\, \big) \in \BT(n)
$$
does not involve brace trees with vertex  
\phantom{a}\tikz{\node [circle, draw, minimum size=5, inner sep=1] (v) at (0,0) {$1$};}\phantom{a} 
having valency $\le 2$\,.

This observation implies that the composition 
$$
\wt{F}'  =  \Tw(f) \circ c : \Ger_{\infty} \to \Tw\BT
$$
factors through the sub-operad $\Br$\,.

Since $\wt{F}'$ is homotopy equivalent to $\wt{F}$, and 
$\wt{F}'$ is compatible with the $\Tw$-coalgebra structures, 
Theorem \ref{thm:main} follows.  

\hfill \qed

\subsection{The proof of Proposition \ref{prop:Ger-infty-BT}}
\label{sec:Ger-infty-BT}

The proof of Theorem \ref{thm:main} is based on technical Proposition \ref{prop:Ger-infty-BT}. 
Here we give a proof of this proposition. 

In the proof of this proposition, it is more convenient to use 
a different but equivalent description of the pre-Lie algebra $\Conv(Q,P)$ which 
makes sense if each $Q(n)$ is finite dimensional.  This description 
is given in Appendix \ref{app:invar-coinvar}. 

Condition \eqref{f-Tcc} implies that $\al$ has the form 
\begin{equation}
\label{al-form-new}
\al = \al_0 +
 \sum_{n\ge 3,\, i}~ \sum_{\si \in S_n}  \si \Big( X_{n,i} \otimes v^1_{n,i} v^2_{n,i} \Big)\,,   
\end{equation}
where, as above, $X_{n,i} \in \BT(n)$,
$$
v^1_{n,i} v^2_{n,i} \in \bs^2 S^2(\La^{-1}\Lie(b_1, \dots, b_n)) \cap \La^{-2}\Ger(n)\,,
$$
and $\al_0$ is the degree $1$ vector 
\begin{equation}
\label{al-0}
\al_0 : = T_{\cc} \otimes b_1 b_2 +  \si_{12}(T_{\cc}) \otimes b_1 b_2\,. 
\end{equation}

Recall that, due to Proposition \ref{prop:bullet-coinv} from Appendix \ref{app:invar-coinvar}, 
the map $\Av$ \eqref{Av-dfn} gives us an isomorphism to the pre-Lie algebra 
$$
 \Conv(\Ger^{\vee}, \BT) \cong  \prod_{n \ge 2} \Big( \BT(n) \otimes \La^{-2}\Ger(n) \Big)^{S_n}
$$
from the pre-Lie algebra
\begin{equation}
\label{BT-Ger-coinv}
\prod_{n \ge 2} \Big( \BT(n) \otimes \La^{-2}\Ger(n) \Big)_{S_n}
\end{equation}
with the pre-Lie product $\bullet'$ given by equation \eqref{bullet-coinv}. 
So, in our consideration, we may replace  $\Conv(\Ger^{\vee}, \BT)$
by the pre-Lie algebra \eqref{BT-Ger-coinv}.
 
Let us denote by $\cT'(n)$ the set of all brace trees for which the labeling agrees 
with the natural order on the set of vertices. For example, the brace 
tree $T_{\cc}$ on figure \ref{fig:T12} belongs to $\cT'(2)$ and 
$\si_{12} (T_{\cc})$ does not belong to $\cT'(2)$\,. 
 
Since $\BT(n)$ is a free $S_n$-module generated elements 
of $\cT'(n)$ we conclude that 
\begin{equation}
\label{BT-Ger-easy}
\Big( \BT(n) \otimes \La^{-2}\Ger(n) \Big)_{S_n} \cong 
\bbK\L \cT'(n) \R \otimes   \La^{-2}\Ger(n)\,. 
\end{equation}

Hence the vector space \eqref{BT-Ger-easy} has the following natural basis
\begin{equation}
\label{BT-Ger-basis}
\Big\{ T \otimes w_{n,i} \Big\}_{T \in \cT'(n),\, i \in I_n}\,, 
\end{equation}
where $w_{n,i}$ are vectors of the basis \eqref{the-basis-n} for $\La^{-2}\Ger(n)$\,.

We also observe that, the $\La^{-1} \Lie$-monomials 
\begin{equation}
\label{La-1Lie-basis}
\Big\{\, v_{\tau} = \{ b_{\tau(1)}, \{ b_{\tau(2)}, \dots  b_{\tau(n-2)}, b_{n-1}\}  \br\, \Big\}_{\tau \in S_{n-2}}
\end{equation}
form a basis in the vector space $\La^{-1} \Lie(n-1)$ and 
\begin{equation}
\label{b-v-i-tau}
\Big\{ b_i v_{\tau}(b_1, \dots, \wh{b_i}, \dots, b_n) \Big\}_{1 \le i \le n;~ \tau \in S_{n-2}} 
\end{equation}
is a subset of the basis \eqref{the-basis-n} for $\La^{-2}\Ger(n)$\,.

Our goal is to show that the decomposition of the Maurer-Cartan 
element $\Av^{-1}(\al)$ with respect to the basis \eqref{BT-Ger-basis}
does not involve vectors of the form
\begin{equation}
\label{bad-vector}
T \otimes b_i v_{\tau}(b_1, \dots, \wh{b_i}, \dots, b_n)\,, 
\end{equation}
where $T$ is a brace tree in $\cT'(n)$ with vertex 
\phantom{a}\tikz{\node [circle, draw, minimum size=5, inner sep=1] (v) at (0,0) {$i$};}\phantom{a} 
being either univalent or bivalent. So the ``forbidden'' brace trees locally have one of the three 
forms shown schematically in figures \ref{fig:bag-vert-top}, \ref{fig:bag-vert-middle}, 
and \ref{fig:bag-vert-bottom}. 
\begin{figure}[htp]
\centering
\begin{minipage}[t]{0.3\linewidth}
\centering
 \begin{tikzpicture}
   \node (vv) at (0,0.3) {$\dots$};
   \node[ngr] (v) at (0,1) {};
   \node[ext] (w) at (0,2) {$i$};
   \draw (vv) edge (v) (v) edge (w) (v) edge (-0.5,1.5)  (v) edge (0.5,1.5);
  \end{tikzpicture}
\caption{\label{fig:bag-vert-top}  Vertex $i$ is univalent}  
\end{minipage}
\begin{minipage}[t]{0.3\linewidth}
\centering
\begin{tikzpicture}
   \node (dots) at (0,0.3) {$\dots$};
   \node[ngr] (gr1) at (0,1) {};
   \node[ext] (v) at (0,2) {$i$};
   \node[ngr] (gr2) at (0,3) {};
     \node (dotss) at (0,4) {$\dots$};
   \draw (dots) edge (gr1) (gr1) edge (v) (v) edge (gr2) (gr1) edge (-0.5, 1.5) edge (0.5, 1.5); 
   \draw (gr2) edge (-0.5, 3.5) edge (0, 3.5) edge (0.5, 3.5);
 \end{tikzpicture}
\caption{\label{fig:bag-vert-middle}  Bivalent vertex $i$ is not adjacent to the root}  
\end{minipage} 
\begin{minipage}[t]{0.3\linewidth}
\centering
\begin{tikzpicture}
\tikzstyle{root}=[circle, draw, fill, minimum size=0, inner sep=1]
      \node[root] (r) at (0,1) {};
   \node[ext] (v) at (0,2) {$i$};
   \node[ngr] (gr) at (0,3) {};
   \node (dotss) at (0,4) {$\dots$};
   \draw (r) edge (v) (v) edge (gr); 
   \draw (gr) edge (-0.5, 3.5) edge (0, 3.5) edge (0.5, 3.5);
 \end{tikzpicture}
\caption{\label{fig:bag-vert-bottom}  Bivalent vertex $i$ is adjacent to the root }  
\end{minipage}
\end{figure} 
Gray vertices on these figures are some other labeled vertices.

Let us introduce an operation $V^{\cced}_{i}$ whose
input is a brace tree in $\cT'(n)$ with vertex 
\phantom{a}\tikz{\node [circle, draw, minimum size=5, inner sep=1] (v) at (0,0) {$i$};}\phantom{a} 
being either univalent or bivalent. The construction of the  
output  $V^{\cced}_{i}(T)$ depends on whether vertex 
\phantom{a}\tikz{\node [circle, draw, minimum size=5, inner sep=1] (v) at (0,0) {$i$};}\phantom{a} 
is univalent or bivalent. 

If vertex 
\phantom{a}\tikz{\node [circle, draw, minimum size=5, inner sep=1] (v) at (0,0) {$i$};}\phantom{a} 
of $T$ is univalent then  $V^{\cced}_{i}(T)$
is a brace tree in $\cT'(n+1)$ which is obtained 
from $T$ via
\begin{itemize}

\item shifting labels of $T$ which are $> i$ up by $1$,

\item creating a vertex with label $i+1$, and 

\item attaching the vertex with label $i+1$ to the vertex with label $i$ by 
a single edge.

\end{itemize}

If vertex 
\phantom{a}\tikz{\node [circle, draw, minimum size=5, inner sep=1] (v) at (0,0) {$i$};}\phantom{a} 
of $T$ is bivalent then  $V^{\cced}_{i}(T)$
is a brace tree in $\cT'(n+1)$ which is obtained from $T$ via 

\begin{itemize}

\item shifting labels of $T$ which are $> i$ up by $1$,

\item creating a vertex with label $i+1$,

\item detaching the edge terminating at the vertex with label $i$
and attaching it to the vertex with label $i+1$,

\item and finally, attaching the vertex with label $i+1$ to the label $i$ by 
a single edge.

\end{itemize}

In pictures, $V^{\cced}_{i}(T)$ locally looks as follows:
\begin{figure}[htp]
\centering
\begin{minipage}[t]{0.3\linewidth}
\centering
 \begin{tikzpicture}
  \node (vv) at (0,0.3) {$\dots$};
  \node[ngr] (v) at (0,1) {};
  \node[ext] (w) at (0,2) {$i$};
  \node[ext] (ww) at (0,3) {\scriptsize{$i$+1}};
  \draw (vv) edge (v) (v) edge (w) (v) edge (-0.5, 1.5) edge (0.5, 1.5)  (w) edge (ww);
 \end{tikzpicture}
\end{minipage}
\begin{minipage}[t]{0.3\linewidth}
\centering 
 \begin{tikzpicture}
  \node (vvv) at (0,-0.7) {$\dots$};
  \node[ngr] (vv) at (0,0) {};
  \node[ext] (v) at (0,1) {$i$};
  \node[ext] (ww) at (0,2) {\scriptsize{$i$+1}};
  \node[ngr] (w) at (0,3) {};
  \node (dots) at (0,4) {$\dots$};
  \draw  (vvv) edge (vv) (vv) edge (v) (v) edge (ww) (ww) edge (w);
  \draw  (vv) edge (-0.5,0.5) edge (0.5, 0.5) (w) edge (-0.5, 3.5) edge (0,3.5) edge (0.5, 3.5);
 \end{tikzpicture}
\end{minipage} 
\begin{minipage}[t]{0.3\linewidth}
\centering
 \begin{tikzpicture}
 \tikzstyle{root}=[circle, draw, fill, minimum size=0, inner sep=1] 
  \node[root] (r) at (0,0) {};
  \node[ext] (v) at (0,1) {$i$};
  \node[ext] (ww) at (0,2) {\scriptsize{$i$+1}};
  \node[ngr] (w) at (0,3) {};
  \node (dots) at (0,4) {$\dots$};
  \draw  (r) edge (v) (v) edge (ww) (ww) edge (w);
  \draw  (w) edge (-0.5, 3.5) edge (0,3.5) edge (0.5, 3.5);
 \end{tikzpicture}
\end{minipage}
\end{figure}

Unfolding the definition, it is not hard to see that 
for every vector of the form \eqref{bad-vector} 
\begin{equation}
\label{al-0-T-b-vbbb}
\al_0 \bullet' T \otimes b_i v_{\tau}(b_1, \dots, \wh{b_i}, \dots, b_n) + 
T \otimes b_i v_{\tau}(b_1, \dots, \wh{b_i}, \dots, b_n) \bullet' \al_0 = 
\end{equation}
$$
\pm\, V^{\cced}_i(T) \otimes  b_i b_{i+1} v_{\tau}(b_1, \dots, b_{i-1}, b_{i+2}, \dots, b_n) + 
\dots\,,
$$
where $\dots$ denote the linear combination of the remaining vectors in 
the basis for \eqref{BT-Ger-easy}. 

\begin{remark}
Note that graphically the operation $\bullet' \al_0$ is the summation over
splittings of a labeled vertex into two labeled vertices, 
while the operation $\al_0 \bullet'$ may graphically be interpreted as attaching a new labeled vertex in all possible ways 
``from the top'' and ``from the bottom'' of the tree. 
Hence the first term on the right hand side of \eqref{al-0-T-b-vbbb} is produced three times on 
the left hand side of \eqref{al-0-T-b-vbbb}.
If $T$ is of the form shown on figure \ref{fig:bag-vert-top} or of the form 
shown on figure \ref{fig:bag-vert-bottom} then the first contribution comes from the
operation $\al_0 \bullet'$, the second one comes from splitting vertex $i$ and the 
third one comes from splitting the labeled vertex adjacent to vertex $i$. 
If $T$ is of the form shown on figure \ref{fig:bag-vert-middle} then all these 
three contributions come from the operation $\bullet' \al_0$: the first one comes from 
the splitting vertex $i$ and the other two contributions come from splitting 
the labeled vertices adjacent to vertex $i$. 
We claim that in all three cases the signs let two of the three contributions cancel.
\end{remark}

It is clear that, if $v v'$ (resp. $w$) is a vector 
in $\La^{-2}\Ger(n)$ (resp. in   $\La^{-2}\Ger(m)$) with the 
$\La^{-1}\Lie$-words $v, v'$ having length $\ge 2$, 
then the pre-Lie products 
$$
\big(T_1 \otimes v v'\big) \bullet' \big(T_2 \otimes w \big)
$$
and
$$
\big(T_2 \otimes w \big) \bullet'  \big(T_1 \otimes v v'\big) 
$$
do not involve tensor factors of the form 
$b_i b_{i+1} v_{\tau}(b_1, \dots, b_{i-1}, b_{i+2}, \dots, b_n)$\,.

It is also clear that, if the vertex with label $j_1$ (resp. label $j_2$) of a 
brace tree $T_1 \in \cT'(n)$ (resp. $T_2 \in \cT'(m)$) has valency $\ge 3$, then 
for every pair of basis vectors 
$v_{\tau} \in \La^{-1}\Lie(n-1)$, $v_{\tau'} \in \La^{-1}\Lie(m-1)$,  the
decomposition of the pre-Lie product
$$
\big(T_1 \otimes b_{j_1} v_{\tau}(b_1, \dots , \wh{b_{j_1}}, \dots, b_n) \big) 
\bullet' \big(T_2 \otimes   b_{j_2} v_{\tau'}(b_1, \dots , \wh{b_{j_2}}, \dots, b_m)  \big)
$$
in the basis \eqref{BT-Ger-basis} does not involve vectors of the form
\begin{equation}
\label{the-form}
V^{\cced}_i(T) \otimes  b_i b_{i+1} v_{\tau}(b_1, \dots, b_{i-1}, b_{i+2}, \dots, b_n)\,.
\end{equation}

These observations imply that, if the decomposition of the 
element $\Av^{-1}(\al)$ in the basis \eqref{BT-Ger-basis}
involves a vector of the form \eqref{bad-vector}, then 
$$
\Av^{-1}(\al) \bullet' \Av^{-1}(\al) \neq 0\,.
$$

On the other hand,  $\Av^{-1}(\al)$ is a Maurer-Cartan element of 
\eqref{BT-Ger-easy}. Thus, the proposition follows. 

\hfill \qed

\subsection{Proof of Corollary \ref{cor:main}}
\label{sec:cor-main}

In this section, we give a proof of Corollary \ref{cor:main} of 
Theorem \ref{thm:main} stated in the Introduction. 

Let us assume that the quasi-isomorphism
$$
F : \Ger_{\infty} \to \Br
$$
is compatible with the $\Tw$-coalgebra structures on 
$\Ger_{\infty}$ and $\Br$\,. This means that the diagram 
\begin{equation}
\label{F-TwF}
\begin{tikzpicture}
\matrix (m) [matrix of math nodes, row sep=2em, column sep =3em]
{ \Tw\Ger_{\infty} & \Tw \Br   \\
  \Ger_{\infty} & \Br \\ };
\path[->,font=\scriptsize]
(m-1-1) edge node[auto] {$\Tw F$}(m-1-2)  
(m-2-1) edge node[auto] {$c_{\Ger_{\infty}}$} (m-1-1)
 edge node[auto] {$F$} (m-2-2)
 (m-2-2)  edge node[auto] {$c_{\Br}$} (m-1-2);
\end{tikzpicture}
\end{equation}
commutes. 

Following Theorem \ref{thm:twisting} and using the 
Maurer-Cartan element $\al$ of $\Cbu(A)$, we form an 
operad map
\begin{equation}
\label{TwBr-EndCbu}
\ma^{\al}_A : \Tw\Br \to \End_{\Cbu(A)^{\al}}\,.
\end{equation}

Unfolding definitions, we see that the composition 
$$
\ma^{\al}_A \circ  \Tw(F) \circ c_{\Ger_{\infty}} : \Ger_{\infty} \to 
 \End_{\Cbu(A)^{\al}}
$$
gives us the $\Ger_{\infty}$-structure $F^{\al}_A$ and the 
composition 
$$
\ma^{\al}_A \circ  c_{\Br} \circ F  : \Ger_{\infty} \to 
 \End_{\Cbu(A)^{\al}}
$$
give us the $\Ger_{\infty}$-structure $F_{A^{\al}}$\,. 

Thus, since diagram \eqref{F-TwF} commutes, the $\Ger_{\infty}$-structures 
$F_{A^{\al}}$ and  $F^{\al}_A$ coincide and the second statement of Corollary \ref{cor:main}
is proved. 

Let us now consider the case when 
$$
F : \Ger_{\infty} \to \Br
$$
is an arbitrary solution of Deligne's conjecture (not necessarily compatible with 
$\Tw$-coalgebra structures).

Due to Theorem \ref{thm:main}, $F$ is homotopy equivalent to a quasi-isomorphism 
$$
F' : \Ger_{\infty} \to \Br
$$ 
that is compatible with the $\Tw$-coalgebra structures on 
$\Ger_{\infty}$ and $\Br$\,. 

Hence diagram \eqref{F-TwF} commutes up to homotopy. 
Therefore, the two (possibly different) $\Ger_{\infty}$-structures $F_{A^{\al}}$ 
and  $F^{\al}_A$ are indeed homotopy equivalent. 

Corollary \ref{cor:main} is proved. 

\hfill \qed

\appendix

\section{The operad \texorpdfstring{$\Ger$}{Ger}}
\label{app:Ger}
A Gerstenhaber algebra is a graded vector space $V$
equipped with a commutative (and associative) product 
(without identity) and a degree $-1$ binary operation $\{\,,\,\}$ which 
satisfies the following relations: 
\begin{gather}
\label{Ger-axiom}
\{v_1 , v_2 \}  = (-1)^{|v_1||v_2|} \{ v_2, v_1\}\,,
\\
\label{Ger-axiom1}
\{v , v_1 v_2 \}  = \{v, v_1\} v_2 + (-1)^{|v_1||v| + |v_1|} v_1 \{v, v_2\} \,,
\\
\label{Ger-axiom-Jac}
\{\{v_1, v_2\} , v_3\} +  
(-1)^{|v_1|(|v_2|+|v_3|)} \{\{v_2, v_3\} , v_1\} +
(-1)^{|v_3|(|v_1|+|v_2|)} \{\{v_3, v_1\} , v_2\}  = 0\,.
\end{gather}

To define spaces of the operad $\Ger$ governing Gerstenhaber algebras 
we introduce the free Gerstenhaber algebra $\Ger_n$ in $n$ dummy 
variables  $a_1, a_2, \dots, a_n$ of degree $0$\,.  Next we set
$\Ger(0) =  \bfzero$ and $\Ger(1) = \bbK$\,. And then we declare that, 
for $n\ge 2$, $\Ger(n)$ is spanned by monomials of $\Ger_n$ in which 
each dummy variable $a_i$ appears exactly once. 

The symmetric group $S_n$ acts on $\Ger(n)$ in the obvious 
way by permuting the dummy variables. It is also clear how 
to define elementary insertions.  
\begin{example}
\label{exam:Ger-insertions}
Let us consider the monomials $u = \{a_2, a_3\} a_1 \{a_4,a_5\} \in \Ger(5)$ and
$w =\{a_1,a_2\}\in \Ger(2)$ and compute the insertions 
$u \circ_2 w$, $u \circ_4 w$ and $w \circ_1 u$\,. We get 
\begin{align*}
u \circ_2 w &=  - \{\{a_2, a_3\}, a_4\} a_1 \{a_5,a_6\} 
\\
u \circ_4 w &=  \{a_2, a_3\} a_1 \{\{a_4, a_5 \},a_6\}
\\
w \circ_1 u &=  \{ \{a_2, a_3\} a_1 \{a_4,a_5\} , a_6\} 
=  \{a_6,  \{a_2, a_3\} a_1 \{a_4,a_5\} \}
\\
& =  \{a_6,  \{a_2, a_3\} \} a_1 \{a_4,a_5\} -  \{a_2, a_3\} \{a_6, a_1\} \{a_4,a_5\}
\\
& \qquad - \{a_2, a_3\} a_1 \{a_6, \{a_4,a_5\}\}\,.  
\end{align*}
\end{example}
It is easy to see that the operad $\Ger$ is generated by the vectors
$a_1 a_2, \{a_1,a_2\} \in \Ger(2)$\,.

\section{Action of the operad \texorpdfstring{$\Tw\BT$}{Tw BT} on the Hochschild cochain 
complex of an \texorpdfstring{$A_{\infty}$}{A-infinity}-algebra}
\label{app:action}

Let $A$ be a cochain complex. We form another cochain complex
\begin{equation}
\label{Cbu-A}
\Cbu(A) = \bigoplus_{m\ge 0} \bs^{m}\, \Hom(A^{\otimes \,m}, A)
\end{equation}
with the differential $\pa_A$ coming from $A$\,.

Let us show that \eqref{Cbu-A} is a naturally an algebra over 
$\BT$\,. To define an action  $\BT$ on  \eqref{Cbu-A} we observe that 
the collection 
\begin{equation}
\label{End-A}
\{\Hom(A^{\otimes \,m}, A)\}_{m \ge 0}
\end{equation}
is an naturally an operad, i.e. the endomorphism operad of $A$\,.

Using this observation, we will construct an auxiliary map
\begin{equation}
\label{BTCbu-to-A}
\vr : \bigoplus_{N \ge 1} \left( \BT(N) \otimes \big(\Cbu(A) \big)^{\otimes\, N} \right)_{S_N}  \to  A
\end{equation}

Given a brace tree $T \in \cT(N)$ and $N$ homogeneous 
vectors 
\begin{equation}
\label{P1P2dotsPN}
P_i \in  \bs^{m_i}\, \Hom(A^{\otimes \,m_i}, A)\,, \qquad 1 \le i \le N
\end{equation}
we decorated non-root vertices of $T$ with vectors \eqref{P1P2dotsPN}
following this rule: the vertex with label $i$ is decorated by 
the vector $P_i$\,. 

We say that such a decoration is {\it admissible} 
if for every $1 \le i  \le N$ the vertex with label $i$ has 
exactly $m_i$ incoming edges. Otherwise, we say that 
the decoration is {\it inadmissible}. In particular, if a decoration 
is admissible, then each leaf of $T$ is decorated by a 
vector in 
$$
\Hom(A^{\otimes 0}, A) =  A\,.
$$

Since \eqref{End-A} is an operad, each brace 
tree $T\in \cT(N)$ with an admissible decoration gives us a vector in $A$\,.
We denote this vector in $A$ by 
\begin{equation}
\label{m-TPPP}
\mm (T; P_1, P_2, \dots, P_N)\,. 
\end{equation}
For example, if $T$ the brace tree depicted on figure \ref{fig:T}, 
$P_1 \in \bs \Hom(A, A)$, $P_2 \in A$, $P_3 \in \bs^2 \Hom(A^{\otimes 2}, A)$, 
and $P_4 \in A$
\begin{figure}
\centering
\begin{tikzpicture}[scale=0.5]
\tikzstyle{lab}=[circle, draw, minimum size=5, inner sep=1]
\tikzstyle{labb}=[circle, draw, minimum size=5, inner sep=3]
\tikzstyle{n}=[circle, draw, fill, minimum size=5]
\tikzstyle{root}=[circle, draw, fill, minimum size=0, inner sep=1]
\node[root] (r) at (0, 0) {};
\node[lab] (v1) at (0, 1) {$3$};
\node[lab] (v2) at (-0.5, 2.3) {$1$};
\node[lab] (v3) at (-1, 3.6) {$2$};
\node[lab] (v4) at (1, 2) {$4$};
\draw (r) edge (v1);
\draw (v1) edge (v2);
\draw (v2) edge (v3);
\draw (v1) edge (v4);
\end{tikzpicture}
\caption{A brace tree $T \in \cT(4)$} \label{fig:T}
\end{figure}  
then we have 
$$
\mm(T; P_1, P_2, P_3, P_4) = P_3( P_1(P_2), P_4)\,.
$$

We can now define the map $\vr$ \eqref{BTCbu-to-A}.

If $T$ is a brace tree in $\cT(N)$ and cochains $P_1, P_2, \dots, P_N$ give 
us an inadmissible decoration of $T$ then we set 
\begin{equation}
\label{T-P-zero}
\vr(T, P_1, \dots, P_N) =0\,.
\end{equation}
Otherwise, we declare that
\begin{equation}
\label{T-PPP}
\vr(T, P_1, \dots, P_N) : =  
(-1)^{\ve(T; P_1, \dots, P_N)} \mm(T; P_1, \dots, P_N)  
\end{equation}
where the sign factor $(-1)^{\ve(T; P_1, \dots, P_N)}$ comes from 
permutation on the set 
$$
\left\{ E(T) \setminus \{\textrm{root edge}\} \right\} \sqcup \{P_1, \dots, P_N\} 
$$
which we perform when we decorate the tree $T$ with vectors 
$P_1, P_2, \dots, P_N$\,.

For example, if $T$ is the brace tree depicted on figure \ref{fig:T},  
 $P_1 \in \bs \Hom(A, A)$, $P_2 \in A$, $P_3 \in \bs^2 \Hom(A^{\otimes 2}, A)$, 
and $P_4 \in A$ then 
$$
\ve(T; P_1, \dots, P_N) =  |P_3| (|P_1| + |P_2|)  +  3 |P_3| + 2 |P_1| + |P_2|\,.
$$ 
The sign factor $(-1)^{ |P_3| (|P_1| + |P_2|)}$ appears because we 
need to switch from the order $(P_1, P_2, P_3, P_4)$ to 
the order  $(P_3, P_1, P_2, P_4)$ ; the sign factor $(-1)^{ 3 |P_3| }$
appears because $P_3$ ``jumps over'' three edges of brace tree $T$\,. 
Similarly, $P_1$ (resp. $P_2$)  ``jumps over'' two edges (resp. one edge) 
of the brace tree $T$\,.

We can now define how a brace tree $T \in \cT(n)$ acts on 
$n$ homogeneous vectors 
\begin{equation}
\label{PPdotsP}
P_i \in  \bs^{m_i}\, \Hom(A^{\otimes \,m_i}, A)\,, \qquad 1 \le i \le n\,.
\end{equation}

For this purpose, we form the linear combinations ($k \ge 0$) 
\begin{equation}
\label{Tk-circ-T}
T_k \circ_1 T\,,
\end{equation}
where $T_k$ is the brace corolla shown on figure \ref{fig:T-k}.
\begin{figure}
\centering
\begin{tikzpicture}[scale=0.5]
\tikzstyle{lab}=[circle, draw, minimum size=5, inner sep=1]
\tikzstyle{labb}=[circle, draw, minimum size=5, inner sep=3]
\tikzstyle{n}=[circle, draw, fill, minimum size=5]
\tikzstyle{root}=[circle, draw, fill, minimum size=0, inner sep=1]
\node[root] (r) at (0, 0) {};
\node[labb] (v1) at (0, 1.5) {$1$};
\node[labb] (v2) at (-2.5, 3.3) {$2$};
\draw (0.5,3.3) node[anchor=center] {{\small $\dots$}};
\node[labb] (v3) at (-1, 3.3) {$3$};
\node[lab] (vk1) at (2.5, 3.3) {{\small $k+1$}};
\draw (r) edge (v1);
\draw (v1) edge (v2);
\draw (v1) edge (v3);
\draw (v1) edge (vk1);
\end{tikzpicture}
\caption{The brace corolla $T_{k}\in \cT(k+1)$} \label{fig:T-k}
\end{figure}  
and 
\begin{equation}
\label{k}
k = \sum_{i=1}^{n} m_i + 1 -n\,.
\end{equation}

Next we set 
\begin{equation}
\label{T-acts}
T(P_1, \dots, P_n; a_1, \dots, a_k) = 
\vr (T_k \circ_1 T,  P_1, \dots, P_n, a_1, \dots, a_k ),
\end{equation}
where $ a_1, \dots, a_k $ are viewed as vectors 
in
$$
\Hom(A^{\otimes\, 0},  A)\,.
$$
Note that, the vectors  $a_1, \dots, a_k $ will 
decorate leaves (of brace trees in the linear combination 
$T_k \circ_1 T$)
with labels $n+1, n+2, \dots, n+k$\,. Moreover, if 
equation \eqref{k} were not satisfied then all decorations 
of $T_k \circ_1 T$ would be inadmissible.

Tedious but straightforward computations show that 
formula  \eqref{T-acts} indeed defines an action of the 
operad $\BT$ on \eqref{Cbu-A}. 

\begin{example}
\label{ex:action}
Let $T$ be the brace tree in $\cT(4)$ depicted on
figure \ref{fig:T} and 
\begin{equation}
\label{PPPP}
\begin{array}{cc}
P_1 \in \bs^2\, \Hom(A \otimes A, A)\,, &
P_2 \in \bs\, \Hom(A , A)\,, \\[0.3cm] 
P_3 \in \bs^3\, \Hom(A^{\otimes\, 3}, A)\,, &
P_4 \in   \Hom(A^{\otimes\, 0}, A) = A\,.
\end{array}
\end{equation}
The vector $T (P_1, P_2, P_3, P_4) $ belongs to 
$$
\bs^3 \Hom(A^{\otimes \, 3}, A)
$$

The linear combination
$T_3 \circ_1 T$ contains a lot of terms. On figure 
\ref{fig:list} we list all brace trees (with signs) which 
get admissible decorations by the vectors 
\begin{equation}
\label{position}
P_1, P_2, P_3, P_4, a_1, a_2, a_3
\end{equation}
\begin{figure}
\begin{minipage}[t]{0.3\linewidth} 
\centering
\begin{tikzpicture}[scale=0.5]
\tikzstyle{lab}=[circle, draw, minimum size=5, inner sep=1]
\tikzstyle{labb}=[circle, draw, minimum size=5, inner sep=3]
\tikzstyle{n}=[circle, draw, fill, minimum size=5]
\tikzstyle{root}=[circle, draw, fill, minimum size=0, inner sep=1]
\draw (-1, 2) node[anchor=center] {{$-$}};
\node[lab] (v7) at (3, 5) {{\small $7$}};
\node[lab] (v6) at (1, 3.5) {{\small $6$}};
\node[lab] (v2) at (3, 3.5) {{\small $2$}};
\node[lab] (v5) at (0.5, 2) {{\small $5$}};
\node[lab] (v1) at (2, 2) {{\small $1$}};
\node[lab] (v4) at (3.5, 2) {{\small $4$}};
\node[lab] (v3) at (2, 0.5) {{\small $3$}};
\node[root] (r) at (2, -0.5) {};
\draw (r) edge (v3);
\draw (v3) edge (v5);
\draw (v3) edge (v1);
\draw (v3) edge (v4);
\draw (v1) edge (v6);
\draw (v1) edge (v2);
\draw (v2) edge (v7);
\end{tikzpicture}
\end{minipage}
\begin{minipage}[t]{0.3\linewidth} 
\centering
\begin{tikzpicture}[scale=0.5]
\tikzstyle{lab}=[circle, draw, minimum size=5, inner sep=1]
\tikzstyle{labb}=[circle, draw, minimum size=5, inner sep=3]
\tikzstyle{n}=[circle, draw, fill, minimum size=5]
\tikzstyle{root}=[circle, draw, fill, minimum size=0, inner sep=1]
\draw (-1, 2) node[anchor=center] {{$+$}};
\node[lab] (v7) at (3, 3.5) {\small ${ 7}$};
\node[lab] (v6) at (1, 5) {\small ${ 6}$};
\node[lab] (v2) at (1, 3.5) {\small ${2}$};
\node[lab] (v5) at (0.5, 2) {\small ${5}$};
\node[lab] (v1) at (2, 2) {\small ${1}$};
\node[lab] (v4) at (3.5, 2) {\small ${4}$};
\node[lab] (v3) at (2, 0.5) {\small ${3}$};
\node[root] (r) at (2, -0.5) {};
\draw (r) edge (v3);
\draw (v3) edge (v5);
\draw (v3) edge (v1);
\draw (v3) edge (v4);
\draw (v1) edge (v2);
\draw (v1) edge (v7);
\draw (v2) edge (v6);
\end{tikzpicture}
\end{minipage}
\begin{minipage}[t]{0.3\linewidth} 
\centering
\begin{tikzpicture}[scale=0.5]
\tikzstyle{lab}=[circle, draw, minimum size=5, inner sep=1]
\tikzstyle{labb}=[circle, draw, minimum size=5, inner sep=3]
\tikzstyle{n}=[circle, draw, fill, minimum size=5]
\tikzstyle{root}=[circle, draw, fill, minimum size=0, inner sep=1]
\draw (-2, 2) node[anchor=center] {{$-$}};
\node[lab] (v6) at (1, 5) {\small ${6}$};
\node[lab] (v5) at (-1, 3.5) {\small ${5}$};
\node[lab] (v2) at (1, 3.5) {\small ${2}$};
\node[lab] (v1) at (0, 2) {\small ${1}$};
\node[lab] (v7) at (2, 2) {\small ${7}$};
\node[lab] (v4) at (3.5, 2) {\small ${4}$};
\node[lab] (v3) at (2, 0.5) {\small ${3}$};
\node[root] (r) at (2, -0.5) {};
\draw (r) edge (v3);
\draw (v3) edge (v1);
\draw (v3) edge (v7);
\draw (v3) edge (v4);
\draw (v1) edge (v5);
\draw (v1) edge (v2);
\draw (v2) edge (v6);
\end{tikzpicture}
\end{minipage}
\caption{The brace trees which contribute to 
$T (P_1, P_2, P_3, P_4)$} \label{fig:list}
\end{figure}    
Thus we get 
\begin{multline}
\label{T-PPPP}
T \big(P_1, P_2, P_3, P_4 \big) (a_1, a_2, a_3)   
= -(-1)^{\ve_1} 
P_3(a_1, P_1(a_2, P_2 (a_3)), P_4)+ \\
(-1)^{\ve_2}  P_3( a_1, P_1 ( P_2(a_2), a_3),  P_4) -
(-1)^{\ve_3}  P_3( P_1( a_1, P_2(a_2)), a_3, P_4)\,,
\end{multline}
where 
\begin{gather*}
\ve_1 = \ve(P_3, a_1, P_1, a_2, P_2, a_3, P_4) 
+  6 |P_3| + 5|a_1| + 4|P_1| + 3 |a_2| + 
2 |P_2| + |a_3|\,, 
\\
\ve_2 = \ve(P_3, a_1, P_1, P_2, a_2, a_3, P_4)
 +  6 |P_3| + 5|a_1| + 4|P_1| + 3 |P_2| + 
2 |a_2| + |a_3|\,,
\\
\ve_3 = \ve( P_3, P_1, a_1, P_2, a_2, a_3, P_4)
+  6 |P_3| +  5 |P_1| + 4 |a_1| + 3 |P_2| + 
2 |a_2| + |a_3|\,,
\end{gather*}
and $\ve(\dots)$ is the sign of the permutation of vectors 
$P_1, P_2, P_3, P_4, a_1, a_2, a_3$ from their standard 
position \eqref{position}.
\end{example}

Since $\BT$ receives the operad map \eqref{LaLie-BT} from $\La\Lie$,
the cochain complex \eqref{Cbu-A} is naturally a $\La\Lie$ algebra. 
The bracket is the famous Gerstenhaber bracket introduced 
in \cite{Ger}. 

Let us observe that 
the cochain complex $\Cbu(A)$ \eqref{Cbu-A} is equipped 
with the obvious decreasing filtration:
\begin{equation}
\label{filtr-Cbu}
\cF_q\Cbu(A) = 
\Big\{ P \in \Cbu(A) ~\big|~ P(a_1, a_2, \dots, a_k) = 0 ~~ \forall ~ k \le q  \Big\}
\end{equation}
and the action of $\BT$ on $ \Cbu(A)$ is compatible with 
this filtration.
Thus we may apply  the general procedure of twisting described 
in Section \ref{sec:twist} to the completion 
\begin{equation}
\label{Hoch-completed}
\hat{C}^{\bul}(A) : =  \prod_{m\ge 0} \bs^{m}\, \Hom(A^{\otimes \,m}, A) 
\end{equation}
of the $\BT$-algebra $\Cbu(A)$ with respect to the filtration \eqref{filtr-Cbu}.

According to Theorem \ref{thm:twisting-cont}, filtered $\Tw\BT$-algebra structures 
on $\hat{C}^{\bul}(A)$ are in bijection with Maurer-Cartan elements $\al$ in $\hat{C}^{\bul}(A)$ 
of the form 
\begin{equation}
\label{MC-Ainfty}
\al = \sum_{k \ge 2} \al_k\,, \qquad 
\al_k \in \bs^k \Hom(A^{\otimes k}, A)
\end{equation}
i.e.  flat $A_{\infty}$-structures on the cochain
complex $A$\,. 

Moreover, for every Maurer-Cartan element \eqref{MC-Ainfty}, 
the cochain complex  $\hat{C}^{\bul}(A)$ with the twisted differential \eqref{tw-diff} is exactly 
the (completed) Hochschild cochain complex of the  $A_{\infty}$-algebra $A$\,.

Thus we conclude that, for every flat $A_{\infty}$-algebra 
$A$, its completed Hochschild cochain complex is equipped with 
a natural action of the operad $\Tw\BT$ and hence 
with a natural action of the operad $\Br$\,.

\begin{remark}
\label{rem:non-complete}
Sometimes the Maurer-Cartan element  \eqref{MC-Ainfty}
satisfies the condition 
$$
\al_{k} = 0 
$$
for all $k \ge N$ for some given number $N$\,. 
(For example, if $A$ is an associative algebra.)
In this case, the action of $\Tw\BT$ (and hence the action 
of $\Br$) is well defined before completing the Hochschild cochain 
complex. 
\end{remark}

\section{The equivalent definition of the pre-Lie product $\bullet$ on $\Conv(Q,P)$ in terms of coinvariants}
\label{app:invar-coinvar}

Let us assume that the pseudo-cooperad $Q$
satisfies the following property.
\begin{pty}
\label{P:fin-dim}
For each $n$ the graded vector space $Q(n)$ is 
finite dimensional. 
\end{pty}

Due to this property we have 
\begin{equation}
\label{Conv-fin}
\Conv(Q, P)  \cong \prod_{n \ge 0} 
\big( P(n) \otimes Q^*(n) \big)^{S_n}\,.
\end{equation}
where $Q^*(n)$ denotes the linear dual of 
the vector space $Q(n)$\,.

The collection $Q^* := \{Q^*(n)\}_{n \ge 0}$ is naturally 
a pseudo-operad and we can express the pre-Lie structure  
\eqref{Conv-bullet} in terms of elementary insertions on 
$P$ and $Q^*$. Namely, given two vectors 
$$
X = \sum_{n \ge 0} v_n \otimes w_n, \qquad 
X' = \sum_{n \ge 0} v'_n \otimes w'_n
$$
in 
$$
\prod_{n \ge 0} \big( P(n) \otimes Q^*(n) \big)^{S_n}\,,
$$
we have 
\begin{equation}
\label{bullet-fin}
X \bullet X' = \sum_{n \ge 1, m \ge 0} (-1)^{|v'_m| |w_n|}
\sum_{\si \in \Sh_{m, n-1}}
\si(v_n \circ_1 v'_m) \otimes \si(w_n \circ_1 w'_m)\,.
\end{equation}

Let us now observe that the formula 
\begin{equation}
\label{Av-dfn}
\Av (v \otimes w) = \sum_{\si \in S_n} \si(v) \otimes \si(w)
\end{equation}
defines a $\bbK$-linear map 
$$
\Av :  P(n) \otimes Q^*(n) \to  \big( P(n) \otimes Q^*(n) \big)^{S_n} \,.
$$
Furthermore, since $\bbK$ has characteristic zero, $\Av$ induces 
an isomorphism (which we denote by the same letter): 
\begin{equation}
\label{Av-iso}
\Av :  \big( P(n) \otimes Q^*(n) \big)_{S_n} \stackrel{\cong}{\,\longrightarrow\,} 
  \big( P(n) \otimes Q^*(n) \big)^{S_n}
\end{equation}
from the space
\begin{equation}
\label{coinvar}
 \big( P(n) \otimes Q^*(n) \big)_{S_n} 
\end{equation}
of $S_n$-coinvariants to the space $  \big( P(n) \otimes Q^*(n) \big)^{S_n}$ of 
invariants.

We have the following proposition.
\begin{prop}
\label{prop:bullet-coinv}
Let $v, v', w, w'$ be homogeneous vectors in $P(n)$, $P(m)$, 
$Q(n)$, and $Q'(m)$, respectively. Then the formula 
\begin{equation}
\label{bullet-coinv}
(v \otimes w) \bullet' (v' \otimes w') =
\sum_{i=1}^n (-1)^{|v'| |w|} 
v \circ_i v' \otimes w \circ_i w'
\end{equation}
defines a binary operation on 
$$
\bigoplus_{n \ge 0}   \big( P(n) \otimes Q^*(n) \big)_{S_n}\,.
$$
This operation extends in the obvious way to the infinite sum 
\begin{equation}
\label{prod-coinv}
\prod_{n \ge 0}   \big( P(n) \otimes Q^*(n) \big)_{S_n}
\end{equation}
and 
\begin{equation}
\label{bul-pr-Av-bul}
\Av\big( \,(v \otimes w) \bullet' (v' \otimes w') \, \big) = 
\Av(v \otimes w) \bullet  \Av (v' \otimes w')\,.
\end{equation}
\end{prop}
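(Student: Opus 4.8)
The plan is to establish the three assertions of the proposition in turn: first that formula \eqref{bullet-coinv} descends to the coinvariants $\bigoplus_{n\ge 0}\big(P(n)\otimes Q^*(n)\big)_{S_n}$, then that the resulting operation extends to the product \eqref{prod-coinv}, and finally the intertwining identity \eqref{bul-pr-Av-bul}. The first two points are essentially formal; \eqref{bul-pr-Av-bul} carries the substance. I note in advance that once \eqref{bul-pr-Av-bul} is known, the claim that $\bullet'$ is a genuine (pre-Lie) binary operation is automatic: $\Av$ is an isomorphism of graded vector spaces -- here one uses that $\bbK$ has characteristic zero -- which by \eqref{bul-pr-Av-bul} carries the pre-Lie product $\bullet$ of $\Conv(Q,P)$ over to $\bullet'$.

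For well-definedness I would invoke the standard $S$-equivariance of the elementary insertions in an (pseudo-)operad $R$ (see \cite{notes}): for $u\in R(n)$, $u'\in R(m)$, $\sigma\in S_n$, $\tau\in S_m$ one has $(\sigma\cdot u)\circ_i u'=(\sigma\circ_i\mathrm{id}_m)\cdot\big(u\circ_{\sigma^{-1}(i)}u'\big)$ and $u\circ_i(\tau\cdot u')=(\mathrm{id}_n\circ_i\tau)\cdot(u\circ_i u')$, with the relevant block permutations $\sigma\circ_i\mathrm{id}_m,\ \mathrm{id}_n\circ_i\tau\in S_{n+m-1}$. Applying this with $R=P$ and with $R=Q^*$, and using that $v$ and $w$ share the arity $n$ while $v'$ and $w'$ share the arity $m$, the block permutations produced in the $P$-slot and in the $Q^*$-slot are identical; hence, modulo the diagonal $S_{n+m-1}$-action they disappear, and after relabelling the summation index $i$ one sees that $(\sigma v\otimes\sigma w)\bullet'(v'\otimes w')$ and $(v\otimes w)\bullet'(\tau v'\otimes\tau w')$ both represent the same class as $(v\otimes w)\bullet'(v'\otimes w')$ in $\big(P(n+m-1)\otimes Q^*(n+m-1)\big)_{S_{n+m-1}}$. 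This gives descent to the coinvariants; the extension to \eqref{prod-coinv} is then immediate, since the arity-$N$ component of a product is a finite sum, ranging over the finitely many pairs $(n,m)$ with $n+m-1=N$.

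For \eqref{bul-pr-Av-bul} I would expand both sides. Substituting $\Av(v\otimes w)=\sum_{\rho\in S_n}\rho v\otimes\rho w$ and $\Av(v'\otimes w')=\sum_{\rho'\in S_m}\rho'v'\otimes\rho'w'$ into \eqref{bullet-fin}, and using that the group actions preserve degree, the right-hand side of \eqref{bul-pr-Av-bul} becomes
\[
(-1)^{|v'|\,|w|}\sum_{\rho\in S_n}\ \sum_{\rho'\in S_m}\ \sum_{\sigma\in\Sh_{m,n-1}}\sigma\big((\rho v\circ_1\rho'v')\otimes(\rho w\circ_1\rho'w')\big),
\]
while $\Av\big((v\otimes w)\bullet'(v'\otimes w')\big)=(-1)^{|v'|\,|w|}\sum_{i=1}^n\sum_{\pi\in S_{n+m-1}}\pi\big((v\circ_i v')\otimes(w\circ_i w')\big)$. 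To match these, I would, for each $i\in\{1,\dots,n\}$, choose the cyclic permutation $\gamma_i\in S_n$ sending $i$ to $1$, so that by equivariance $v\circ_i v'=(\gamma_i\circ_1\mathrm{id}_m)^{-1}\cdot\big((\gamma_i v)\circ_1 v'\big)$ (and the same with $w$, using the same $\gamma_i$), which reduces every $\circ_i$ to a $\circ_1$; then use the unique factorization $S_{n+m-1}=\Sh_{m,n-1}\cdot(S_m\times S_{n-1})$ together with the right-equivariance $u\circ_1(\tau u')=(\mathrm{id}_n\circ_1\tau)\cdot(u\circ_1 u')$ to absorb the $S_m\times S_{n-1}$ part into the factor $\rho'\in S_m$ and into additional $\gamma$-twists recorded by $\rho\in S_n$. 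The bookkeeping is consistent with the cardinality identity $n\cdot(n+m-1)!=n!\,m!\,\binom{n+m-1}{m}$, and carrying it through term by term reproduces exactly the displayed sum, with no extra signs beyond the common factor $(-1)^{|v'|\,|w|}$, since all the symmetric-group actions involved are degree-preserving (any Koszul signs being already internal to the operad structures on $P$ and $Q^*$).

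The hard part will be precisely this last identification: realizing the bijection $\{1,\dots,n\}\times S_{n+m-1}\xrightarrow{\ \sim\ }S_n\times S_m\times\Sh_{m,n-1}$ compatibly with the $\circ_i$-to-$\circ_1$ reduction, and verifying that the block permutations and shuffles recombine correctly, without introducing a spurious sign. Everything else -- the descent to coinvariants and the passage to the infinite product -- is routine.
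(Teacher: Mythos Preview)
Your proposal is correct and follows essentially the same route as the paper's proof. The paper is terser: it observes that since $\Av$ is an isomorphism it suffices to verify \eqref{bul-pr-Av-bul}, and then points to the coset identity
\[
\sum_{\sigma\in S_n}\sigma(v)=\sum_{\sigma'\in S_{\{2,\dots,n\}}}\ \sum_{i=1}^n \sigma'\circ(1,2,\dots,i)(v)
\]
together with equivariance of the elementary insertions. Your argument unpacks exactly this computation --- your cyclic permutations $\gamma_i$ are the cycles $(1,2,\dots,i)$, and your factorization $S_{n+m-1}=\Sh_{m,n-1}\cdot(S_m\times S_{n-1})$ combined with the substitution $\rho=\tau''\gamma_i$ is precisely the paper's coset decomposition read in the other direction. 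Your separate direct check that $\bullet'$ descends to coinvariants is extra work (the paper skips it, as you yourself note it follows from \eqref{bul-pr-Av-bul} and the fact that $\Av$ is an isomorphism), but it is correct and does no harm.
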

\begin{proof}
Since the map \eqref{Av-iso} is an isomorphism of 
graded vector spaces, it suffices to prove that equation
\eqref{bul-pr-Av-bul} holds. 

This equation can be verified by a straightforward computation using the obvious 
identity\footnote{Here $S_{\{2,3, \dots, n\}}$ is the subgroup of elements $\si \in S_n$
satisfying the condition $\si(1) =1$\,.}
$$
\sum_{\si \in S_n} \si (v) = \sum_{\si' \in S_{\{2,3, \dots, n\}} } \sum_{i=1}^n \si' \circ (1,2, \dots, i)(v)  
$$
and properties of the elementary insertions. 
\end{proof}

Proposition \ref{prop:bullet-coinv} implies that the map 
\eqref{Av-iso} establishes an isomorphism between the pre-Lie 
algebra \eqref{Conv-fin} with the pre-Lie product  \eqref{bullet-fin}
and the pre-Lie algebra \eqref{prod-coinv} with the pre-Lie product 
given by equation \eqref{bullet-coinv}.

\section{A Lemma on a filtered complex}
\label{app:filtered-lem}

\begin{lemma}
\label{lem:filtered}
Let $C$ be a cochain complex with a complete descending filtration which is 
bounded above. I.e., 
\[
C= \mF_{-N} \supset  \mF_{-N+1} \supset \dots  \supset \mF_{0}   \supset \mF_1 \supset \cdots
\]
and $C= \lim_p C / \mF_p$. If the associated graded complex $\Gr C$ is acyclic then 
so is $C$\,.
\end{lemma}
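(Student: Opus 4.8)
The plan is to prove the statement directly, by showing that every $\pa$-cocycle $c\in C$ is a $\pa$-coboundary, constructing a primitive $b$ with $\pa b=c$ as a convergent series and correcting the error one filtration layer at a time.

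First I would fix a cocycle $c\in C^k$ in some degree $k$. Since $C=\mF_{-N}$, we have $c\in\mF_{-N}$, and the image $\widebar{c}$ of $c$ in $(\mF_{-N}/\mF_{-N+1})^k$ is a cocycle for the induced differential; by acyclicity of $\Gr C$ it is a coboundary, so lifting a primitive to $\mF_{-N}$ gives $b_{-N}\in\mF_{-N}^{k-1}$ with $c-\pa b_{-N}\in\mF_{-N+1}$. Now $c-\pa b_{-N}$ is again a cocycle, this time lying in $\mF_{-N+1}$, and the same argument applied in $\mF_{-N+1}/\mF_{-N+2}$ produces $b_{-N+1}\in\mF_{-N+1}^{k-1}$ with $c-\pa(b_{-N}+b_{-N+1})\in\mF_{-N+2}$. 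Iterating, I obtain elements $b_p\in\mF_p^{k-1}$ for $p\ge -N$ such that
\[
c-\pa\Big(\sum_{p=-N}^{q}b_p\Big)\in\mF_{q+1}\qquad\text{for every }q\ge -N.
\]

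Next I would pass to the limit. The partial sums $s_q:=\sum_{p=-N}^{q}b_p$ form a Cauchy sequence for the filtration topology, since $s_{q+1}-s_q=b_{q+1}\in\mF_{q+1}$; by completeness $C=\lim_p C/\mF_p$ they converge to some $b\in C^{k-1}$. Because $\pa$ preserves the filtration it is continuous for this topology, so $\pa b=\lim_q \pa s_q$; writing $\pa s_q=c-r_q$ with $r_q\in\mF_{q+1}$, we have $r_q\to 0$, hence $\pa b=c$. Therefore $H^\bullet(C)=0$, i.e., $C$ is acyclic.

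The only delicate points are (i) that each correction step is legitimate — that a cocycle in $\mF_p$ maps to a cocycle in $\mF_p/\mF_{p+1}$, which is immediate since $\pa$ respects the filtration — and (ii) the convergence and continuity bookkeeping in the last paragraph, which is precisely what the hypotheses ``complete'' and ``bounded above'' are there to supply (the lower bound $-N$ lets the induction start and makes the series manifestly convergent without invoking a spectral sequence). I do not expect any serious obstacle: this is the standard ``complete filtered quasi-isomorphism'' lemma, and one could alternatively phrase the same argument as the degeneration/convergence of the spectral sequence of the filtration, but the hands-on induction above is shorter and self-contained.
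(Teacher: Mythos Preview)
Your proof is correct and is essentially identical to the paper's own argument: both pick a starting filtration level for the cocycle, inductively peel off one layer of $\Gr C$ at a time to produce corrections $b_p\in\mF_p$, and then use completeness to sum the $b_p$ and conclude $\pa b=c$. The paper's write-up is terser (it just says ``proceed in this manner'' and invokes completeness for the limit), while you spell out the Cauchy/continuity bookkeeping, but there is no substantive difference.
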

\begin{proof}
Suppose $c\in C$ is a cocycle. Then there exists $j$ such that $c\in \mF_j$. 
Then by acyclicity of the associated graded complex, there is a $b_j\in \mF_j$ such that $c-db_j\in \mF_{j+1}$. Proceed in this manner to define $b_{j+1}$, $b_{j+2}$ etc. Then set $b:= \sum_j b_j$. The sum converges since $C$ is complete 
with respect to the filtration.
Again by completeness 
\[
c - db=\lim_{N\to \infty} c-\sum_{j\leq N} db_j=0.
\]
Hence $c$ is exact.
\end{proof}

\section{\texorpdfstring{$\Br$}{Br} is a homotopy \texorpdfstring{$\Tw$}{Tw} fixed point -- 
combinatorial argument}
\label{app:brhomfixed}
We showed in Section \ref{sec:Br-homotopyfixed} above that the operad $\Br$ is a homotopy fixed point of $\Tw$, i. e., that $\Tw\Br\to \Br$ is a quasi-isomorphism. One may give an alternative proof of that fact by an elementary combinatorial argument, which we briefly sketch in this appendix. 

Elements of $\Tw\Br$ can be seen as series of brace trees, for which some of the 
labeled vertices have been colored in, say, gray. 
The differential is schematically depicted in Figure \ref{fig:g1diff}.
The number of neutral (``black'') vertices of brace trees yields a descending complete filtration 
\[
\Tw\Br = \mathcal{F}^0 \supset \mathcal{F}^1 \supset \mathcal{F}^2 \supset \cdots
\]
where $\mathcal{F}^p$ is composed of series in trees with $\geq p$ neutral (black) vertices. Let us consider the associated graded $\Gr \mathcal{F}$. Its differential misses those terms of Figure \ref{fig:g1diff} that produce a black vertex.
We claim that $V_p:=\mathcal{F}^p / \mathcal{F}^{p+1}$ is acyclic for $p\geq 1$.
To show this we need to use additional notation. 
For a brace tree, the \emph{first} neutral vertex is the one hit first when going around he tree in clockwise order, see Figure \ref{fig:firstint}.
We filter $V_p$ by the number of valence zero gray vertices attached to the very left of the first internal vertex, see Figure \ref{fig:veryleft}.
Taking a spectral sequence, the first differential increases that number by one, see Figure \ref{fig:graydiff}. It is easy to see that the cohomology under this differential is zero. It follows that $V_p$ is acyclic as claimed.
Hence the projection $\Gr \mathcal{F}\to \mathcal{F}^0 / \mathcal{F}^1$ is a quasi-isomorphism. From this one can see that also the projection $\Tw\Br\to \Tw \BT$ is a quasi-isomorphism. But since $\Br \to \Tw \BT$ is a quasi-isomorphism by section \ref{sec:Br-TwBT}, we are done.

\begin{figure}
\centering
\begin{tikzpicture}
[
lab/.style={draw, circle, minimum size=5, inner sep=1}, 
n/.style={draw, circle, fill, minimum size=5, inner sep=1},
yscale=-1, scale=.7
]
\draw (5,7) node[lab] (v1) {} 
+(-.7,-1)  node[n] (v2) {} 
+(.7,-1) node[lab] (v3) {} 
+(0,.7) node (root) {};
\draw (v2)+(-.7,-1) node[lab] (v4) {}
+(.7, -1) node[n] (v5) {};
\draw (v5)+(-.7,-1) node[lab] (v6) {}
+(.7, -1) node[lab] (v7) {};
\draw (v1) edge (v2) edge (v3) edge (root) 
(v2) edge(v4) edge (v5)
(v5) edge (v6) edge (v7);
\draw[dashed] (v2) circle (.5);
\end{tikzpicture}
\caption{\label{fig:firstint} The \emph{first} neutral vertex of a brace tree.}
\end{figure}

\begin{figure}
\centering
\begin{tikzpicture}
[lab/.style={draw, circle, minimum size=5, inner sep=1},
n/.style={draw, circle, fill,minimum size=5, inner sep=1},
gray/.style={draw, circle, fill=gray, minimum size=5, inner sep=1}, yscale=-1, scale=.7
]
\draw (5,7) node[lab] (v1) {}
+(-.7,-1)  node[n] (v2) {}
+(.7,-1) node[lab] (v3) {}
+(0,.7) node (root) {};
\draw (v2)+(-.7,-1) node[gray] (v4) {}
+(-1.2, -1) node[gray] (v21) {}
+(-1.7, -1) node[gray] (v22) {}
+(+1.4, -1) node[gray] (v23) {}
+(.7, -1) node[gray] (v5) {};
\draw (v5)+(-.7,-1) node[gray] (v6) {}
+(.7, -1) node[lab] (v7) {};
\draw (v1) edge (v2) edge (v3) edge (root) 
(v2) edge(v4) edge (v5) edge (v21) edge (v22) edge (v23)
(v5) edge (v6) edge (v7);
\draw[dashed] (v21) ellipse (1 and .5);
\end{tikzpicture}
\caption{\label{fig:veryleft}  The filtration on $V_p$ we use comes from the number of valence zero gray vertices attached to to the very left of the first neutral vertex. In this example, that number is three.}
\end{figure}

\begin{figure}
\centering
\begin{tikzpicture}
[lab/.style={draw, circle, minimum size=5, inner sep=1},
n/.style={draw, circle, fill,minimum size=5, inner sep=1},
gray/.style={draw, circle, fill=gray, minimum size=5, inner sep=1},
yscale=-1, scale=.7
]
\begin{scope}[xshift=-3.5cm]
\draw (5,7)  node[n] (v2) {}
+(0,.7) node (root) {};
\draw (v2)+(-.7,-1) node[gray] (v4) {}
+(-1.2, -1) node[gray] (v21) {}
+(+1.4, -1) node[gray] (v23) {}
+(.7, -1) node[gray] (v5) {};
\draw[fill=white] (v2)+(1.05,-1) ellipse (1 and .5) ;
\draw  (v2)+(1.05,-1) node {$\cdots$};
\draw 
(v2) edge(v4) edge (v5) edge (v21)  edge (v23)  edge (root);
\end{scope}
\node at (5,7) {\huge $\mapsto $};
\begin{scope}[xshift=3.5cm]
\draw (5,7)  node[n] (v2) {}
+(0,.7) node (root) {};
\draw (v2)+(-.7,-1) node[gray] (v4) {}
+(-1.2, -1) node[gray] (v21) {}
+(-1.7, -1) node[gray] (v22) {}
+(+1.4, -1) node[gray] (v23) {}
+(.7, -1) node[gray] (v5) {};
\draw[fill=white] (v2)+(1.05,-1) ellipse (1 and .5) ;
\draw  (v2)+(1.05,-1) node {$\cdots$};
\draw 
(v2) edge(v4) edge (v5) edge (v21) edge (v22) edge (v23)  edge (root);
\end{scope}
\end{tikzpicture}
\caption{\label{fig:graydiff} The differential increases the number of valence zero gray vertices attached to to the very left of the first neutral vertex by at most one. The component that increases this number by exactly one is shown here. Note that if this number is odd, the differential acts as zero.}
\end{figure}

\begin{figure}
\centering

\includegraphics[scale=1]{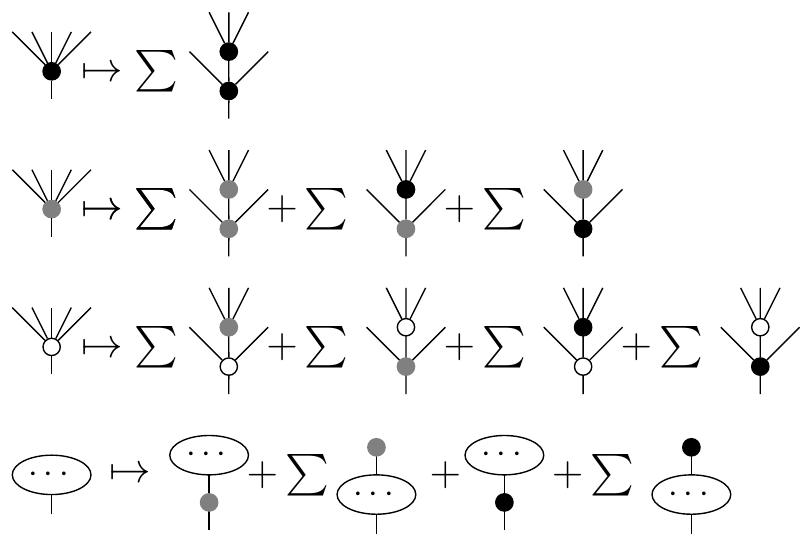}
\caption{\label{fig:g1diff}
A pictorial description of the differential on $\Tw \Br$. On the associated graded $\Gr \mathcal{F}$ the terms that create black vertices (i. e., the first row and the last two terms in the remaining rows) are absent. The terms depicted in Figure \ref{fig:graydiff} are contained in the second term of the last row. 
}
\end{figure}

\vspace{0.1cm}

\noindent\textsc{Department of Mathematics,
Temple University, \\
Wachman Hall Rm. 638\\
1805 N. Broad St.,\\
Philadelphia, PA, 19122 USA \\
\emph{E-mail address:} {\bf vald@temple.edu}}

\vspace{0.5cm}

\noindent\textsc{University of Z\"urich, \\
Institute of Mathematics, \\
Winterthurerstrasse 190,\\
8057 Z\"urich, Switzerland  \\
\emph{E-mail address:} {\bf thomas.willwacher@math.uzh.ch}}

\end{document}